\newtheorem{theorem}{Theorem}[section]
\newtheorem{lemma}[theorem]{Lemma}
\newtheorem{prop}[theorem]{Proposition}
\newtheorem{conjecture}[theorem]{Conjecture}
\newtheorem{coro}[theorem]{Corollary}
\theoremstyle{definition}
\newtheorem{definition}[theorem]{Definition}
\newtheorem{example}[theorem]{Example}
\newtheorem{problem}{Problem}[section]
\theoremstyle{remark}
\newtheorem{remark}[theorem]{Remark}
\newenvironment{solution}[1]{\begin{proof}[#1]}{\end{proof}}
\newcommand{\N}{\mathbb{N}}
\newcommand{\Z}{\mathbb{Z}}
\newcommand{\Q}{\mathbb{Q}}
\newcommand{\R}{\mathbb{R}}
\newcommand{\C}{\mathbb{C}}
\newcommand{\E}{\mathbb{E}}
\renewcommand{\P}{\mathbb{P}}
\newcommand{\Walks}{\mathcal{W}}
\newcommand{\SAWs}{\mathcal{S}}
\newcommand{\SLE}{\mathrm{SLE}}
\newcommand{\abs}[1]{\left\lvert#1\right\rvert}
\newcommand{\norm}[1]{\left\lVert#1\right\rVert}
\newcommand{\floor}[1]{\left\lfloor#1\right\rfloor}
\newcommand{\set}[1]{\left\{#1\right\}}
\newcommand{\bignorm}[1]{\bigl\Vert#1\bigr\Vert}
\newcommand{\union}{\cup}
\newcommand{\bigunion}{\bigcup}
\newcommand{\intersect}{\cap}
\newcommand{\indicator}[1]{1_{\set{#1}}}
\newcommand{\defeq}{=} 
\newcommand{\ddk}{d^d k} 
\newcommand{\increasesto}{\nearrow}
\newcommand{\Bubble}{{\sf B}}
\newcommand{\Bridge}{B}
\newcommand{\lbeq}[1]{\label{e:#1}}
\newcommand{\refeq}[1]{\eqref{e:#1}}
\newcommand{\lbsect}[1]{\label{s:#1}}
\newcommand{\refSect}[1]{Section~\ref{s:#1}}
\newcommand{\lbssect}[1]{\label{ss:#1}}
\newcommand{\refSSect}[1]{Section~\ref{ss:#1}}
\newcommand{\lbthm}[1]{\label{t:#1}}
\newcommand{\refthm}[1]{Theorem~\ref{t:#1}}
\newcommand{\lbprop}[1]{\label{p:#1}}
\newcommand{\refprop}[1]{Proposition~\ref{p:#1}}
\newcommand{\lblemma}[1]{\label{l:#1}}
\newcommand{\reflemma}[1]{Lemma~\ref{l:#1}}
\newcommand{\lbcoro}[1]{\label{c:#1}}
\newcommand{\refcoro}[1]{Corollary~\ref{c:#1}}
\newcommand{\lbfig}[1]{\label{f:#1}}
\newcommand{\reffig}[1]{Figure~\ref{f:#1}}
\newcommand{\ddp}[2]{\frac{\partial #1}{\partial #2}}
\newcommand{\half}{\frac{1}{2}}
\numberwithin{equation}{section}
\begin{document}

\title{Lectures on Self-Avoiding Walks}

\author[Bauerschmidt]{Roland Bauerschmidt}
\address{Department of Mathematics, University of British Columbia, 1984 Mathematics Road, Vancouver, BC, Canada V6T 1Z2}
\email{brt@math.ubc.ca}

\author[Duminil-Copin]{Hugo Duminil-Copin}
\address{D\'epartement de Math\'ematiques, Universit\'e de Gen\`eve, 2-4 rue du Li\`evre, 1211 Gen\`eve, Switzerland}
\email{hugo.duminil@unige.ch}

\author[Goodman]{Jesse Goodman}
\address{EURANDOM, Technische Universiteit Eindhoven, PO Box 513, 5600 MB  Eindhoven, The Netherlands}
\email{j.a.goodman@tue.nl}

\author[Slade]{Gordon Slade}
\address{Department of Mathematics, University of British Columbia, 1984 Mathematics Road, Vancouver, BC, Canada V6T 1Z2}
\email{slade@math.ubc.ca}

\subjclass[2010]{Primary 82B41; Secondary 60K35}

\begin{abstract}
These lecture notes provide a rapid introduction to a number of rigorous
results on self-avoiding walks, with emphasis on the critical
behaviour.  Following an introductory overview of the central
problems, an account is given of the
Hammersley--Welsh bound on the number of self-avoiding walks and
its consequences for the growth rates of bridges
and self-avoiding polygons.
A detailed proof that the connective constant on the hexagonal lattice
equals $\sqrt{2+\sqrt{2}}$ is then provided.
The lace expansion for self-avoiding walks is described,
and its use in understanding the critical behaviour in
dimensions $d>4$ is discussed.
Functional integral representations of the self-avoiding walk model are
discussed and developed, and their use in a renormalisation group analysis
in dimension $4$ is sketched.
Problems and solutions from tutorials are included.
\end{abstract}

\maketitle

\tableofcontents

\section*{Foreword}

These notes are based on a course on Self-Avoiding Walks given
in B\'uzios, Brazil, in August 2010, as part of the
Clay Mathematics Institute
Summer School and the XIV Brazilian Probability School.
The course consisted of
six lectures by Gordon Slade, a lecture
by Hugo Duminil-Copin based on recent joint work with Stanislav Smirnov
(see \refSect{cchex}),
and tutorials by Roland Bauerschmidt and Jesse Goodman.
The written version of Slade's lectures
was drafted by Bauerschmidt and Goodman,
and the written version of Duminil-Copin's lecture was drafted by himself.
The final manuscript was integrated and prepared jointly by the four authors.

\section{Introduction and overview of the critical behaviour}\lbsect{Definitions}

These lecture notes focus on a number of
rigorous results for self-avoiding walks
on the $d$-dimensional integer lattice ${\mathbb Z}^d$.
The model is defined by assigning equal probability to all
paths of length $n$ starting from
the origin and without self-intersections.
This family of probability
measures is not consistent as $n$ is varied, and thus
does not define a stochastic process; the model is combinatorial in
nature.
The natural questions about self-avoiding walks concern the asymptotic behaviour as the length
of the paths tends to infinity.
Despite its simple definition, the self-avoiding walk is
difficult to study in a mathematically rigorous manner.  Many of
the important problems remain unsolved, and the
basic problems encompass many of the
features and challenges of critical phenomena.
This section gives the basic definitions and an overview of the
critical behaviour.

\subsection{Simple random walks}\lbssect{SRW}

The basic reference model is
\emph{simple random walk} (SRW).
Let $\Omega \subset \Z^d$ be the set of possible steps. The primary examples considered in these lectures are
\begin{align}
\begin{split}
  \text{the nearest-neighbour model:}
  \qquad\quad
  \Omega &= \set{x\in\Z^d : \norm{x}_1 = 1},
   \\
  \text{the spread-out model:}
  \qquad\quad
  \Omega &= \set{x\in\Z^d : 0 < \norm{x}_\infty \leq L},
\end{split}
\lbeq{nnso}
\end{align}
where $L$ is a fixed integer, usually large.
An $n$-\emph{step walk} is a sequence $\omega=(\omega(0),\omega(1),\dotsc,\omega(n))$ with $\omega(j)-\omega(j-1)\in\Omega$ for $j=1,\dotsc,n$.
The $n$-\emph{step simple random walk} is the uniform measure on  $n$-step walks.
We define the sets
\begin{equation}
  \lbeq{WnxDefn}
  \Walks_n(0,x)=\{\omega : \omega\text{ is an $n$-step  walk with $\omega(0)=0$ and $\omega(n)=x$}\}
\end{equation}
and
\begin{equation}
  \lbeq{WnDefn}
  \Walks_n = \bigunion_{x\in\Z^d} \Walks_n(0,x).
\end{equation}

\subsection{Self-avoiding walks}\lbssect{SAW}

The \emph{weakly self-avoiding walk} and the \emph{strictly self-avoiding walk}
(the latter also called simply \emph{self-avoiding walk})
are the main subjects of these notes.  These are random paths
on $\Z^d$, defined as follows.
Given an $n$-step walk $\omega\in\Walks_n$, and integers $s,t$ with $0\leq s <t\leq n$, let
\begin{equation}
  \lbeq{UstDefn}
  U_{st} = U_{st}(\omega) = -\indicator{\omega(s)=\omega(t)}
  =
  \begin{cases}
    -1 & \text{if $\omega(s)=\omega(t)$},
    \\ \;\;\, 0 & \text{if $\omega(s) \neq \omega(t)$}.
  \end{cases}
\end{equation}
Fix $\lambda\in[0,1]$. We assign to each path $\omega\in\Walks_n$ the weighting factor
\begin{equation}
  \lbeq{WeightFactor}
  \prod_{0\leq s < t\leq n} (1+\lambda U_{st}(\omega)).
\end{equation}
The weights can also be expressed as Boltzmann weights:
\begin{equation}
  \lbeq{cnlambdaxExp}
  \prod_{0\leq s < t\leq n} (1+\lambda U_{st}(\omega)) =
  \exp\Bigl( -g \sum_{0\leq s < t\leq n} \indicator{\omega(s)=\omega(t)} \Bigr)
\end{equation}
with $g=-\log(1-\lambda) \in[0,\infty)$ for $\lambda\in[0,1)$.
Making the convention $\infty \cdot 0 = 0$, the case $\lambda=1$ corresponds to $g=\infty$.

The choice $\lambda=0$ assigns equal weight to all walks
in $\Walks_n$; this is the
case of the simple random walk.
For $\lambda\in(0,1)$, self-intersections are penalised but not forbidden,
and the model is called the \emph{weakly self-avoiding walk}.  The choice
$\lambda=1$ prevents any return to a previously visited site,
and defines the \emph{self-avoiding walk} (SAW).
More precisely, an $n$-step walk $\omega$ is a self-avoiding walk
if and only if the expression \refeq{WeightFactor} is
non-zero for $\lambda=1$, which happens if and only if $\omega$ visits
each site at most once, and for such walks the weight equals $1$.

These weights give rise to associated partition sums $c_n^{(\lambda)}(x)$ and $c_n^{(\lambda)}$ for walks in
$\Walks_n(0,x)$ and $\Walks_n$, respectively:
\begin{equation}
  \lbeq{cnlambdaxDefn}
  c_n^{(\lambda)}(x) = \sum_{\omega\in\Walks_n(0,x)} \prod_{0\leq s < t\leq n} (1+\lambda U_{st}(\omega)),
  \quad
  c_n^{(\lambda)} = \sum_{x\in\Z^d} c_n^{(\lambda)}(x).
\end{equation}
In the case $\lambda=1$, $c_n^{(1)}(x)$ counts the number of
self-avoiding walks of length $n$ ending at $x$, and $c_n^{(1)}$
counts all $n$-step self-avoiding walks. The case $\lambda=0$
reverts to simple random walk, for which $c_n^{(0)}=\abs{\Omega}^n$.
When $\lambda =1$ we will often drop the superscript $(1)$ and write
simple $c_n$ instead of $c_n^{(1)}$.

We also define
probability measures $\mathbb{Q}^{(\lambda)}_n$ on $\Walks_n$
with expectations $\E^{(\lambda)}_n$:
\begin{equation}
  \label{eq:cnlambdaProbability}
  \Q_n^{(\lambda)}(A)
  =
  \frac{1}{c_n^{(\lambda)}}\sum_{\omega\in A}
  \prod_{0\leq s<t\leq n}(1+\lambda U_{st}(\omega))
  \quad (A \subset \Walks_n),
\end{equation}
\begin{equation}
  \lbeq{cnlambdaExpectation}
  \E_n^{(\lambda)}(X) =
  \frac{1}{c_n^{(\lambda)}}
  \sum_{\omega\in\Walks_n} X(\omega)
  \prod_{0\leq s<t\leq n} (1+\lambda U_{st}(\omega))
  \quad
  (X : \Walks_n \to {\mathbb R}).
\end{equation}
The measures $\Q_n^{(\lambda)}$ define the weakly self-avoiding
walk when $\lambda \in (0,1)$ and the strictly self-avoiding walk
when $\lambda =1$.  Occasionally we will also
consider self-avoiding
walks that do not begin at the origin.

%
%

%
%

\subsection{Subadditivity and the connective constant}\lbsect{Subadd}

The sequence $c_n^{(\lambda)}$ has the following submultiplicativity property:
\begin{equation}
  \lbeq{cSubmult}
  c_{n+m}^{(\lambda)}
  \leq \sum_{\omega\in\Walks_{n+m}} \prod_{0\leq s<t\leq n} (1+\lambda U_{st}) \prod_{n\leq s'<t'\leq n+m} (1+\lambda U_{s't'})
  \leq c_n^{(\lambda)} c_m^{(\lambda)}.
\end{equation}
Therefore, $\log c_n^{(\lambda)}$ is a \emph{subadditive} sequence:
$\log c_{n+m}^{(\lambda)} \leq \log c_n^{(\lambda)} + \log c_m^{(\lambda)}$.

\begin{lemma}
  \lblemma{Subadd}
  If $a_1,a_2,\dotsc\in\R$ obey $a_{n+m}\leq a_n + a_m$ for every $n,m$, then
  \begin{equation}
    \lbeq{SubaddLim}
    \lim_{n\to\infty} \frac{a_n}{n} = \inf_{n\geq 1} \frac{a_n}{n} \in [-\infty, \infty).
  \end{equation}
\end{lemma}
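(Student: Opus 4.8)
The plan is to prove the standard Fekete subadditive lemma. Write $L = \inf_{n\geq 1} a_n/n \in [-\infty,\infty)$; the value $+\infty$ is excluded because $a_1/n \leq a_1 < \infty$ already bounds the infimum. It suffices to show $\limsup_{n\to\infty} a_n/n \leq L$, since $\liminf_{n\to\infty} a_n/n \geq L$ is immediate from the definition of infimum.

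To bound the $\limsup$, fix any $m\geq 1$ and write an arbitrary $n$ via division with remainder as $n = qm + r$ with $0\leq r < m$ and $q = \lfloor n/m\rfloor$. Iterating the subadditivity hypothesis gives $a_n = a_{qm+r} \leq q\,a_m + a_r$, where for $r=0$ we interpret $a_0$ as absent (or simply treat the cases $r=0$ and $1\leq r\leq m-1$ separately, using $a_n \leq q\,a_m$ in the former). Dividing by $n$,
\begin{equation}
  \frac{a_n}{n} \leq \frac{q\,a_m}{n} + \frac{a_r}{n} \leq \frac{a_m}{m} + \frac{\max_{0\leq r < m} \abs{a_r}}{n},
\end{equation}
using $qm \leq n$ so that $q/n \leq 1/m$ when $a_m \geq 0$, and $q/n \to 1/m$ in general (one handles the sign of $a_m$ by noting $qm/n \to 1$ as $n\to\infty$ with $m$ fixed). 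Letting $n\to\infty$ with $m$ held fixed, the error term vanishes since $\max_{0\leq r<m}\abs{a_r}$ is a finite constant depending only on $m$, yielding $\limsup_{n\to\infty} a_n/n \leq a_m/m$.

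Since $m\geq 1$ was arbitrary, we may take the infimum over $m$ on the right-hand side to conclude $\limsup_{n\to\infty} a_n/n \leq \inf_{m\geq 1} a_m/m = L$. Combined with $\liminf_{n\to\infty} a_n/n \geq L$, the limit exists and equals $L$, which proves \refeq{SubaddLim}.

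The only mildly delicate point is bookkeeping the remainder term and the ratio $qm/n$ when $a_m$ could be negative; the clean way is to first establish $\limsup_{n\to\infty} a_n/n \leq a_m/m$ for each fixed $m$ by writing $a_n/n \leq (a_m/m)(qm/n) + a_r/n$ and using $qm/n \to 1$, then take the infimum over $m$. No genuine obstacle arises beyond this routine care.
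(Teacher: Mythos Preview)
Your proof is correct and follows essentially the same approach as the paper's: write $n=qm+r$ by division with remainder, use subadditivity to get $a_n/n \leq (a_m/m)(qm/n) + a_r/n$, let $n\to\infty$ with $m$ fixed to obtain $\limsup a_n/n \leq a_m/m$, then take the infimum over $m$. Your explicit remark about handling the sign of $a_m$ via $qm/n\to 1$ is the right bookkeeping, and the paper's solution leaves this step implicit.
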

\begin{proof}
  See Problem~\ref{problem:subadditivity}.
  The value $-\infty$ is possible, e.g., for the sequence $a_n=-n^2$.
\end{proof}

Applying \reflemma{Subadd} to $c_n^{(\lambda)}$ gives the existence
of $\mu_\lambda$ such that
$\lim \frac{1}{n}\log c_n^{(\lambda)} \defeq \log\mu_\lambda \leq \frac{1}{n}\log c_n^{(\lambda)}$ for all $n$, i.e.,
\begin{equation}
  \lbeq{mulambdaDefn}
  \mu_\lambda \defeq \lim_{n\to\infty} ( c_n^{(\lambda)} )^{1/n}
  \text{ exists, and}
  \quad c_n^{(\lambda)} \geq \mu_\lambda^n \quad \text{for all $n$}.
\end{equation}
In the special case $\lambda=1$, we write simply $\mu=\mu_1$.
This $\mu$, which depends on $d$ (and also on $L$ for the spread-out
model), is called the
\emph{connective constant}.
For the nearest-neighbour model, by counting only walks that
move in positive coordinate directions, and by counting walks that
are restricted only to prevent immediate reversals of steps, we obtain
\begin{equation}
  \lbeq{cnEasyBounds}
  d^n \leq c_n \leq 2d(2d-1)^{n-1} \qquad \text{which implies} \qquad d\leq\mu\leq 2d-1.
\end{equation}
For $d=2$, the following rigorous bounds are known:
\begin{equation}
  \lbeq{muBounds}
  \mu\in[2.625\,622, 2.679\,193].
\end{equation}
The lower bound is due to Jensen \cite{Jens04b} via bridge enumeration
(bridges are defined in \refSect{HammWelsh} below),
and the upper bound is due to P\"onitz and Tittmann
\cite{PT00} by comparison with finite-memory walks.
The estimate
\begin{equation}
  \lbeq{muEstimate}
  \mu = 2.638\,158\,530\,31(3)
\end{equation}
is given in \cite{Jens03}; here the $3$ in parentheses represents
the subjective error in the last digit.
It has been observed that
$1/\mu$ is well approximated by the smallest positive root
of $581 x^4 + 7x^2 - 13 = 0$ \cite{CEG93,JG99}, though no derivation
or explanation of this quartic polynomial is known, and
later evidence has raised doubts about its validity
\cite{Jens03}.

Even though the definition of self-avoiding walks has been restricted
to the graph $\Z^d$ thus far,
it applies more generally.
In 1982, arguments based on  a Coulomb gas formalism led
Nienhuis \cite{Nien82} to predict that on the hexagonal lattice
the connective constant is equal to
$\sqrt{2+\sqrt{2}}$.  This was very recently proved by
Duminil-Copin and Smirnov \cite{D-CS10}, whose theorem is
the following.
\begin{theorem}
  \lbthm{muHexagonal}
  The connective constant for the hexagonal lattice is
  \begin{equation}
    \lbeq{muHexagonalValue}
    \mu = \textstyle{\sqrt{2+\sqrt{2}}}.
  \end{equation}
\end{theorem}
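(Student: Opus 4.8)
The plan is to follow the Duminil--Copin--Smirnov argument, whose key innovation is the introduction of a \emph{parafermionic observable} on the hexagonal lattice. First I would fix the hexagonal lattice $\mathbb{H}$ drawn so that edges point in one of three directions, and work in a discrete domain $\Omega$ whose boundary consists of mid-edges; for a mid-edge $z$ and a fixed mid-edge $a$ on the boundary, I would define
\begin{equation}
  F(z) = \sum_{\gamma : a \to z} x^{\abs{\gamma}} e^{-i\sigma W(\gamma)},
\end{equation}
where the sum is over self-avoiding walks in $\Omega$ from $a$ to $z$, $\abs{\gamma}$ is the number of edges, $W(\gamma)$ is the winding (total turning) of $\gamma$, $\sigma$ is a \emph{spin} parameter to be chosen, and $x>0$ is a weight per step. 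The crucial algebraic miracle, proved by a direct local computation, is that when $x = x_c := 1/\sqrt{2+\sqrt 2}$ and $\sigma = 5/8$, the observable satisfies a discrete Cauchy--Riemann-type relation: for every vertex $v$ of $\Omega$, summing $F$ over the three mid-edges $p,q,r$ adjacent to $v$ with appropriate cube-root-of-unity phases gives zero,
\begin{equation}
  (p - v) F(p) + (q - v) F(q) + (r - v) F(r) = 0.
\end{equation}
This identity comes from pairing up walks that differ only near $v$; the winding weights and the value of $x_c$ are exactly what make the three contributions cancel. This discrete holomorphicity is the heart of the matter, and establishing it correctly — getting the phases, the winding convention, and the choice $\sigma=5/8$ to conspire — is where the real work lies.

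Next I would turn the local relation into a global one by choosing a clever test domain, namely a trapezoid $S_{T,L}$ cut out of $\mathbb{H}$ with one left edge, symmetric top and bottom edges, and a right side of width $2L$ at height $T$. Summing the vertex identity over all vertices of $S_{T,L}$, the interior terms telescope and one is left with a \emph{contour-integral identity}: a weighted sum of $F$ over the boundary mid-edges vanishes. Decomposing the boundary into its left part (containing $a$), its top and bottom parts, and its right part, and using the symmetry of the domain together with the explicit winding of any walk that reaches a given side, this identity becomes a linear relation among three generating functions evaluated at $x_c$:
\begin{equation}
  1 = c_\alpha A_{T,L} + c_\beta B_{T,L} + c_\gamma E_{T,L},
\end{equation}
where $A_{T,L}$ sums $x_c^{\abs{\gamma}}$ over self-avoiding walks from $a$ ending on the left side, $B_{T,L}$ over walks ending on the top or bottom, $E_{T,L}$ over walks ending on the right side, and $c_\alpha,c_\beta,c_\gamma$ are explicit trigonometric constants arising from the three edge-directions (here $c_\beta = \cos(3\pi/8)$ and $c_\gamma = \cos(\pi/4)$, say).

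Finally I would extract the connective constant by a monotone-limit argument in $T$ and $L$. The sequences $A_{T,L}, B_{T,L}, E_{T,L}$ are increasing in $T$, so they have limits $A_L, B_L, E_L$ as $T\to\infty$, still satisfying the linear identity. Two cases arise. If $E_L \to 0$ as $L\to\infty$, then $A_L$ increases to a finite limit $A_\infty$, which forces $\sum_\gamma x_c^{\abs{\gamma}} < \infty$ over all bridges (walks confined to a half-strip and ending on its left side), and hence over all self-avoiding walks by a bridge-decomposition of the Hammersley--Welsh type; this gives $\mu \le 1/x_c = \sqrt{2+\sqrt 2}$. If instead $E_L$ does not go to zero, then a comparison between $E_{T,L}$ and $B_{T,L+1}$ (a walk ending on the right side of one trapezoid, continued one more column, ends on the top/bottom of the next) shows $\sum_L E_L = \infty$, which makes the bridge generating function at $x_c$ diverge and yields the reverse bound $\mu \ge 1/x_c$. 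Combining the two cases gives $\mu = \sqrt{2+\sqrt 2}$ exactly. The main obstacle, as noted, is the verification of the discrete holomorphicity relation with the precise spin $\sigma = 5/8$ and weight $x_c = 1/\sqrt{2+\sqrt 2}$; once that identity is in hand, the trapezoid contour computation and the final dichotomy are comparatively routine, if delicate.
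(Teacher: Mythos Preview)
Your setup—the parafermionic observable with $\sigma=5/8$, the local relation at $x_c=1/\sqrt{2+\sqrt2}$, the strip domain, and the resulting boundary identity—is correct and is exactly the Duminil--Copin--Smirnov argument presented in the paper. The gap is in how you extract $\mu$ from the identity: as written, your Case~1 yields only $\mu\le 1/x_c$ and your Case~2 yields only $\mu\ge 1/x_c$, so together you have established the tautology that one of these two inequalities holds. The correct logical structure is that the bound $\mu\le 1/x_c$ is proved \emph{unconditionally}, while the dichotomy is used only for $\mu\ge 1/x_c$, with \emph{both} branches producing divergence at $x_c$. In the paper's labelling (with $B_{T,L}$ counting walks that cross the width-$T$ strip, i.e.\ bridges of span $T$, and $E_{T,L}$ counting walks exiting through top or bottom at height $L$) the identity is $1=c_\alpha A_{T,L}+B_{T,L}+c_\epsilon E_{T,L}$; the bridge term carries coefficient $1$ because such walks have winding $0$. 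This gives $B_T(x_c)\le 1$ directly, and since every contributing walk has length at least $T$, $B_T(x)\le(x/x_c)^T$ for $x<x_c$, which is summable in $T$; hence $\mu_{\rm Bridge}\le 1/x_c$ with no case analysis.

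For $\mu\ge 1/x_c$: as $L\to\infty$, $A_{T,L}$ and $B_{T,L}$ increase while $E_{T,L}$ \emph{decreases} (your claim that all three increase is incompatible with the identity). If $E_T(x_c)>0$ for some $T$, then $\chi(x_c)\ge\sum_L E_{T,L}(x_c)\ge\sum_L E_T(x_c)=\infty$. If instead $E_T(x_c)=0$ for every $T$, the step you are missing is a geometric inequality: any walk contributing to $A_{T+1}$ but not to $A_T$ must reach the rightmost column of $S_{T+1}$, and cutting it at its first such visit produces two bridges of span $T+1$, giving
\[
A_{T+1}(x_c)-A_T(x_c)\le x_c\,B_{T+1}(x_c)^2.
\]
Combined with the now two-term identity $1=c_\alpha A_T+B_T$, this recursion forces $B_T(x_c)\ge c/T$ and hence $\sum_T B_T(x_c)=\infty$. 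Your proposed comparison (``a walk ending on the right side, continued one more column, ends on the top/bottom of the next'') does not supply this inequality and does not close the argument.
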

The proof of \refthm{muHexagonal} is presented
in \refSect{cchex} below.
Except for trivial cases, this is the only lattice for which
the connective constant is known explicitly.

Returning to ${\mathbb Z}^d$,
in 1963, Kesten  \cite{Kest63} proved that
  \begin{equation}
    \lbeq{cn2Ratio}
    \lim_{n\to\infty} \frac{c_{n+2}}{c_n}=\mu^2,
  \end{equation}
  but it remains an open problem (for $d=2,3,4$) to prove that
  \begin{equation}
    \lim\limits_{n\to\infty} \dfrac{c_{n+1}}{c_n}=\mu .
  \end{equation}
Even the proof of $c_{n+1}\geq c_n$ is a non-trivial result,
proved by O'Brien
\cite{OBri90}, though it is not hard to show that $c_{n+2}\geq c_n$.

\subsection{\texorpdfstring{$1/d$}{1/d} expansion}\lbssect{dexpansion}

It was proved by
Hara and Slade \cite{HS95} that the connective constant $\mu(d)$ for $\Z^d$
(with nearest-neighbour steps) has an asymptotic expansion in powers of $1/2d$ as $d\to\infty$:
There exist integers $a_i\in\Z$, $i=-1,0,1,\dotsc$ such that
\begin{equation}
\lbeq{1overdExpansion}
\mu(d) \sim \sum_{i=-1}^\infty \frac{a_i}{(2d)^i}
\end{equation}
in the sense that $\mu(d)=a_{-1}(2d) + a_0 +\dotsb+a_{M-1} (2d)^{-(M-1)}
+O(d^{-M})$, for each fixed $M$.
In Problem~\ref{problem:1/d} below, the first three terms
are computed.
The constant in the $O(d^{-M})$
term may depend on $M$. It is expected, though not proved,
that the asymptotic series in \refeq{1overdExpansion} has
radius of convergence $0$, so that the right-hand side of
\refeq{1overdExpansion} diverges for each fixed $d$.  The values
of $a_i$ are known for $i=-1,0,\dotsc,11$ and grow rapidly in
magnitude; see Clisby, Liang, and Slade
\cite{CLS07}.

Graham \cite{Grah10} has proved
Borel-type error bounds for the asymptotic expansion of
$z_c = z_c(d) \defeq \mu^{-1}$.  Namely,
writing the asymptotic expansion of $z_c$ as
$\sum_{i=1}^\infty \alpha_i(2d)^{-i}$, there is a constant $C$, independent
of $d$ and $M$, such that
for each $M$ and for all $d \ge 1$,
\begin{equation}
\lbeq{zcAsympBound}
\Big|z_c-\sum_{i=1}^{M-1}\frac{\alpha_i}{(2d)^i}\Big|
\leq \frac{C^M M!}{(2d)^M}.
\end{equation}
An extension of \refeq{zcAsympBound} to \emph{complex} values of the
dimension $d$ would be needed in order to
apply the method of Borel summation to recover the value of $z_c$, and
hence of $\mu(d)$, from the asymptotic series.

\subsection{Critical exponents}\lbsect{CritExp}

It is a characteristic feature of models of statistical mechanics at the critical point
that there exist \emph{critical exponents} which describe the asymptotic behaviour
on the large scale. It is a deep conjecture, not yet properly understood
mathematically, that these
critical exponents are \emph{universal}, meaning that they depend only on the
spatial dimension of the system, but not on  details such as the
specific lattice in $\R^d$.  For the case of the self-avoiding
walk, this conjecture of universality extends to lack of dependence on
the constant $\lambda$, as soon as $\lambda>0$.
We now introduce the critical exponents,
and in \refSSect{dimEffect} we will discuss what is known about them
in more detail.

\medskip
\subsubsection{Number of self-avoiding walks}\lbssect{cnCritExp}

It is predicted that
for each $d$ there is a constant $\gamma$ such that for all
$\lambda\in (0,1]$, and for both the nearest-neighbour and spread-out
models,
  \begin{equation}
    \lbeq{cnlambdaAsymp}
    c_n^{(\lambda)} \sim A_\lambda \mu_\lambda^n n^{\gamma-1}.
  \end{equation}
  Here $f(n) \sim g(n)$ means $\lim_{n\to\infty}f(n)/g(n)=1$.
  The predicted values of the critical exponent $\gamma$ are:
  \begin{equation}
    \lbeq{gammaPrediction}
    \gamma =
    \begin{cases}
      1 & d=1, \\
      \tfrac{43}{32} & d=2,  \\
      1.16\ldots & d=3, \\
      1 & d=4,  \\
      1 & d\geq 5.
    \end{cases}
  \end{equation}
In fact, for $d=4$, the prediction involves a logarithmic correction:
  \begin{equation}
    \lbeq{cnlambdaAsymp4}
    c_n^{(\lambda)} \sim A_\lambda \mu_\lambda^n (\log n)^{1/4}.
  \end{equation}
This situation
should be compared with simple random walk, for which $c_n^{(0)}=\abs{\Omega}^n$,
so that $\mu_0$ is equal to the degree $\abs{\Omega}$ of the lattice,
and $\gamma = 1$.

In the case of the self-avoiding walk (i.e., $\lambda=1$),
$\gamma$ has a probabilistic interpretation.
Sampling independently from two $n$-step self-avoiding walks uniformly,
\begin{equation}
\lbeq{P2SAWavoid}
\P(\omega_1\intersect \omega_2 = \set{0})
= \frac{c_{2n}}{c_n^2}
\sim {\rm const}\frac{1}{n^{\gamma-1}},
\end{equation}
so $\gamma$ is a measure of how likely it is for two self-avoiding walks
to avoid each other.
The analogous question for SRW is discussed in \cite{Lawl91}.


Despite the precision of the prediction
\refeq{cnlambdaAsymp},
the best rigorously known bounds in
dimension $d=2,3,4$ are
very far from tight and almost 50 years old.  In \cite{HW62}, Hammersley
and Welsh proved that, for all $d \ge 2$,
\begin{equation}
\lbeq{HW}
\mu^n \leq c_n \leq
\mu^n e^{\kappa \sqrt{n}}
\end{equation}
(the lower bound is just subadditivity, the upper bound is
nontrivial).  This was improved slightly
by Kesten \cite{Kest63}, who showed that for $d = 3,4,\ldots$,
\begin{equation}
\lbeq{cnBounds}
\mu^n \leq c_n \leq
\mu^n \exp\left(\kappa n^{2/(d+2)} \log n\right).
\end{equation}
The proof of the Hammersley--Welsh bound is the subject of \refSect{HammWelsh}.

\medskip
\subsubsection{Mean-square displacement}\lbssect{MeanSqDispCritExp}


Let $|x|$ denote the Euclidean norm of $x\in\R^d$.
It is predicted that for $\lambda\in(0,1]$, and for both the nearest-neighbour
and spread-out models,
  \begin{equation}
    \E_n^{(\lambda)} \abs{\omega(n)}^2 \sim D_\lambda n^{2\nu},
  \end{equation}
  with
  \begin{equation}
    \lbeq{nuPrediction}
    \nu =
    \begin{cases}
      1 & d=1, \\
      \tfrac{3}{4} & d=2, \\
      0.588\ldots & d=3, \\
      \tfrac{1}{2} & d=4, \\
      \tfrac{1}{2} & d\geq 5.
    \end{cases}
  \end{equation}
Again, a logarithmic correction is predicted for $d=4$:
\begin{equation}
\lbeq{nuPrediction4}
    \E_n^{(\lambda)} \abs{\omega(n)}^2 \sim D_\lambda n (\log n)^{1/4}.
\end{equation}
This should be compared with the SRW, for which
$\nu=\tfrac{1}{2}$ in all dimensions.

Almost nothing is known rigorously about $\nu$ in dimensions $2,3,4$.
It is an open problem to show that the mean-square
displacement grows at least as rapidly as simple random walk,
and grows more slowly than ballistically, i.e., it has not been proved that
\begin{equation}
  c n\leq \E_n^{(1)}\abs{\omega(n)}^2 \leq C n^{2-\epsilon},
\end{equation}
or even that the endpoint is typically as far away as the surface
of a ball of volume $n$, i.e., $c n^{2/d}\leq \E_n^{(1)}\abs{\omega(n)}^2$.
Madras (unpublished) has shown $\E_n^{(1)}\abs{\omega(n)}^2 \geq c n^{4/3d}$.

\medskip
\subsubsection{Two-point function and susceptibility}

The two-point function is defined by
\begin{equation}
  \lbeq{TwoPointDefn}
  G_z^{(\lambda)}(x) = \sum_{n=0}^\infty c_n^{(\lambda)}(x) z^n,
\end{equation}
and the susceptibility by
\begin{equation}
  \lbeq{SuscDefn}
  \chi^{(\lambda)}(z) = \sum_{x\in\Z^d} G_z^{(\lambda)}(x) = \sum_{n=0}^\infty c_n^{(\lambda)} z^n.
\end{equation}
Since $\chi^{(\lambda)}$ is a power series whose coefficients satisfy
\refeq{mulambdaDefn}, its radius of convergence $z_c^{(\lambda)}$
is given by $z_c^{(\lambda)}=\mu_\lambda^{-1}$.  The value $z_c^{(\lambda)}$
is referred to as the \emph{critical point}.
\begin{prop}
\lbprop{TwoPointSubcritDecay}
Fix $\lambda\in[0,1], z\in(0,z_c^{(\lambda)})$.  Then $G_z^{(\lambda)}(x)$
decays exponentially in $x$.
\end{prop}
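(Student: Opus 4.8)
The plan is to exploit submultiplicativity at the level of the two-point function in order to obtain an exponential rate that matches the known $n$-th root behaviour, and then convert this into genuine exponential decay in $x$. First I would record the analogue of \refeq{cSubmult} for endpoint-constrained walks: by splitting a walk from $0$ to $x+y$ into its first $n$ steps (ending at some intermediate point) and last $m$ steps, and throwing away interaction terms between the two pieces (each factor $1+\lambda U_{st}\le 1$), one gets
\begin{equation}
c_{n+m}^{(\lambda)}(x+y) \leq \sum_{u\in\Z^d} c_n^{(\lambda)}(u)\, c_m^{(\lambda)}(x+y-u),
\end{equation}
and in particular, choosing the intermediate point to be $x$ itself,
\begin{equation}
c_{n+m}^{(\lambda)}(x+y) \leq c_n^{(\lambda)}(x)\, c_m^{(\lambda)}(y).
\end{equation}
Summing against $z^{n+m}$ shows that $x\mapsto G_z^{(\lambda)}(x)$ is submultiplicative: $G_z^{(\lambda)}(x+y)\le G_z^{(\lambda)}(x)\,G_z^{(\lambda)}(y)$, and along any coordinate axis this gives a well-defined decay rate. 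The key quantitative input I would then need is that for $z<z_c^{(\lambda)}$ this rate is strictly positive, i.e. $G_z^{(\lambda)}(x)\to 0$ as $\abs{x}\to\infty$ in at least one direction.

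For that, note $G_z^{(\lambda)}(x)\le \chi^{(\lambda)}(z)<\infty$ for $z<z_c^{(\lambda)}$, since the radius of convergence of $\chi^{(\lambda)}$ is exactly $z_c^{(\lambda)}=\mu_\lambda^{-1}$. Finiteness of $\chi^{(\lambda)}(z)=\sum_x G_z^{(\lambda)}(x)$ with all terms nonnegative forces $G_z^{(\lambda)}(x)\to 0$ as $\abs{x}\to\infty$; in particular there is some $x_0$ on a coordinate axis, say $x_0=N e_1$ with $N$ large, for which $\rho:=G_z^{(\lambda)}(N e_1)<1$. Iterating the submultiplicative bound along that axis gives $G_z^{(\lambda)}(kN e_1)\le \rho^{\,k}$, which is the desired exponential decay in that direction.

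To upgrade this to exponential decay in all directions, I would use the lattice symmetries together with one more application of submultiplicativity. A point $x$ with $\abs{x}_\infty=M$ can be reached by concatenating at most $d$ axis-aligned segments each of length $\le M$, and for integer multiples of $N$ the decay rate along each axis is the same by symmetry; writing a general $x$ as such a concatenation (absorbing the bounded ``remainder'' segments of length $<N$ into a constant, using that $G_z^{(\lambda)}(y)$ is bounded uniformly by $\chi^{(\lambda)}(z)$) yields $G_z^{(\lambda)}(x)\le C\,\rho^{\,c\abs{x}}$ for constants $C,c>0$ depending only on $z,\lambda,d,N$. This gives the claim. The main obstacle is the step that produces strict positivity of the decay rate: one must be sure that $G_z^{(\lambda)}(x)$ is not merely summable but actually drops below $1$ somewhere along an axis, and that this can be done on the \emph{same} axis in both the $+e_1$ and $-e_1$ directions — here reflection symmetry of $\Z^d$ and of the step set $\Omega$ is what saves us, and it is worth remarking that for $z=z_c^{(\lambda)}$ the argument breaks down precisely because $\chi^{(\lambda)}(z_c^{(\lambda)})$ need not be finite, consistent with the absence of a mass gap at criticality.
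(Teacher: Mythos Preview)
Your argument contains a genuine error at the step where you pass from the convolution inequality
\[
c_{n+m}^{(\lambda)}(x+y) \leq \sum_{u\in\Z^d} c_n^{(\lambda)}(u)\, c_m^{(\lambda)}(x+y-u)
\]
to the pointwise bound $c_{n+m}^{(\lambda)}(x+y) \leq c_n^{(\lambda)}(x)\, c_m^{(\lambda)}(y)$ by ``choosing the intermediate point to be $x$.'' The sum over $u$ gives an \emph{upper} bound; restricting to a single term $u=x$ gives a \emph{lower} bound on that sum, not an upper bound on $c_{n+m}^{(\lambda)}(x+y)$. The walk is not obliged to pass through $x$ at time $n$. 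A concrete counterexample: take $d=1$, $\lambda=0$, $n=m=1$, $x=y=0$. Then $c_2^{(0)}(0)=2$ while $c_1^{(0)}(0)=0$, so $2\leq 0\cdot 0$ fails. In $d=2$ with $x=e_1$, $y=e_2$, $n=m=1$, one gets $c_2^{(0)}(e_1+e_2)=2>1=c_1^{(0)}(e_1)c_1^{(0)}(e_2)$, and at the generating-function level $G_z^{(0)}(e_1+e_2)=2z^2+O(z^4)>z^2+O(z^4)=G_z^{(0)}(e_1)G_z^{(0)}(e_2)$ for small $z$. So the submultiplicativity $G_z^{(\lambda)}(x+y)\leq G_z^{(\lambda)}(x)G_z^{(\lambda)}(y)$ on which the rest of your proof rests is simply false, and the whole chain (axis decay, then concatenation of axis segments) collapses.

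The paper's proof avoids this entirely and is much shorter: a walk from $0$ to $x$ needs at least $\norm{x}_1$ steps (in the nearest-neighbour case), so $G_z^{(\lambda)}(x)\leq\sum_{n\geq\norm{x}_1}c_n^{(\lambda)}z^n$. Since $z<z_c^{(\lambda)}=\mu_\lambda^{-1}$ and $(c_n^{(\lambda)})^{1/n}\to\mu_\lambda$, one has $c_n^{(\lambda)}\leq K(\mu_\lambda+\epsilon)^n$ with $z(\mu_\lambda+\epsilon)<1$, and summing the geometric tail yields $G_z^{(\lambda)}(x)\leq K'(z(\mu_\lambda+\epsilon))^{\norm{x}_1}$. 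No spatial submultiplicativity is needed at all; the exponential decay in $x$ is inherited directly from the exponential decay in $n$ of $c_n^{(\lambda)}z^n$.
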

\begin{proof}
For simplicity, we consider only the nearest-neighbour model, and
we omit $\lambda$ from the notation.  Since $c_n(x)=0$ if $n<\norm{x}_1$,
\begin{equation}
G_z(x)=\sum_{n=\norm{x}_1}^\infty c_n(x) z^n \leq \sum_{n=\norm{x}_1}^\infty c_n z^n.
\end{equation}
Fix $z<z_c=1/\mu$ and choose $\epsilon>0$ such that
$z(\mu+\epsilon) <1$.  Since $c_n^{1/n}\to\mu$,
there exists $K=K(\epsilon)$ such that $c_n\leq K(\mu+\epsilon)^n$ for all $n$.
Hence
\begin{equation}
    G_z(x)\leq K\sum_{n=\norm{x}_1}^\infty (z(\mu+\epsilon))^n
    \leq K' (z(\mu+\epsilon))^{\norm{x}_1},
\end{equation}
as claimed.
\end{proof}

We restrict temporarily to $\lambda = 1$.
Much is known about $G_z(x)$ for $z<z_c$:
there is a norm $\abs{\,\cdot\,}_z$ on $\R^d$, satisfying
$\|u\|_\infty \le \abs{u}_z \le \|u\|_1$ for all $u \in \R^d$,
such that $m(z)\defeq\lim\limits_{\abs{x}_z\to\infty} -\frac{\log G_z(x)}{\abs{x}_z}$ exists and is finite.
The \emph{correlation length} is defined by $\xi(z)=1/m(z)$, and hence approximately
\begin{equation}
  G_z(x) \approx e^{-\abs{x}_z/\xi(z)}.
\end{equation}
Indeed, more precise asymptotics (Ornstein--Zernike decay) are known
\cite{CC86b,MS93,CIV04}:
\begin{equation}
  G_z(x) \sim \frac{c}{|x|_z^{(d-1)/2}} e^{-|x|_z/\xi(z)}
  \quad \text{as $x \to \infty$},
\end{equation}
and the arguments leading to this also prove that
\begin{equation}
\lbeq{xidiverges}
    \lim\limits_{z\increasesto z_c} \xi(z)=\infty.
\end{equation}

As a refinement of \refeq{xidiverges}, it is predicted that
as $z\increasesto z_c$,
  \begin{align}
    \xi(z)
    &\sim
    \mathrm{const}\left(1-\frac{z}{z_c}\right)^{-\nu},
  \end{align}
and that, in addition,  as $\abs{x}\to\infty$ (for $d\geq 2$),
  \begin{align}
    G_{z_c}(x)
    &\sim
    \frac{\mathrm{const}}{\abs{x}^{d-2+\eta}}.
  \end{align}
The exponents $\gamma$, $\eta$ and $\nu$ are predicted to be
related to each other via \emph{Fisher's relation}
(see, e.g., \cite{MS93}):
  \begin{equation}
    \lbeq{FishersRel}
    \gamma = (2-\eta)\nu.
  \end{equation}

There is typically
a correspondence between the asymptotic growth of the coefficients in a
generating function and the behaviour of the
generating function near its dominant singularity.  For our purpose we note that, under suitable hypotheses,
\begin{equation}
a_n \sim \frac{n^{\gamma-1}}{R^n}\text{ as $n\to\infty$}
\qquad \overset{\approx}{\Longleftrightarrow} \qquad
 \sum_n a_n z^n \sim \frac{C}{(1-z/R)^\gamma} \text{ as $z\increasesto R$}.
\end{equation}
The easier $\implies$ direction is known as an Abelian theorem,
and the more delicate
$\Longleftarrow$ direction is known as a Tauberian theorem
\cite{Hard49}.
With this in mind, our earlier prediction for $c_n^{(\lambda)}$
for $\lambda\in (0,1]$
corresponds to:
  \begin{equation}
    \lbeq{SuscAsymp}
    \chi^{(\lambda)}(z) \sim \frac{\mathrm{const}_\lambda}{(1-z/z_c)^\gamma}
  \end{equation}
  as $z\increasesto z_c$, with an additional factor
  $\abs{\log(1-z/z_c)}^{1/4}$ on the right-hand side when $d=4$.

\subsection{Effect of the dimension}\lbssect{dimEffect}

Universality asserts that self-avoiding walks on different
lattices in a fixed dimension $d$
should behave in the same way, independently of the fine details of
how the model is defined.
However,
the behaviour does depend very strongly on the dimension.

\medskip
\subsubsection{$d=1$}

For the nearest-neighbour model with $\lambda=1$ it is a triviality
that $c_n^{(1)}=2$ for all $n\geq 1$ and $\abs{\omega(n)}=n$ for
all $\omega$, since a self-avoiding walk must continue either in the
negative or in the positive direction.
Any configuration $\omega\in \Walks_n$ is possible when $\lambda\in (0,1)$,
however, and it is by no means trivial to prove
that the critical behaviour when $\lambda\in (0,1)$ is
similar to the case of $\lambda =1$.
The following theorem of K\"{o}nig \cite{Koni96}
(extending a result of Greven and den Hollander \cite{GH93})
proves that the weakly self-avoiding
walk measure \eqref{eq:cnlambdaProbability} does have ballistic
behaviour for all $\lambda \in (0,1)$.
%
\begin{theorem}
  \lbthm{WSAW1d}
  Let $d=1$.
  For each $\lambda\in(0,1)$, there exist $\theta(\lambda)\in(0,1)$
  and $\sigma(\lambda)\in(0,\infty)$ such that for all $u \in \R$,
  \begin{equation}
    \lbeq{WSAW1dCLT}
    \lim_{n\to\infty} \Q_n^{(\lambda)} \!
    \left(\frac{\abs{\omega(n)}-n\theta}{\sigma\sqrt{n}} \leq u \right)
    = \int_{-\infty}^u \frac{e^{-t^2/2}}{\sqrt{2\pi}} \,dt.
  \end{equation}
\end{theorem}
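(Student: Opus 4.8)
The plan is to exploit the fact that in one dimension the self-interaction depends only on the local times. Writing $\ell_n(x)=\#\{0\le j\le n:\omega(j)=x\}$, the weight \refeq{cnlambdaxExp} becomes $\exp(-gH_n(\omega))$ with $H_n(\omega)=\sum_{x\in\Z}\binom{\ell_n(x)}{2}$, so the Boltzmann factor is a \emph{product of single-site factors} $\prod_{x}\psi(\ell_n(x))$, $\psi(k)=e^{-g\binom{k}{2}}$. Since $c_n^{(\lambda)}(x)=c_n^{(\lambda)}(-x)$, it suffices to prove that $\Q_n^{(\lambda)}(\omega(n)>0)\to\tfrac12$ and that, conditionally on $\{\omega(n)>0\}$, the rescaled quantity $(\abs{\omega(n)}-n\theta)/(\sigma\sqrt n)$ converges to a standard normal; the statement for $\abs{\omega(n)}$ then follows by symmetry.

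Next, I would set up a renewal decomposition. Call $j$ a \emph{crossing point} of $\omega$ if $\omega(i)<\omega(j)$ for $i<j$ and $\omega(i)>\omega(j)$ for $i>j$; such a point is visited exactly once and contributes nothing to $H_n$. Cutting a walk with $\omega(n)>0$ at its successive crossing points (after splitting off a leftmost and a rightmost excursion) expresses it as a concatenation of \emph{irreducible bridges}, i.e.\ paths from $0$ to some $m\ge1$ whose interior stays strictly inside $(0,m)$; since consecutive bridges share only a site with $\ell=1$ and non-consecutive ones have disjoint ranges, $H_n$ is \emph{exactly additive} over the pieces. With $B_z(m)=\sum_{\text{bridges }0\to m}z^{\abs{\omega}}e^{-gH(\omega)}$ and $\Psi(z,\zeta)=\sum_{m\ge1}B_z(m)\zeta^m$, the bivariate generating function $\sum_{n,\,x>0}c_n^{(\lambda)}(x)z^n\zeta^x$ factors, up to analytic boundary factors, through $(1-\Psi(z,\zeta))^{-1}$; equivalently this is a transfer-operator computation for the left-to-right local-time profile, which is a Markov chain by a Ray--Knight-type identity and is reweighted only by the local factors $\psi$. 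Assuming (see below) that $\Psi$ is analytic near $(z_c^{(\lambda)},1)$ with $\Psi(z_c^{(\lambda)},1)=1$ and $\partial_z\Psi>0$ there, the equation $\Psi(\rho(\zeta),\zeta)=1$ defines an analytic $\rho$ with $\rho(1)=z_c^{(\lambda)}$, the dominant singularity in $z$ is the \emph{simple pole} at $z=\rho(\zeta)$, and hence $\sum_{x>0}c_n^{(\lambda)}(x)\zeta^x\sim C(\zeta)\,\rho(\zeta)^{-n}$ uniformly for $\zeta$ in a real neighbourhood of $1$. Dividing by $c_n^{(\lambda)}\sim 2C(1)(z_c^{(\lambda)})^{-n}$, putting $\zeta=e^{t/\sqrt n}$, and Taylor-expanding $\Lambda(s):=-\log\rho(e^s)$ to second order yields $\E_n^{(\lambda)}[e^{t(\omega(n)-n\theta)/\sqrt n}\,;\,\omega(n)>0]\to\tfrac12 e^{\sigma^2 t^2/2}$ with $\theta=\Lambda'(0)$ and $\sigma^2=\Lambda''(0)$; since convergence of conditional moment generating functions near $0$ implies convergence in distribution, this gives the conditional CLT. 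Here $\theta=\E[m]/\E[\text{length}]\in(0,1)$ because every bridge $0\to m$ has at least $m$ steps, strictly more for non-straight bridges (which have positive weight when $\lambda<1$), and $\sigma^2>0$ because the spatial extent of a bridge is not deterministic.

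The main obstacle is the analytic input assumed above: one must show that the bridge weights $B_z(m)$ decay \emph{exponentially} in $m$, uniformly for $z$ in a (complex) neighbourhood of $z_c^{(\lambda)}$ --- equivalently, that the transfer operator on local-time profiles has a spectral gap and a simple, analytic leading eigenvalue. This is exactly where the repulsion $\lambda>0$ is essential: without the factor $e^{-gH}$, a bridge confined to a strip of width $m$ already has growth rate $2-O(m^{-2})$, which exceeds $\mu_\lambda$ for large $m$, so $B_{z_c}(m)$ would diverge; the penalty suppresses the long, heavily self-intersecting bridges enough to restore summability. I would derive this from submultiplicativity-type estimates on weighted bridge counts in a strip together with a lower bound on $H$ for strip-confined paths, and then obtain $\Psi(z_c^{(\lambda)},1)=1$ from a standard renewal argument using $\chi^{(\lambda)}(z)\to\infty$ as $z\uparrow z_c^{(\lambda)}$. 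The remaining points --- analyticity of the boundary (leftmost/rightmost excursion) factors past $\rho(\zeta)$, uniqueness of the singularity on its circle of convergence (aperiodicity, clear since bridges of every length $\ge m$ contribute to $B_z(m)$), finiteness of $\E[\text{length}^2]$ and $\E[m^2]$ so that $\sigma<\infty$, and non-degeneracy --- are routine once the spectral-gap estimate is in hand.
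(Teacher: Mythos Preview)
The paper does not contain a proof of this theorem: it is stated as a result of K\"onig \cite{Koni96} (extending Greven and den Hollander \cite{GH93}) and is quoted without proof, as background for the discussion of the one-dimensional case. So there is no ``paper's own proof'' to compare your proposal against.

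As for the approach itself, your renewal/transfer-operator strategy is a reasonable route and is close in spirit to what is done in the literature, though the cited references proceed somewhat differently: Greven and den Hollander obtain the speed $\theta$ via a variational formula coming from large-deviation theory for the empirical local-time profile of simple random walk, and K\"onig refines this analysis to obtain the Gaussian fluctuations. Your bridge decomposition exploits the same key structural fact---that in $d=1$ the interaction is a local functional of the local times and is exactly additive across one-point cut sets---but packages it as a renewal equation and a simple-pole analysis of a bivariate generating function rather than as a variational problem.

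You have correctly located the crux: the exponential decay of the irreducible-bridge weights $B_{z_c}(m)$ in $m$ (equivalently, a spectral gap for the transfer operator). Your heuristic that the penalty $e^{-gH}$ beats the entropy of strip-confined walks is the right intuition, but turning ``submultiplicativity-type estimates together with a lower bound on $H$'' into an honest proof is exactly the substantial work that the references carry out by other means; as written, this step is an outline rather than an argument. The remaining points you list (aperiodicity, analyticity of boundary factors, finiteness of second moments, $\theta\in(0,1)$, $\sigma^2>0$) are, as you say, routine once that estimate is in hand.
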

A similar result is proved in \cite{Koni96} for the 1-dimensional
spread-out strictly self-avoiding walk.
The result of
\refthm{WSAW1d} should be contrasted to the case $\lambda=0$,
which has diffusive rather than ballistic behaviour.
It remains an open problem to prove the intuitively appealing
statement that $\theta$ should be an increasing function of $\lambda$.
A review of results for $d=1$ is given in \cite{HK01}.

\medskip
\subsubsection{$d=2$}

Based on non-rigorous Coulomb gas methods,
Nienhuis \cite{Nien82} predicted that
$\gamma=\tfrac{43}{32}$, $\nu=\tfrac{3}{4}$.
These predicted values have been confirmed numerically by Monte Carlo
simulation, e.g., \cite{LMS95},
and exact enumeration of self-avoiding walks up to
length $n= 71$ \cite{Jens04}.

Lawler, Schramm, and Werner \cite{LSW04}
have given major mathematical support to these predictions.
Roughly speaking, they show that if self-avoiding walk has a scaling
limit, and if this scaling limit has a certain
conformal invariance property, then the scaling limit
must be $\SLE_{8/3}$ (the Schramm--Loewner evolution with parameter
$\kappa = \frac 83$).
The values of $\gamma$ and $\nu$ are then recovered from an
$\SLE_{8/3}$ computation.
Numerical evidence supporting the statement that the scaling limit
is $\SLE_{8/3}$ is given in \cite{Kenn04}.
However,
until now, it remains an open problem to prove the required existence
and conformal invariance of the scaling limit.

The result of \cite{LSW04} is discussed in greater detail in the course of
Vincent Beffara  \cite{Beff11}.
Here, we describe it
only briefly, as follows.
Consider a simply connected domain $\Omega$ in the complex plane $\C$
with two points $a$ and $b$ on the boundary. Fix $\delta>0$, and
let $(\Omega_{\delta},a_{\delta},b_{\delta})$ be a discrete approximation
of $(\Omega,a,b)$ in the following sense: $\Omega_{\delta}$ is the largest
finite domain of $\delta \mathbb{Z}^2$ included in $\Omega$, $a_\delta$ and
$b_\delta$ are the closest vertices of $\delta \mathbb{Z}^2$ to $a$ and $b$ respectively.
When $\delta$ goes to 0, this provides an approximation of the domain.

For fixed $z,\delta>0$,
there is a probability measure on the set of self-avoiding walks
$\omega$ between $a_\delta$ and $b_\delta$ that remain in
$\Omega_\delta$ by assigning to $\omega$ a \emph{Boltzmann weight}
proportional to $z^{\ell(\omega)}$, where $\ell(\omega)$ denotes the length of $\omega$. We obtain a random piecewise
linear curve, denoted by $\omega_\delta$.

It is possible to prove that when $z<z_c=1/\mu$, walks are penalised
so much with respect to their length that $\omega_\delta$
becomes straight when $\delta$ goes to 0; this is
closely related to the Ornstein--Zernike decay results.
 On the other hand, it is expected that,
 when $z>z_c$, the entropy wins against the penalisation and
 $\omega_\delta$ becomes space filling when $\delta$ tends to 0.
 Finally, when $z=z_c$, the sequence of measures conjecturally
 converges to a random continuous curve.  It is for this case that
 we have the following conjecture of
 Lawler, Schramm and Werner \cite{LSW04}.

\begin{conjecture}
\label{conjecture Lawler Schramm Werner}
For $z=z_c$, the random curve $\omega_\delta$ converges to
$\SLE_{8/3}$ from $a$ and $b$ in the domain $\Omega$.
\end{conjecture}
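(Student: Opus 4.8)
The plan is to follow the conditional program of Lawler, Schramm and Werner \cite{LSW04}, supplying the two analytic inputs they left open: existence of the scaling limit of $\omega_\delta$, and its conformal invariance. Granting these, the identification of the limit with $\SLE_{8/3}$ is forced by the \emph{restriction property}. At $z=z_c$ the self-avoiding walk measure has an exact combinatorial restriction property: a walk confined to a subdomain $\Omega'\subset\Omega$ with the same endpoints is simply a self-avoiding walk in $\Omega$ that happens to avoid $\Omega\setminus\Omega'$, carrying the identical Boltzmann weight $z_c^{\ell(\omega)}$. Lawler, Schramm and Werner proved that the only conformally invariant family of random simple curves with this property is the restriction measure of exponent $\tfrac{5}{8}$, namely $\SLE_{8/3}$. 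Hence it suffices to: (i) prove precompactness of the laws of $\omega_\delta$; (ii) prove that every subsequential limit is conformally invariant; and (iii) check that the restriction exponent selected in the limit is $\tfrac{5}{8}$.

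For (i) and (ii) the natural tool is a discrete holomorphic (parafermionic) observable. On the hexagonal lattice the observable of Duminil-Copin and Smirnov used in \refSect{cchex} to prove \refthm{muHexagonal} is \emph{exactly} discretely holomorphic, of spin $\tfrac{5}{8}$; on $\delta\Z^2$ one expects an analogous observable that satisfies the discrete Cauchy--Riemann equations only approximately. First I would establish, for $(\Omega_\delta,a_\delta,b_\delta)$, uniform a priori bounds on this observable together with uniform estimates on crossing probabilities, and invoke an Aizenman--Burchard-type criterion to deduce tightness of $\{\omega_\delta\}$ in the space of curves modulo reparametrisation. Next I would show that the rescaled observable converges as $\delta\to0$ to a conformally covariant limit: a holomorphic function on $\Omega$ with a prescribed singularity at $a$, solving a Riemann--Hilbert boundary problem whose solution transforms under a conformal map $\phi:\Omega\to\Omega'$ by the $\tfrac{5}{8}$-power of $|\phi'|$. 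This convergence is precisely the ``conformal invariance of the scaling limit'' alluded to in the text.

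With the observable in hand, I would complete the identification by running the exploration process. Parametrise a subsequential limiting curve by half-plane capacity; by its defining boundary value problem, the limiting observable evaluated at a fixed interior point is a local martingale for the filtration generated by the curve. Computing the It\^o differential of this martingale via Loewner's equation forces the driving function to be a Brownian motion of speed $\kappa=\tfrac83$, so the limit is $\SLE_{8/3}$; equivalently, one passes the discrete restriction identity to the limit and applies the Lawler--Schramm--Werner uniqueness theorem with exponent $\tfrac{5}{8}$. Since $\SLE_{8/3}$ is reversible, this also shows the limit does not depend on whether one explores from $a$ or from $b$.

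The hard part is step (ii). On $\delta\Z^2$ no exactly discretely holomorphic observable is known, so controlling the error in the discrete Cauchy--Riemann equations sharply enough to extract a conformally covariant scaling limit is exactly where the problem is open; and even on the hexagonal lattice, where the observable is exact, upgrading ``the observable converges'' to ``the curve $\omega_\delta$ converges'' requires a priori regularity estimates for self-avoiding walks — of the strength that would also yield $c_n\sim A\mu^n n^{\gamma-1}$, or at the very least $c_{n+1}/c_n\to\mu$ — which are not currently available. This is why the statement remains a conjecture.
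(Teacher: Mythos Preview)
This statement is a \emph{conjecture}, not a theorem: the paper explicitly labels it as such and states immediately afterward that ``it remains a major open problem in 2-dimensional statistical mechanics to prove the conjecture.'' There is therefore no proof in the paper to compare your proposal against. You correctly recognise this yourself in your final paragraph.

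That said, your outline of the conditional program---precompactness of the curves, convergence of a spin-$\tfrac{5}{8}$ parafermionic observable to a conformally covariant limit, and identification of the limit either via the martingale technique or via the restriction property---is exactly the approach the paper sketches in \refSSect{cchex} (the subsection ``Conjecture~\ref{conjecture Lawler Schramm Werner} and the holomorphic observable''). The paper's Conjecture~3.7 on the scaling limit of $F_\delta$ is precisely the observable convergence you invoke, and the paper, like you, identifies the missing a priori regularity estimates as the obstruction. So your write-up is an accurate and slightly more detailed rendition of the paper's own heuristic discussion, not a proof; and since no proof exists, that is the appropriate response.
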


It remains a major open problem in 2-dimensional statistical
mechanics to prove the conjecture.




\medskip
\subsubsection{$d=3$}

For $d=3$, there are no rigorous results for critical exponents,
and no mathematically well-defined candidate has been proposed for
the scaling limit.
An early prediction for the values of $\nu$, referred to as the
Flory values \cite{Flor49}, was
$\nu = \frac{3}{d+2}$ for $1\le d \le 4$.
This does give the correct answer for $d=1,2,4$, but it is not
quite accurate for $d=3$---the Flory argument is very remote from
a rigorous mathematical proof.  Flory's interest in the problem was
motivated by the use of SAWs to model polymer molecules; this application
is discussed in detail in the course of Frank den Hollander \cite{Holl11}
(see also \cite{Holl09}).

For $d=3$, there are three methods to compute the exponents approximately.
In one method, non-rigorous
field theory computations in theoretical physics  \cite{GZ98} combine
the $n \to 0$ limit
for the $O(n)$ model with an expansion in $\epsilon = 4-d$ about
dimension $d=4$, with $\epsilon = 1$.
Secondly, Monte Carlo studies have been carried out with walks of length
33,000,000 \cite{Clis10}, using the pivot algorithm \cite{MS88,Jans09}.
Finally, exact enumeration plus series analysis has been used;
currently the most extensive enumerations in dimensions
$d \ge 3$ use the lace expansion \cite{CLS07}, and for $d=3$ walks
have been enumerated to length $n=30$.
The exact enumeration estimates for $d=3$ are
$\mu = 4.684043(12)$,
$\gamma = 1.1568(8)$,
$\nu = 0.5876(5)$ \cite{CLS07}.
Monte Carlo estimates are consistent with these values:
$\gamma = 1.1575(6)$ \cite{CCP98} and
$\nu= 0.587597(7)$ \cite{Clis10}.

\medskip
\subsubsection{$d=4$}
\label{sec:d4}
Four dimensions is the \emph{upper critical dimension}
for the self-avoiding walk.
This term encapsulates the notion that for $d>4$ self-avoiding walk has
the same critical behaviour as simple random walk, while for $d<4$ it does not.
The dimension $4$ can be guessed by considering the fractal properties
of the simple random walk:
for $d\geq 2$, the path of a simple random walk is two-dimensional.
If $d>4$, two independent two-dimensional objects should generically
not intersect,
so that the effect of self-interaction between the past and the
future of a simple random walk should be negligible.
In $d=4$, the expected number of intersections between two
independent random walks tends to infinity,
but only logarithmically in the length.  Such considerations
are related to the logarithmic corrections that appear in
\refeq{cnlambdaAsymp4} and \refeq{nuPrediction4}.

The existence of logarithmic corrections to scaling has been proved
for models of weakly self-avoiding walk on a 4-dimensional \emph{hierarchical}
lattice, using rigorous renormalisation group methods
\cite{BEI92,BI03c,BI03d,HO10}.  The hierarchical lattice is a simplification
of the hypercubic lattice $\Z^4$ which is particularly amenable to the
renormalisation group approach.  Recently there has been progress in the
application of renormalisation group methods to a continuous-time
weakly self-avoiding walk model on $\Z^4$ itself, and in particular it has been
proved in this context that the critical two-point function has $|x|^{-2}$
decay \cite{BS11},
which is a statement that the critical exponent $\eta$ is equal to $0$.
This is the topic of \refSect{ctwsaw} below.

\medskip
\subsubsection{$d\geq 5$}


Using the lace expansion, it has been proved that for the nearest-neighbour
model in dimensions $d \ge 5$ the critical exponents exist and take their
so-called \emph{mean field} values $\gamma =1$, $\nu = \frac 12$ \cite{HS92b,HS92a}
and $\eta = 0$ \cite{Hara08}, and that the scaling limit is Brownian motion
\cite{HS92a}.
The lace expansion for self-avoiding walks is discussed in \refSect{LaceExp},
and its application to prove simple random walk behaviour in dimensions
$d \ge 5$ is discussed in \refSect{ConvLace}.

\subsection{Tutorial}
\lbsect{Tut1}

\begin{problem} \label{problem:subadditivity}
  Let $(a_n)$ be a real-valued sequence that is subadditive, that is, $a_{n+m} \leq a_n + a_m$ holds for all $n,m$.
  Prove that $\lim_{n\to\infty} n^{-1} a_n$ exists in $[-\infty, \infty)$ and equals $\inf_{n} n^{-1} a_n$.
\end{problem}

\begin{problem} \label{problem:connectivec}
  Prove that the connective constant $\mu$ for the nearest-neighbour model
  on the square lattice $\Z^2$ obeys the strict inequalities $2<\mu <3$.
\end{problem}

\begin{problem} \label{problem:consistent}
  A family of probability measures $(\P_n)$ on $\Walks_n$ is called consistent if
  $\P_n(\omega) = \sum_{\rho > \omega} \P_m(\rho)$ for all $m > n$ and for
  all $\omega \in \Walks_n$, where the sum is over all $\rho
   \in \Walks_m$ whose first $n$ steps agree with $\omega$.
  Show that $\Q_n^{(1)}$, the uniform measure on SAWs, does not provide a consistent family.
\end{problem}

\begin{problem} \label{problem:2point-1d}
  Show that the Fourier transform of the two-point function of the 1-dimensional
  strictly self-avoiding walk is given by
  \begin{equation}
    \hat G_z (k) = \frac{1 - z^2}{1 + z^2 - 2z \cos k}.
  \end{equation}
  Here $\hat f(k) = \sum_{x\in \Z^d}f(x) e^{ik\cdot x}$.
\end{problem}

\begin{problem} \label{problem:tauberian}
  Suppose that $f(z) = \sum_{n=0}^\infty a_n z^n$ has radius of convergence $1$.
  Suppose that $|f(z)| \leq c |1-z|^{-b}$ uniformly in $|z| < 1$, with $b \geq 1$.
  Prove that, for some constant $C$,
  $|a_n| \leq C n^{b-1}$ if $b > 1$, and that $|a_n|  \leq C \log n$ if $b = 1$. Hint:
  \begin{equation}
    a_n = \frac{1}{2\pi i} \oint_{\Gamma_n} \frac{f(z)}{z^{n+1}} \; dz,
  \end{equation}
  where $\Gamma_n = \{ z \in \C: |z| = 1- \frac{1}{n} \}$.
\end{problem}

\begin{problem} \label{problem:transience}
  Consider the nearest-neighbour
  simple random walk $(X_n)_{n\geq 0}$ on $\Z^d$ started at the origin.
  Let $D(x) = (2d)^{-1}\indicator{\|x\|_1=1}$ denote its step distribution.
  The two-point function for simple random walk is defined by
    \begin{equation}
      C_z(x) = \sum_{n\geq 0} c^{(0)}_n(x) z^n
      = \sum_{n\geq 0} D^{*n}(x) (2dz)^n,
    \end{equation}
    where $D^{*n}$ denotes the $n$-fold convolution of $D$ with itself.
\medskip

  (a)
    Let $u$ denote the probability that the walk ever returns to the origin. The
    walk is recurrent if $u = 1$ and transient if $u < 1$. Let $N$ denote the random
    number of visits to the origin, including the initial visit at time $0$, and let
    $m = {\mathbb E}(N)$. Show that $m = (1-u)^{-1}$; so the walk is recurrent if and only if $m = \infty$.
    \medskip

    (b)
    Show that
    \begin{equation}
      m= \sum_{n\geq 0} {\mathbb P}(X_n = 0) = \int_{[-\pi,\pi]^d} \frac{1}{1-\hat D(k)} \frac{d^dk}{(2\pi)^d} .
    \end{equation}
    Thus transience is characterised by the integrability of
    $\hat C_{z_0}(k)$, where $z_0 = (2d)^{-1}$.
    \medskip

    (c)
    Show that the walk is recurrent in dimensions $d \leq 2$
    and transient for $d > 2$.
\end{problem}

\begin{problem} \label{problem:intersection}
  Let $X^1= (X^1_i)_{i\geq 0}$ and $X^2=(X^2_i)_{i\geq 0}$ be two independent
  nearest-neighbour simple random walks on $\Z^d$
  started at the origin, and let
  \begin{equation}
    I = \sum_{i\geq 0}\sum_{j\geq 0} \indicator{X^1_i = X^2_j}
  \end{equation}
  be the random number of intersections of the two walks. Show that
  \begin{equation}
    {\mathbb E}(I) =  \int_{[-\pi,\pi]^d} \frac{1}{[1-\hat D(k)]^2} \frac{d^dk}{(2\pi)^d}.
  \end{equation}
  Thus ${\mathbb E}(I)$ is finite if and only if $\hat C_{z_0}$ is square integrable.
  Conclude 
  that the expected number of
  intersections is finite if $d > 4$ and infinite if $d \leq 4$.
\end{problem}

\section{Bridges and polygons}

Throughout this section, we consider only the nearest-neighbour strictly
self-avoiding walk on $\Z^d$.  We will introduce a class of self-avoiding
walks called bridges, and will show that the number of bridges grows
with the same exponential rate as the number of self-avoiding walks, namely
as $\mu^n$.
The analogous fact for the hexagonal lattice ${\mathbb H}$
will be used in \refSect{cchex}
as an ingredient in the proof that the connective constant for  ${\mathbb H}$
is $\sqrt{2+\sqrt{2}}$.
The study of bridges will also lead to the proof of the
Hammersley--Welsh bound \refeq{HW} on $c_n$.
Finally, we will study self-avoiding polygons, and show that they too
grow in number as $\mu^n$.

\subsection{Bridges and the Hammersley--Welsh bound}\lbsect{HammWelsh}

For a self-avoiding walk $\omega$, denote by $\omega_1(i)$ the first spatial coordinate of $\omega(i)$.
\begin{definition}
\label{def:bridge}
An $n$-step \emph{bridge} is an $n$-step SAW $\omega$ such that
\begin{equation}
\lbeq{BridgeDefn}
\omega_1(0) < \omega_1(i) \leq \omega_1(n) \qquad\qquad\text{for $i=1,2,\dotsc,n$.}
\end{equation}
Let $b_n$ be the number of $n$-step bridges with $\omega(0)=0$ for $n > 1$, and $b_0=1$.
\end{definition}

While the number of self-avoiding walks is a \emph{sub}multiplicative sequence,
the number of bridges is \emph{super}multiplicative:
\begin{equation}
\lbeq{BridgeSupermult}
b_{n+m}\geq b_n b_m.
\end{equation}
Thus, applying \reflemma{Subadd} to $-\log b_n$, we obtain the existence
of the bridge growth constant $\mu_{\text{Bridge}}$ defined by
\begin{equation}
    \mu_{\text{Bridge}} = \lim_{n\to\infty} b_n^{1/n} = \sup_{n\geq 1} b_n^{1/n}.
\end{equation}
Using the trivial inequality $\mu_{\text{Bridge}}\leq\mu$ we conclude that
\begin{equation}
\lbeq{bnmuBound}
b_n\leq\mu_{\text{Bridge}}^n\leq \mu^n.
\end{equation}
\begin{definition}
An $n$-step \emph{half-space walk} is an $n$-step SAW $\omega$ with
\begin{equation}
\lbeq{HSWDefn}
\omega_1(0) < \omega_1(i)\qquad\qquad\text{for $i=1,2,\dotsc,n$.}
\end{equation}
Let $h_0=1$, and for $n \ge 1$,
let $h_n$ denote the number of $n$-step half-space walks with
$\omega(0)=0$.
\end{definition}
\begin{definition}
The \emph{span} of an $n$-step SAW $\omega$ is
\begin{equation}
    \max_{0\leq i\leq n} \omega_1(i)-\min_{0\leq i\leq n} \omega_1(i).
\end{equation}
Let $b_{n,A}$ be the number of $n$-step bridges with span $A$.
\end{definition}

We will use the following result on integer partitions
which dates back to 1917, due to
Hardy and Ramanujan \cite{HR17}.

\begin{theorem}
\lbthm{IntParts}
For an integer $A\geq 1$, let $P_D(A)$ denote the number of ways of
writing $A=A_1+\dotsb+A_k$ with $A_1>\dotsb>A_k\geq 1$, for any $k\geq 1$.
Then
\begin{equation}
\lbeq{IntPartAsymp}
\log P_D(A) \sim \pi \left( \frac{A}{3} \right)^{1/2}
\end{equation}
as $A\to\infty$.
\end{theorem}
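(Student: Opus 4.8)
The plan is to reduce the estimate to the behaviour, as $x\increasesto 1$, of the generating function
\[
  F(x) \defeq \sum_{A\ge 0} P_D(A)\, x^A = \prod_{n\ge 1}(1+x^n),
\]
and then to run a saddle‑point argument in each direction. Writing $x=e^{-t}$ with $t\to 0^+$, the key asymptotic input is
\[
  \log F(e^{-t}) = \sum_{n\ge 1}\log(1+e^{-nt}) = \frac1t\int_0^\infty \log(1+e^{-u})\,du + O(1) = \frac{\pi^2}{12\,t}+O(1).
\]
The sum‑versus‑integral comparison uses only that $u\mapsto\log(1+e^{-u})$ is positive and decreasing, and $\int_0^\infty\log(1+e^{-u})\,du=\sum_{k\ge1}(-1)^{k+1}k^{-2}=\tfrac{\pi^2}{12}$ follows by expanding the logarithm. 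It is also convenient to record Euler's identity $F(x)=\prod_{k\ge0}(1-x^{2k+1})^{-1}$, the generating function for partitions into odd parts.

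The upper bound is then immediate: all coefficients of $F$ are non‑negative, so $P_D(A)\,e^{-At}\le F(e^{-t})$ for every $t>0$, whence $\log P_D(A)\le At+\tfrac{\pi^2}{12t}+O(1)$; taking $t=\tfrac{\pi}{2}(3A)^{-1/2}$ balances the two main terms and yields $\log P_D(A)\le \pi(A/3)^{1/2}(1+o(1))$.

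The matching lower bound is the part I expect to require real care. The idea is to realise $P_D(A)$ probabilistically: let $S\subseteq\{1,2,\dotsc\}$ be random with the events $\{n\in S\}$ independent and $\P(n\in S)=(1+e^{nt})^{-1}$, and set $\Sigma_t=\sum_{n\in S}n$. A short computation gives $\P(\Sigma_t=A)=P_D(A)\,e^{-At}/F(e^{-t})$, so
\[
  \log P_D(A) = \log F(e^{-t}) + At + \log\P(\Sigma_t=A).
\]
Here $\Sigma_t$ has mean $\mu_t=\sum_n n(1+e^{nt})^{-1}=\tfrac{\pi^2}{12}t^{-2}(1+o(1))$ and variance $\sigma_t^2=\sum_n n^2 e^{nt}(1+e^{nt})^{-2}=O(t^{-3})$, so fixing $t$ by $\mu_t=A$ makes $At+\log F(e^{-t})=\tfrac{\pi^2}{6t}(1+o(1))=\pi(A/3)^{1/2}(1+o(1))$, and it remains only to see that $\log\P(\Sigma_t=A)=o(t^{-1})=o(\sqrt A)$. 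Rather than proving a sharp local central limit theorem I would extract anti‑concentration by Chebyshev's inequality: a fixed fraction of the mass of $\Sigma_t$ lies on the $O(\sigma_t)$ integers within $2\sigma_t$ of $\mu_t$, so some integer $A^\ast$ there has $\P(\Sigma_t=A^\ast)\ge c/\sigma_t$, giving $\log P_D(A^\ast)\ge \pi(A^\ast/3)^{1/2}(1-o(1))$; and since $\sigma_t\ll\mu_t$ one has $A^\ast=A(1+o(1))$. To pass from $A^\ast$ to the value $A$ itself I would use that $P_D$ is non‑decreasing — which is immediate from Euler's identity, since $(1-x)F(x)=\prod_{k\ge1}(1-x^{2k+1})^{-1}$ has non‑negative coefficients — together with the freedom to centre the Chebyshev window just below $A$ (choose $t$ with $\mu_t=A-10\sigma_t$, so $A^\ast\le A$ and $P_D(A)\ge P_D(A^\ast)$).

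The main obstacle is thus the anti‑concentration estimate $\log\P(\Sigma_t=A)=o(t^{-1})$ for this sum of independent, non‑identically‑distributed lattice variables; the Chebyshev‑plus‑monotonicity device sidesteps any need for a precise local limit law, at the harmless cost of only determining $\log P_D$ up to $o(\sqrt A)$, which is exactly what \refeq{IntPartAsymp} asks. Everything else — Euler's identity, the sum/integral comparison, and the two optimisations over $t$ — is routine.
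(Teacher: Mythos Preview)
The paper does not prove this theorem: it is quoted as a classical result of Hardy and Ramanujan \cite{HR17} and used as a black box in the proof of \refprop{HSWPartBound}. So there is no ``paper's own proof'' to compare against.

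Your argument is nonetheless a correct self-contained proof of \refeq{IntPartAsymp}. The upper bound via $P_D(A)e^{-At}\le F(e^{-t})$ and optimisation in $t$ is the standard Chernoff-type estimate, and your computation of $\log F(e^{-t})=\tfrac{\pi^2}{12t}+O(1)$ is right. For the lower bound, the probabilistic reinterpretation (independent inclusion of each part $n$ with probability $(1+e^{nt})^{-1}$) is exactly the tilted measure behind the saddle-point method, and your device of combining Chebyshev's inequality with the monotonicity of $P_D$ is a clean way to avoid proving a local limit theorem for $\Sigma_t$. The monotonicity step via Euler's identity---$(1-x)F(x)=\prod_{k\ge1}(1-x^{2k+1})^{-1}$ having non-negative coefficients---is correct and elegant. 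The only places to be slightly careful in writing it up are: (i) the implicit choice of $t$ with $\mu_t+10\sigma_t=A$ exists by continuity and monotonicity of $t\mapsto\mu_t+10\sigma_t$ on $(0,\infty)$; and (ii) the error terms $|A^\ast-A|\,t=O(\sigma_t t)=O(t^{-1/2})$ and $\log\sigma_t=O(\log A)$ are both $o(\sqrt{A})$, so they do not affect the leading asymptotic. With those points made explicit, the proof is complete.
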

\begin{prop}
\lbprop{HSWPartBound}
$h_n\leq P_D(n) b_n$ for all $n\geq 1$.
\end{prop}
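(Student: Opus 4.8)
The plan is to decompose an arbitrary half-space walk into a sequence of bridges of strictly decreasing spans, so that the combinatorics of this decomposition is governed precisely by the partition function $P_D$. First I would fix an $n$-step half-space walk $\omega$ with $\omega(0)=0$, so that $\omega_1(i) > 0$ for all $i \ge 1$. The key geometric observation is to look at the running maximum of the first coordinate. Let $\omega_1(n)$ be the final height, and let $A_1 = \max_{0 \le i \le n} \omega_1(i)$; define $i_1$ to be the \emph{last} time this maximum is attained. Then the piece $\omega[0,i_1]$ is, after possibly no change, a bridge of span $A_1$ (it starts at height $0$, never exceeds $A_1$, and ends at $A_1$ with the defining inequality $\omega_1(0) < \omega_1(i) \le \omega_1(i_1)$ holding because $i_1$ is the \emph{last} maximizer). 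The remaining piece $\omega[i_1,n]$, re-rooted at $\omega(i_1)$ and reflected through the vertical hyperplane through $\omega(i_1)$, is again a half-space walk, and crucially its span is strictly less than $A_1$: after reflection its first coordinate stays below $A_1$ because the original $\omega$ after time $i_1$ never returned to height $A_1$ (again using that $i_1$ was the last maximizer).

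Iterating this procedure, I would obtain a decomposition of $\omega$ into bridges $\omega^{(1)}, \dots, \omega^{(k)}$ of spans $A_1 > A_2 > \dots > A_k \ge 1$, with the lengths summing to $n$ and — this is the point that forces the span bound — the spans summing to at most $n$, since each bridge has span at most its number of steps. Conversely, given a choice of a strict partition $A = A_1 > \dots > A_k \ge 1$ with $A \le n$, together with a choice of one bridge for each part (of the appropriate length), the concatenation reconstructs a half-space walk. The counting then goes: the number of half-space walks is at most $\sum$ over strict partitions of the product of the bridge counts. The cleanest way to turn this into the stated clean inequality $h_n \le P_D(n) b_n$ is to bound each product $b_{n_1} \cdots b_{n_k}$ of bridge counts (over the parts of the decomposition, where $n_1 + \dots + n_k = n$) by $b_n$ using supermultiplicativity $b_{n+m} \ge b_n b_m$ from \refeq{BridgeSupermult}, and then to note that the number of strict partitions of any $A \le n$ is at most $P_D(n)$ (since $P_D$ is non-decreasing, or by padding), giving at most $P_D(n)$ choices for the sequence of spans; summing over the at most $P_D(n)$ relevant span-sequences and bounding each summand by $b_n$ yields the result.

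The main obstacle I anticipate is making the decomposition genuinely canonical and checking the reflection bookkeeping carefully: one must verify that each extracted piece really is a bridge (not merely a half-space walk) under the stated definition \refeq{BridgeDefn}, which hinges on the ``last maximizer'' choice, and that the reflected remainder is again a half-space walk with \emph{strictly} smaller span — without strictness the partition would not be into distinct parts and the bound would collapse. A secondary subtlety is ensuring the map from $\omega$ to (span-sequence, bridge-sequence) is injective, so that the sum over decompositions genuinely dominates $h_n$; this follows because the decomposition is deterministic, but it should be stated. Everything else — the supermultiplicativity step and the monotonicity of $P_D$ — is routine.
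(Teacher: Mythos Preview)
Your decomposition is correct and coincides with the paper's: iteratively peeling off a bridge by taking the last global maximiser of $\omega_1$, then reflecting the remainder, produces exactly the sequence of cut-times $n_i$ and spans $A_1>A_2>\cdots>A_k$ that the paper defines.

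The gap is in the counting step. You propose to bound each product $b_{n_1}\cdots b_{n_k}\le b_n$ by supermultiplicativity and then sum over ``at most $P_D(n)$ relevant span-sequences''. Two things go wrong. First, supermultiplicativity controls the product for a \emph{fixed} length composition $(n_1,\ldots,n_k)$, but for a given span-sequence the decomposition does not fix the $n_i$; you still carry an uncontrolled sum over all compositions $n_1+\cdots+n_k=n$. Second, the number of strictly decreasing sequences $(A_1,\ldots,A_k)$ with $\sum A_i\le n$ is $\sum_{A=1}^n P_D(A)$, not $P_D(n)$: the bound $P_D(A)\le P_D(n)$ holds for each individual $A$, but you must sum over all $A\le n$.

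The paper closes both holes simultaneously by observing that the iterated reflection (equivalently, concatenation of the extracted bridges) is an injection from half-space walks with span-sequence $(A_1,\ldots,A_k)$ into $n$-step bridges of span \emph{exactly} $A=A_1+\cdots+A_k$. This gives $h_n[a_1,\ldots,a_k]\le b_{n,A}$, and hence
\[
h_n\;\le\;\sum_{A=1}^n P_D(A)\, b_{n,A}\;\le\; P_D(n)\sum_{A=1}^n b_{n,A}\;=\;P_D(n)\,b_n.
\]
Tracking the span of the resulting bridge is what turns the sum over $A$ into the harmless identity $\sum_A b_{n,A}=b_n$. Your supermultiplicativity step discards precisely this information; you should replace it by the injectivity of the concatenation map together with the fact that the concatenated bridge has span exactly $\sum A_i$.
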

\begin{proof}
Set $n_0=0$ and inductively define
\begin{equation}
A_{i+1}=\max_{j>n_i} (-1)^i (\omega_1(j)-\omega_1(n_i))
\end{equation}
and
\begin{equation}
n_{i+1}=\max\set{j>n_i : (-1)^i(\omega_1(j)-\omega_1(n_i))=A_{i+1}}.
\end{equation}
In words, $j=n_1$ maximises $\omega_1(j)$, $j=n_2$ minimises $\omega_1(j)$ for $j>n_1$, $n_3$ maximises $\omega_1(j)$ for $j>n_2$, and so on in an alternating pattern.  In addition $A_1=\omega_1(n_1)-\omega_1(n_0)$, $A_2=\omega_1(n_1)-\omega_1(n_2)$ and so on.  Moreover, the $n_i$ are chosen to be the last times these extrema are attained.
\begin{figure}[t]
\begin{center}
\input{HSWalkToBridge.pspdftex}
\end{center}
\caption{A half-space walk is decomposed into bridges, which are reflected to form a single bridge.}
\lbfig{hBoundByb}
\end{figure}

This procedure stops at some step $K\geq 1$ when $n_K=n$.
Since the $n_i$ are chosen maximal, it follows that $A_{i+1}<A_i$.
Note that $K=1$ if and only if $\omega$ is a bridge, and in that case $A_1$
is the span of $\omega$.
Let $h_n[a_1,\dotsc,a_k]$ denote the number of $n$-step half-space walks with $K=k$, $A_i=a_i$ for $i=1,\dotsc,k$.  We observe that
\begin{equation}
h_n[a_1,a_2,a_3,\dotsc,a_k] \leq h_n[a_1+a_2,a_3,\dotsc,a_n].
\end{equation}
To obtain this, reflect the part of the walk $(\omega(j))_{j\geq n_1}$ across the line $\omega_1 = A_1$; see \reffig{hBoundByb}.  Repeating this inequality gives
\begin{equation}
h_n[a_1,\dotsc,a_k] \leq h_n[a_1+\cdots+a_k] = b_{n,a_1+\cdots+a_k}.
\end{equation}
So we can bound
\begin{align}
h_n
&=
\sum_{k\geq 1} \, \sum_{a_1>\dotsb>a_k>0} h_n[a_1,\dotsc,a_k]
\notag\\
&\leq
\sum_{k\geq 1} \, \sum_{a_1>\dotsb>a_k>0} b_{n,a_1+\cdots+a_k}
\notag\\
&=
\sum_{A=1}^n P_D(A) b_{n,A}.
\end{align}
Bounding $P_D(A)$ by $P_D(n)$, we obtain $h_n\leq P_D(n)\sum\limits_{A=1}^n b_{n,A}=P_D(n) b_n$ as claimed.
\end{proof}

We can now prove the Hammersley--Welsh bound \refeq{HW}, from \cite{HW62}.
\begin{theorem}
\lbthm{HammWelsh}
Fix $B>\pi(\tfrac{2}{3})^{1/2}$.  Then there is $n_0=n_0(B)$ independent of the dimension $d\geq 2$ such that
\begin{equation}
\lbeq{HammWelshBound}
c_n \leq b_{n+1} e^{B\sqrt{n}} \leq \mu^{n+1} e^{B\sqrt{n}} \qquad\text{for $n\geq n_0$.}
\end{equation}
\end{theorem}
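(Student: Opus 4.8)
The plan is to relate self-avoiding walks to half-space walks, and half-space walks to bridges, using \refprop{HSWPartBound} together with the Hardy--Ramanujan asymptotic \refeq{IntPartAsymp}. The starting observation is a decomposition of an arbitrary $n$-step SAW into two half-space walks. Given a SAW $\omega$, let $j$ be the last index at which $\omega_1$ attains its minimum value over $0,1,\dotsc,n$. Then the reversed initial segment $(\omega(j),\omega(j-1),\dotsc,\omega(0))$ is a half-space walk (after translating $\omega(j)$ to the origin) of some length $n_1$, and the terminal segment $(\omega(j),\omega(j+1),\dotsc,\omega(n))$ is a half-space walk of length $n_2=n-n_1$ — here one must be slightly careful: the condition defining the first segment is $\omega_1(i) > \omega_1(j)$ for $i<j$, which holds since $j$ is the \emph{last} minimiser, and for the second segment $\omega_1(i)\geq\omega_1(j)$, with equality possible, so it is really a half-space walk in the weak sense; a standard fix is to peel off one step so that one obtains a genuine half-space walk of length $n_2-1$ or to note that such weak half-space walks are counted by $h_{n_2}$ up to a harmless adjustment. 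This yields a bound of the shape $c_n \leq \sum_{n_1+n_2=n} h_{n_1} h_{n_2}$, possibly with indices shifted by $1$.

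Next I would feed in \refprop{HSWPartBound}, $h_m \leq P_D(m) b_m$, and the supermultiplicativity $b_{n_1}b_{n_2}\leq b_{n_1+n_2}\leq b_{n+1}$ (the last step using $b_{n_1+n_2}\leq b_{n_1+n_2+1}$, which follows from $b_{n+1}\ge b_n b_1$ and $b_1=1$ — or more simply absorb the shift into the $b_{n+1}$ appearing in the statement). This gives
\begin{equation}
c_n \leq b_{n+1}\sum_{n_1+n_2=n} P_D(n_1)P_D(n_2) \leq b_{n+1}\, (n+1)\, P_D(n)^2,
\end{equation}
using monotonicity of $P_D$. Now invoke \refthm{IntParts}: since $\log P_D(A)\sim \pi(A/3)^{1/2}$, for any $B' > \pi(2/3)^{1/2}$ we have $P_D(n)^2 \leq e^{B'\sqrt n}$ for all large $n$, because $2\pi(n/3)^{1/2} = \pi(2/3)^{1/2}\sqrt{n}\cdot\sqrt{2} $ — wait, more directly: $\log(P_D(n)^2) = 2\log P_D(n) \sim 2\pi(n/3)^{1/2} = \pi(4n/3)^{1/2}=\pi(2/3)^{1/2}\,(2n)^{1/2}$; in any case it is asymptotic to $2\pi(n/3)^{1/2} = \pi(2/3)^{1/2}\sqrt{2n}$, so choosing any $B>\pi(2/3)^{1/2}$ we can absorb both the polynomial factor $n+1$ and the difference between $\sqrt{2n}$ and $\sqrt n$ — hmm, this needs care — the correct reading is that $2\log P_D(n)\sim \pi(\tfrac{4}{3}n)^{1/2}$ and one wants this $\le B\sqrt n$; note $\pi(4/3)^{1/2}=2\pi/\sqrt3 = \pi(2/3)^{1/2}\cdot\sqrt2$, which exceeds $\pi(2/3)^{1/2}$. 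So the clean statement is that for $B>\pi(\tfrac{2}{3})^{1/2}$ one has, for $n$ large, $\,(n+1)P_D(n)^2\le e^{B\sqrt n}$ provided we interpret the Hardy--Ramanujan bound correctly; I will recheck the constant when writing out the details.

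The main obstacle, then, is bookkeeping rather than ideas: (i) getting the half-space decomposition exactly right, including the weak-inequality endpoint issue and the resulting index shift, so that the final bound reads $c_n\le b_{n+1}e^{B\sqrt n}$ with the shift by $1$ in the subscript of $b$ exactly as stated; and (ii) pinning down the precise constant so that the hypothesis $B>\pi(2/3)^{1/2}$ is exactly what is needed — this is where the factor of $2$ from $P_D(n)^2$ (two half-space pieces) interacts with the exponent $(A/3)^{1/2}$ in \refthm{IntParts}, and one must verify that $2\cdot\pi(n/3)^{1/2}$ is indeed dominated by $B\sqrt n$ for the stated range of $B$, with the polynomial prefactor $n+1$ from the sum over $n_1$ being harmlessly absorbed into any $B$ strictly above the threshold. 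The dimension-independence of $n_0$ is automatic since \refthm{IntParts} and \refprop{HSWPartBound} are dimension-free and the only $d$-dependence, in $b_{n+1}$, is kept as $b_{n+1}$ on the right-hand side. Finally, the second inequality $b_{n+1}\le\mu^{n+1}$ in \refeq{HammWelshBound} is just \refeq{bnmuBound}.
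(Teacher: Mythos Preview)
Your overall strategy is exactly the paper's: decompose an arbitrary $n$-step SAW at its last first-coordinate minimum into two half-space pieces, then combine \refprop{HSWPartBound} with supermultiplicativity of bridges and \refthm{IntParts}. Two points, however, need fixing.

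First, you have the two halves of the decomposition reversed. If $j$ is the \emph{last} index realising the minimum of $\omega_1$, then strict inequality $\omega_1(i)>\omega_1(j)$ holds for the \emph{forward} piece $i>j$, not the backward piece; for $i<j$ you only get $\omega_1(i)\ge\omega_1(j)$. The paper repairs this by prepending a single step in the $-e_1$ direction to the reversed initial segment, producing a genuine $(m{+}1)$-step half-space walk and yielding precisely
\[
c_n \le \sum_{m=0}^n h_{n-m}\,h_{m+1}.
\]
This is also the source of the shift to $b_{n+1}$ via $b_{n-m}b_{m+1}\le b_{n+1}$.

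Second, and more seriously, your estimate $P_D(n_1)P_D(n_2)\le P_D(n)^2$ is too crude and gives the wrong threshold constant---you noticed this yourself. Using it, $2\log P_D(n)\sim 2\pi(n/3)^{1/2}=\pi(4/3)^{1/2}\sqrt n$, which would only yield the conclusion for $B>\pi(4/3)^{1/2}=\sqrt{2}\,\pi(2/3)^{1/2}$, a factor $\sqrt 2$ off from what is claimed. The missing idea is to exploit that the two pieces have \emph{total} length about $n$, not $2n$: write $P_D(A)\le K\exp\bigl(B'(A/2)^{1/2}\bigr)$ for any $B'>\pi(2/3)^{1/2}$, and then use the concavity inequality $\sqrt{x}+\sqrt{y}\le\sqrt{2(x+y)}$ to get
\[
\sqrt{\tfrac{n-m}{2}}+\sqrt{\tfrac{m+1}{2}}\le\sqrt{n+1},
\]
so that $P_D(n-m)P_D(m+1)\le K^2 e^{B'\sqrt{n+1}}$ uniformly in $m$. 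Summing over $m$ gives only a polynomial factor $(n+1)$, which together with the discrepancy between $\sqrt{n+1}$ and $\sqrt n$ is absorbed by passing from $B'$ to any $B>B'>\pi(2/3)^{1/2}$. Once these two steps are inserted, your sketch becomes the paper's proof.
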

%
Note that \refeq{HammWelshBound}, though an improvement over $c_n\leq \mu^n e^{o(n)}$ which follows from the definition
\refeq{mulambdaDefn} of $\mu$, is still much larger than the
predicted growth $c_n\sim A\mu^n n^{\gamma-1}$
from \refeq{cnlambdaAsymp}.
It is an open problem to improve \refthm{HammWelsh} in $d=2,3,4$
beyond the result of Kesten \cite{Kest63}
shown in \refeq{cnBounds}.

\begin{figure}[h]
\begin{center}
\input{HWcAndh.pspdftex}
\end{center}
\caption{The decomposition of a self-avoiding walk into two half-space walks.}
\lbfig{cBoundByh}
\end{figure}
\begin{proof}[Proof of \refthm{HammWelsh}]
We first prove
\begin{equation}
\lbeq{cnHalfSpaceBound}
c_n \leq \sum_{m=0}^n h_{n-m} h_{m+1},
\end{equation}
using the decomposition depicted in \reffig{cBoundByh}, as follows.
Given an $n$-step SAW $\omega$, let
\begin{equation}
x_1=\min_{0\leq i\leq n} \omega_1(i), \qquad m=\max\set{i : \omega_1(i)=x_1}.
\end{equation}
Write $e_1$ for the unit vector in the first coordinate direction of $\Z^d$.   Then (after translating by $\omega(m)$) the walk $(\omega(m),\omega(m+1),\dotsc,\omega(n))$ is an $(n-m)$-step half-space walk, and (after translating by $\omega(m)-e_1$) the walk $(\omega(m)-e_1,\omega(m),\omega(m-1),\linebreak\dotsc,\omega(1),\omega(0))$ is an $(m+1)$-step half-space walk.  This proves \refeq{cnHalfSpaceBound}.

Next, we apply \refprop{HSWPartBound} in \refeq{cnHalfSpaceBound} and use \refeq{BridgeSupermult} to get
\begin{align}
c_n
&\leq
\sum_{m=0}^n P_D(n-m) P_D(m+1) b_{n-m} b_{m+1}
\notag\\
&\leq
b_{n+1} \sum_{m=0}^n P_D(n-m) P_D(m+1).
\end{align}
Fix $B>B'>\pi (\tfrac{2}{3})^{1/2}$.  By \refthm{IntParts}, there is $K>0$ such that $P_D(A)\leq K \exp\left( B' (A/2)^{1/2} \right)$ and consequently
\begin{equation}
P_D(n-m) P_D(m+1) \leq K^2 \exp\left[ B'\left( \sqrt{\frac{n-m}{2}}+\sqrt{\frac{m+1}{2}} \right) \right] \! .
\end{equation}
The bound $x^{1/2}+y^{1/2} \leq (2x+2y)^{1/2}$ now gives
\begin{equation}
\lbeq{HWcnFinalBound}
c_n \leq (n+1)K^2 e^{B' \sqrt{n+1}} b_{n+1} \leq e^{B\sqrt{n}} b_{n+1}
\end{equation}
if $n\geq n_0(B)$.  By \refeq{bnmuBound}, the result follows.
\end{proof}
\begin{coro}
\lbcoro{bnLowerBound}
For $n\geq n_0(B)$,
\begin{equation}
\lbeq{bnLowerBound}
b_n \geq c_{n-1} e^{-B\sqrt{n-1}} \geq \mu^{n-1} e^{-B\sqrt{n-1}}.
\end{equation}
In particular, $b_n^{1/n}\to\mu$ and so $\mu_{\text{Bridge}}=\mu$.
\end{coro}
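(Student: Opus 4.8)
The plan is to obtain \refeq{bnLowerBound} by simply rearranging the Hammersley--Welsh bound \refthm{HammWelsh} after a shift of index, and then to deduce $b_n^{1/n}\to\mu$ by squeezing $b_n^{1/n}$ between two sequences that both tend to $\mu$. First I would apply \refthm{HammWelsh} with $n$ replaced by $n-1$: for all sufficiently large $n$ one has $c_{n-1}\leq b_n e^{B\sqrt{n-1}}$, which rearranges to $b_n\geq c_{n-1}e^{-B\sqrt{n-1}}$, the first inequality of \refeq{bnLowerBound}. The second inequality is then immediate from the submultiplicativity bound $c_{n-1}\geq\mu^{n-1}$ recorded in \refeq{mulambdaDefn} (specialised to $\lambda=1$).

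For the limiting statement, I would combine the lower bound just obtained with the trivial upper bound $b_n\leq\mu^n$ from \refeq{bnmuBound}. Taking $n$-th roots gives, for all large $n$,
\[
\mu^{(n-1)/n}\,e^{-B\sqrt{n-1}/n}\;\leq\; b_n^{1/n}\;\leq\;\mu .
\]
Since $\mu>0$ and both $(n-1)/n\to 1$ and $\sqrt{n-1}/n\to 0$ as $n\to\infty$, the left-hand side converges to $\mu$, so $b_n^{1/n}\to\mu$. Because $\mu_{\text{Bridge}}$ was defined as $\lim_{n\to\infty}b_n^{1/n}$ (the limit existing by the supermultiplicativity \refeq{BridgeSupermult} together with \reflemma{Subadd} applied to $-\log b_n$), this yields $\mu_{\text{Bridge}}=\mu$.

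I do not expect any genuine obstacle: all the substantive work --- the partition estimate \refprop{HSWPartBound} and the half-space decomposition underlying \refeq{cnHalfSpaceBound} --- has already been carried out in proving \refthm{HammWelsh}, and the corollary is a formal consequence. The only point deserving a word of care is the threshold bookkeeping: \refthm{HammWelsh} supplies an $n_0$ valid for the index $n$, so to quote it at index $n-1$ one should take the corollary's $n_0(B)$ to be (at least) one more than the theorem's, or simply observe that $n_0(B)$ may be enlarged freely; this changes nothing of substance.
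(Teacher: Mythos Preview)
Your proposal is correct and is exactly the intended argument: the paper states the corollary without proof because it is an immediate rearrangement of \refeq{HammWelshBound} (with $n$ shifted to $n-1$) combined with $c_{n-1}\ge\mu^{n-1}$ and the upper bound \refeq{bnmuBound}. Your remark about possibly enlarging $n_0(B)$ by one is the only bookkeeping needed, and it is handled correctly.
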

\begin{coro}
\lbcoro{chiBridgeBound}
Define the bridge generating function $\Bridge(z)=\sum_{n=0}^\infty b_n z^n$.  Then
\begin{equation}
\chi(z) \leq \frac{1}{z} e^{2(\Bridge(z)-1)}
\end{equation}
and in particular $\Bridge(1/\mu)=\infty$.
\end{coro}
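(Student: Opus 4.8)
The plan is to compare $\chi$ with the generating function of half-space walks, dominate the latter by bridges, and then read off the divergence at $z=1/\mu$ from the trivial lower bound $c_n\ge\mu^n$.

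First I would reduce everything to half-space walks. Inequality~\refeq{cnHalfSpaceBound}, namely $c_n\le\sum_{m=0}^n h_{n-m}h_{m+1}$, was established inside the proof of \refthm{HammWelsh}. Multiplying by $z^n$ and summing over $n\ge0$, and re-indexing by the numbers of steps $p=n-m\ge0$ and $q=m+1\ge1$ of the two half-space pieces, the right-hand side becomes $z^{-1}\bigl(\sum_{p\ge0}h_pz^p\bigr)\bigl(\sum_{q\ge1}h_qz^q\bigr)=z^{-1}H(z)\bigl(H(z)-1\bigr)$, where $H(z)=\sum_{n\ge0}h_nz^n$. Hence $\chi(z)\le z^{-1}H(z)^2$, and it suffices to prove $H(z)\le e^{B(z)-1}$.

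The heart of the argument is a bridge decomposition of half-space walks, obtained by iterating the reflection used in the proof of \refprop{HSWPartBound}. An $n$-step half-space walk $\omega$ of span $A$ is determined by its initial segment up to the \emph{last} time its first coordinate attains its maximum value, which (by the half-space property at the left endpoint and by maximality at the right) is a bridge of span $A$, together with the reflection across the line $\omega_1=A$ of the remaining portion of $\omega$, which is again a half-space walk, now of span strictly smaller than $A$. Iterating until the walk is exhausted expresses $\omega$ as a concatenation (with intervening reflections) of bridges whose spans are strictly decreasing and whose numbers of steps sum to $n$, and this correspondence is injective. Writing $B_s(z)=\sum_{n}b_{n,s}z^n$ for the generating function of bridges of span $s$, it follows that $H(z)\le\prod_{s\ge1}\bigl(1+B_s(z)\bigr)$, since each span occurs at most once. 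Taking logarithms and using $\log(1+x)\le x$ together with $\sum_{s\ge1}b_{n,s}=b_n$ for $n\ge1$ gives $\log H(z)\le\sum_{s\ge1}B_s(z)=\sum_{n\ge1}b_nz^n=B(z)-1$, so $H(z)\le e^{B(z)-1}$ and therefore $\chi(z)\le z^{-1}e^{2(B(z)-1)}$. For the final assertion, \refeq{mulambdaDefn} gives $c_n\ge\mu^n$, so $\chi(1/\mu)=\sum_{n\ge0}c_n\mu^{-n}\ge\sum_{n\ge0}1=\infty$; if $B(1/\mu)$ were finite the bound just proved would force $\chi(1/\mu)\le\mu\,e^{2(B(1/\mu)-1)}<\infty$, a contradiction, so $B(1/\mu)=\infty$.

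The only step needing genuine care is the bridge decomposition: one must verify that each extracted initial segment really is a bridge in the sense of Definition~\ref{def:bridge}, that reflecting the remainder across the current maximal level yields a half-space walk of strictly smaller span, and that the iteration terminates with step counts and spans behaving as claimed. All of this is essentially the reflection operation already exploited for \refprop{HSWPartBound}, now read as a step-count-preserving bijection rather than merely a counting bound, so reusing that machinery should keep it routine. The generating-function manipulations are then pure bookkeeping, valid as inequalities in $[0,\infty]$ and hence untroubled by questions of convergence.
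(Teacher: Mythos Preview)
Your proof is correct and follows essentially the same route as the paper: both use \refeq{cnHalfSpaceBound} to pass to half-space walks, decompose each half-space walk into bridges of strictly decreasing spans (you phrase this iteratively, the paper does it in one shot via the $A_1>\dotsb>A_k$ decomposition from the proof of \refprop{HSWPartBound}), obtain $H(z)\le\prod_{A\ge1}(1+\sum_m b_{m,A}z^m)$, and apply $1+x\le e^x$. The deduction of $\Bridge(1/\mu)=\infty$ from $\chi(1/\mu)=\infty$ is likewise the same.
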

\begin{proof}
In the proof of \refprop{HSWPartBound}, we decomposed a half-space walk
into subwalks on $[n_{i-1},n_i]$ for $i=1,\dotsc,K$.
Note that each such subwalk was in fact a bridge of span $A_i$.
With this observation, we conclude that
\begin{equation}
h_n \leq \sum_{k=1}^\infty \sum_{A_1>\dotsb>A_k}
\sum_{0=n_0<n_1<\dotsb<n_k=n} \prod_{i=1}^k b_{n_i-n_{i-1},A_i}
\end{equation}
(the second sum is over $A_1$ when $k=1$).
The choice of a descending sequence $A_1>\dotsb>A_k$ of arbitrary length is
equivalent to the choice of a subset of $\N$, so that taking generating functions gives
\begin{equation}
\sum_{n=0}^\infty h_n z^n \leq \prod_{A=1}^\infty \left( 1+\sum_{m=1}^\infty b_{m,A} z^m \right)  \! .
\end{equation}
Using the inequality $1+x\leq e^x$, we obtain
\begin{equation}
\sum_{n=0}^\infty h_n z^n
\leq
\exp \left( \sum_{A=1}^\infty \sum_{m=1}^\infty b_{m,A} z^m \right)
=
e^{\Bridge(z)-1}.
\end{equation}
Now using \refeq{cnHalfSpaceBound} gives
\begin{align}
\chi(z)
=
\sum_{n=0}^\infty c_n z^n
&\leq
\frac{1}{z}\sum_{n=0}^\infty \sum_{m=0}^n h_{n-m} z^{n-m} h_{m+1} z^{m+1}
\notag\\
&=
\frac{1}{z}\left( \sum_{n=0}^\infty h_n z^n \right)\left( \sum_{n=1}^\infty h_n z^n \right)
\notag\\
&\leq
\frac{1}{z}e^{2(\Bridge(z)-1)},
\end{align}
as required.
\end{proof}

\subsection{Self-avoiding polygons}\lbsect{SAPolygons}

A $2n$-step \emph{self-avoiding return} is a walk $\omega \in \Walks_{2n}$
with $\omega(2n)=\omega(0)=0$ and with $\omega(i)\ne \omega(j)$ for
distinct pairs $i,j$ other than the pair $0,2n$.  A
\emph{self-avoiding polygon} is a self-avoiding return with both the
orientation and the location of the origin forgotten.
Thus we can count self-avoiding polygons by counting self-avoiding returns
up to orientation and translation invariance, and their number is
\begin{equation}
  \lbeq{qnDefn}
  q_{2n} = \frac{2d c_{2n-1}(e_1)}{2\cdot 2n}, \qquad \text{$n\geq 2$,}
\end{equation}
where $e_1=(1,0,\dotsc,0)$ is the first standard basis vector.
Here, the $2$ in the denominator cancels the choice of orientation,
and the $2n$ cancels the choice of origin in the polygon.

\setlength{\unitlength}{0.4mm}
\begin{figure}[hbt]
\begin{center}
\begin{picture}(155,40)
\put(0,5){\begin{picture}(20,30)
\put(0,0){\line(1,0){20}}
\put(0,30){\line(1,0){10}}
\put(10,20){\line(1,0){10}}
\put(0,0){\line(0,1){30}}
\put(10,20){\line(0,1){10}}
\put(20,0){\line(0,1){20}}
\put(20,10){\circle*{1.5}}
\put(20,20){\circle*{1.5}}
\end{picture}}
\put(35,5){\begin{picture}(30,30)
\put(10,0){\line(1,0){20}}
\put(10,10){\line(1,0){10}}
\put(0,20){\line(1,0){20}}
\put(0,30){\line(1,0){30}}
\put(10,0){\line(0,1){10}}
\put(30,0){\line(0,1){30}}
\put(0,20){\line(0,1){10}}
\put(20,10){\line(0,1){10}}
\put(0,20){\circle*{1.5}}
\put(0,30){\circle*{1.5}}
\end{picture}}
\put(95,0){\begin{picture}(60,40)
\put(0,10){\line(1,0){20}}
\put(0,40){\line(1,0){10}}
\put(10,30){\line(1,0){50}}
\put(40,0){\line(1,0){20}}
\put(40,10){\line(1,0){10}}
\put(20,20){\line(1,0){30}}
\put(0,10){\line(0,1){30}}
\put(10,30){\line(0,1){10}}
\put(60,0){\line(0,1){30}}
\put(40,0){\line(0,1){10}}
\put(50,10){\line(0,1){10}}
\put(20,10){\line(0,1){10}}
\put(20,20){\circle*{1.5}}
\put(20,30){\circle*{1.5}}
\put(30,30){\circle*{1.5}}
\put(30,20){\circle*{1.5}}
\end{picture}}
\put(75,20){\vector(1,0){10}}
\end{picture}
\end{center}
\caption{\label{figpolycat}
Concatenation of a 10-step polygon and a 14-step polygon to
produce a 24-step polygon in ${\mathbb Z}^2$.
}
\end{figure}
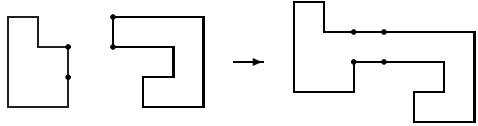

We first observe that
two self-avoiding polygons can be concatenated to form a larger
self-avoiding polygon.  Consider first the case of $d=2$.
The procedure is as in
Figure~\ref{figpolycat}, namely we
join a ``rightmost'' bond of one polygon to a ``leftmost'' bond of the other.
This shows that
for even integers $m,n\geq 4$, and for $d=2$, $q_m q_n\leq q_{m+n}$.
With a little thought (see \cite{MS93} for details), in general dimensions
$d \ge 2$ one obtains
\begin{equation}
\lbeq{qnSuperMult}
\frac{q_m q_n}{d-1}\leq q_{m+n},
\end{equation}
and if we set $q_2=1$
and make the easy observation that $q_n\leq q_{n+2}$,
then \refeq{qnSuperMult} holds for all even $m,n\geq 2$.
It follows from \refeq{qnSuperMult} that
\begin{equation}
\lbeq{muPolygonAndq2nBound}
q_{2n}^{1/2n}\to\mu_{\text{Polygon}}\leq \mu,\qquad q_{2n}\leq\mu_{\text{Polygon}}^{2n}\leq\mu^{2n} \quad\text{for all $n\geq 2$}.
\end{equation}

\begin{theorem}
\lbthm{PolygonLowerBound}
There is a constant $K=K(d)$ such that, for all $n\geq 1$,
\begin{equation}
c_{2n+1}(e_1) \geq \frac{K}{n^{d+2}} b_n^2.
\end{equation}
\end{theorem}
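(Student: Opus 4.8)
The plan is to construct, injectively, a large family of $(2n+1)$-step self-avoiding walks from $0$ to $e_1$ by gluing together two $n$-step bridges, absorbing the overcounting into polynomial factors in $n$. The basic move is a reflection gluing. Given $n$-step bridges $\omega^{(1)}$ and $\omega^{(2)}$ (both rooted at $0$), write $A_i$ for the span of $\omega^{(i)}$ and let $R$ be the reflection of $\mathbb{Z}^d$ in the hyperplane $\{x_1=-\tfrac12\}$. Since $\omega^{(2)}_1(j)\geq 1$ for $j\geq 1$, the walk $R\omega^{(2)}$ has $R\omega^{(2)}(0)=-e_1$ and $(R\omega^{(2)})_1(j)\leq -2$ for $j\geq 1$, so it lies in $\{x_1\leq -1\}$ and meets that boundary only at $-e_1$; meanwhile $\omega^{(1)}$ lies in $\{x_1\geq 0\}$ and meets $\{x_1=0\}$ only at $0$. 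Hence the concatenation of $R\omega^{(2)}$ run backwards, the step $-e_1\to 0$, and $\omega^{(1)}$ is a $(2n+1)$-step self-avoiding walk $\Gamma(\omega^{(1)},\omega^{(2)})$; and since $\Gamma$ crosses $\{x_1=0\}$ at exactly one vertex, both halves can be recovered from $\Gamma$, so the map is injective.

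Next I would pigeonhole on the endpoint. After translating $\Gamma$ to start at $0$, its other endpoint depends on $\omega^{(1)},\omega^{(2)}$ only through the spans $A_1,A_2\in\{1,\dots,n\}$ and the difference of the transverse (last $d-1$) coordinates of the two bridge endpoints, each coordinate ranging over an interval of length $O(n)$; allowing in addition one auxiliary choice of range $O(n)$ for the adjustment described below, the $b_n^2$ walks $\Gamma$ fall into $O(n^{d+2})$ classes indexed by endpoint. Some class is therefore hit by at least $b_n^2/(Cn^{d+2})$ walks, and by the symmetries of $\mathbb{Z}^d$ this class may be taken to be that of the endpoint $e_1$, which gives the theorem for large $n$; the finitely many remaining small values of $n$ are handled by the trivial bound $c_{2n+1}(e_1)\geq 1$.

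The real obstacle is that the gluing just described sends $\omega^{(1)}$ and $\omega^{(2)}$ off in a common direction, so $\Gamma$ has first-coordinate displacement $A_1+A_2+1$ and escapes --- it never reaches a neighbour of the origin. To return near the origin one must traverse the second bridge \emph{backwards}, making $\Gamma$ a ``lens'' whose net first-coordinate displacement is of order $A_1-A_2$; equivalently, using the identity $c_{2n+1}(e_1)=\tfrac{2(n+1)}{d}\,q_{2n+2}$ that follows from \refeq{qnDefn}, one builds a self-avoiding polygon through $0$ and $e_1$ from two bridges. The two arcs of such a lens now occupy overlapping slabs, so the reflection defining the returning arc must be chosen so that the arcs are separated by a hyperplane and meet only along their shared endpoints; carrying this out so that it applies to a polynomially --- not merely subexponentially --- large family of bridge pairs, and verifying the self-avoidance in detail, is the main work. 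Once that is done, the injectivity and pigeonhole above apply as stated, with the auxiliary $O(n)$ factor accounting for where the lens closes up.
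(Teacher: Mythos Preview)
Your proposal is not a proof: in the final paragraph you correctly diagnose that the reflection gluing of the first two paragraphs produces walks whose first coordinate displacement is $A_1+A_2+1$, so they never end at a neighbour of the origin, and you then defer the actual construction to ``the main work'' without doing it. The pigeonhole-plus-symmetry step in your second paragraph is also broken as stated: even if you pigeonhole the $b_n^2$ glued walks into $O(n^{d+2})$ endpoint classes, the most-populated class lands on some specific vertex far from the origin, and there is no lattice symmetry carrying an arbitrary vertex to $e_1$ while preserving the walk count. So the argument as written neither constructs walks to $e_1$ nor reduces to such a construction.

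The missing idea is how to make the two arcs of the ``lens'' disjoint. The paper's trick is to first restrict, via Cauchy--Schwarz, to pairs of $n$-step bridges $\omega,\upsilon$ with the \emph{same} endpoint $x$; this costs only a factor $n(2n+1)^{d-1}$, the number of possible bridge endpoints. Now choose any nonzero $\mathbf{v}\in\mathbb{R}^d$ orthogonal to $x$ and a unit lattice direction $u$ with $u\cdot\mathbf{v}>0$. Let $i$ be the first time $\omega$ maximises $\omega(\cdot)\cdot\mathbf{v}$ and $j$ the first time $\upsilon$ minimises $\upsilon(\cdot)\cdot\mathbf{v}$. Cyclically rotate each bridge at its extremal time (swap the two pieces, using $\omega(0)=\upsilon(0)=0$ and $\omega(n)=\upsilon(n)=x$ to rejoin them) to obtain walks $\overline{\omega},\overline{\upsilon}$. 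By construction $\overline{\omega}$ lies weakly on one side of a hyperplane orthogonal to $\mathbf{v}$ and $\overline{\upsilon}$ on the other; inserting the single step $u$ between them gives a $(2n+1)$-step self-avoiding walk ending at $u$. The map $(\omega,\upsilon)\mapsto(\rho,i,j)$ is injective, yielding $\sum_x b_n(x)^2\le 2d(n+1)^2 c_{2n+1}(e_1)$, and Cauchy--Schwarz turns $\sum_x b_n(x)^2$ into the required lower bound $b_n^2/(n(2n+1)^{d-1})$. The orthogonal-direction splitting is precisely the ``separation by a hyperplane'' you were looking for, and it works for \emph{all} same-endpoint pairs, not merely a polynomial fraction.
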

\begin{figure}
  \begin{center}
    \input{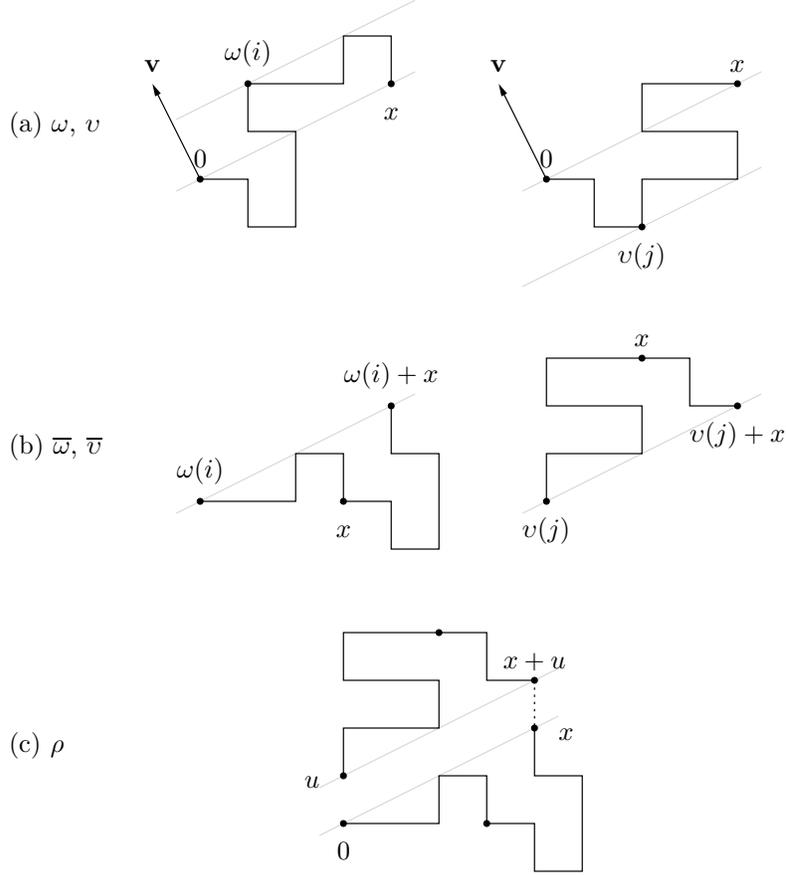}
  \end{center}
  \caption{Proof of Theorem~\ref{t:PolygonLowerBound}.
    Here $n=12$.  (a) The
    $n$-step bridges $\omega$ and $\upsilon$, and the vector ${\bf v}$.
    (b) The derived walks $\overline{\omega}$ and $\overline{\upsilon}$.
    (c) The $(2n+1)$-step walk $\rho$; here $u=(1,0)$.  The shaded lines are the hyperplanes orthogonal
    to ${\bf v}$.}
  \label{figure:PolygonLowerBound}
\end{figure}
\begin{proof}
We first show the inequality
\begin{equation}
\lbeq{bnxSquaredIneq}
\sum_{x\in \Z^d} b_n(x)^2 \leq 2d(n+1)^2 c_{2n+1}(e_1)
\end{equation}
where $b_n(x)$ denotes the number of $n$-step bridges ending at $x$.
The proof is illustrated in Figure~\ref{figure:PolygonLowerBound}.
Namely, given $n$-step bridges $\omega$ and $\upsilon$ with
$\omega(n)=\upsilon(n)=x\in\Z^d$, let ${\bf v}\in \R^d$ be some non-zero
vector orthogonal to $x$, and fix some unit direction $u\in\Z^d$
with $u\cdot {\bf v}>0$.  Let $i\in\set{0,1,\dotsc,n}$ be the
smallest index maximising $\omega(i)\cdot {\bf v}$ and
$j\in\set{0,1,\dotsc,n}$ the smallest index minimising
$\upsilon(j)\cdot {\bf v}$.  Split $\omega$ into the pieces
before and after $i$ and interchange them to produce a walk
$\overline{\omega}$, as in Figure~\ref{figure:PolygonLowerBound}(b).
Do the same for $\upsilon$ and $j$.  Finally combine
$\overline{\omega}$ and $\overline{\upsilon}$ with an inserted
step $u$ to produce a SAW $\rho$ with $\rho(2n+1)=u$, as in
Figure~\ref{figure:PolygonLowerBound}(c).  The resulting
map $(\omega,\upsilon)\mapsto(\rho,i,j)$ is one-to-one, which
proves \refeq{bnxSquaredIneq}.

Now, applying the Cauchy-Schwarz inequality to \refeq{bnxSquaredIneq} gives
\begin{align}
b_n^2
&=
\left( \sum_{x\in\Z^d} b_n(x) \indicator{b_n(x)\neq 0}\right)^2
\leq
\sum_{x\in\Z^d} b_n(x)^2 \sum_{x\in\Z^d}\indicator{b_n(x)\neq 0}
\notag\\
&\leq
n(2n+1)^{d-1}\sum_{x\in\Z^d} b_n(x)^2.
\end{align}
Thus $2d c_{2n+1}(e_1)\geq \frac{b_n^2}{n(n+1)^2(2n+1)^{d-1}}$, which completes the proof.
\end{proof}
\begin{coro}
There is a $C>0$ such that
\begin{equation}
\mu^{2n} e^{-C\sqrt{n}} \leq c_{2n+1}(e_1) \leq (n+1)\mu^{2n+2}.
\end{equation}
 In particular, $\mu_{\rm{Polygon}}=\mu$. 
\end{coro}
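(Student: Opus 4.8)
The plan is to read both inequalities off \refthm{PolygonLowerBound} and \refcoro{bnLowerBound}, using the exact identity
\[
 q_{2n+2} = \frac{d\,c_{2n+1}(e_1)}{2n+2},
\]
which is \refeq{qnDefn} with $n$ replaced by $n+1$ (valid for $n\ge 1$). For the \textbf{upper bound}, combine this with $q_{2n+2}\le\mu^{2n+2}$ from \refeq{muPolygonAndq2nBound}: then $c_{2n+1}(e_1)=\tfrac{2(n+1)}{d}q_{2n+2}\le\tfrac{2(n+1)}{d}\mu^{2n+2}\le(n+1)\mu^{2n+2}$, the last step using $d\ge 2$. Nothing further is needed here.

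For the \textbf{lower bound}, fix $B>\pi(2/3)^{1/2}$ and let $n_0=n_0(B)$ be as in \refcoro{bnLowerBound}. For $n\ge n_0$, \refthm{PolygonLowerBound} followed by \refcoro{bnLowerBound} gives
\[
 c_{2n+1}(e_1)\ \ge\ \frac{K}{n^{d+2}}\,b_n^2\ \ge\ \frac{K}{n^{d+2}}\,\mu^{2n-2}e^{-2B\sqrt{n-1}}\ =\ \mu^{2n}\,\frac{K\mu^{-2}}{n^{d+2}}\,e^{-2B\sqrt{n-1}}.
\]
Since $(d+2)\log n$ is $o(\sqrt n)$ and $\sqrt{n-1}\le\sqrt n$, the prefactor exceeds $e^{-C_0\sqrt n}$ with, say, $C_0=2B+1$ once $n$ is large enough, say $n\ge n_1$ (we may take $n_1\ge n_0$). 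For the finitely many $1\le n<n_1$ it suffices that $c_{2n+1}(e_1)>0$; and indeed the straight bridge $i\mapsto ie_1$ shows $b_n\ge 1$, hence $c_{2n+1}(e_1)\ge K/n^{d+2}>0$, so enlarging $C_0$ to a constant $C$ with $\mu^{2n}e^{-C\sqrt n}\le c_{2n+1}(e_1)$ for each of these $n$ gives the bound for all $n\ge 1$.

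For the final claim $\mu_{\rm Polygon}=\mu$: the inequality $\mu_{\rm Polygon}\le\mu$ is already in \refeq{muPolygonAndq2nBound}, while the identity together with the lower bound gives $q_{2n+2}\ge\tfrac{d}{2(n+1)}\mu^{2n}e^{-C\sqrt n}$, so
\[
 q_{2n+2}^{1/(2n+2)}\ \ge\ \Bigl(\tfrac{d}{2(n+1)}\Bigr)^{1/(2n+2)}\mu^{n/(n+1)}e^{-C\sqrt n/(2n+2)}\ \longrightarrow\ \mu
\]
as $n\to\infty$. Since $q_{2m}^{1/(2m)}$ converges to $\mu_{\rm Polygon}$ by \refeq{muPolygonAndq2nBound}, its tail subsequence has the same limit, forcing $\mu_{\rm Polygon}\ge\mu$, hence equality.

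The whole argument is essentially bookkeeping, so there is no serious obstacle; the only step needing a little care is reconciling the polynomial factor $n^{-(d+2)}$ and the $e^{-2B\sqrt{n-1}}$ factor supplied by \refthm{PolygonLowerBound} and \refcoro{bnLowerBound} with the clean $e^{-C\sqrt n}$ demanded by the statement, and absorbing the finitely many small $n<n_0(B)$ on which \refcoro{bnLowerBound} says nothing — both handled simply by choosing $C$ large.
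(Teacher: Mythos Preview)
Your proof is correct and follows essentially the same approach as the paper: the upper bound via \refeq{qnDefn} and \refeq{muPolygonAndq2nBound} (using $d\ge 2$), and the lower bound via \refthm{PolygonLowerBound} combined with \refcoro{bnLowerBound}. You have simply spelled out in detail the bookkeeping (absorbing the polynomial factor into the exponential, handling the finitely many small $n$) that the paper leaves implicit.
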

\begin{proof}
The lower bound follows from \refthm{PolygonLowerBound}
and \refcoro{bnLowerBound}.  The upper bound follows from \refeq{qnDefn}
and \refeq{muPolygonAndq2nBound} (using $d\ge 2$).
\end{proof}
With a little more work, it can be shown that
for any fixed $x\neq 0$, $c_n(x)^{1/n}\to\mu$ as $n\to\infty$ along the
subsequence of integers whose parity agrees with
$\norm{x}_1$.  The details can be found in \cite{MS93}.
Thus the radius of convergence of the two-point function
$G_z(x)=\sum_{n=0}^\infty c_n(x)z^n$ is equal to $z_c=1/\mu$ for all $x$.


\newcommand{\blankbox}[2]{%
  \parbox{\columnwidth}{\centering
    \setlength{\fboxsep}{0pt}%
    \fbox{\raisebox{0pt}[#2]{\hspace{#1}}}%
  }%
}

\section{The connective constant on the hexagonal lattice}
\lbsect{cchex}

Throughout this section,
we consider self-avoiding walks on the hexagonal lattice $\mathbb{H}$.
Our first and primary goal is to prove the following theorem from
\cite{D-CS10}.
The proof makes use of a certain observable of broader significance,
and following the proof we discuss this in the context of the
$O(n)$ models.

\begin{theorem}\label{thm:cchex}
For the hexagonal lattice ${\mathbb H}$,
\begin{equation}
\mu=\textstyle{\sqrt{2+\sqrt{2}}}.
\end{equation}
\end{theorem}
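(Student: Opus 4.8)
\emph{Overview.} I would prove $\mu\le\sqrt{2+\sqrt 2}$ and $\mu\ge\sqrt{2+\sqrt 2}$ separately, i.e.\ show that the susceptibility $\chi_{\mathbb H}(x)=\sum_{n} c_n x^n$ of self-avoiding walks on $\mathbb H$ has radius of convergence exactly $x_c:=1/\sqrt{2+\sqrt 2}$. Both inequalities are to be squeezed out of a single algebraic identity satisfied by the Duminil--Copin--Smirnov \emph{parafermionic observable}, together with the bridge results of the previous section (in their $\mathbb H$-version).

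\emph{The observable and the local relation.} Fix a mid-edge $a$ of $\mathbb H$. For a finite simply connected domain $\Omega$ built from hexagons and a mid-edge $z$ of $\Omega$, set
\[
  F_\Omega(z)=\sum_{\gamma:\,a\to z,\ \gamma\subset\Omega} x^{\ell(\gamma)}\,e^{-i\sigma W(\gamma)},
\]
the sum being over self-avoiding walks $\gamma$ from $a$ to $z$ inside $\Omega$, where $\ell(\gamma)$ is the length, $W(\gamma)$ the winding (total turning angle) of $\gamma$, and $\sigma=\tfrac58$ (the value corresponding to the $O(n)$ model at $n=0$). The heart of the argument is the \emph{local relation}, which holds \emph{only} at $x=x_c$: for every interior vertex $v$ of $\Omega$, with $p,q,r$ the three mid-edges incident to $v$,
\[
  (p-v)F(p)+(q-v)F(q)+(r-v)F(r)=0.
\]
I would prove it by sorting the walks contributing to the left-hand side by their trace in a small neighbourhood of $v$: walks avoiding $v$ contribute to exactly one of the three terms, with a phase making those cancel for geometric reasons, while walks through $v$ group into small families related by local rerouting near $v$, within each of which the weighted phases collapse to a common factor times $1+x^{2}\bigl(e^{i\pi/4}+e^{-i\pi/4}\bigr)$. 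This vanishes precisely when $x^{-2}=2+\sqrt 2$; this single computation is the only place where the number $\sqrt{2+\sqrt 2}$ enters, everything else being soft. I expect this verification --- tracking all the winding phases so the cancellation is exact --- to be the main obstacle.

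\emph{Contour sum and the trapezoid identity.} Summing the local relation over all interior vertices of $\Omega$ telescopes (each interior mid-edge occurs in two vertex-relations with opposite coefficients), leaving $\sum_{z\in\partial\Omega}\ell(z)F(z)=0$, where $\ell(z)$ records the direction of the boundary edge at $z$. I would apply this to $\Omega=S_{T,L}$, a trapezoid of ``height'' $T$ and ``width'' $L$ with $a$ on the short side, and split $\partial S_{T,L}=\{a\}\cup\alpha\cup\beta\cup\varepsilon$ into the short arc through $a$ (minus $a$), the long opposite side, and the two slanted sides. The winding of a walk from $a$ is determined, up to its endpoint's position, by which of $\alpha,\beta,\varepsilon$ that endpoint lies in; using the reflection symmetry of $S_{T,L}$ the contour identity collapses to a relation with positive coefficients of the form
\[
  1=\cos\left(\tfrac{3\pi}{8}\right)A_{T,L}+B_{T,L}+\cos\left(\tfrac{\pi}{4}\right)E_{T,L},
\]
where $A_{T,L},B_{T,L},E_{T,L}$ are the generating functions \emph{evaluated at $x=x_c$} of self-avoiding walks in $S_{T,L}$ from $a$ to $\alpha$, to $\beta$, and to $\varepsilon$ respectively, and the isolated $1$ is the contribution of the trivial walk at $z=a$.

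\emph{Reading off the bounds.} All terms are non-negative and $A_{T,L},E_{T,L}$ are non-decreasing in $T$. For $\mu\le 1/x_c$: positivity forces $A_{T,L}\le 1/\cos(\tfrac{3\pi}{8})$ for all $T,L$, so letting $T\to\infty$ and then $L\to\infty$, the generating function at $x_c$ of self-avoiding walks on $\mathbb H$ that stay in a half-plane and both start and end on its boundary line is finite; since such a walk can be manufactured by gluing a bridge to a reflected bridge of the same span, these walks grow at exponential rate $\mu$ (invoking here the $\mathbb H$-analogue, sketched in the section on bridges and polygons, of $b_n^{1/n}\to\mu$), so a finite value of their generating function at $x_c$ forces $\mu\le 1/x_c$. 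For $\mu\ge 1/x_c$ I would use the complementary regime of the same identity: if $\chi_{\mathbb H}(x_c)<\infty$, then the $x_c$-weight of the walks counted by $B_{T,L}$ and by the parts of $\varepsilon$ receding to infinity --- all of length growing with the size of $S_{T,L}$ --- tends to $0$ in the iterated limit, being dominated by tails of the convergent series $\chi_{\mathbb H}(x_c)$, and the identity degenerates to the exact equality $\cos(\tfrac{3\pi}{8})A_\infty=1$; I would then try to contradict this precise value by a sharper lower bound on $A_\infty$ fed by the bridge estimates of the preceding section, forcing $\chi_{\mathbb H}(x_c)=\infty$ and hence $\mu\ge 1/x_c$. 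Combining gives $\mu=\sqrt{2+\sqrt 2}$. The two points where I anticipate real work are the vertex-by-vertex proof of the local relation with the exact phase cancellation that singles out $x_c$, and the limiting argument --- deciding which boundary generating function survives in which order of limits, checking the receding contributions vanish under the relevant hypotheses (using monotonicity in $T$ but not in $L$), and closing the contradiction in the lower bound, which is where the bridge results of the previous section are genuinely needed.
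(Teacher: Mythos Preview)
Your architecture matches the paper's, and the identity $1=\cos(\tfrac{3\pi}{8})A_{T,L}+B_{T,L}+\cos(\tfrac{\pi}{4})E_{T,L}$ at $x_c$ is exactly right. The genuine gap is in the lower bound. Under the hypothesis $\chi_{\mathbb H}(x_c)<\infty$ you do reach $\cos(\tfrac{3\pi}{8})A_\infty=1$, but there is no way to ``contradict this precise value by a sharper lower bound on $A_\infty$ fed by the bridge estimates'': the identity itself already gives $A_\infty\le 1/\cos(\tfrac{3\pi}{8})$ unconditionally, and Hammersley--Welsh type bounds are far too coarse to pin down the value of a critical generating function. The paper's argument does not pass to $A_\infty$ for this direction. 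After sending the height to infinity it keeps the width $T$ \emph{finite} and argues by cases. If $E_T(x_c)>0$ for some $T$, then since $E_{T,L}$ decreases in $L$ to $E_T$ one gets $\chi(x_c)\ge\sum_{L} E_{T,L}(x_c)\ge\sum_L E_T(x_c)=\infty$ directly. Otherwise $1=c_\alpha A_T(x_c)+B_T(x_c)$ for every $T$, and the missing idea is the combinatorial inequality $A_{T+1}(x_c)-A_T(x_c)\le x_c\,B_{T+1}(x_c)^2$, obtained by cutting an arc that first reaches column $T{+}1$ into two bridges of span $T{+}1$. Subtracting consecutive identities gives $c_\alpha x_c B_{T+1}^2+B_{T+1}\ge B_T$, whence by induction $B_T(x_c)\ge C/T$ and so $\sum_T B_T(x_c)=\infty$: the bridge generating function diverges at $x_c$, giving $\mu\ge 1/x_c$. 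You should replace your contradiction-with-$A_\infty$ plan by this recursive argument.

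Two smaller points. Your cancellation factor $1+x^{2}(e^{i\pi/4}+e^{-i\pi/4})=1+x^{2}\sqrt 2$ never vanishes for real $x$, so the local-relation computation as sketched cannot single out $x_c$. In the paper the walks ending at the three mid-edges of $v$ are grouped not by whether they pass through $v$ but by how many of $p,q,r$ they visit: walks visiting all three are paired by reversing the loop at $v$, and that pair cancellation (a factor $j\bar\lambda^4+\bar j\lambda^4$ with $j=e^{2\pi i/3}$, $\lambda=e^{-i\sigma\pi/3}$) is what fixes $\sigma=\tfrac58$, independently of $x$; walks visiting one or two mid-edges form triplets related by extending or retracting a single step, with factor $1+x(j\bar\lambda+\bar j\lambda)=1-2x\cos(\tfrac{\pi}{8})$, \emph{linear} in $x$, vanishing at $x_c$. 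Finally, the upper bound is cleaner via $B$ than via $A$: from $B_{T}(x_c)\le 1$ and the fact that bridges of span $T$ have length at least $T$ you get $B_T(z)\le (z/x_c)^T$ for $z<x_c$, hence $\sum_T B_T(z)<\infty$, and $\mu_{\text{Bridge}}=\mu$ from the previous section then gives $\mu\le 1/x_c$ without a separate argument that arc walks grow at rate $\mu$.
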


As a matter of convenience, we extend walks at their extremities
by two half-edges
in such a way that they start and end at \emph{mid-edges},
i.e., centres of edges of $\mathbb{H}$.
The set of mid-edges will be called $H$.
We position the hexagonal lattice $\mathbb{H}$ of mesh size 1 in $\mathbb{C}$
so that there exists a horizontal edge $e$ with mid-edge $a$ being $0$.
We now write $c_n$ for the number of $n$-step
SAWs on the hexagonal lattice $\mathbb{H}$ which start at $0$,
and $\chi(z) = \sum_{n=0}^\infty c_nz^n$ for the susceptibility.

We first point out that it suffices to count bridges.
On the hexagonal lattice, a bridge is defined by the following
adaptation of Definition~\ref{def:bridge}:
a \emph{bridge} on ${\mathbb H}$ is a SAW which never revisits
the vertical line through its starting point, never visits a vertical
line to the right of the vertical line through its endpoint, and moreover
starts and ends at the midpoint of a horizontal edge.
We now use $b_n$ to denote the number of $n$-step bridges on ${\mathbb H}$
which start at $0$.
It is straightforward to adapt the arguments used to prove
\refcoro{bnLowerBound} to the hexagonal lattice, leading to the conclusion
that $\mu_{\text{Bridge}}=\mu$ also on ${\mathbb H}$.  Thus it suffices to
show that
\begin{equation}
\lbeq{mubridgehex}
    \mu_{\text{Bridge}}=\textstyle{\sqrt{2+\sqrt{2}}}.
\end{equation}
Using notation which anticipates our conclusion but which should not
create confusion, we will write
\begin{equation}
    z_c = \frac{1}{\sqrt{2+\sqrt{2}}}.
\end{equation}
We also write
$B(z)=\sum_{n=0}^\infty b_nz^n$ for $z>0$.
To prove \refeq{mubridgehex}, it suffices to
prove that
$B(z_c)=\infty$ or $\chi(z_c)=\infty$, and that $B(z)<\infty$ whenever $z<z_c$.
This is what we will prove.

\subsection{The holomorphic observable}

The proof is based on a generalisation of the two-point function that
we call the \emph{holomorphic observable}.  In this section, we introduce
the holomorphic observable and prove its discrete analyticity.
Some preliminary definitions are required.


A \emph{domain} $\Omega\subset H$ is
a union of all mid-edges emanating from a given
connected collection of
vertices $V(\Omega)$; see Figure~\ref{fig:SAWpicture}.
In other words, a mid-edge
$x$ belongs to $\Omega$ if at least one end-point of its associated
edge is in $V(\Omega)$.  The boundary $\partial \Omega$ consists of mid-edges
whose associated edge has exactly one endpoint
in $\Omega$. We further assume $\Omega$ to be simply connected,
i.e., having a connected complement.

\begin{figure}[h]
\includegraphics[width=1.00\hsize]{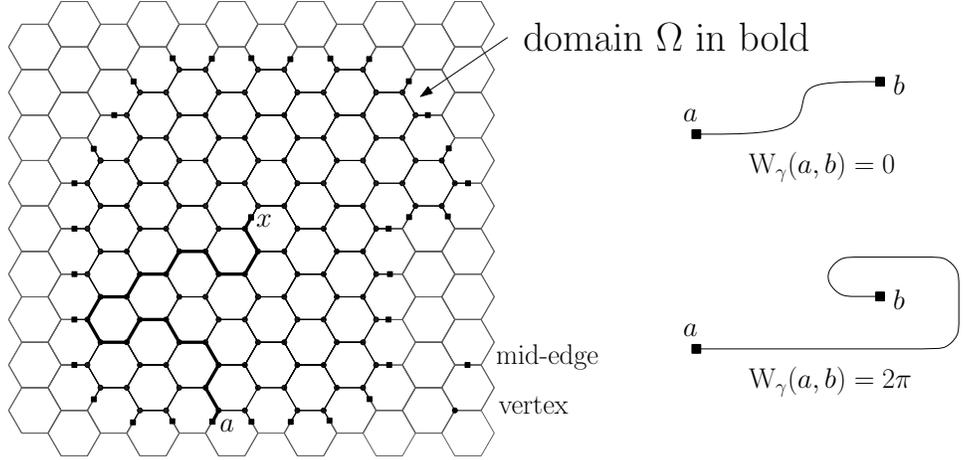}
\caption{Left: A domain $\Omega$ whose boundary mid-edges are pictured by
small black squares. Vertices of $V(\Omega)$ correspond to circles.
Right: Winding of a SAW $\omega$.}
\label{fig:SAWpicture}
\end{figure}

\begin{definition}
The winding ${\rm W}_\omega(a,b)$ of a SAW $\omega$
between mid-edges $a$ and $b$ (not necessarily the start and end of $\omega$)
is the total rotation in radians when $\omega$ is
traversed from $a$ to $b$; see Figure~\ref{fig:SAWpicture}.
\end{definition}

We write $\omega:a\rightarrow E$ if
a walk $\omega$ starts at mid-edge $a$ and ends at some mid-edge of $E\subset H$.
In the case where $E=\{b\}$, we simply write $\omega:a\rightarrow b$.
The \emph{length} $\ell(\omega)$ of the walk is the number of vertices
belonging to $\omega$.
The following definition provides a generalisation of the two-point function
$G_z(x)$.

\begin{definition}
Fix $a\in \partial \Omega$ and $\sigma \in \R$. For $x\in \Omega$  and $z \ge 0$,
the \emph{holomorphic observable} is defined to be
\begin{equation}\label{defparafermion}
    F_z(x)=\sum_{\omega\subset \Omega:\ a\rightarrow x} e^{-i\sigma {\rm W}_\omega(a,x)} z^{\ell(\omega)}.
\end{equation}
\end{definition}

In contrast to the two-point function,
the weights in the holomorphic observable need not be positive.
For the special case $z=z_c$ and $\sigma=\frac 58$,
$F_{z_c}$ satisfies the relation in the following lemma,
a relation which can be regarded as a weak form of discrete analyticity,
and which will be crucial in the rest of the proof.

\begin{lemma}\label{integral contour}
    If $z=z_c$ and $\sigma=\frac 58$, then, for every vertex $v\in V(\Omega)$,
    \begin{equation}\label{relation around vertex}
        (p-v)F_{z_c}(p)+(q-v)F_{z_c}(q)+(r-v)F_{z_c}(r)=0,
    \end{equation}
    where $p,q,r$ are the mid-edges of the three edges adjacent to $v$.
\end{lemma}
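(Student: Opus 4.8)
The plan is to establish the identity \refeq{relation around vertex} by a direct enumeration argument, grouping the self-avoiding walks $\omega:a\to x$ (as $x$ ranges over $\{p,q,r\}$) into small families whose contributions to the left-hand side cancel exactly. Fix a vertex $v\in V(\Omega)$ with incident mid-edges $p,q,r$. I would first observe that the left-hand side of \refeq{relation around vertex} is a sum, over all SAWs $\omega$ ending at one of $p,q,r$, of a complex weight $(\text{mid-edge}-v)e^{-i\sigma\mathrm{W}_\omega(a,\cdot)}z_c^{\ell(\omega)}$. The key is to partition these walks according to the unordered pair of edges at $v$ that the walk touches, and to match walks in a way that uses the self-avoidance constraint near $v$.

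The main step is the pairing/tripling argument. Given $\omega:a\to p$ that reaches $p$ without having previously visited $v$, one can try to extend $\omega$ by one step from $p$ to $v$; from $v$ the walk could continue to $q$ or to $r$, giving walks $\omega:a\to q$ and $\omega:a\to r$ that visit $v$. Conversely any walk ending at $q$ or $r$ whose last visited vertex before the endpoint is $v$ arises this way. The crucial computation is that, \emph{for the special values} $z=z_c=1/\sqrt{2+\sqrt 2}$ and $\sigma=5/8$, the three complex weights $(p-v),(q-v),(r-v)$ — which are the three cube roots of a fixed complex number times a common phase, since the edges at $v$ point in directions $120^\circ$ apart — combine with the winding phases (the winding differs by $\pm 2\pi/3$ depending on whether one turns left or right at $v$, hence a phase $e^{\mp i\sigma 2\pi/3}=e^{\mp i 5\pi/12}$) and the extra length factor $z_c$ (one more vertex, $v$) so that the contributions of the three walks sum to zero. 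Working this out: with the edge directions giving $(q-v)/(p-v)=e^{\pm 2\pi i/3}$ etc., the cancellation condition becomes an equation of the form $1 + z_c\,\omega_1 + z_c\,\omega_2 = 0$ for appropriate unit complex numbers $\omega_1,\omega_2$ built from $e^{i5\pi/12}$ and the $120^\circ$ rotation, and one checks $z_c = \cos(3\pi/8)$ (equivalently $2z_c\cos(\pi/4)=1$ after simplification) is exactly what makes this hold. Walks that visit $v$ in the \emph{interior} (not adjacent to the endpoint) must be handled separately: these come in pairs that enter $v$ from one edge and leave by another, and one shows the two ``interleaving'' ways of resolving the walk at $v$ (turn left then the rest, versus an alternative local configuration) again cancel by the same trigonometric identity applied to the local winding contributions.

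The bookkeeping of which configurations at $v$ are allowed — a SAW visits $v$ at most once, and when it does it uses exactly two of the three incident edges — is what organizes the sum: the walks splitting into (i) those not visiting $v$ at all, (ii) those visiting $v$ adjacent to the endpoint, (iii) those visiting $v$ in the interior. Type (i) walks contribute in triples (the same walk traversed toward $p$, $q$, or $r$ — more precisely, a walk from $a$ whose final edge does or does not touch $v$), and after a short check type (i) also cancels via the master identity, or is absorbed into type (ii) by the extension map. I would present the master trigonometric identity as a preliminary computation, then do the three-case combinatorial decomposition, invoking the identity in each case.

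\textbf{Main obstacle.} The hard part is not any single computation but the careful combinatorial pairing: one must verify that the extension/contraction map at $v$ is a bijection between the relevant sets of SAWs \emph{respecting self-avoidance} (adding the vertex $v$ must not create a collision, and every walk through $v$ decomposes uniquely), and that the winding bookkeeping assigns exactly the phases $e^{\pm i 5\pi/12}$ claimed. Getting the orientation conventions for the winding and the signs of the $(p-v)$ factors consistent — so that the three-term identity is genuinely $(p-v)+\text{(phase)}\,z_c(q-v)+\text{(phase)}\,z_c(r-v)=0$ with the phases that the value $\sigma=5/8$ and $z_c$ conspire to kill — is the delicate point, and is precisely where the special values $\sqrt{2+\sqrt 2}$ and $5/8$ enter.
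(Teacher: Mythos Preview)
Your overall architecture is the paper's: partition walks ending at $p,q,r$ into triplets (your types (i)+(ii): a walk not visiting $v$, matched with its two one-step extensions through $v$) and pairs (your type (iii): walks visiting $v$ in their interior), and show each group's contribution vanishes. That is exactly right.

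However, several of your details are wrong, and one of them is a genuine conceptual gap. First, the turn at a hexagonal vertex is $\pm\pi/3$, not $\pm 2\pi/3$: coming into $v$ along one edge and leaving along another changes the heading by $60^\circ$, not $120^\circ$. So in the triplet, the winding of the extended walk differs from the original by $\pm\pi/3$, giving a phase $e^{\mp i\sigma\pi/3}$. Combined with $(q-v)/(p-v)=e^{\pm 2\pi i/3}$ and the extra length factor $z$, the triplet cancellation condition is
\[
1 + z\,e^{i(2\pi/3+\sigma\pi/3)} + z\,e^{-i(2\pi/3+\sigma\pi/3)} = 1 + 2z\cos\!\big(\tfrac{(2+\sigma)\pi}{3}\big)=0,
\]
which for $\sigma=5/8$ gives $z^{-1}=2\cos(\pi/8)=\sqrt{2+\sqrt2}$. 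Your stated identity $z_c=\cos(3\pi/8)$ (equivalently $2z_c\cos(\pi/4)=1$) is simply false numerically.

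The more serious gap is your treatment of the pairs. You write that type (iii) walks ``again cancel by the same trigonometric identity''. They do not: the pair cancellation is a \emph{different} identity, and it is the one that pins down $\sigma=5/8$. A walk visiting all three mid-edges decomposes as a SAW from $a$ to $p$ together with (up to a half-edge) a self-avoiding return from $v$ to $v$; the paired walk reverses the orientation of this return. Because $\Omega$ is simply connected and $a\in\partial\Omega$, the winding along the return is forced to be $\pm 4\pi/3$ (not $\pm\pi/3$), and the two paired walks have the \emph{same} length. The resulting cancellation condition involves no $z$ at all: it is $e^{2\pi i/3}e^{i\sigma 4\pi/3}+e^{-2\pi i/3}e^{-i\sigma 4\pi/3}=2\cos\!\big(\tfrac{(2+4\sigma)\pi}{3}\big)=0$, forcing $\sigma=5/8$. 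Your description ``two interleaving ways of resolving the walk at $v$'' misses this loop-reversal picture and the crucial $\pm4\pi/3$ winding.

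So the scheme is right, but you must separate the two cancellations cleanly: the pair case fixes $\sigma$, and only then does the triplet case fix $z$.
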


\begin{proof}
Let $z \ge 0$ and $\sigma \in \R$.  We will
specialise later to $z=z_c$ and $\sigma = \frac 58$.
We assume without loss of generality
that $p,q$ and $r$ are oriented counter-clockwise around $v$.
By definition, $(p-v)F_z(p)+(q-v)F_z(q)+(r-v)F_z(r)$ is a sum of contributions
$c(\omega)$ over all possible SAWs $\omega$ ending at $p,q$ or $r$.
For instance, if $\omega$ ends at the mid-edge $p$,
then its contribution will be
\begin{equation}
    c(\omega)=(p-v) e^{-i\sigma {\rm W}_{\omega}(a,p)}
    z^{\ell(\omega)}.
\end{equation}
The set of walks $\omega$ finishing at $p,q$ or $r$
can be partitioned into pairs and triplets of walks as depicted in
Figure~\ref{fig:pairs}, in the following way:
\begin{itemize}
\item If a SAW $\omega_1$ visits all three mid-edges $p,q,r$,
then the edges belonging to $\omega_1$ form a
SAW plus (up to a half-edge) a self-avoiding return from $v$ to $v$.
One can associate to $\omega_1$ the walk
$\omega_2$ passing through the same edges,
but traversing the return from $v$ to $v$ in the opposite direction. Thus,
walks visiting the three mid-edges can be grouped in pairs.

\item If a walk $\omega_1$ visits only one mid-edge, it can be
associated to two walks $\omega_2$ and $\omega_3$ that visit exactly
two mid-edges by prolonging the walk one step further (there are two
possible choices). The reverse is true: a walk visiting exactly two
mid-edges is naturally associated to a walk visiting only one mid-edge
by erasing the last step. Thus, walks visiting one or two mid-edges
can be grouped in triplets.
\end{itemize}
We will prove that when $\sigma = \frac 58$ and
$z=z_c$ the sum of contributions for each pair and each
triplet vanishes, and therefore the total sum is zero.

\begin{figure}[h]
    \begin{center}
      \includegraphics[width=1.00\hsize]{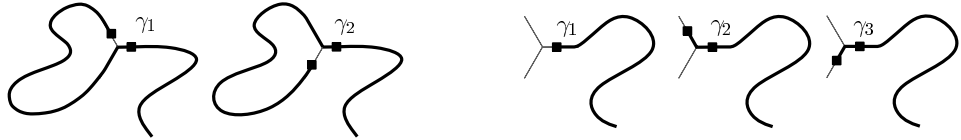}
     \end{center}
    \caption{Left: a pair of walks visiting the three mid-edges
    and matched together.
    Right: a triplet of walks, one visiting one mid-edge,
    the two others visiting two mid-edges, which are matched together.}
    \label{fig:pairs}
\end{figure}

Let $\omega_1$ and $\omega_2$ be two walks that are grouped as in the first case.
Without loss of generality, we assume that $\omega_1$ ends at $q$ and
$\omega_2$ ends at $r$. Note that $\omega_1$ and $\omega_2$ coincide
up to the mid-edge $p$ since $(\omega_1,\omega_2)$ are matched together.
Then
\begin{equation}
\ell(\omega_1)=\ell(\omega_2)\quad\quad \text{and}\quad \quad \left\{\ \substack{
    {\rm W}_{\omega_1}(a,q)={\rm W}_{\omega_1}(a,p)+{\rm W}_{\omega_1}(p,q)
    ={\rm W}_{\omega_1}(a,p)-\frac{4\pi}{3}\\ \ \\
    {\rm W}_{\omega_2}(a,r)={\rm W}_{\omega_2}(a,p)+{\rm W}_{\omega_2}(p,r)
    ={\rm W}_{\omega_1}(a,p)+\frac{4\pi}{3}.}\right.
\end{equation}
In evaluating the winding of $\omega_1$ between $p$ and $q$,
we used the fact that $a\in\partial \Omega$ and $\Omega$ is simply connected.
The term
$e^{-i\sigma {\rm W}_\omega(a,x)}$ gives a weight
$\lambda$ or $\bar{\lambda}$ per left or right turn of $\omega$,
where
\begin{equation}
    \lambda=\exp \left(- i \sigma \frac{\pi}{3}\right).
\end{equation}
Writing $j=e^{i 2\pi/3}$, we obtain
\begin{align}c(\omega_1)+c(\omega_2)
&=(q-v)  e^{- i\sigma {\rm W}_{\omega_1}(a,q)}z^{\ell(\omega_1)}
+(r-v) e^{- i\sigma {\rm W}_{\omega_2}(a,r)} z^{\ell(\omega_2)}
\nonumber
\\
&=(p-v) e^{- i\sigma {\rm W}_{\omega_1}(a,p)}z^{\ell(\omega_1)}
\left(j\bar{\lambda}^4+\bar{j}\lambda^4\right).
\end{align}
Now we set $\sigma = \frac 58$ so that $j\bar{\lambda}^4+\bar{j}\lambda^4
= 2\cos (\frac{3\pi}{2}) = 0$, and hence
\begin{align}c(\omega_1)+c(\omega_2) &=0.
\end{align}

Let $\omega_1,\omega_2,\omega_3$ be three walks matched as in the second case.
Without loss of generality, we assume that $\omega_1$ ends at $p$ and that
$\omega_2$ and $\omega_3$ extend $\omega_1$ to $q$ and $r$ respectively.
As before, we easily find that
\begin{equation}
\ell(\omega_2)=\ell(\omega_3)=\ell(\omega_1)+1\quad\quad\text{and}\quad\quad\left\{\ \substack{{\rm W}_{\omega_2}(a,r)={\rm W}_{\omega_2}(a,p)+{\rm W}_{\omega_2}(p,q)={\rm W}_{\omega_1}(a,p)-\frac{\pi}{3}\\ \ \\
    {\rm W}_{\omega_3}(a,r)={\rm W}_{\omega_3}(a,p)+{\rm W}_{\omega_3}(p,r)
    ={\rm W}_{\omega_1}(a,p)+\frac{\pi}{3},}\right.
\end{equation}
and thus
\begin{align}
c(\omega_1)+c(\omega_2)+c(\omega_3)
&=(p-v) e^{- i\sigma {\rm W}_{\omega_1}(a,p)}z^{\ell(\omega_1)}
\left(1+zj\bar{\lambda}+z\bar{j}\lambda\right).
\end{align}
Now we choose $z$ such that $1+z j\bar{\lambda}+z \bar{j}\lambda=0$.
Due to our choice $\sigma = \frac 58$, we have
$\lambda = \exp(- i\frac{5\pi}{24})$.
Thus we choose $z_c^{-1}=2\cos \frac{\pi}{8}=\sqrt{2+\sqrt2}$.

Now the desired identity \eqref{relation around vertex} follows
immediately by summing over all the pairs and triplets of walks.
\end{proof}

The last step of the proof of Lemma~\ref{integral contour}
is the \emph{only} place where the choice $z=z_c=1/\sqrt{2+\sqrt2}$
is used in the proof
of Theorem~\ref{thm:cchex}.

\subsection{Proof of Theorem~\ref{thm:cchex} completed.}
Now we will apply Lemma~\ref{integral contour} to prove
Theorem~\ref{thm:cchex}.

 We consider a vertical strip domain $S_T$ composed of the vertices
 of $T$ strips of hexagons,
 and its finite version $S_{T,L}$ cut at height $L$ at an angle of $\frac{\pi}{3}$;
 see Figure~\ref{fig:domain}.
We denote the left and right boundaries of $S_{T}$ by $\alpha$ and
$\beta$, respectively,
and the top and bottom boundaries of $S_{T,L}$
by $\epsilon$ and $\bar{\epsilon}$, respectively.
We also introduce the positive quantities:
\begin{align}
    A_{T,L}(z)&\defeq\sum_{\substack{\omega\subset S_{T,L} :\  a\rightarrow\alpha\setminus \{a\}}}z^{\ell(\omega)},\\
    B_{T,L}(z)&\defeq\sum_{\substack{\omega\subset S_{T,L}:\ a\rightarrow\beta}}z^{\ell(\omega)},\\
    E_{T,L}(z)&\defeq\sum_{\substack{\omega\subset S_{T,L}:\ a\rightarrow\epsilon\cup\bar{\epsilon}}}z^{\ell(\omega)}.
\end{align}

\begin{figure}[h]
    \begin{center}
      \includegraphics[width=0.40\hsize]{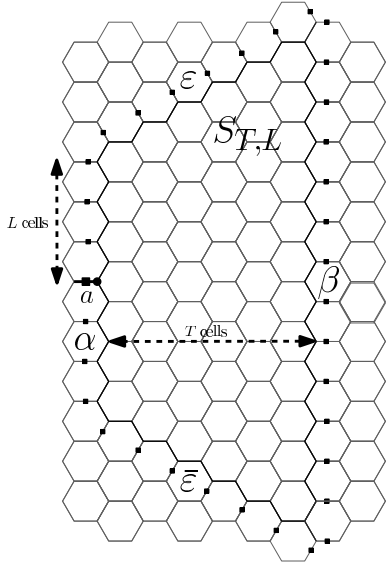}
    \end{center}
    \caption{Domain $S_{T,L}$ and boundary parts $\alpha$, $\beta$, $\epsilon$ and $\bar{\epsilon}$.}
    \label{fig:domain}
\end{figure}

\begin{lemma}
    For $z=z_c$,
    \begin{equation}\label{equation box}
        1=c_{\alpha}A_{T,L}(z_c)+B_{T,L}(z_c)+c_{\epsilon}E_{T,L}(z_c),
    \end{equation}
    where $c_{\alpha}=\cos \left(\frac {3\pi}{8}\right)$ and $c_{\epsilon}=\cos \left(\frac{\pi}{4}\right)$.
\end{lemma}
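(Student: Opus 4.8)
The plan is to apply Lemma~\ref{integral contour} by summing the local identity \eqref{relation around vertex} over \emph{all} vertices of the finite domain $S_{T,L}$: the contributions of interior mid-edges will cancel in pairs, so the identity collapses to a sum over the boundary mid-edges of $(x-v_x)F_{z_c}(x)$, and \eqref{equation box} will follow once we evaluate those boundary values of $F_{z_c}$.

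First I would perform the telescoping. Adding $(p-v)F_{z_c}(p)+(q-v)F_{z_c}(q)+(r-v)F_{z_c}(r)=0$ over all $v\in V(S_{T,L})$, the coefficient of $F_{z_c}(x)$ for a mid-edge $x$ of $S_{T,L}$ is $\sum_v(x-v)$, summed over those $v\in V(S_{T,L})$ that are endpoints of the edge of $x$. If $x$ is an interior mid-edge, both endpoints $v_1,v_2$ of that edge lie in $V(S_{T,L})$ and, since $x=\tfrac12(v_1+v_2)$, the two terms $(x-v_1)$ and $(x-v_2)$ cancel. Hence only boundary mid-edges survive, and, writing $v_x$ for the unique vertex of $V(S_{T,L})$ incident to a boundary mid-edge $x$,
\[
  \sum_{x\in\partial S_{T,L}}(x-v_x)\,F_{z_c}(x)=0,
\]
where $x-v_x=\tfrac12 e^{i\theta_x}$ with $e^{i\theta_x}$ the orientation of the edge of $x$ pointing out of $S_{T,L}$; in particular $a$ lies on $\alpha$ with $a-v_a=-\tfrac12$.

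Second I would evaluate $F_{z_c}$ on the four boundary arcs $\{a\}$, $\alpha\setminus\{a\}$, $\beta$, and $\epsilon\cup\bar\epsilon$. Since $a\in\partial S_{T,L}$, any SAW from $a$ to $a$ inside $S_{T,L}$ would have to revisit the vertex $v_a$, which self-avoidance forbids, so only the length-$0$ walk contributes and $F_{z_c}(a)=1$, giving the constant term $(a-v_a)F_{z_c}(a)=-\tfrac12$. For the other arcs the essential input is that, since $S_{T,L}$ is simply connected and $\omega$ is self-avoiding, the winding ${\rm W}_\omega(a,x)$ of any SAW $\omega\colon a\to x$ is pinned down (up to the unavoidable $\pm$ ambiguity) to a value depending only on the arc containing $x$: completing $\omega$ by an arc of $\partial S_{T,L}$ from $x$ back to $a$ yields a simple closed curve of total turning $\pm2\pi$, so ${\rm W}_\omega(a,x)$ equals $\pm2\pi$ minus the (fixed) turning of that boundary arc — a general lattice path would accumulate further multiples of $2\pi$, but self-avoidance excludes this. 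Using the reflection symmetry of $S_{T,L}$ in the horizontal line through $a$ (which sends $\omega$ to a walk of the same length, ending at the conjugate mid-edge with opposite winding) to see that the whole sum is real, and feeding $\sigma=\tfrac58$ and the admissible windings into $(x-v_x)e^{-i\sigma{\rm W}_\omega(a,x)}$, the contribution of each $\omega$ reduces to $\tfrac12\,z_c^{\ell(\omega)}$ for $x\in\beta$ (winding $0$), to $-\tfrac12\cos(\tfrac{5\pi}{8})\,z_c^{\ell(\omega)}=\tfrac12\cos(\tfrac{3\pi}{8})\,z_c^{\ell(\omega)}$ for $x\in\alpha\setminus\{a\}$ (winding $\pm\pi$), and to $\tfrac12\cos(\tfrac{\pi}{4})\,z_c^{\ell(\omega)}$ for $x\in\epsilon\cup\bar\epsilon$. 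Summing over $\omega$ and $x$ turns the telescoped identity into
\[
  -\tfrac12+\tfrac12\cos(\tfrac{3\pi}{8})\,A_{T,L}(z_c)+\tfrac12\,B_{T,L}(z_c)+\tfrac12\cos(\tfrac{\pi}{4})\,E_{T,L}(z_c)=0,
\]
which is \eqref{equation box} after multiplication by $-2$, with $c_\alpha=\cos(\tfrac{3\pi}{8})$ and $c_\epsilon=\cos(\tfrac{\pi}{4})$.

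The main obstacle is this second step — specifically, the geometric bookkeeping: rigorously establishing, arc by arc, the list of admissible windings of a self-avoiding walk from $a$ to that arc (a genuine topological/combinatorial point resting on simple connectivity of $S_{T,L}$ and on self-avoidance), and then checking that the edge-orientation factor $\tfrac12e^{i\theta_x}$, the phase $e^{-i\sigma{\rm W}}$ with $\sigma=\tfrac58$, and the reflection symmetry combine to give exactly the real cosines $\cos(\tfrac{3\pi}{8})$ and $\cos(\tfrac{\pi}{4})$, carefully accounting for the possible mid-edge orientations occurring along each of $\alpha$, $\epsilon$ and $\bar\epsilon$. The telescoping of the interior contributions and the identity $F_{z_c}(a)=1$ are routine by comparison.
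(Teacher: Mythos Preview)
Your proposal is correct and follows essentially the same approach as the paper: sum \eqref{relation around vertex} over all vertices of $S_{T,L}$, observe that interior mid-edges cancel, and evaluate the surviving boundary contributions using the fixed windings on each arc together with the reflection symmetry. The paper's proof is in fact slightly terser on precisely the point you flag as the main obstacle---it simply asserts the admissible windings on $\alpha$, $\beta$, $\epsilon$, $\bar\epsilon$ rather than justifying them via a closed-curve argument---so your treatment is, if anything, more explicit.
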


\begin{proof}
We fix $z=z_c$ and drop it from the notation.
We sum the relation \eqref{relation around vertex}
over all vertices in $V(S_{T,L})$.
Contributions at interior mid-edges vanish and we arrive at
\begin{equation}\label{sum}
    -\sum_{x\in \alpha}F(x)+\sum_{x\in \beta}F(x)
    +j\sum_{x\in \epsilon}F(x)+\bar{j}\sum_{x\in \bar{\epsilon}}F(x)
    =0.
\end{equation}
The winding of any SAW
from $a$ to the bottom part of $\alpha$ is $-\pi$, while the winding
to the top part is $\pi$. Using this and symmetry,
together with the fact that the only SAW from $a$ to $a$ has
length $0$, we conclude that
\begin{equation}
\lbeq{Falpha}
    \sum_{x\in \alpha}F(x)=F(a)+\sum_{x\in \alpha\setminus \{a\}}F(x)
    =1+\frac{ e^{- i\sigma \pi}
    + e^{i\sigma \pi}}{2}A_{T,L}=1-c_{\alpha}A_{T,L}.
\end{equation}
Similarly, the winding from $a$ to any half-edge
in $\beta$, $\epsilon$ or $\bar{\epsilon}$ is respectively $0$,
$\frac {2\pi}3$ or $-\frac{2\pi}3$.  Therefore, again using symmetry,
\begin{equation}
\lbeq{Fbeta}
\sum_{x\in \beta}F(x)=B_{T,L},\quad
\quad j\sum_{x\in \epsilon}F(x)+\bar{j}\sum_{x\in \bar{\epsilon}}F(x)
=c_{\epsilon}E_{T,L}.
\end{equation}
The proof is completed by inserting \refeq{Falpha}--\refeq{Fbeta}
into \eqref{sum}.
\end{proof}

The sequences $(A_{T,L}(z))_{L>0}$ and $(B_{T,L}(z))_{L>0}$ are increasing
in $L$ and are bounded for $z\leq z_c$, thanks to \eqref{equation box}
and the monotonicity in $z$. Thus they have limits
\begin{align}
    A_T(z)&=\lim_{L\rightarrow \infty}A_{T,L}(z)
    =\sum_{\omega\subset S_T:\ a\rightarrow\alpha\setminus \{a\}} z^{\ell(\omega)},\\
    B_T(z)&=\lim_{L\rightarrow \infty}B_{T,L}(z)=\sum_{\omega\subset S_T:\ a\rightarrow \beta} z^{\ell(\omega)}.
\end{align}
When $z=z_c$, via \eqref{equation box} again, we conclude that
$(E_{T,L}(z_c))_{L>0}$ is decreasing
and converges to a limit $E_T(z_c)=\lim_{L\rightarrow \infty}E_{T,L}(z_c)$.
Thus, by \eqref{equation box},
\begin{equation}\label{equation strip}
    1=c_{\alpha}A_T(z_c) +B_T(z_c) +c_{\epsilon}E_T(z_c).
\end{equation}

\begin{proof}[Proof of Theorem~\ref{thm:cchex}]
The bridge generating function is given by $B(z)=\sum_{T=0}^\infty B_T(z)$.
Recall that it suffices to show that $B(z)<\infty$ for $z<z_c$,
and that $B(z_c)=\infty$ or $\chi(z_c)=\infty$.

We first assume $z<z_c$.
Since $B_T(z)$ involves only bridges of length at least $T$, it follows from
\eqref{equation strip} that
\begin{equation}
    B_T(z)\leq \left(\frac{z}{z_c}\right)^TB_T(z_c)
    \leq \left(\frac{z}{z_c}\right)^T,
\end{equation}
and hence $B(z)$ is finite since the right-hand side is summable.

It remains to prove that $B(z_c)=\infty$ or $\chi(z_c)=\infty$.
We do this by considering two
separate cases.
Suppose first that,
for some $T$, $E_T(z_c)>0$. As noted previously, $E_{T,L}(z_c)$
is decreasing in $L$. Therefore, as required,
\begin{equation}
    \chi(z_c)\geq \sum_{L=1}^\infty E_{T,L}(z_c)
    \geq \sum_{L=1}^\infty E_{T}(z_c)=\infty.
\end{equation}
It remains to consider the case that
 $E^{z_c}_T=0$ for every $T$.
In this case, \eqref{equation strip} simplifies to
\begin{equation}\label{infinite strip}
    1=c_{\alpha}A_T(z_c)+B_T(z_c).
\end{equation}
Observe that walks contributing to $A_{T+1}(z_c)$ but not to $A_T(z_c)$
must visit some vertex adjacent to the right edge of $S_{T+1}$.
Cutting such a walk at the first such point (and adding half-edges to the
two halves), we obtain two bridges of span $T+1$ in $S_{T+1}$. We conclude
from this that
\begin{equation}\label{rec relation}
    A_{T+1}(z_c)-A_T(z_c)\leq z_c\left(B_{T+1}(z_c)\right)^2.
\end{equation}
Combining \eqref{infinite strip} for $T$ and $T+1$ with \eqref{rec relation},
we can write
\begin{align}
0
&=[c_{\alpha}A_{T+1}(z_c)+B_{T+1}(z_c)]-[c_{\alpha}A_{T}(z_c)+B_{T}(z_c)]
\nonumber \\
&\leq c_{\alpha}z_c\left(B_{T+1}(z_c)\right)^2+B_{T+1}(z_c)-B_T(z_c),
\end{align}
so
\begin{equation}
c_{\alpha}z_c\left(B_{T+1}(z_c)\right)^2+B_{T+1}(z_c)\geq B_T(z_c).
\end{equation}
It is an easy exercise to verify by induction that
\begin{equation} 
B_T(z_c)\geq \min \{B_1(z_c),1/(c_{\alpha}z_c)\}\frac {1}{T}
\end{equation}
for every $T\geq1$.  This implies, as required, that
\begin{equation}
B(z_c)\geq \sum_{T=1}^\infty B_T(z_c)=\infty.
\end{equation}
This completes the proof.
\end{proof}

\subsection{Conjecture~\ref{conjecture Lawler Schramm Werner}
and the holomorphic observable}

Recall the statement of Conjecture~\ref{conjecture Lawler Schramm Werner}.
When formulated on ${\mathbb H}$, this conjecture concerns
a simply connected domain $\Omega$ in the complex plane $\C$
with two points $a$ and $b$ on the boundary, with a discrete approximation
given by the largest
finite domain $\Omega_{\delta}$ of $\delta {\mathbb H}$ included in $\Omega$,
and with $a_\delta$ and
$b_\delta$ the closest vertices of $\delta {\mathbb H}$ to $a$ and $b$
respectively.
A probability measure
$\mathbb{P}_{z,\delta}$ is defined on the set of SAWs
$\omega$ between $a_\delta$ and $b_\delta$ that remain in
$\Omega_\delta$ by assigning to $\omega$ a  weight
proportional to $z_c^{\ell(\omega)}$.
 We obtain a random curve denoted $\omega_\delta$.
 We can also
define the observable in this context, and we denote it by
$F_\delta$.
Conjecture~\ref{conjecture Lawler Schramm Werner} then asserts that
the random curve $\omega_\delta$ converges to
$\SLE_{8/3}$ from $a$ and $b$ in the domain $\Omega$.

A possible approach to proving
Conjecture~\ref{conjecture Lawler Schramm Werner}
might be the following.
First, prove a precompactness result for self-avoiding walks.  Then,
by taking a subsequence, we could
assume that the curve $\gamma_\delta$ converges to a continuous curve
(in fact, the limiting object would need to be a Loewner chain,
see \cite{Beff11}).
The second step would consist in identifying the possible limits.
The holomorphic observable should play a crucial role in this step.
Indeed, if $F_\delta$ converges when rescaled to an explicit function,
one could use the \emph{martingale technique} introduced in \cite{Smir06}
to verify that the only possible limit is $\SLE_{8/3}$.

Regarding the convergence of $F_\delta$,
we first recall that in the discrete setting contour integrals
should be performed along dual edges.  For ${\mathbb H}$, the
dual edges form a triangular lattice, and
Lemma~\ref{integral contour} has the enlightening interpretation that
the contour integral vanishes
along any elementary dual triangle.
Any area enclosed
by a discrete closed dual contour is a union of elementary
triangles, and hence the
integral along any discrete closed
contour also vanishes.
This is a discrete analogue of Morera's theorem. It implies that if the
limit of $F_\delta$ (properly rescaled) exists and is continuous, then it
is automatically holomorphic.
By studying the boundary conditions, it is even
possible to identify the limit.  This leads to the following
conjecture, which is based on ideas in \cite{Smir06}.

\begin{conjecture}
Let $\Omega$ be a simply connected domain (not equal to $\mathbb{C}$),
let $z \in \Omega$,
and let $a,b$ be two distinct points on the boundary of $\Omega$.
We assume that the boundary of $\Omega$ is smooth near $b$.
For
$\delta>0$, let $F_\delta$ be the holomorphic observable
in the domain
$(\Omega_\delta,a_\delta,b_\delta)$ approximating $(\Omega,a,b)$,
and let $z_\delta$ be the closest point in $\Omega_\delta$ to $z$.
Then
\begin{equation}
\lbeq{Fconj}
\lim_{\delta\rightarrow 0} \frac {F_\delta(a_\delta,z_\delta)}
{F_\delta(a_\delta,b_\delta)}
=\left(\frac{\Phi'(z)}{\Phi'(b)}\right)^{5/8},
\end{equation}
where $\Phi$ is a conformal map from $\Omega$ to the upper half-plane
mapping a to $\infty$ and $b$ to 0.
\end{conjecture}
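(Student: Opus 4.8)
The plan is to follow Smirnov's strategy for identifying conformally covariant scaling limits from discrete holomorphic observables, applied here to the parafermionic observable $F_\delta$ of spin $\sigma=\tfrac58$. Four ingredients are required: (i) an \emph{a priori} compactness statement for the normalised family $F_\delta(a_\delta,\cdot)/F_\delta(a_\delta,b_\delta)$ on compact subsets of $\Omega\setminus\{a\}$; (ii) discrete holomorphicity, ensuring that any subsequential limit is a genuine holomorphic function; (iii) identification of the boundary conditions carried over to the limit; and (iv) uniqueness for the resulting Riemann--Hilbert boundary value problem, whose explicit solution one then checks to be $(\Phi')^{5/8}$ up to a multiplicative constant.

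Ingredient (ii) is essentially already in hand. Lemma~\ref{integral contour} states that the discrete contour integral of $F_{z_c}$ vanishes around every elementary dual triangle; since any region bounded by a closed discrete dual contour is a union of such triangles, the integral vanishes around every closed discrete contour. This discrete Morera theorem implies that any continuous subsequential limit $f$ of the normalised observable $F_\delta(a_\delta,\cdot)/F_\delta(a_\delta,b_\delta)$ is holomorphic on $\Omega\setminus\{a\}$.

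For ingredient (iii), I would examine the phase $e^{-i\sigma{\rm W}_\omega(a,x)}$ for mid-edges $x$ on or near $\partial\Omega$. Exploiting the reflection symmetry of self-avoiding walks across a boundary edge --- the same device already used to evaluate the boundary sums in \refeq{Falpha} --- one should obtain, in the limit, the Riemann--Hilbert boundary condition that $f(x)\,\tau(x)^{\sigma}$ is real (up to a fixed, $\delta$-independent phase) for $x\in\partial\Omega$, where $\tau(x)$ is the unit tangent to $\partial\Omega$ at $x$. In addition, $f$ inherits a source singularity at $a$: since $\Phi$ sends $a$ to $\infty$ it has a simple pole there, so $(\Phi'(w))^{5/8}\sim\mathrm{const}\,(w-a)^{-5/4}$, and one must verify that $F_\delta$ produces exactly this type of blow-up near $a_\delta$ after normalisation (consistent with the ratio in \refeq{Fconj} diverging as $z\to a$). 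Near $b$, where $\partial\Omega$ is assumed smooth and $\Phi'(b)\neq0$, $f$ should be bounded. For ingredient (iv), one checks that the problem ``$f$ holomorphic on $\Omega$, $f\,\tau^{\sigma}$ real on $\partial\Omega$, with the prescribed $(w-a)^{-5/4}$ singularity at $a$ and bounded at $b$'' has a one-dimensional solution space: conformal covariance transports it to the upper half-plane, where the solution is explicit, and mapping back gives $f=\mathrm{const}\cdot(\Phi')^{5/8}$; dividing by the finite, non-zero value at $b$ yields \refeq{Fconj}.

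The genuine obstacle is ingredient (i). At present there is no \emph{a priori} estimate guaranteeing that $F_\delta(a_\delta,\cdot)/F_\delta(a_\delta,b_\delta)$ is bounded and equicontinuous on compact subsets of $\Omega$, nor even that $F_\delta(a_\delta,b_\delta)$ does not degenerate after rescaling. In the Ising and FK--Ising settings the analogous compactness is obtained because an auxiliary function built from the observable --- a discrete primitive of $F^2$ --- has a sign-definite discrete Laplacian and hence obeys a maximum principle; for self-avoiding walk no such positivity is available, and $F_\delta^2$ has no harmonic primitive. A plausible route would be to establish a Russo--Seymour--Welsh-type crossing estimate for self-avoiding walk in rectangles at $z=z_c$, or directly an interior H\"older bound for $F_\delta$; either would supply the missing compactness and let the argument above run to completion. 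Proving such an estimate is precisely the point that has so far resisted all attempts, which is why \refeq{Fconj} remains a conjecture.
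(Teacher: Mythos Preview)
The statement is a \emph{conjecture}, and the paper does not prove it. The surrounding discussion in the paper is brief: it observes that Lemma~\ref{integral contour} is a discrete Morera theorem, so that any continuous subsequential limit of the (rescaled) observable is automatically holomorphic, and asserts that by studying the boundary conditions one can identify the limit as $(\Phi')^{5/8}$; no further details are given, and the paper explicitly presents \refeq{Fconj} as a conjecture based on ideas in \cite{Smir06}.

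Your proposal is consistent with this, and in fact considerably more detailed than the paper's own discussion. You correctly isolate the four ingredients of Smirnov's strategy, note that (ii) is essentially Lemma~\ref{integral contour}, sketch the expected Riemann--Hilbert boundary condition for (iii) and the uniqueness argument for (iv), and---most importantly---you correctly identify (i), the a~priori compactness of the normalised family, as the genuine obstruction. Your remark that the Ising/FK--Ising proofs rely on a sign-definite discrete Laplacian for an auxiliary function (the primitive of $F^2$), and that no analogous positivity is known for self-avoiding walk, is exactly the point: this is why the statement remains open. There is nothing to correct; your write-up is an accurate heuristic account of why \refeq{Fconj} is believed and what is missing.
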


The right-hand side of \refeq{Fconj} is well-defined,
since the conformal map $\Phi$ is unique up to multiplication by
a real factor.


\subsection{Loop models and holomorphic observables.}
\label{sec:loopholo}

The original motivation for the introduction of the
holomorphic observable stems from a
more general context, which we now discuss.
The \emph{loop} $O(n)$ \emph{model} is a lattice model on a domain $\Omega$.
We restrict attention in this discussion to the hexagonal lattice ${\mathbb H}$.
A configuration $\omega$ is a family of self-avoiding loops, and its
probability is proportional to $z^{\#\text{edges}}n^{\#\text{loops}}$.
The \emph{loop parameter} $n$ is taken in $[0,2]$. There are other variants of
the model; for instance, one can introduce an interface going from one point $a$ on the boundary to the inside, or one interface between two points of the boundary.
The case $n=1$ corresponds to the Ising model, while the case $n=0$ corresponds
to the self-avoiding walk
(when allowing one interface).

Fix $n\in[0,2]$. It is a
non-rigorous prediction of \cite{Nien82}
that the model has the following three phases distinguished
by the value of $z$:
\begin{itemize}
\item If $z<1/\sqrt{2+\sqrt{2-n}}$, the loops are sparse
(typically of logarithmic size in the size of the domain).
This phase is subcritical.
\item If $z=1/\sqrt{2+\sqrt{2-n}}$, the loops are dilute
(there are loops of the size of the domain  which
are typically separated be a distance of the size of the domain).
This phase is critical.
\item If $z>1/\sqrt{2+\sqrt{2-n}}$, the loops are dense
(there are loops of the size of the domain  which
are typically separated be a distance much smaller
than the size of the domain).
This phase is critical as well.
\end{itemize}

Consider the special case of the Ising model at its critical
value $z_c=1/\sqrt 3$. Let $E$ denote the set of configurations
consisting only of self-avoiding loops, and let
$E(a,x)$ denote the set of configurations
with self-avoiding loops plus an interface $\gamma$ from $a$ to $x$.
Then, ignoring the issue of boundary conditions,
the Ising spin-spin correlation is given in terms of the loop model by
\begin{equation}
\label{eq:disordered}
\left\langle \sigma(a)\sigma(x)\right\rangle
=\frac{\sum_{\omega\in E(a,x)} z_c^{\#\text{edges}} }
{\sum_{\omega\in E} z_c^{\#\text{edges}}}.
\end{equation}
A natural operation in physics consists in
flipping the sign of the coupling constant
of the Ising model along a path from $a$ to $x$, in such a way that a
monodromy is introduced: if we follow a path turning around $x$,
spins are reversed after one whole turn.
See, e.g., \cite{RC06}.
In terms of the loop representation, the spin-spin correlation
$\left\langle \sigma(a)\sigma(x)\right\rangle_{\text{monodromy}}$ in
this new Ising model is
\begin{equation}
\lbeq{newIsing}
    \left\langle \sigma(a)\sigma(x)\right\rangle_{\text{monodromy}}
    =\frac{\sum_{\omega\in E(a,x)}
    (-1)^{\#\text{turns of }\gamma\text{ around } x}z_c^{\#\text{edges}} }
    {\sum_{\omega\in E} z_c^{\#\text{edges}}}
\end{equation}
where $\gamma$ is the interface between $a$ and $x$.

The numerator of the right-hand side of \refeq{newIsing} can be rewritten as
\begin{equation}
\sum_{\omega\in E(a,x)}
 e^{- i\frac12 {\rm W}_{\gamma}(a,x)}z^{\# \text{edges}}n^{\# \text{loops}}
\end{equation}
with $n=1$.
This is of the same form as the holomorphic observable \eqref{defparafermion}.
With general values of $n$, and with the freedom to choose the
value of $\sigma \in [0,1]$, we obtain the observable
\begin{equation}
    F_z(x)\defeq\sum_{\omega\in E(a,x)}  e^{- i\sigma
    {\rm W}_{\gamma}(a,x)}z^{\# \text{edges}}n^{\# \text{loops}}.
\end{equation}
The values of $\sigma$ and $z$
need to be chosen according to the value of $n$. If $\sigma=\sigma(n)$
satisfies $2\cos [(1+2\sigma)2\pi/3]=-n$
and $z=z(n)=1/\sqrt{2+\sqrt{2-n}}$, then the proof of
Lemma~\ref{integral contour} can be modified to yield its conclusion
in this more general context.

To conclude this discussion,
consider the loop $O(n)$ model with a family of self-avoiding loops
and a single interface between two boundary points $a$ and $b$.
For $n=1$ and $z=1/\sqrt{3}$, it has been proved that
the interface converges to $\SLE_{3}$ \cite{CS10}.
For other values of $z$ and $n$,
the following behaviour is conjectured \cite{Smir06}.
\begin{conjecture}
\label{conj:interface}
Fix $n\in [0,2]$.  For $z=1/\sqrt{2+\sqrt{2-n}}$, the interface between $a$
and $b$ converges, as the lattice spacing goes to zero, to
\begin{equation}
\SLE_\kappa\quad\text{with}\quad\kappa
=\frac{4\pi}{2\pi-\arccos (-n/2)}.
\end{equation}
For $z>1/\sqrt{2+\sqrt{2-n}}$, the interface between
$a$ and $b$ converges, as the lattice spacing goes to zero,
to
\begin{equation}
\SLE_\kappa\quad\text{with}\quad\kappa=\frac{4\pi}{\arccos (-n/2)}.
\end{equation}
\end{conjecture}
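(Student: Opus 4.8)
The plan is to implement Smirnov's \emph{martingale technique} \cite{Smir06}, in exactly the spirit of the concluding discussion above for the self-avoiding walk, but now for the full loop $O(n)$ model at its dilute (respectively dense) critical point, driven by the generalized parafermionic observable $F_z(x)=\sum_{\omega\in E(a,x)} e^{- i\sigma \mathrm{W}_{\gamma}(a,x)}z^{\#\text{edges}}n^{\#\text{loops}}$, with $\sigma=\sigma(n)$ determined by $2\cos[(1+2\sigma)2\pi/3]=-n$ and $z$ the corresponding critical weight. The starting point is the analogue of Lemma~\ref{integral contour}, which holds in this generality by the modification of its proof indicated in the text: the discrete contour integral of $F_z$ vanishes around every elementary dual triangle, hence (as in the discrete Morera argument discussed above) around every discrete closed dual contour, so that any subsequential scaling limit of a suitably rescaled $F_\delta$ is holomorphic on $\Omega$.

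First I would establish \emph{precompactness} of the family of interfaces $\{\gamma_\delta\}_{\delta>0}$. This requires a priori estimates, uniform in the mesh, that rule out the formation of long thin tubes by the interface — annulus-crossing (Kemppainen--Smirnov-type) conditions, which reduce to an RSW-type crossing estimate for the loop $O(n)$ model. Granting such bounds one extracts a subsequential limit, shows it is a Loewner chain with continuous driving function $W_t$, and simultaneously upgrades the subsequential limit of $F_\delta$ on the slit domain $\Omega\setminus\gamma_{[0,t]}$ to a genuine holomorphic function $f_t$.

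Second I would \emph{identify the limit}. The boundary behaviour of $F_\delta$ — a Riemann--Hilbert-type condition along the already-drawn interface and along $\partial\Omega$, together with the monodromy of argument $-2\pi\sigma$ that $F_\delta$ acquires on encircling the moving tip — should force $f_t$ to be, up to normalization, the conformally covariant holomorphic function $g_t'(x)^{s}\,(g_t(x)-W_t)^{\beta}$ on the upper half-plane, with exponents $s=s(\sigma)$ and $\beta=\beta(\sigma)$ dictated by the conformal weight of the observable and by its exponent at the tip ($g_t$ the Loewner uniformizing map). On the discrete side, $F_\delta$ at a fixed point, re-rooted at the growing tip of the exploration, is a martingale up to negligible errors; in the limit $f_t(x)$ is a local martingale, and vanishing of the It\^o drift along the Loewner equation (with $W_t=\sqrt{\kappa}\,B_t$) yields a quadratic relation among $s$, $\beta$ and $\kappa$ that pins down $\kappa$. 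Working out the arithmetic returns $\kappa=4\pi/(2\pi-\arccos(-n/2))$ at the dilute point $z=1/\sqrt{2+\sqrt{2-n}}$ and $\kappa=4\pi/\arccos(-n/2)$ at the dense point, recovering in particular $\SLE_{8/3}$ for $n=0$ and $\SLE_{3}$ for $n=1$.

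\textbf{Main obstacle.} The decisive gap is precompactness for general $n\in[0,2]$: no RSW theory is available for the loop $O(n)$ model except at $n=1$ (the Ising/FK case, where the interface is proved to converge to $\SLE_3$ \cite{CS10}) and, in a degenerate sense, $n=0$. Without such uniform crossing estimates one cannot pass to subsequential limits of $\gamma_\delta$, nor control $F_\delta$ near $\partial\Omega$ and near the tip. A second, intrinsic difficulty is that for generic $\sigma$ (for instance the self-avoiding walk value $\sigma=\tfrac58$) the observable is only \emph{weakly} discrete-holomorphic — the triangle relation of Lemma~\ref{integral contour} constrains a single real-linear combination of the values at a vertex, not the full discrete Cauchy--Riemann system — so even with crossing estimates in hand, promoting a limit of $F_\delta$ to the specific covariant function above is not automatic and seems to require genuinely new ideas. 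This is why the statement remains a conjecture.
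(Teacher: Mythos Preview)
The statement you are addressing is a \emph{conjecture} in the paper, not a theorem; the paper offers no proof and explicitly presents it as open, attributing the prediction to Smirnov \cite{Smir06}. So there is no proof in the paper to compare your proposal against.

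That said, your write-up is an accurate and well-informed summary of the heuristic route the paper itself sketches in the surrounding discussion: use the generalised parafermionic observable, exploit the analogue of Lemma~\ref{integral contour} (which the paper notes extends to general $n$ with the appropriate $\sigma$ and $z$), pass to a holomorphic limit via the discrete Morera argument, and then run the martingale identification. You also correctly isolate the two genuine obstructions --- the absence of RSW/crossing estimates for the loop $O(n)$ model away from $n=1$, and the fact that the observable satisfies only a single real-linear relation per vertex rather than full discrete Cauchy--Riemann equations --- and you rightly conclude that these are why the statement remains conjectural. Your proposal is therefore not a proof (nor does it claim to be), but as a roadmap it matches the paper's own commentary and the state of the art.
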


\begin{figure}
\includegraphics[width=0.90\hsize]{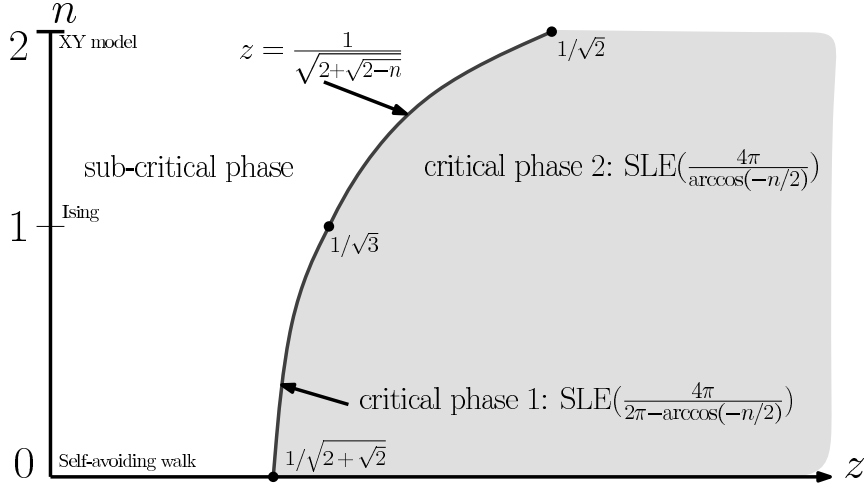}
\caption{Phase diagram for $O(n)$ models.}
\label{fig:O(n)}
\end{figure}

Conjecture~\ref{conj:interface} is summarised in Figure~\ref{fig:O(n)}.
The value of $\arccos$ is in $[0,\pi]$, so the first regime corresponds to
$\kappa \in [\frac 83, 4]$ and the second to $\kappa \in [4,8]$.
These two critical regimes do not belong to the same
universality class, in the sense that the scaling limit of
the interface is not the same. In particular,
since $\SLE_\kappa$ curves are simple
for $\kappa\leq 4$ but not for $\kappa>4$ (see \cite{Beff11}),
in the dilute phase
the interface is conjectured to be simple in the scaling limit,
but not in
the dense phase.  In addition, all the
 $\SLE_\kappa$ models for $\frac 83 \le \kappa \le 8$ arise in
 these $O(n)$ models. This rich
 behaviour is at the heart of the mathematical
 interest in $O(n)$ models. To prove
 the conjecture remains a major challenge in 2-dimensional statistical
 mechanics.

\section{The lace expansion}\lbsect{LaceExp}

\subsection{Main results}

In dimensions $d \ge 5$, it has been proved
that SAW has the same scaling behaviour as
SRW.  The following two theorems, due to Hara and Slade
\cite{HS92a,HS92b} and to Hara \cite{Hara08}, respectively, show
that the critical exponents $\gamma,\nu,\eta$ exist and take the
values $\gamma=1$, $\nu = \frac 12$, $\eta =0$, and that the scaling
limit is Brownian motion.
\begin{theorem}
  \label{thm:SAWLaceConclusion}
  Fix $d \ge 5$, and consider the nearest-neighbour SAW on
  $\Z^d$.
  There exist constants $A, D, \epsilon >0$ such that,
  as $n \to \infty$,
  \begin{align}
    c_n
    &=
    A\mu^n [1+O(n^{-\epsilon})],
    \\
    \E_n\abs{\omega(n)}^2
    &=
    D n [1+O(n^{-\epsilon})].
    \end{align}
    Also,
    \begin{align}
    \left( \frac{\omega(\floor{nt})}{\sqrt{Dn}} \right)_{t\geq 0}
    &\to (B_t)_{t\geq 0},
  \end{align}
  where $B_t$ denotes Brownian motion and the convergence is in distribution.
\end{theorem}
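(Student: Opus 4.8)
The plan is to prove Theorem~\ref{thm:SAWLaceConclusion} by the \emph{lace expansion}, following Hara and Slade \cite{HS92a,HS92b}. First I would derive, by an inclusion--exclusion expansion over the self-intersections of a walk (the ``lace'' expansion), the identity
\begin{equation}
G_z(x) = \delta_{0,x} + 2dz\,(D*G_z)(x) + (\Pi_z*G_z)(x),
\end{equation}
where $D(y)=(2d)^{-1}\indicator{\norm{y}_1=1}$, $*$ is convolution on $\Z^d$, and $\Pi_z(y)=\sum_{N\ge 1}(-1)^N\Pi_z^{(N)}(y)$ is the lace-expansion coefficient, each $\Pi_z^{(N)}(y)\ge 0$ being a sum over walks carrying an $N$-lace. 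This holds first as an identity of formal power series in $z$, and hence, once absolute convergence is established, as a genuine identity for $z$ below the radius of convergence. Taking Fourier transforms and solving for $\hat G_z$ gives
\begin{equation}
\hat G_z(k) = \frac{1+\hat\Pi_z(k)}{\,1-2dz\,\hat D(k)-\hat\Pi_z(k)\,},
\end{equation}
so that $\chi(z)=\hat G_z(0)$ and, at criticality, $1=2dz_c+\hat\Pi_{z_c}(0)$.

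\textbf{Step 2: the bootstrap.}
The heart of the proof is to show that $\hat\Pi_z$, together with the handful of $k$-derivatives needed below, is small uniformly for $z\in[0,z_c)$. Standard diagrammatic estimates bound $\Pi_z^{(N)}(y)$ by a ``diagram'' built from the two-point function, controlled by powers of the bubble $\sum_x G_z(x)^2$ times a parameter that is small when the model is spread out enough or the dimension large. The input that makes this work in $d\ge 5$ is precisely that the random-walk bubble $\int_{[-\pi,\pi]^d}(1-\hat D(k))^{-2}\,\ddk/(2\pi)^d$ is finite for $d>4$ (Problem~\ref{problem:intersection}). I would then introduce a function $f(z)$ combining a few weighted sup-norms of $\hat G_z$ relative to the random-walk two-point function, check that $f(0)$ lies safely below a target constant $K$, and show that $f(z)\le K$ forces (via the diagrammatic bounds) $\hat\Pi_z$ to be small enough to improve this to $f(z)<K$; continuity of $f$ on $[0,z_c)$ then propagates $f\le K$ to all $z<z_c$. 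For the nearest-neighbour model at $d=5$ this step requires the sharp computer-assisted numerical estimates of Hara and Slade; for large $d$, or for the spread-out model with large $L$, the smallness parameter is of order $d^{-1}$ or $L^{-d}$ and the argument is softer. Letting $z\uparrow z_c$ then shows $\hat\Pi_{z_c}(k)$ exists, is small, and is as regular in $(z,k)$ near $(z_c,0)$ as the two-point function bounds allow.

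\textbf{Step 3: the exponents $\gamma$ and $\eta$.}
Since $\hat\Pi_z(0)$ is bounded and H\"older continuous in $z$ up to $z_c$, the formula for $\hat G_z(0)$ gives $\chi(z)=A(1-z/z_c)^{-1}(1+O((1-z/z_c)^\epsilon))$ for some $A,\epsilon>0$; moreover the denominator has no other zero in $|z|\le z_c$, so $\chi$ continues analytically to a punctured neighbourhood of $z_c$, and a contour-integral/Tauberian-with-remainder argument (a quantitative version of Problem~\ref{problem:tauberian}) converts this into $c_n=A'\mu^n(1+O(n^{-\epsilon}))$, i.e.\ $\gamma=1$. For $\eta=0$: using $1=2dz_c+\hat\Pi_{z_c}(0)$ one finds that $\hat G_{z_c}(k)^{-1}$ equals a positive bounded prefactor times $2dz_c(1-\hat D(k))+(\hat\Pi_{z_c}(0)-\hat\Pi_{z_c}(k))$; since $1-\hat D(k)=|k|^2/(2d)+O(|k|^4)$ while the last difference is $O(|k|^2)$ with a constant small compared to $z_c$, the coefficient of $|k|^2$ is positive, so $\hat G_{z_c}(k)\asymp|k|^{-2}$ as $k\to 0$, hence $G_{z_c}(x)\asymp|x|^{-(d-2)}$ and $\eta=0$.

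\textbf{Step 4: mean-square displacement and the invariance principle.}
For $\nu=\tfrac12$, apply the $k$-Laplacian at $k=0$: regularity of $\hat\Pi_z$ gives $-\Delta_k\hat G_z(k)|_{k=0}\sim\mathrm{const}\,(1-z/z_c)^{-2}$, and a Tauberian argument combined with the $c_n$ asymptotics of Step~3 yields $\E_n\abs{\omega(n)}^2\sim Dn$. For the functional central limit theorem I would establish (i) tightness of the rescaled paths and (ii) convergence of finite-dimensional distributions. Tightness follows from a fourth-moment bound $\E_n\abs{\omega(\floor{nt})-\omega(\floor{ns})}^4\le C(t-s)^2 n^2$---itself a consequence of the two-point function estimates---via the Kolmogorov criterion. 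The convergence of finite-dimensional distributions is the main obstacle: because a concatenation of self-avoiding walks need not be self-avoiding, one cannot simply factorise $c_n(x_1,\ldots,x_j)$, the number of $n$-step SAWs visiting prescribed points at prescribed times, into a product of two-point functions; instead one must run a further lace-type expansion for these multi-point quantities, show that the correction terms are negligible on the diffusive scale (once again using finiteness of the bubble for $d>4$), and match the resulting Fourier--Laplace transform to that of Brownian motion with diffusion constant $D$. Controlling these multi-point corrections uniformly in $n$ and in the marked points is the technically heaviest part of the argument.
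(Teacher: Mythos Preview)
Your outline is essentially the Hara--Slade strategy and is broadly correct, but note that the paper itself does \emph{not} prove Theorem~\ref{thm:SAWLaceConclusion}: it attributes the result to \cite{HS92a,HS92b}, derives the lace expansion (\refssect{LEInclExcl}), and then only sketches the bootstrap argument for the bubble condition, concluding with the weaker statement $\chi(z)\asymp(1-z/z_c)^{-1}$ (\refcoro{chiAsympFollows}) and, in Problem~\ref{problem:chi-asymptotic}, the asymptotic $\chi(z)\sim A(1-z/z_c)^{-1}$. Everything from your Step~3 onward---the $O(n^{-\epsilon})$ error for $c_n$, the mean-square displacement, and the functional CLT---lies outside what the paper carries out.

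One correction to your Step~1: with the paper's convention for $\Pi_z$ (see \refeq{GzxRelation}--\refeq{GhatFormula}), the Fourier identity is
\[
\hat G_z(k)=\frac{1}{1-2dz\,\hat D(k)-\hat\Pi_z(k)},
\]
with numerator $1$, not $1+\hat\Pi_z(k)$. Your formula corresponds to a different (nonstandard) organisation of the expansion; it is harmless for the heuristics but would propagate errors if used literally in Step~3.

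Otherwise your sketch is accurate, including the point that the nearest-neighbour case at $d=5$ requires the numerically sharp diagrammatic estimates of \cite{HS92a,HS92b}, and that the invariance principle demands control of multi-point functions (finite-dimensional distributions) via a further expansion---this is indeed the most delicate part of \cite{HS92a} and is not touched in these notes.
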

\begin{theorem}
  \label{thm:SAWLaceGreenConclusion}
  Fix $d \ge 5$, and consider the nearest-neighbour SAW on
  $\Z^d$.
  There are constants $c,\epsilon>0$ such that,
  as $x\to\infty$,
  \begin{equation}
    G_{z_c}(x)=\frac{c}{\abs{x}^{d-2}}
    \left[1+O\left( \abs{x}^{-\epsilon} \right) \right].
  \end{equation}
\end{theorem}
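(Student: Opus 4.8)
The plan is to feed the lace expansion into a Fourier inversion. The lace expansion developed in \refSect{LaceExp} --- the engine behind Theorem~\ref{thm:SAWLaceConclusion} --- produces, for $d\ge5$ and $z\le z_c$, a function $\Pi_z$ (the lace-expansion coefficient) for which, writing $D(x)=(2d)^{-1}\indicator{\norm{x}_1=1}$ and $F_z(x)=2dz\,D(x)+\Pi_z(x)$, the two-point function satisfies
\[
\hat G_z(k)=\frac{1}{1-\hat F_z(k)}
\]
with $\hat f(k)=\sum_{x\in\Z^d}f(x)e^{ik\cdot x}$ as in Problem~\ref{problem:2point-1d}. At the critical point, Theorem~\ref{thm:SAWLaceConclusion} gives $c_n\sim A\mu^n$ and hence $\hat G_{z_c}(0)=\chi(z_c)=\sum_n c_n z_c^n=\infty$, so $\hat F_{z_c}(0)=1$; thus $F_{z_c}$ has unit total mass and $G_{z_c}=\sum_{n\ge0}F_{z_c}^{*n}$ is the Green's function of a (possibly signed) step kernel that is a controlled perturbation of $2dz_c D$.

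First I would extract the quantitative output of the lace expansion at $z=z_c$ for $d\ge5$: the series defining $\Pi_{z_c}$ converges absolutely, $\hat\Pi_{z_c}$ is twice continuously differentiable on $[-\pi,\pi]^d$ with a small enough bound on its Hessian that $\sigma^2>0$ below, and --- crucially --- $\Pi_{z_c}(x)$ decays \emph{strictly faster} than the free Green's function, say $\abs{\Pi_{z_c}(x)}\le C\abs{x}^{-(d-2)-\epsilon_0}$ for some $\epsilon_0>0$ (this arises from bounding the lace diagrams by products and convolutions of two-point functions, which for $d\ge5$ leaves a genuine margin). Together with the $\Z^d$-symmetry of $\Pi_{z_c}$ this yields $1-\hat F_{z_c}(k)=\sigma^2\abs{k}^2+O(\abs{k}^{2+\epsilon_1})$ as $k\to0$, with $\sigma^2>0$, and $\abs{1-\hat F_{z_c}(k)}$ bounded below uniformly for $k$ bounded away from the origin. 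This is the step I expect to be the main obstacle: the nearest-neighbour model in $d=5,6,\dots$ has no small parameter, so producing these bounds requires running the Hara--Slade bootstrap for the two-point function supplemented by computer-assisted rigorous estimates on the bubble and related diagrams, and, beyond what Theorem~\ref{thm:SAWLaceConclusion} needs, one must control the \emph{spatial decay rate} of $\Pi_{z_c}$ and of the lace-expansion remainders rather than merely their summability.

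With these bounds in hand I would compare $G_{z_c}$ to the massless lattice Green's function $C$ defined by $\hat C(k)=(\sigma^2\abs{k}^2)^{-1}$ on $[-\pi,\pi]^d$, for which the classical analysis gives (for $d\ge3$) $C(x)=a_d\sigma^{-2}\abs{x}^{-(d-2)}[1+o(1)]$. Writing the Fourier inversion of $\hat G_{z_c}-\hat C$ and splitting the integral at $\abs{k}=\delta$: for $\abs{k}>\delta$ the integrand is a bounded smooth function of $k$, contributing a rapidly decaying term; for $\abs{k}\le\delta$ the Taylor remainder $1-\hat F_{z_c}(k)-\sigma^2\abs{k}^2=O(\abs{k}^{2+\epsilon_1})$ gives $\hat G_{z_c}(k)-\hat C(k)=O(\abs{k}^{-2+\epsilon_1})$, whose inverse transform is $O(\abs{x}^{-(d-2)-\epsilon_1})$. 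Alternatively --- and more robustly --- one resums to a convolution equation $G_{z_c}=C+(\text{fast-decaying kernel})*G_{z_c}$ and bootstraps the decay rate of $G_{z_c}-C$ directly in $x$-space, using the fast decay of $\Pi_{z_c}$ to keep each convolution integrable against $\abs{x}^{-(d-2)}$. Either route gives $G_{z_c}(x)=c\,\abs{x}^{-(d-2)}[1+O(\abs{x}^{-\epsilon})]$ with $\epsilon=\min\{\epsilon_0,\epsilon_1\}>0$ and $c=a_d/\sigma^2>0$; what remains --- tracking which decay exponents survive the convolutions, and handling the non-smoothness of $(\sigma^2\abs{k}^2)^{-1}$ at the origin --- is routine once the diagrammatic estimates are in place.
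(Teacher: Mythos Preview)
The paper does not prove this theorem. It is stated as a result of Hara \cite{Hara08}, and the paper explicitly restricts itself to a ``modest'' goal: deriving the lace expansion in \refSect{LaceExp} and sketching, in \refSect{ConvLace}, only the proof that $\gamma=1$ via the bubble condition. Neither Theorem~\ref{thm:SAWLaceConclusion} nor Theorem~\ref{thm:SAWLaceGreenConclusion} is proved in these notes.

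That said, your sketch is a fair outline of the strategy in \cite{Hara08}. You have correctly identified the structure --- the lace expansion yields $\hat G_{z_c}(k)=(1-\hat F_{z_c}(k))^{-1}$ with $F_{z_c}$ a perturbation of $2dz_cD$ --- and, more importantly, the real obstacle: for the nearest-neighbour model in $d\ge5$ one needs not just summability of $\Pi_{z_c}$ but a quantitative \emph{spatial} decay estimate $|\Pi_{z_c}(x)|\le C|x|^{-(d-2)-\epsilon_0}$, and there is no small parameter to exploit. Hara's argument runs a bootstrap directly in $x$-space, proving decay of $G_{z_c}(x)$ and of the lace-expansion diagrams simultaneously; your ``alternative'' convolution-equation route is closer to what is actually done than the pure Fourier comparison you describe first. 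The Fourier-side argument you sketch (comparing $\hat G_{z_c}$ to $(\sigma^2|k|^2)^{-1}$ and inverting) is heuristically right but glosses over the circularity: the decay of $\Pi_{z_c}$ that feeds into the Taylor expansion of $1-\hat F_{z_c}$ itself depends on the decay of $G_{z_c}$ through the diagrammatic bounds, so the two must be established together rather than sequentially.
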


The proofs are based on the lace expansion, a technique that
was introduced by Brydges and Spencer \cite{BS85} to
study the weakly SAW in dimensions $d > 4$.
Since 1985, the method of lace expansion has been highly developed
and extended to several other models: percolation ($d > 6$), oriented percolation
($d > 4$ \emph{spatial} dimensions), the
contact process ($d > 4$), lattice trees and lattice animals ($d > 8$),
the Ising model ($d > 4$), and to random subgraphs of high-dimensional
transitive graphs such as the Boolean cube.
For a review and references, see \cite{Slad06}.

Versions of Theorems~\ref{thm:SAWLaceConclusion}--\ref{thm:SAWLaceGreenConclusion}
have been proved also for spread-out models; see \cite{MS93,HHS03}.
More recently, the above two theorems have been extended also to
study long-range SAWs based on simple random walks which take steps
of length $r$ with probability proportional to $r^{-d-\alpha}$ for some
$\alpha$.  For $\alpha \in (0,2)$, the upper critical dimension
(recall Section~\ref{sec:d4})
is reduced from $4$ to $2\alpha$, and the Brownian limit is replaced by
a stable law in dimensions $d>2\alpha$ \cite{Heyd09}.
Further results in this direction can be found in
\cite{HHS08,CS11}.

Our goal now is modest.
In this section, we will derive the lace expansion.
In \refSect{ConvLace}, we will sketch a proof
of how it can be used to prove that $\gamma=1$, in the sense that
\begin{equation}
\lbeq{chiAsymp}
\chi(z) \asymp \left( 1-z/z_c \right)^{-1} \qquad\text{as $z\increasesto z_c$,}
\end{equation}
both for the nearest-neighbour model with $d\geq d_0 \gg 4$,
and for the spread-out model with $L\geq L_0(d) \gg 1$ and any $d>4$.
Here, the notation $f(z) \asymp g(z)$ means that there exist positive
$c_1,c_2$ such that $c_1g(z) \le f(z) \le c_2g(z)$ holds uniformly in $z$.
The lower bound in \refeq{chiAsymp} holds in all dimensions
and follows immediately from the elementary
observation in \refeq{mulambdaDefn} that $c_n\geq \mu^n = z_c^{-n}$, since
\begin{equation}
  \chi(z)
  =
  \sum_{n=0}^\infty c_n z^n
  \geq
  \sum_{n=0}^\infty (\mu z)^n
  =
  \frac{1}{1-z/z_c}
  \lbeq{chiLowerBound}
\end{equation}
for $z<z_c$.
It therefore suffices to prove that in high dimensions we have
the complementary upper bound
\begin{equation}
\lbeq{chiUpperBound}
\chi(z)\leq \frac{C}{1-z/z_c}
\end{equation}
for some finite constant $C$.

\subsection{The differential inequality for
\texorpdfstring{$\chi(z)$}{the susceptibility}}\lbssect{DiffIneq}

We prove \refeq{chiUpperBound} by means of a
\emph{differential inequality}---an inequality relating
$\tfrac{d}{dz}\chi(z)$ to $\chi(z)$.
The derivation of the differential inequality and its implication for
\refeq{chiUpperBound} first appeared in \cite{BFF84}.

The differential inequality is expressed in terms of the quantity
\begin{equation}
  \lbeq{BubbleDefn}
  \Bubble(z)=\sum_{x\in \Z^d} G_z(x)^2
\end{equation}
for $z\leq z_c$.
\refprop{TwoPointSubcritDecay} ensures that $\Bubble(z)$ is finite for $z<z_c$.
If we assume, as usual, that $G_{z_c} \sim c|x|^{-(d-2+\eta)}$, then
$\Bubble(z_c)$ will be finite precisely when $d>4-2\eta$.  With Fisher's
relation \refeq{FishersRel} and the predicted values of $\gamma$ and $\nu$
from \refeq{gammaPrediction} and \refeq{nuPrediction}, this inequality
can be expected to hold, and
correspondingly $\Bubble(z_c)<\infty$, only for $d>4$
(this is a prediction, not a theorem).
We refer to
$\Bubble(z)$ as the \emph{bubble diagram} because we
express \refeq{BubbleDefn} diagramatically as
\begin{equation}
  \lbeq{BubbleDiagram}
  \Bubble(z)=\raisebox{-0.26in}{\begin{picture}(0,0)%
\includegraphics{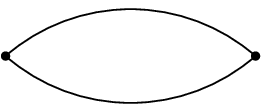}%
\end{picture}%
\setlength{\unitlength}{3947sp}%
\begingroup\makeatletter\ifx\SetFigFont\undefined%
\gdef\SetFigFont#1#2#3#4#5{%
  \reset@font\fontsize{#1}{#2pt}%
  \fontfamily{#3}\fontseries{#4}\fontshape{#5}%
  \selectfont}%
\fi\endgroup%
\begin{picture}(1252,589)(1175,-716)
\put(1201,-661){\makebox(0,0)[b]{\smash{{\SetFigFont{10}{12.0}{\familydefault}{\mddefault}{\updefault}{\color[rgb]{0,0,0}$0$}%
}}}}
\end{picture}%
}
  \, .
\end{equation}
In this diagram, each line represents a factor $G_z(x)$ and the unlabelled
vertex is summed over $x\in\Z^d$.  The condition that $\Bubble(z_c)<\infty$
will be referred to as the \emph{bubble condition}.

We now derive the differential inequality
\begin{equation}
  \lbeq{DiffIneq}
  \frac{d}{dz}\left( z \chi(z) \right) \geq \frac{\chi(z)^2}{\Bubble(z)}
  .
\end{equation}
Assuming \refeq{DiffIneq}, we obtain \refeq{chiUpperBound} as
if we were solving a differential equation.
Namely, using the monotonicity of $\Bubble$, we first replace
$\Bubble(z)$ by $\Bubble(z_c)$ in \refeq{DiffIneq}.
We then rearrange and integrate from
$z$ to $z_c$, using the terminal value $\chi(z_c)=\infty$ from
\refeq{chiLowerBound}, to obtain
\begin{align}
  \frac{1}{z^2 \chi(z)^2} \frac{d}{dz}\left( z \chi(z) \right)
  &\geq
  \frac{1}{z^2 \Bubble(z_c)}
  \notag\\
  -\frac{d}{dz}\left( \frac{1}{z\chi(z)} \right)
  &\geq
  \frac{d}{dz}\left( \frac{-1}{z \Bubble(z_c)} \right)
  \notag\\
  -0+\frac{1}{z\chi(z)}
  &\geq
  \frac{1}{\Bubble(z_c)} \left( -\frac{1}{z_c} + \frac{1}{z} \right)
  \notag\\
  \frac{\Bubble(z_c)}{1-z/z_c}
  &\geq
  \chi(z).
\end{align}
Thus we have reduced the proof of \refeq{chiAsymp}
to verifying \refeq{DiffIneq} and showing
that $\Bubble(z_c)<\infty$ in high dimensions.
We will prove \refeq{DiffIneq} now, and in \refSect{ConvLace}
we will sketch the proof of the bubble condition in high dimensions.

We will use diagrams to derive \refeq{DiffIneq}.  A proof using more
conventional mathematical notation can be found, e.g.,\ in \cite{Slad06}.
In the diagrams in the next two paragraphs, each
dot denotes a point in $\Z^d$, and if a dot is unlabelled then
it is summed over all points in $\Z^d$.
Each arc (or line) in a diagram represents
a generating function for a SAW connecting the endpoints.
At times SAWs corresponding to distinct lines must be mutually-avoiding.
We will indicate this condition by labelling diagram lines and listing
in groups those that mutually avoid.

With these conventions, we can describe the two-point function and the
susceptibility succinctly by
\begin{align}
  \lbeq{GchiAsDiagrams}
  G_z(x)=
  \raisebox{-0.2in}{\input{Gzofx.pspdftex}} \, ,
  &&\chi(z)=
  \raisebox{-0.2in}{\begin{picture}(0,0)%
\includegraphics{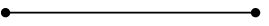}%
\end{picture}%
\setlength{\unitlength}{3947sp}%
\begingroup\makeatletter\ifx\SetFigFont\undefined%
\gdef\SetFigFont#1#2#3#4#5{%
  \reset@font\fontsize{#1}{#2pt}%
  \fontfamily{#3}\fontseries{#4}\fontshape{#5}%
  \selectfont}%
\fi\endgroup%
\begin{picture}(1252,305)(1175,-641)
\put(1201,-586){\makebox(0,0)[b]{\smash{{\SetFigFont{10}{12.0}{\familydefault}{\mddefault}{\updefault}{\color[rgb]{0,0,0}$0$}%
}}}}
\end{picture}%
} \, .
\end{align}
In order to obtain \refeq{DiffIneq}, let us consider
$Q(z)=\frac{d}{dz}(z \chi(z))$.  Note that $Q(z)$ can be regarded as
the generating function for
SAWs weighted by the number of vertices visited in the walk.
We represent this diagrammatically as:
\begin{equation}
  \lbeq{QDiagram}
  Q(z)=\sum_{n=0}^\infty (n+1)c_n z^n=
  \raisebox{-0.33in}{\input{Qofz.pspdftex}} .
\end{equation}
In \refeq{QDiagram}, each segment represents a SAW path, and the
notation [12] indicates that SAWs 1 and 2 must be mutually avoiding,
apart from one shared vertex.

We apply inclusion-exclusion to \refeq{QDiagram},
first summing over all pairs of SAWs, mutually avoiding or not,
and then subtracting configurations where SAWs 1 and 2 intersect.
We parametrise the subtracted term according to the \emph{last} intersection
point along the second walk.  Renumbering the subwalks, we have
\begin{equation}
  \lbeq{QIncExcl}
  Q(z)
  =
  \raisebox{-0.2in}{\begin{picture}(0,0)%
\includegraphics{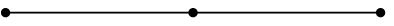}%
\end{picture}%
\setlength{\unitlength}{3947sp}%
\begingroup\makeatletter\ifx\SetFigFont\undefined%
\gdef\SetFigFont#1#2#3#4#5{%
  \reset@font\fontsize{#1}{#2pt}%
  \fontfamily{#3}\fontseries{#4}\fontshape{#5}%
  \selectfont}%
\fi\endgroup%
\begin{picture}(1852,305)(1175,-641)
\put(1201,-586){\makebox(0,0)[b]{\smash{{\SetFigFont{10}{12.0}{\familydefault}{\mddefault}{\updefault}{\color[rgb]{0,0,0}$0$}%
}}}}
\end{picture}%
}
  \: - \:
  \raisebox{-0.39in}{\input{QExcl.pspdftex}}
\end{equation}
where the notation $[124][34]$ means that
walks 1, 2 and 4 must be mutually avoiding except at the endpoints, whereas
walk 3 must avoid walk 4 but is allowed to intersect walks 1 and 2.
Also, SAWs 2 and 3 must each take at least one step.
We obtain an inequality by relaxing the avoidance pattern to [14],
keeping the requirement that the walk 23 should be non-empty:
\begin{align}
  Q(z)
  &\geq
  \raisebox{-0.2in}{}
  -
  \raisebox{-0.39in}{\input{QExclRelax.pspdftex}}
  \notag\\
  &=
\chi(z)^2-Q(z)(\Bubble(z)-1).
\lbeq{QIneq}
\end{align}
Rearranging gives the inequality \refeq{DiffIneq}.

\subsection{The lace expansion by inclusion-exclusion}\lbssect{LEInclExcl}

The proof of the bubble condition is based on the lace expansion.
The original derivation of the lace expansion by Brydges and Spencer
\cite{BS85} made
use of a certain graphical construction called a \emph{lace}.
Later, it was realised that repeated inclusion-exclusion leads to the same
expansion \cite{Slad91}.  We present the inclusion-exclusion approach
now; the approach via laces is treated in the problems of
\refSect{Tut2}.  The underlying graph plays little role in the derivation,
and the following discussion pertains to either nearest-neighbour
or spread-out SAWs.  Indeed, with minor modifications, the discussion
also applies on general graphs \cite{CS09}.

We use the convolution $(f  *  g)(x)=\sum_{y\in\Z^d} f(y) g(x-y)$
of two functions $f,g$ on $\Z^d$.
The lace expansion gives rise to a formula for $c_n(x)$, for $n \ge 1$,
of the form
\begin{align}
\lbeq{cnLaceExpansion}
\begin{split}
c_n(x)
&=
(c_1 * c_{n-1})(x) + \sum_{m=2}^n (\pi_m * c_{n-m})(x)
\\
&=
\sum_{y\in\Z^d} c_1(y) c_{n-1}(x-y) + \sum_{m=2}^n \sum_{y\in\Z^d} \pi_m(y) c_{n-m}(x-y)
,
\end{split}
\end{align}
in which the coefficients
$\pi_m(y)$ are certain combinatorial integers that we
will define below.
Note that the identity \refeq{cnLaceExpansion} would hold for
SRW with $\pi\equiv 0$.  The quantity $\pi_m(y)$ can therefore be understood as
a correction factor determining to what degree SAWs fail
to behave like SRWs.  In this sense, the lace expansion studies the
SAW as a perturbation of the SRW.


Our starting point is similar to that of the derivation of the differential
inequality \refeq{DiffIneq}, but now we will work with identities rather
than inequalities.  Also, rather than working with generating functions,
we will work instead with walks with a fixed number of steps and without
factors $z$: diagrams now arise from walks of fixed length.
We begin by dividing an $n$-step SAW ($n\geq 1$) into its first step and
the remainder of the walk.  Because of self-avoidance, these two parts
must be mutually avoiding, and we perform inclusion-exclusion on this
condition:
\begin{align}
\raisebox{-0.2in}{\input{Gzofx.pspdftex}}
&=
\raisebox{-0.33in}{\input{c1ConvcAvoid.pspdftex}}
\notag\\
&=
\raisebox{-0.2in}{\input{c1Convc.pspdftex}}
\: - \:
\raisebox{-0.33in}{\input{LoopConvcAvoid.pspdftex}}
\lbeq{LaceExpStep1}
\end{align}
where \begin{picture}(0,0)%
\includegraphics{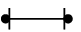}%
\end{picture}%
\setlength{\unitlength}{3947sp}%
\begingroup\makeatletter\ifx\SetFigFont\undefined%
\gdef\SetFigFont#1#2#3#4#5{%
  \reset@font\fontsize{#1}{#2pt}%
  \fontfamily{#3}\fontseries{#4}\fontshape{#5}%
  \selectfont}%
\fi\endgroup%
\begin{picture}(352,126)(1025,-724)
\end{picture}%
 indicates a single step.  In more detail, the
first term on the right-hand side represents $(c_1*c_{n-1})(x)$, and
the subtracted term represents the number of $n$-step walks from $0$ to $x$
which are self-avoiding apart from a single required return to $0$.
We again perform inclusion-exclusion,
first on the avoidance $[12]$ in
the second term of \refeq{LaceExpStep1}
(noting now the \emph{first} time along walk 2 that walk 1 is hit):
\begin{align}
\raisebox{-0.33in}{\input{LoopConvcAvoid.pspdftex}}
&=
\raisebox{-0.2in}{\input{LoopConvc.pspdftex}}
\: - \:
\raisebox{-0.33in}{\input{ThetaConvcAvoid.pspdftex}}
\lbeq{LaceExpStep2}
\intertext{and then on the avoidance $[34]$ in the second term of
\refeq{LaceExpStep2} (noting the \emph{first} time along walk 4 that walk 3 is hit):}
\raisebox{-0.33in}{\input{ThetaConvcAvoid.pspdftex}}
&=
\raisebox{-0.33in}{\input{ThetaConvc.pspdftex}}
\: - \:
\raisebox{-0.33in}{\input{ThirdLaceConvcAvoid.pspdftex}}.
\end{align}
The process is continued recursively.   Since the total number $n$
of steps is finite, the above process
terminates after a finite number of applications of inclusion-exclusion,
because each application uses at least one step.  The result is
\begin{align}
\raisebox{-0.2in}{\input{Gzofx.pspdftex}}
&=
\raisebox{-0.2in}{\input{c1Convc.pspdftex}}
\: - \:
\raisebox{-0.2in}{\input{LoopConvc.pspdftex}}
\notag\\
&\quad+ \:
\raisebox{-0.33in}{\input{ThetaConvc.pspdftex}}
\: - \:
\raisebox{-0.33in}{\input{ThirdLaceConvc.pspdftex}}
\: +\dots
\lbeq{lacepictures}
\end{align}

The first term on the right-hand side is just $(c_1*c_{n-1})(x)$.
In the remaining terms on the right-hand side, we regard the line ending
at $x$ as having length $n-m$, so that $m$ steps are used by the other
lines.  We also regard the line ending at $x$ as starting at $y$.
A crucial fact is that the line ending at $x$ has no dependence on the
other lines, so it represents $c_{n-m}(x-y)$.
Thus, if we define the coefficients $\pi_m(y)$ as
\begin{align}
\pi_m(y)
&=
- \:
\raisebox{-0.2in}{\begin{picture}(0,0)%
\includegraphics{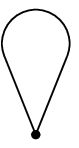}%
\end{picture}%
\setlength{\unitlength}{3947sp}%
\begingroup\makeatletter\ifx\SetFigFont\undefined%
\gdef\SetFigFont#1#2#3#4#5{%
  \reset@font\fontsize{#1}{#2pt}%
  \fontfamily{#3}\fontseries{#4}\fontshape{#5}%
  \selectfont}%
\fi\endgroup%
\begin{picture}(343,889)(1029,-641)
\put(1201,-586){\makebox(0,0)[b]{\smash{{\SetFigFont{10}{12.0}{\familydefault}{\mddefault}{\updefault}{\color[rgb]{0,0,0}$0$}%
}}}}
\end{picture}%
}
\delta_{0y}
+ \:
\raisebox{-0.33in}{\input{Theta.pspdftex}}
\: - \:
\raisebox{-0.33in}{\input{ThirdLace.pspdftex}}
\: +\dotsc
\notag\\
&=
\sum_{N=1}^\infty (-1)^N \pi_m^{(N)}(y),
\lbeq{pimy}
\end{align}
where $\delta_{xy}=\indicator{x=y}$ denotes the Kronecker delta, then \refeq{lacepictures} becomes \refeq{cnLaceExpansion}, namely
\begin{align}
c_n(x)
&=
\sum_{y\in\Z^d} c_1(y) c_{n-1}(x-y) + \sum_{m=2}^n \sum_{y\in\Z^d} \pi_m(y) c_{n-m}(x-y)
.
\end{align}

By definition, $\pi_m^{(1)}(y)$ counts the number of $m$-step
self-avoiding returns if $y=0$, and is otherwise $0$.  Also,
$\pi_m^{(2)}(y)$ counts the number of $m$-step ``$\theta$-diagrams''
with vertices $0$ and $y$, i.e., the number of $m$-step walks which
start at zero, end at $y$, and are self-avoiding apart from a required
return to $0$ and a visit to $y$ before terminating at $y$.
With more attention to the inclusion-exclusion procedure, it can be
seen that in the three diagrams on the right-hand side of \refeq{pimy}
all the individual subwalks must have length at least $1$ except for
subwalk 3 of the third term which may have length $0$.
As noted above, the inclusion-exclusion procedure terminates after
a finite number of steps, so the terms in the series \refeq{pimy}
are eventually all zero, but as $m$ increases more and more terms are
non-zero.
If the diagrams make you uncomfortable, formulas for $\pi_m(y)$ are
given in \refSect{Tut2}.  This completes the derivation of the
lace expansion.

Our next task is to relate $\pi_m(y)$ to our goal of proving the bubble
condition.
Equation \refeq{cnLaceExpansion} contains two convolutions:
a convolution in space given by the sum over $y$, and a convolution
in time given by the sum over $m$.  To eliminate these and facilitate
analysis, we pass to
generating functions and Fourier transforms.  By definition of the two-point
function,
\begin{align}
G_z(x)
&=
\sum_{n=0}^\infty c_n(x) z^n = \delta_{0x} + \sum_{n=1}^\infty c_n(x) z^n,
\end{align}
and we define
\begin{align}
\Pi_z(x)
&=
\sum_{m=2}^\infty \pi_m(x) z^m.
\end{align}
From \refeq{cnLaceExpansion}, we obtain
\begin{align}
\lbeq{GzxRelation}
\begin{split}
G_z(x)
&=
\delta_{0x}+\sum_{y\in\Z^d} z c_1(y) G_z(x-y) + \sum_{y\in\Z^d} \Pi_z(y) G_z(x-y)
\\
&=
\delta_{0x}+z (c_1  *  G_z)(x) + (\Pi_z  *  G_z)(x).
\end{split}
\end{align}
Given an
absolutely summable function $f: \Z^d \to \C$, we write its Fourier transform as
\begin{equation}
\hat{f}(k)=\sum_{x\in\Z^d} f(x) e^{ik\cdot x},
\end{equation}
with $k =(k_1,\ldots,k_d)\in [-\pi,\pi]^d$.
Then \refeq{GzxRelation} gives
\begin{equation}
\lbeq{GhatRelation}
\hat{G}_z(k) = 1 + z\hat{c}_1(k)\hat{G}_z(k) + \hat{\Pi}_z(k)\hat{G}_z(k).
\end{equation}
We solve for $\hat{G}_z(k)$ to obtain
\begin{equation}
\lbeq{GhatFormula}
\hat{G}_z(k) = \frac{1}{1-z\hat{c}_1(k)-\hat{\Pi}_z(k)}.
\end{equation}

It is convenient to express $c_1(y)$ in terms of the probability distribution for the steps of the corresponding SRW model:
\begin{equation}
D(y) \defeq \frac{c_1(y)}{\abs{\Omega}}, \qquad\qquad
\hat{c}_1(k)=\abs{\Omega} \hat{D}(k),
\end{equation}
where $\abs{\Omega}$ denotes the cardinality of either option for
the set $\Omega$
defined in \refeq{nnso}.
For the nearest-neighbour model, $\abs{\Omega}=2d$ and
\begin{equation}
\hat{D}(k)=\frac{1}{2d}\sum_{j=1}^d \bigl( e^{i k_j}+e^{-i k_j} \bigr) =\frac{1}{d}\sum_{j=1}^d \cos k_j.
\end{equation}
To simplify the notation, we define $\hat F_z(k)$ by
\begin{equation}
\lbeq{FhatDefn}
\hat{G}_z(k)=\frac{1}{1-z\abs{\Omega}\hat{D}(k)-\hat{\Pi}_z(k)}
\defeq \frac{1}{\hat{F}_z(k)}.
\end{equation}
Notice that $\hat{G}_z(0)=\sum_{x\in\Z^d}\sum_{n=0}^\infty c_n(x) z^n =\chi(z)$,
so that $\hat{G}_z(0)$ will have a singularity at $z=z_c$.  To emphasise this,
we will write
%
%
%
\begin{align}
\hat{F}_z(k)
&=
\hat{F}_z(0)+\bigl(\hat{F}_z(k)-\hat{F}_z(0)\bigr)
\notag\\
&=
\chi(z)^{-1} + z\abs{\Omega}\bigl(1-\hat{D}(k)\bigr) + \bigl(\hat{\Pi}_z(0)-\hat{\Pi}_z(k)\bigr).
\lbeq{FhatFormula}
\end{align}

Now we can make contact with our goal of proving the bubble condition.
By Parseval's relation,
\begin{equation}
\lbeq{BzIntegral}
\Bubble(z)=\sum_{x\in\Z^d} G_z(x)^2
= \int_{[-\pi,\pi]^d}|\hat{G}_z(k)|^2 \frac{\ddk}{(2\pi)^d}
\end{equation}
(this includes the case where one side of the equality, and hence both,
are infinite).  The issue of whether $\Bubble(z_c)<\infty$ or not
boils down to the question of whether the singularity of the integrand
is integrable or not,
so we will need to understand the asymptotics of the terms
in \refeq{FhatFormula} as $k\to 0$ and $z \nearrow z_c$.
In principle there could be other singularities when $z=z_c$,
 but for the nearest-neighbour
and spread-out models $1-\hat{D}(k) > 0$ for non-zero $k$, and one of the
goals of the analysis will be to prove that the term
$\hat{\Pi}_z(0)-\hat{\Pi}_z(k)$ cannot create a cancellation.

The term $1-\hat{D}(k)$ is explicit, and for the nearest-neighbour model
has asymptotic behaviour
\begin{equation}
1-\hat{D}(k)=\frac{1}{d}\sum_{j=1}^d (1-\cos k_j) \sim \frac{\abs{k}^2}{2d}
\end{equation}
as $k\to 0$.
We need to see that the term
$\hat{\Pi}_z(0)-\hat{\Pi}_z(k)$ is relatively small in high dimensions.
By symmetry, we can write this term as
\begin{align}
\hat{\Pi}_z(0)-\hat{\Pi}_z(k)
&=
\sum_{x\in\Z^d} (1-e^{ik\cdot x})\Pi_z(x)
=
\sum_{x\in\Z^d} (1-\cos k\cdot x) \Pi_z(x).
\lbeq{PihatDifference}
\end{align}


Finally, we note that the equation $\chi(z_c)=\infty$ can be rewritten as
$0=\chi(z_c)^{-1}=1-z_c \abs{\Omega} - \hat{\Pi}_{z_c}(0)$, from which
we see that the critical point $z_c$ is given implicitly by
\begin{equation}
\lbeq{zcEquation}
z_c = \frac{1}{\abs{\Omega}} \bigl(1-\hat{\Pi}_{z_c}(0)\bigr).
\end{equation}
This equation has been the starting point for the study of $z_c$,
in particular
for the derivation of the $1/d$ expansion for the connective constant
discussed in \refSSect{dexpansion}.  Problem~\ref{problem:1/d} below
indicates how the first terms are obtained.

\subsection{Tutorial}
\lbsect{Tut2}

These problems develop the original derivation of the lace expansion by
Brydges and Spencer \cite{BS85}.
All this material can also be found in \cite{Slad06}.

We require a notion of graphs on integer intervals, and
connectivity of these graphs. We emphasise in advance that the notion of
connectivity is
\emph{not} the usual graph theoretic one, but that it is the right notion
in this context.

\begin{definition}
  (i)
      Let $I=[a,b]$ be an interval of non-negative integers.
      An \emph{edge} is a pair $st = \{s,t\}$ with $s,t\in\Z$
      and $a\leq s<t\leq b$.
      A \emph{graph} on $[a,b]$ is a set of edges.
      We denote the set of all graphs on $[a,b]$ by $\mathcal{B}[a,b]$.

  (ii)
      A graph $\Gamma \in \mathcal{B}[a,b]$ is \emph{connected} if $a,b$ are endpoints of edges, and if
      for any $c \in (a,b)$, there are $s,t \in [a,b]$ such that $c \in (s,t)$ and $st \in \Gamma$.
      Equivalently, $\Gamma$ is connected if $(a,b) = \cup_{st \in \Gamma} (s,t)$. The set of all connected
      graphs on $[a,b]$ is denoted by $\mathcal{G}[a,b]$.
\end{definition}

\begin{problem}
  \label{problem:graph-connected-not-path-connected}
  Give an example of a graph which is connected in the above sense, but not path-connected
  in the usual graph theoretic sense, and give an example which is path-connected, but not connected in the above
  sense.
\end{problem}

Let $U_{st}(\omega) = -\indicator{\omega(s) \neq \omega(t)}$, and
for $a<b$ define
\begin{equation}
  K[a,b](\omega) = \prod_{a \leq s < t \leq b} (1+U_{st}(\omega)), \quad K[a,a](\omega) = 1,
\end{equation}
so that
\begin{equation}
  c_n(x) = \sum_{\omega \in \mathcal{W}_n(0,x)} K[0,n](\omega).
\end{equation}

\begin{problem}
  \label{problem:Ksumprod}
  Show that
  \begin{equation}
    K[a,b](\omega) = \sum_{\Gamma \in \mathcal{B}[a,b]} \prod_{st \in \Gamma} U_{st}(\omega).
  \end{equation}
\end{problem}

\begin{problem} \label{problem:KJidentity}
  For $a<b$, let
  \begin{equation}
    J[a,b](\omega) =
    \sum_{\Gamma \in \mathcal{G}[a,b]} \prod_{st \in \Gamma} U_{st}(\omega).
  \end{equation}
  Show that
  \begin{equation}
    K[a,b] = K[a+1,b] + \sum_{j=a+1}^b J[a,j]K[j,b].
  \end{equation}
\end{problem}

\begin{problem} \label{problem:c_n-convolution}
  Define
  \begin{equation}
    \pi_m(x) = \sum_{\omega \in \mathcal{W}_m(0,x)} J[0,m](\omega)
  \end{equation}
  for $m\geq 1$.
  Use Problem~\ref{problem:KJidentity} to show that, for $n \ge 1$,
  \begin{equation}
    c_n(x) = (c_1 * c_{n-1})(x) + \sum_{m=1}^n (\pi_m * c_{n-m})(x).
  \end{equation}
  (Compared to \refeq{cnLaceExpansion}, the sum here starts at $m=1$ instead
of $m=2$.  In fact, we will see that $\pi_1(x)=0$ for the self-avoiding walk, since walks cannot self-intersect in 1 step.)
\end{problem}

\begin{definition}
  A \emph{lace} is a minimally connected graph, that is, a connected graph for which the removal of
  any edge would result in a disconnected graph. The set of laces on $[a,b]$ is denoted $\mathcal{L}[a,b]$.
\end{definition}

\begin{problem} \label{problem:LaceIntervals}
  Let $L = \{ s_1t_1, \dots, s_N t_N\}$, where $s_l < t_l$ and $s_l\leq s_{l+1}$ for all $l$ (and all the edges are different).  Show that $L$ is a lace if
  and only if
  \begin{equation}
    a= s_1 < s_2, \quad s_N < t_{N-1} < t_N = b, \quad
    s_{l+1} < t_l \leq s_{l+2} \quad (1 \leq l \leq  N-2),
  \end{equation}
  or $L=\set{ab}$ if $N=1$.  In particular, for $N>1$, $L$ divides $[a,b]$ into $2N-1$ subintervals,
  \begin{equation}
    [s_1, s_2], [s_2, t_1], [t_1,s_3] [s_3, t_2], \dots, [t_{N-2},s_N] [s_N, t_{N-1}], [t_{N-1}, t_N].
  \end{equation}
  Determine which of these intervals must have length at least $1$, and which can have length $0$.
\end{problem}

Let $\Gamma \in \mathcal{G}[a,b]$ be a connected graph.
We associate a unique lace ${\sf L}_\Gamma$ to $\Gamma$ as follows: Let
\begin{equation}
\begin{split}
  t_1 = \max\{t: at\in \Gamma\}, &\quad s_1 = a,\\
  t_{i+1} = \max\{t: \exists s < t_i \text{ such that } st \in \Gamma \}, &\quad s_{i+1} = \min\{ s: st_{i+1} \in \Gamma\}.
\end{split}
\end{equation}
The procedure terminates when $t_{N} = b$ for some $N$,  and we then define
${\sf L}_\Gamma = \{s_1t_1, \dots, s_Nt_N\}$.
We define the set of edges \emph{compatible} with a lace
$L \in \mathcal{L}[a,b]$ to be
\begin{equation}
  \mathcal{C}(L) = \{ st: {\sf L}_{L\cup\{st\}} = L, st\notin L \}.
\end{equation}

\begin{problem} \label{problem:Compatible}
  Show that ${\sf L}_\Gamma = L$ if and only if
  $L \subset \Gamma$ and $\Gamma \setminus L \subset \mathcal{C}(L)$.
\end{problem}

\begin{problem} \label{problem:Jsumprod}
  Show that
  \begin{equation}
    J[a,b](\omega)
    = \sum_{L \in \mathcal{L}[a,b]} \prod_{st \in  L} U_{st}(\omega)
     \sum_{\Gamma: L_\Gamma = L}
     \prod_{s't' \in \Gamma \setminus L} U_{s't'}(\omega).
  \end{equation}
  Conclude from the previous exercise that
  \begin{equation}
    \sum_{\Gamma: {\sf L}_\Gamma = L}\prod_{s't' \in \Gamma \setminus L} U_{s't'}(\omega) = \prod_{s't' \in \mathcal{C}(L)}(1+U_{s't'}(\omega)),
  \end{equation}
  and thus
  \begin{equation}
    J[a,b](\omega)
    = \sum_{L \in \mathcal{L}[a,b]} \prod_{st \in  L} U_{st}(\omega)
    \prod_{s't' \in \mathcal{C}(L)}(1+U_{s't'}(\omega)).
  \end{equation}
\end{problem}

\begin{problem} \label{problem:J-laces}
  Let $\mathcal{L}^{(N)}[a,b]$ denote the set of laces on $[a,b]$
  which consist of exactly $N$ edges.  Define
  \begin{equation}
    J^{(N)}[a,b](\omega) = \sum_{L \in \mathcal{L}^{(N)}[a,b]} \prod_{st \in  L} U_{st}(\omega) \prod_{s't' \in \mathcal{C}(L)}(1+U_{s't'}(\omega))
  \end{equation}
  and
  \begin{equation}
    \pi_m^{(N)}(x)=(-1)^N\sum_{\omega\in\mathcal{W}_m(0,x)} J^{(N)}[0,m](\omega).
  \end{equation}
  \begin{enumerate}
  \item
    Prove that
    \begin{equation}
      \pi_m(x)=\sum_{N=1}^\infty (-1)^N \pi_m^{(N)}(x)
    \end{equation}
    with $\pi_m^{(N)}(x)\geq 0$.
  \item
    Describe the walk configurations that correspond to non-zero terms in $\pi_m^{(N)}(x)$, for $N=1,2,3,4$.  What parts of the walk must be mutually avoiding?
  \item
    What is the interpretation of the possibly empty intervals in Problem \ref{problem:LaceIntervals}?
  \end{enumerate}
\end{problem}

\section{Lace expansion analysis in dimensions \texorpdfstring{$d>4$}{d > 4}}\lbsect{ConvLace}

In this section, we outline a proof that the bubble condition holds
for the nearest-neighbour model in sufficiently high dimensions, and for
the spread-out model in dimensions $d>4$ provided $L$ is large enough.
As noted above, the bubble condition implies that $\gamma = 1$ in the
sense that the susceptibility diverges linearly at the critical point
as in \refeq{chiAsymp}.  Proving the bubble condition
will require control of the generating
function $\hat\Pi_z(k)$ at the critical value $z=z_c$.  According to
\refeq{pimy} (see also Problem~\ref{problem:J-laces}), $\Pi_z$ is
given by an infinite series
\begin{equation}
\lbeq{PiNsum}
\Pi_z(x) = \sum_{N=1}^\infty (-1)^N \Pi_z^{(N)}(x),
\qquad \Pi_z^{(N)}(x)=\sum_{m=2}^\infty \pi_m^{(N)}(x) z^m.
\end{equation}
The lace expansion is said to converge
if $\Pi_z(x)$ is absolutely summable when $z=z_c$, in the
strong sense that
\begin{equation}
\sum_{x \in \Z^d}\sum_{N=1}^\infty
\Pi_{z_c}^{(N)}(x)  < \infty.
\end{equation}

There are now several different approaches to proving convergence of
the lace expansion.  In particular, a powerful but technically demanding method
involves the study of \refeq{cnLaceExpansion} by induction on $n$
\cite{HS02}.  Here we will follow the relatively
simple approach of \cite{Slad06}, which was
inspired by a similar argument for percolation in \cite{BCHSS05b}.
Some details are omitted below; these can all be found in \cite{Slad06}.

We will make use of the usual $\ell^p$ norms on functions on $\Z^d$,
for $p=1,2,\infty$.
In addition, when dealing with functions on the torus $[-\pi,\pi]^d$, we
will use the usual $L^p$ norms with respect to the probability measure
$(2\pi)^{-d}\ddk$ on the torus, for $p=1,2$.
To simplify the notation, we will sometimes
omit the measure, and write, e.g.,
$\Bubble(z)=\int \hat{G}_z^2=\|\hat{G}_z\|_2^2$.

\subsection{Diagrammatic estimates}\lbssect{DiagEst}


We will obtain bounds on $\Pi_z(x)$ in terms of $G_z(x)$
and the closely related quantity $H_z(x)$ defined by
\begin{equation}
\lbeq{HzxDefn}
H_z(x)=G_z(x)-\delta_{0x} = \sum_{n=1}^\infty c_n(x) z^n.
\end{equation}
The trivial term $c_0(x)=\delta_{0x}$ in $G_z(x)$
gives rise to a contribution $1$ in
the bubble diagram, and it will be important in the following that this
contribution sometimes be omitted.
It is for this reason that we use $H_z$ as well as $G_z$.

The following \emph{diagrammatic estimates} bound $\Pi_z$ in terms of
$H_z$ and $G_z$.  Once this theorem has been proved,
the details of the definition of
$\Pi_z$ are no longer needed---the rest of the argument is analysis
that uses the diagrammatic estimates.

\begin{theorem}
\lbthm{PiBounds}
For any $z\geq 0$,
\begin{align}
\lbeq{Pi1Bound}
\sum_{x\in\Z^d} \Pi_z^{(1)}(x)
&\leq
z\abs{\Omega}\norm{H_z}_\infty,
\end{align}
\begin{align}
\lbeq{Pi1cosBound}
\sum_{x\in\Z^d} (1-\cos k\cdot x) \Pi_z^{(1)}(x)
&=
0,
\end{align}
and for $N\geq 2$,
\begin{align}
\lbeq{Pi2Bound}
\sum_{x\in\Z^d} \Pi_z^{(N)}(x)
&\leq
\norm{H_z}_\infty \norm{G_z  *  H_z}_\infty^{N-1}
,
\end{align}
\begin{align}
\lbeq{Pi2cosBound}
\sum_{x\in\Z^d} (1-\cos k\cdot x) \Pi_z^{(N)}(x)
&\leq
N^2 \norm{(1-\cos k\cdot x)H_z}_\infty \norm{G_z  *  H_z}_\infty^{N-1}
.
\end{align}
\end{theorem}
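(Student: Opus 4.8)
The plan is to obtain these \emph{diagrammatic estimates} directly from the lace representation of $\pi_m^{(N)}(x)$ set up in Problems~\ref{problem:c_n-convolution}--\ref{problem:J-laces}, by systematically relaxing the mutual-avoidance constraints between the subwalks of a lace. Recall that $\pi_m^{(N)}(x)=(-1)^N\sum_{\omega\in\mathcal{W}_m(0,x)}J^{(N)}[0,m](\omega)$, that $J^{(N)}[0,m](\omega)=\sum_{L\in\mathcal{L}^{(N)}[0,m]}\prod_{st\in L}U_{st}(\omega)\prod_{s't'\in\mathcal{C}(L)}(1+U_{s't'}(\omega))$, and that by Problem~\ref{problem:LaceIntervals} an $N$-edge lace $L=\{s_1t_1,\dots,s_Nt_N\}$ cuts $[0,m]$ into $2N-1$ subintervals on which $\omega$ restricts to self-avoiding walks, the $N$ lace edges forcing the ``pinches'' $\omega(s_l)=\omega(t_l)$. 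Since $\prod_{st\in L}U_{st}=(-1)^N\prod_{st\in L}\indicator{\omega(s)=\omega(t)}$ cancels the sign, and $\prod_{s't'\in\mathcal{C}(L)}(1+U_{s't'})\le 1$ (it only enforces extra self-avoidance), we may drop this factor and also the self-avoidance \emph{between} distinct subwalks; summing over $m$ with weight $z^m$, each subwalk with prescribed endpoints then contributes at most $G_z(\cdot)$, or $H_z(\cdot)$ if the corresponding subinterval is forced to have positive length. The result is a ``diagram bound'': $\Pi_z^{(N)}(x)$ is dominated by an explicit convolution, indexed by the pinch points, consisting of $N-1$ bubbles (pairs of subwalks sharing both endpoints) and one extra subwalk, chained along a spine running from $0$ to $x$.

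For $N=1$ a lace is the single edge $\{0,m\}$, so $\omega(0)=\omega(m)$: $\pi_m^{(1)}$ is supported at the origin and counts $m$-step self-avoiding returns. Splitting off the first step (to a neighbour $u\in\Omega$, with weight $z$) and bounding the remainder by $H_z(u)$ gives $\Pi_z^{(1)}(0)\le z\sum_u c_1(u)H_z(u)\le z\abs{\Omega}\norm{H_z}_\infty$, which is \refeq{Pi1Bound}; and \refeq{Pi1cosBound} is immediate since $1-\cos(k\cdot 0)=0$.

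For $N\ge 2$ I would prove \refeq{Pi2Bound} by summing the diagram bound over $x$ and over all internal pinch points, then peeling factors off the spine one end at a time, using repeatedly: (i) $\sum_y f(y)g(x-y)\le\norm{f*g}_\infty$ pointwise in $x$, for $f,g\ge 0$; (ii) $\sum_x f(x)h(x)\le\norm{f}_\infty\sum_x h(x)$; and (iii) the key estimate $\sum_x H_z(x)^2\le\sum_x H_z(x)G_z(x)=(H_z*G_z)(0)\le\norm{G_z*H_z}_\infty$, valid because $\Omega$ is symmetric (so $G_z,H_z$ are even) and $G_z\ge H_z\ge 0$. Using also $\norm{H_z}_\infty\le\norm{G_z*H_z}_\infty$ (from $(G_z*H_z)(x)\ge G_z(0)H_z(x)$) to tidy up, each peeling step absorbs two subwalks into one factor $\norm{G_z*H_z}_\infty$; after $N-1$ steps the single leftover subwalk contributes $\norm{H_z}_\infty$, giving \refeq{Pi2Bound}.

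Finally, \refeq{Pi2cosBound} --- which I expect to be the main obstacle --- requires distributing the weight $1-\cos(k\cdot x)$ over the diagram. Since $x$ is the sum of the displacements of the subwalks along a fixed path from $0$ to $x$ through the diagram (at most $2N-1$, in fact $O(N)$, of them), the elementary inequality $1-\cos(\sum_{i=1}^j t_i)\le j\sum_{i=1}^j(1-\cos t_i)$ (from $\abs{1-e^{i\theta}}=2\abs{\sin(\theta/2)}$, the triangle inequality, and Cauchy--Schwarz) reduces the bound to, for each subwalk $i$ on that path, an estimate in which the generating-function factor of subwalk $i$ is replaced by $(1-\cos k\cdot x_i)H_z(x_i)\le\norm{(1-\cos k\cdot x)H_z}_\infty$ (harmless even if that factor was $G_z$, since the $\delta$-term vanishes against the weight) and the remaining $2N-2$ subwalks are handled exactly as in the proof of \refeq{Pi2Bound}, yielding $\norm{G_z*H_z}_\infty^{N-1}$. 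The split factor $j$ together with the sum over which subwalk carries the weight contribute at most $N^2$. The delicate point is precisely this: one must choose the spine (and the accounting of the remaining $2N-2$ factors) so that the leftover estimate really is $\norm{G_z*H_z}_\infty^{N-1}$ with no stray factor of $\norm{H_z}_1$ or similar; granting that, \refeq{Pi2cosBound} follows. All remaining steps are routine bookkeeping, and full details are in \cite{Slad06}.
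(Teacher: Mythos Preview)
Your proposal is correct and follows essentially the same approach as the paper: for $N=1$ both you and the paper split off the first step of the self-avoiding return and use that $\Pi_z^{(1)}$ is supported at the origin; for $N\ge 2$ both proceed by dropping the mutual-avoidance constraints encoded in $\prod_{s't'\in\mathcal{C}(L)}(1+U_{s't'})$ to bound $\Pi_z^{(N)}$ by a convolution of $H_z$ and $G_z$ factors, and for \refeq{Pi2cosBound} the cosine weight is distributed over the subwalks via the inequality $1-\cos\bigl(\sum_i t_i\bigr)\le j\sum_i(1-\cos t_i)$. The paper in fact only writes out $N=1,2$ explicitly and refers to \cite[Theorem~4.1]{Slad06} for general $N$; your sketch of the peeling argument and the $N^2$ bookkeeping is exactly the content of that reference.
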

\begin{proof}
We prove just the cases $N=1,2$ here; the complete proof can
be found in \cite[Theorem~4.1]{Slad06}.

For $N=1$, since $\pi_m^{(1)}(x)$ is equal to $\delta_{0x}$ times
the number $\sum_{y\in \Omega}c_{m-1}(y)$ of self-avoiding returns,
we have
\begin{equation}
    \sum_{x\in\Z^d} \Pi_z^{(1)}(x)
    =
    \sum_{y\in \Omega}
    \sum_{m=2}^\infty c_{m-1}(y) z^m
    =
    \sum_{y\in \Omega} z H_z(y),
\end{equation}
which implies \refeq{Pi1Bound}.   Also, \refeq{Pi1cosBound} follows from
\begin{equation}
    \sum_{x\in\Z^d} (1-\cos k\cdot x) \Pi_z^{(1)}(x)
    =
    (1-\cos k \cdot 0) \Pi_z^{(1)}(0) = 0.
\end{equation}

For $N=2$,
dropping the mutual avoidance constraint between the three lines
in $\pi_m^{(2)}(x)$ in \refeq{pimy} gives
\begin{align}
\sum_{x\in\Z^d} \Pi_z^{(2)}(x)
&\leq
\sum_{x\in\Z^d} H_z(x)^3 \leq \norm{H_z}_\infty (H_z  *  H_z)(0)
\notag\\
&\leq
\norm{H_z}_\infty \norm{H_z *  H_z}_\infty
\end{align}
and
\begin{align}
\sum_{x\in\Z^d} (1-\cos k\cdot x) \Pi_z^{(2)}(x)
&\leq
\norm{(1-\cos k\cdot x)H_z}_\infty \norm{H_z *  H_z}_\infty.
\end{align}
Since $0\leq H_z(x)\leq G_z(x)$, this is stronger than
\refeq{Pi2Bound} and \refeq{Pi2cosBound}.
\end{proof}

\subsection{The small parameter}\lbssect{SmallParam}

\refthm{PiBounds} shows that the sum over $N$ in \refeq{PiNsum} can be
dominated by the sum of a geometric series with ratio $\norm{G_z  *  H_z}_\infty$.
Ideally, we would like this ratio to be small.
A Cauchy--Schwarz estimate gives
\begin{align}
\norm{H_z *  G_z}_\infty & \le \norm{H_z }_\infty+ \norm{H_z *  H_z}_\infty
 \leq \norm{H_z }_\infty+ \norm{H_z}_2^2
 \nonumber \\ &\leq \norm{H_z }_\infty+ \norm{G_z}_2^2
 = \norm{H_z }_\infty+ \Bubble(z),
\end{align}
but this looks problematic because the upper bound involves
the bubble diagram
---the very quantity we are trying to prove is finite at the critical point!
So we will need some insight to make good use of the diagrammatic estimates.

An important idea will be to use not just the finiteness,
but also the smallness of $H_z$.  Specifically, we might hope that
$\norm{H_{z_c}}_2^2=\norm{\smash[t]{\hat{H}_{z_c}}}_2^2=\norm{\smash[t]{\hat{G}_{z_c}-1}}_2^2$ 
should be small when the corresponding quantity for SRW is small.

Let $C_z(x)=\sum_{n=0} c_n^{(0)}(x) z^n$ be the analogue of $G_z(x)$ for the
SRW model.  Its critical value is $z_0=\abs{\Omega}^{-1}$, and
\begin{equation}
\hat{C}_z(k) = \frac{1}{1-z\abs{\Omega}\hat{D}(k)},\qquad\qquad\hat{C}_{z_0}(k)=\frac{1}{1-\hat{D}(k)}.
\end{equation}
The SRW analogue of $\norm{\smash[t]{\hat{G}_{z_c}-1}}_2^2$ is
\begin{equation}
\norm{\smash[t]{\hat{C}_{z_0}-1}}_2^2
=\int \left( \frac{1}{1-\hat{D}}-1 \right)^2
=\int \frac{\hat{D}^2}{\bigl( 1-\hat{D} \bigr)^2}.
\end{equation}
The following elementary proposition shows that the above integral is
small for the models we are studying.
The hypothesis $d>4$ is needed for convergence, due
to the $(|k|^{-2})^2$ singularity at the origin.

%
\begin{prop}
\lbprop{betaBound}
Let $d>4$.  Then
\begin{equation}
\lbeq{betaBound}
\int \frac{\hat{D}^2}{\bigl( 1-\hat{D} \bigr)^2}
\leq \beta
\end{equation}
where, for some constant $K$,
\begin{equation}
\lbeq{betaFormula}
\beta \defeq
\begin{cases}
\dfrac{K}{d-4}&\text{for the nearest-neighbour model,}\\
\text{\raisebox{0pt}[1.75em]{}} \dfrac{K}{L^d}&\text{for the spread-out model.}
\end{cases}
\end{equation}
\end{prop}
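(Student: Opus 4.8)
The plan is to decompose the torus $[-\pi,\pi]^d$ into a ``bulk'' region where $\hat{D}(k)$ is bounded away from $1$ and a punctured neighbourhood of the origin where $\hat{D}(k)\to1$, and to estimate the two pieces by different means. The one point to keep in mind is that one must \emph{not} simply bound $\hat{D}^2\le1$ in the integrand: the factor $\hat{D}^2$ must be retained, since $\hat{D}$ is small away from the origin and it is exactly this smallness that produces a small $\beta$. Equivalently, by Parseval and a short computation, $\int \hat{D}^2(1-\hat{D})^{-2}\,(2\pi)^{-d}\ddk = \norm{\hat{C}_{z_0}-1}_2^2 = \sum_{x\in\Z^d}\left(C_{z_0}(x)-\delta_{0x}\right)^2 = \sum_{n\ge1}(2n-1)\,\P(S_{2n}=0)$: it is the $L^2$ norm of the SRW Green function with its trivial $n=0$ term removed, which is small precisely when the walk is spread out or $d$ is large.

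Fix $\epsilon>0$ small (any $\epsilon<4/(\pi e)$ works, e.g.\ $\epsilon=\tfrac1{10}$), and set $T_1=\{k:\hat{D}(k)\le 1-\epsilon\}$ and $T_2=\{k:\hat{D}(k)>1-\epsilon\}$. On $T_1$ one has $1-\hat{D}\ge\epsilon$, hence $\hat{D}^2(1-\hat{D})^{-2}\le\epsilon^{-2}\hat{D}^2$, and Parseval gives
\begin{equation}
\int_{T_1}\frac{\hat{D}^2}{(1-\hat{D})^2}\,\frac{\ddk}{(2\pi)^d}\le\frac1{\epsilon^2}\sum_{x\in\Z^d}D(x)^2=\frac1{\epsilon^2\abs{\Omega}},
\end{equation}
which is $\le(2\epsilon^2 d)^{-1}$ for the nearest-neighbour model ($\abs{\Omega}=2d$) and $\le\epsilon^{-2}(2L)^{-d}$ for the spread-out model ($\abs{\Omega}\ge(2L)^d$); both are of the required size.

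On $T_2$ we drop $\hat{D}^2\le1$ and insert a quadratic lower bound on $1-\hat{D}$. For the nearest-neighbour model, $1-\cos\theta\ge\tfrac2{\pi^2}\theta^2$ on $[-\pi,\pi]$ gives $1-\hat{D}(k)\ge\tfrac2{\pi^2 d}\abs{k}^2$, so $T_2$ lies in the ball $B(0,R)$ with $R^2=\tfrac{\pi^2 d\epsilon}{2}$, and for $d\ge5$
\begin{equation}
\int_{T_2}\frac{\hat{D}^2}{(1-\hat{D})^2}\,\frac{\ddk}{(2\pi)^d}\le\frac{\pi^4 d^2}{4}\int_{B(0,R)}\frac{\ddk}{(2\pi)^d\abs{k}^4}=\frac{\pi^4 d^2}{4}\cdot\frac{\omega_{d-1}}{(2\pi)^d}\cdot\frac{R^{d-4}}{d-4},
\end{equation}
with $\omega_{d-1}=2\pi^{d/2}/\Gamma(d/2)$ the surface area of the unit sphere. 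A Stirling estimate shows the prefactor $\pi^4 d^2\,\omega_{d-1}R^{d-4}/(4(2\pi)^d)$ equals $(\pi e\epsilon/4)^{d/2}$ times a polynomial in $d$; since $\epsilon<4/(\pi e)$ this is bounded by a constant $C_\epsilon$ uniformly in $d\ge5$, so $\int_{T_2}\le C_\epsilon/(d-4)$. Adding the two bounds and using $d\ge5$ (so $d^{-1}\le(d-4)^{-1}$) yields \refeq{betaBound} with $K=\tfrac1{2\epsilon^2}+C_\epsilon$. For the spread-out model ($d>4$ fixed) the same scheme works, now with the standard uniform-in-$L$ estimate $1-\hat{D}(k)\ge c\,[1\wedge\sigma_L^2\abs{k}^2]$, $\sigma_L^2=\abs{\Omega}^{-1}\sum_x x_1^2\asymp L^2$: this confines $T_2$ to a ball of radius $\asymp L^{-1}$ and renders the analogue of the last display of order $\sigma_L^{-4}L^{-(d-4)}\asymp L^{-d}$.

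The main obstacle is the $T_2$ estimate, and in particular its dependence on $d$ in the nearest-neighbour case: the inclusion $T_2\subset B(0,R)$ with $R\asymp\sqrt d$ is lossy, and it is only because the threshold was taken below $4/(\pi e)$ that the Stirling factor $(\pi e\epsilon/4)^{d/2}$ decays in $d$ rather than growing --- a larger threshold would give a bound blowing up with $d$. In the spread-out case the corresponding difficulty is packaged into the uniform-in-$L$ Gaussian lower bound on $1-\hat{D}$, which is where the geometry of $\Omega$ and the hypothesis that $L$ is large genuinely enter; this is standard and may be quoted, e.g., from \cite{Slad06}. As a consistency check, the $n=1$ term of $\sum_{n\ge1}(2n-1)\P(S_{2n}=0)$ is exactly $\P(S_2=0)=\abs{\Omega}^{-1}$, already of the right order, and the remaining terms are controlled by standard return-probability bounds, which gives an alternative route to the same conclusion.
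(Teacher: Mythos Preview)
The paper does not actually prove this proposition: its entire proof reads ``This is a calculus problem'' and points to \cite[Lemma~A.3]{MS93} and \cite[Proposition~5.3]{Slad06} for the two cases. Your argument is correct and is essentially the standard calculus that those references contain: split the torus into a bulk region where $1-\hat D$ is bounded below (so that Parseval and the factor $\hat D^2$ give the $\abs{\Omega}^{-1}$ smallness) and a punctured neighbourhood of the origin where the quadratic lower bound on $1-\hat D$ reduces the problem to $\int_{B(0,R)}\abs{k}^{-4}\,\ddk$. Your observation that the threshold must satisfy $\epsilon<4/(\pi e)$ so that the Stirling factor in the nearest-neighbour prefactor decays rather than grows in $d$ is exactly the point of the computation, and the alternative identity $\int\hat D^2(1-\hat D)^{-2}=\sum_{n\ge1}(2n-1)\,\P(S_{2n}=0)$ is also correct and provides another clean route.
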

\begin{proof}
This is a calculus problem.  For the nearest-neighbour model, see
\cite[Lemma~A.3]{MS93}, and for the spread-out model see
\cite[Proposition~5.3]{Slad06}.
\end{proof}
%


We will prove the following theorem.
\begin{theorem}
\lbthm{betaThreshold}
There are constants $\beta_0$ and $C$, independent of $d$ and $L$, such that
when \refeq{betaBound} holds with $\beta\leq\beta_0$
we have $\Bubble(z_c)\leq 1+C\beta$.
\end{theorem}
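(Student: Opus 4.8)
The plan is to prove \refthm{betaThreshold} by a \emph{bootstrap} (forbidden-region) argument on $z\in[0,z_c)$, finishing with a monotone limit $z\increasesto z_c$. I would introduce a function $f\colon[0,z_c)\to[0,\infty)$ of the form $f(z)=\max\{z\abs{\Omega},\,f_2(z),\,f_3(z)\}$, where $f_2(z)=\sup_k\abs{\hat G_z(k)}/\hat U_z(k)$ compares the two-point function to a renormalised simple-random-walk propagator $\hat U_z$ (one convenient choice being $\hat U_z(k)=(\chi(z)^{-1}+z\abs{\Omega}(1-\hat D(k)))^{-1}$, which is positive for $z<z_c$), and $f_3(z)$ is a weighted, discrete second-difference analogue of $f_2$ designed to control the $(1-\cos k\cdot x)$ quantities in \refthm{PiBounds}. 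The goal is then to establish: (a) $f$ is continuous on $[0,z_c)$ --- each ingredient is a power series in $z$ with non-negative coefficients, finite for $z<z_c$ by \refprop{TwoPointSubcritDecay}, hence continuous there; (b) $f(0)$ is at most some absolute constant, comfortably below a chosen threshold $K$ (at $z=0$ the two-point function is $\delta_0$, so everything is explicit); and (c) the \emph{bootstrap inequality}: there is an absolute $\beta_0$ such that, whenever \refeq{betaBound} holds with $\beta\le\beta_0$, the assumption $f(z)\le K$ forces the improvement $f(z)\le 1+C_0\beta$ with $C_0$ absolute and $1+C_0\beta_0<K$. Since $[0,z_c)$ is connected, (a)--(c) imply $f(z)\le 1+C_0\beta$ for all $z<z_c$.

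All the work is in step (c), via \refthm{PiBounds} and \refprop{betaBound}. Assume $f(z)\le K$. This gives $\abs{\hat G_z}\le K\hat U_z$, and from the relation \refeq{GzxRelation} (equivalently $\hat H_z=\hat G_z(z\abs{\Omega}\hat D+\hat\Pi_z)$, in the notation $H_z=G_z-\delta_0$) one then bounds $\norm{H_z}_2^2=\norm{\hat H_z}_2^2$ by $\int\hat C_{z_0}^2\hat D^2$ plus lower-order terms, which is $O(\beta)$ by \refprop{betaBound}; hence $\norm{H_z}_\infty\le\norm{H_z}_2=O(\sqrt\beta)$ and $\norm{G_z*H_z}_\infty\le\norm{\hat G_z}_2\norm{\hat H_z}_2=O(\sqrt\beta)<1$. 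Using in addition $\abs{\Omega}^{-1}=\norm{\hat D}_2^2\le C\beta$ and $z\abs{\Omega}=O(1)$, one extracts the \emph{sharper} estimates $\sum_x\Pi_z^{(1)}(x)=z\abs{\Omega}\,\abs{(D*H_z)(0)}\le z\abs{\Omega}\norm{\hat D}_2\norm{\hat H_z}_2=O(\beta)$ and, with the elementary bound $1-\cos(k\cdot x)\le O(1)\,(1-\hat D(k))$ applied across the single step separating the relevant diagram vertices, $\sum_x(1-\cos k\cdot x)\Pi_z^{(N)}(x)\le O(\beta)(1-\hat D(k))$ with an $N$-summable prefactor. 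Feeding these into \refeq{Pi1Bound}--\refeq{Pi2cosBound} and summing the geometric series \refeq{PiNsum} (ratio $\norm{G_z*H_z}_\infty=O(\sqrt\beta)$) yields $\sum_x\abs{\Pi_z(x)}=O(\beta)$ and $\sum_x(1-\cos k\cdot x)\abs{\Pi_z(x)}=O(\beta)(1-\hat D(k))$. Inserting these into \refeq{FhatFormula} gives $\hat F_z(k)\ge(1-O(\beta))\hat U_z(k)^{-1}>0$, i.e. $\abs{\hat G_z(k)}\le(1+O(\beta))\hat U_z(k)$, together with the matching improvement for the second-difference term --- precisely $f(z)\le 1+C_0\beta$.

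With $f(z)\le 1+C_0\beta$ for all $z<z_c$ in hand, the bubble bound follows from a limit. Writing $\hat G_z=1+\hat H_z$ (real, by symmetry), Parseval gives $\Bubble(z)=\norm{\hat G_z}_2^2=1+2H_z(0)+\norm{H_z}_2^2$, and the step-(c) bounds, together with \refeq{GzxRelation} evaluated at $x=0$, give $H_z(0)=O(\beta)$ and $\norm{H_z}_2^2=O(\beta)$, so $\Bubble(z)\le 1+C\beta$ for all $z<z_c$. Since $c_n(x)\ge 0$, the double series defining $\Bubble(z)$ has non-negative terms, so $\Bubble$ is non-decreasing and continuous from the left on $[0,z_c]$; letting $z\increasesto z_c$ yields $\Bubble(z_c)\le 1+C\beta$.

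The main obstacle is closing the bootstrap in (c) with $\beta_0$ and $C_0$ \emph{independent of $d$ and $L$}. The individual inequalities are routine once the machinery is set up, but one must be sure that every lattice quantity is reduced --- with no hidden dimension- or $L$-dependent factor --- to one of the two simple-random-walk integrals controlled by \refprop{betaBound} (the bubble $\norm{\hat C_{z_0}}_2^2=1+O(\beta)$ and $\int\hat D^2/(1-\hat D)^2\le\beta$), and that the combinatorial weights in \refthm{PiBounds}, notably the $N^2$ in \refeq{Pi2cosBound}, are absorbed into convergent series with absolute constants. Making that uniformity manifest, rather than any single calculation, is the delicate point; the omitted details can be found in \cite{Slad06}.
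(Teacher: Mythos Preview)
Your strategy matches the paper's: a bootstrap (forbidden-region) argument on $z\in[0,z_c)$ with $f=\max\{z|\Omega|,f_2,f_3\}$, followed by a monotone limit $z\nearrow z_c$. Your comparison propagator $\hat U_z$ differs only cosmetically from the paper's $\hat C_{p(z)}$ (both are SRW-type propagators agreeing with $\hat G_z$ at $k=0$), and your final extraction $\Bubble(z)=1+2H_z(0)+\|H_z\|_2^2=1+\|H_z\|_2^2$ (using $H_z(0)=0$ for strict SAW) is in fact cleaner than the paper's route via $a^2\|(1-\hat D)^{-1}\|_2^2$.

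There is, however, a real gap in step (c), precisely at the point you identify as delicate. To obtain $\sum_x(1-\cos k\cdot x)\Pi_z^{(N)}(x)\le O(\beta)(1-\hat D(k))$ you invoke ``the elementary bound $1-\cos(k\cdot x)\le O(1)(1-\hat D(k))$ applied across the single step separating the relevant diagram vertices''. That pointwise inequality is false for general $x$ (take $|k|$ small and $|x|$ large: the left side is of order $(k\cdot x)^2$, the right of order $|k|^2$), and for $N\ge 2$ the endpoint displacement in $\Pi_z^{(N)}(x)$ is not a single step---the diagram lines are SAWs of arbitrary length. The diagrammatic estimate \refeq{Pi2cosBound} places the $(1-\cos)$ weight on a full SAW line, producing the factor $\|(1-\cos k\cdot x)H_z\|_\infty$; it is \emph{this} quantity that must be shown to be $O(1-\hat D(k))$, and that is not elementary---it is exactly what $f_3$ is for. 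In the paper, the hypothesis $f_3\le 4$ bounds $\tfrac12|\Delta_k\hat G_z(l)|$ by an explicit combination $U_{p(z)}(k,l)$ of SRW propagators whose $L^1$-norm in $l$ supplies the factor $1-\hat D(k)$ via Fourier inversion; as the paper says, ``it is precisely at this point that the need to include $f_3$ in the definition of $f$ arises''. You correctly introduced $f_3$ for this purpose but then bypassed it with a nonexistent pointwise bound. You also still owe the closing of the bootstrap for $f_3$ itself, which is what forces the somewhat unnatural choice of denominator in its definition.
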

\refthm{betaThreshold} achieves
our goal of proving the bubble condition for the nearest-neighbour model
in sufficiently high dimensions, and for the spread-out model with $L$
sufficiently large in dimensions $d>4$.  As noted previously, this
gives the following corollary that $\gamma=1$ in high dimensions.

\begin{coro}
\lbcoro{chiAsympFollows}
When \refeq{betaBound} holds with $\beta\leq\beta_0$, then as $z\increasesto z_c$,
\begin{equation}
\lbeq{chiAsympFollows}
\chi(z) \asymp \frac{1}{1-z/z_c}.
\end{equation}
\end{coro}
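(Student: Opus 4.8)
The plan is to assemble \refcoro{chiAsympFollows} from three ingredients already in hand: the elementary lower bound, the differential inequality, and the bubble bound of \refthm{betaThreshold}. Indeed, the heuristic integration carried out after \refeq{DiffIneq} in \refSSect{DiffIneq} becomes a rigorous argument as soon as the bubble diagram is known to be bounded at $z_c$. The lower bound $\chi(z)\ge (1-z/z_c)^{-1}$ is exactly \refeq{chiLowerBound} and holds in all dimensions, so only the matching upper bound requires work, and for that it suffices to take $\beta\le\beta_0$.

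First I would note that, by \refprop{TwoPointSubcritDecay}, $\Bubble(z)=\sum_{x}G_z(x)^2$ is finite for $z<z_c$, and since it is a power series in $z$ with non-negative coefficients it is non-decreasing and left-continuous on $[0,z_c]$ (by monotone convergence); hence $\Bubble(z)\le\lim_{z'\nearrow z_c}\Bubble(z')=\Bubble(z_c)\le 1+C\beta$ for every $z<z_c$, using \refthm{betaThreshold}. Substituting this uniform bound for $\Bubble(z)$ into the differential inequality \refeq{DiffIneq} gives $\frac{d}{dz}\bigl(z\chi(z)\bigr)\ge (1+C\beta)^{-1}\chi(z)^2$ on $(0,z_c)$. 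Then I would run the one-line ODE comparison exactly as displayed after \refeq{DiffIneq}: divide by $z^2\chi(z)^2$, recognise the left-hand side as $-\frac{d}{dz}\bigl(1/(z\chi(z))\bigr)$, and integrate from $z$ to $z_c$. Since $\chi(z_c)=\infty$ by \refeq{chiLowerBound}, the boundary term at $z_c$ vanishes and one is left with $1/(z\chi(z))\ge (1+C\beta)^{-1}\bigl(z^{-1}-z_c^{-1}\bigr)$, i.e.\ $\chi(z)\le (1+C\beta)(1-z/z_c)^{-1}$. Combined with the lower bound this is \refeq{chiAsympFollows}, with constants $c_1=1$ and $c_2=1+C\beta$.

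There is no remaining obstacle of substance here: all the analytic difficulty has been absorbed into \refthm{betaThreshold} (and, beneath it, the diagrammatic estimates of \refthm{PiBounds} and the calculus bound \refprop{betaBound}), so \refcoro{chiAsympFollows} is genuinely a corollary. The only points needing a little care are bookkeeping ones: that $z\chi(z)$ is $C^1$ on $(0,z_c)$ so that the fundamental theorem of calculus legitimately applies in the integration step — immediate from the power-series representation of $\chi$ — and that the monotonicity and left-continuity of $\Bubble$ are what make the replacement of $\Bubble(z)$ by the constant $1+C\beta$ in \refeq{DiffIneq} valid for all $z<z_c$.
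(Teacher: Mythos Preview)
Your proposal is correct and follows exactly the approach laid out in the paper: the lower bound is \refeq{chiLowerBound}, the upper bound comes from integrating the differential inequality \refeq{DiffIneq} after replacing $\Bubble(z)$ by $\Bubble(z_c)$ (as displayed in \refSSect{DiffIneq}), and \refthm{betaThreshold} supplies the finiteness (indeed the bound $1+C\beta$) of $\Bubble(z_c)$. The paper does not write out a separate proof of the corollary precisely because it is assembled from these pieces, and your write-up makes that assembly explicit with appropriate care about monotonicity and differentiability.
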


\subsection{Proof of \refthm{betaThreshold}}

We begin with the following elementary lemma, which will be a principal
ingredient in the proof.

\begin{lemma}
\lblemma{ForbiddenValues}
Let $a<b$ be real numbers and let $f$ be a continuous
real-valued function on $[z_1,z_2)$ such that $f(z_1)\leq a$.
Suppose that, for each $z\in(z_1,z_2)$, we have the implication
\begin{equation}
\lbeq{SmallVerySmall}
f(z)\leq b \qquad \implies \qquad f(z)\leq a.
\end{equation}
Then $f(z)\leq a$ for all $z\in[z_1,z_2)$.
\end{lemma}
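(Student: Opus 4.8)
The plan is to run the standard ``clopen'' connectedness argument on the set where $f$ stays below $a$. I would set
\begin{equation}
A = \set{z\in[z_1,z_2) : f(z)\leq a},
\end{equation}
and observe that $A$ is nonempty, since $z_1\in A$ by hypothesis, and that $A$ is relatively closed in $[z_1,z_2)$: if $z_n\to z$ in $[z_1,z_2)$ with each $z_n\in A$, then continuity of $f$ gives $f(z)=\lim_n f(z_n)\leq a$. Since $[z_1,z_2)$ is an interval, hence connected, it will then suffice to prove that $A$ is also relatively open; that forces $A=[z_1,z_2)$, which is exactly the assertion.

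For relative openness I would fix $z\in A$, so that $f(z)\leq a<b$ \emph{strictly}, and use continuity of $f$ to choose $\delta>0$ with $f(w)<b$ for all $w\in(z-\delta,z+\delta)\cap[z_1,z_2)$. Every such $w$ lies in $A$: if $w=z_1$ this is the hypothesis $f(z_1)\leq a$, and if $w\in(z_1,z_2)$ then $f(w)\leq b$, so the assumed implication \refeq{SmallVerySmall} upgrades this to $f(w)\leq a$. Hence the whole relative neighbourhood $(z-\delta,z+\delta)\cap[z_1,z_2)$ of $z$ is contained in $A$, completing the argument.

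I do not expect any genuine obstacle; the one point needing a little care is that the implication \refeq{SmallVerySmall} is assumed only on the open interval $(z_1,z_2)$, so the left endpoint $z_1$ has to be dealt with separately — and that is precisely what the hypothesis $f(z_1)\leq a$ provides. An equivalent route, if one prefers to avoid topological language, is to argue by contradiction: let $z^\star=\inf\set{z\in[z_1,z_2):f(z)>a}$, use continuity to deduce $f(z^\star)\leq a$ and $z^\star>z_1$, use continuity again to get $f<b$ on a right-neighbourhood of $z^\star$, and then apply \refeq{SmallVerySmall} there to obtain $f\leq a$, contradicting the definition of $z^\star$. Either way the proof is short.
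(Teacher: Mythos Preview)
Your argument is correct. The paper's own proof is a single sentence invoking the Intermediate Value Theorem, which is just another way of packaging the connectedness argument you give; your clopen version (and the equivalent infimum alternative you sketch) fleshes out exactly that idea.
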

\begin{proof}
The result is a straightforward application of the Intermediate Value Theorem.
\end{proof}

\begin{figure}[h]
  \input{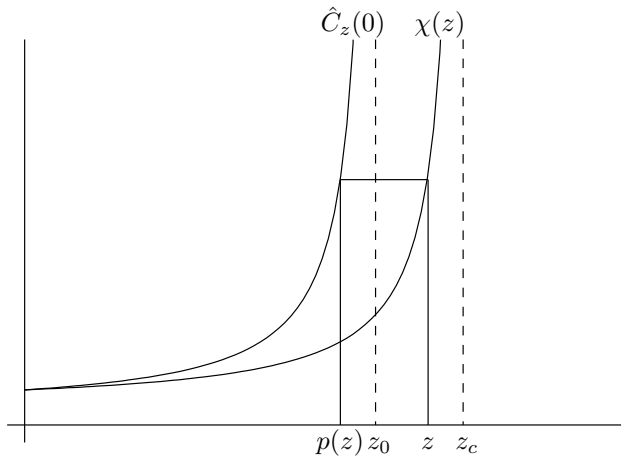}
  \caption{The definition of $p(z)$.}
  \label{fig:pofz}
\end{figure}

We will apply \reflemma{ForbiddenValues} to a carefully chosen function $f$,
based on a \emph{coupling} between $\hat{G}$ on the parameter range $[0,z_c)$,
and the SRW analogue $\hat{C}$ on the parameter range $[0,z_0)$.
To define the coupling, let $z\in[0,z_c)$ and define $p(z)\in[0,z_0)$ by
\begin{equation}
\lbeq{pzDefn}
\hat{G}_z(0)=\chi(z)=\hat{C}_{p(z)}(0)=\frac{1}{1-p(z)\abs{\Omega}},
\end{equation}
i.e.,
\begin{equation}
\lbeq{pzFormula}
p(z)\abs{\Omega} = 1-\chi(z)^{-1} = z\abs{\Omega} + \hat{\Pi}_z(0).
\end{equation}
See Figure~\ref{fig:pofz}.
We expect (or hope!) that $\hat{G}_z(k)\approx \hat{C}_{p(z)}(k)$ for
\emph{all} $k$, not just for $k=0$,
as well as an additional condition that expresses another form of similarity
between $\hat{G}_z(k)$ and $\hat{C}_{p(z)}(k)$.  For the latter,
we define
\begin{equation}
-\tfrac{1}{2}\Delta_k \hat{G}_z(l)
= \hat{G}_z(l)-\tfrac{1}{2}\bigl(\hat{G}_z(l+k)+\hat{G}_z(l-k)\bigr);
\end{equation}
this is the Fourier transform of
$(1-\cos k\cdot x) G_z(x)$ with $l$ as the dual variable.
We aim to apply \reflemma{ForbiddenValues} with
$z_1=0$, $z_2=z_c$, $a=1+{\rm const} \cdot \beta$ (with a constant whose
value is determined in \refeq{f4Impl} below),
$b=4$ (in fact, any fixed $b>1$ will do here), and
\begin{equation}
\lbeq{fDefn}
f(z)=\max\set{f_1(z),f_2(z),f_3(z)}
\end{equation}
where
\begin{equation}
%
f_1(z)=z\abs{\Omega},\qquad f_2(z)
=\sup_{k\in[-\pi,\pi]^d} \frac{|\hat{G}_z(k)|}{|\hat{C}_{p(z)}(k)|},
\end{equation}
and
\begin{equation}
f_3(z)= \sup_{k,l\in[-\pi,\pi]^d}
\frac{\frac 12 |\Delta_k \hat{G}_z(l)|}{|U_{p(z)}(k,l)|},
\lbeq{f3Defn}
%
\end{equation}
with
\begin{align}
    U_{p(z)}(k,l)
   &= 16\hat{C}_{p(z)}(k)^{-1}\left( \hat{C}_{p(z)}(l-k)\hat{C}_{p(z)}(l)
    + \hat{C}_{p(z)}(l+k)\hat{C}_{p(z)}(l)
    \right. \nonumber \\ & \hspace{28mm} \left.
    + \hat{C}_{p(z)}(l-k)\hat{C}_{p(z)}(l+k) \right).
\end{align}
The choice of $U_{p(z)}(k,l)$ is made for technical reasons not explained
here, and should be regarded as a useful replacement for the more natural
choice $\frac 12 |\Delta_k \hat{C}_{p(z)}(l)|$.

The conclusion from \reflemma{ForbiddenValues} would
be that $f(z)\leq a$ for all $z\in[0,z_c)$.
The inequality \refeq{betaBound} can be used to show
that $\|(1-\hat D)^{-1}\|_2^2
\le 1+3\beta$ (see \cite[(5.10)]{Slad06}), and hence
we may assume that $\|(1-\hat D)^{-1}\|_2^2
\le 2$.
Using $f_2(z)\leq a$ we therefore conclude that
\begin{align}
B(z_c)
&=
\lim_{z\increasesto z_c} B(z)
=
\lim_{z\increasesto z_c} \norm{\smash[t]{\hat{G}_z}}_2^2
\notag\\
&\leq
a^2\lim_{z\increasesto z_c} \|\hat{C}_{p(z)}\|_2^2
=
a^2 \bignorm{\bigl(1-\hat{D}\bigr)^{-1}}_2^2
\notag\\
&\leq 2 a^2 <\infty
\end{align}
which is our goal.
Thus it suffices to verify the hypotheses on $f(z)$ in
\reflemma{ForbiddenValues}.
This is the content of the following lemma.

\begin{lemma}\lblemma{fHypos}
The function $f(z)$ defined by \refeq{fDefn}--\refeq{f3Defn} is
continuous on $[0,z_c)$, with $f(0)=1$, and for each $z\in(0,z_c)$,
\begin{equation}
\lbeq{f4Impl}
f(z)\leq 4 \qquad \implies \qquad f(z)\leq 1+O(\beta).
\end{equation}
\end{lemma}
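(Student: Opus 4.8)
The statement bundles an elementary part --- continuity of $f$ on $[0,z_c)$ together with $f(0)=1$ --- with the substantive bootstrap implication \refeq{f4Impl}. I would dispose of the elementary part first. Since $\chi$ is finite and continuous on $[0,z_c)$, \refeq{pzFormula} shows $p(z)$ is continuous with $p(z)\abs{\Omega}<1$, so $\hat C_{p(z)}(k)$ is continuous, strictly positive, and bounded away from $0$ in $k$, uniformly on compact subintervals of $[0,z_c)$; on such subintervals $\hat G_z(k)$ and $\hat\Pi_z(k)$ are uniform-in-$k$ limits of their defining power series. Hence $f_1,f_2,f_3$ in \refeq{fDefn}--\refeq{f3Defn}, and their maximum $f$, are continuous. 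At $z=0$ one has $\chi(0)=c_0=1$, hence $p(0)=0$, $\hat C_0\equiv1$ and $\hat G_0(k)=c_0(0)=1$; therefore $f_1(0)=0$, $f_2(0)=1$, and since $\hat G_0$ is constant the symmetrised second difference $\Delta_k\hat G_0(l)$ vanishes, so $f_3(0)=0$ and $f(0)=1$. (These two facts are precisely the input required in order then to apply \reflemma{ForbiddenValues}.)

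Now fix $z\in(0,z_c)$ and assume $f(z)\le 4$, i.e.\ $z\abs{\Omega}\le4$, $\abs{\hat G_z(k)}\le4\hat C_{p(z)}(k)$ for all $k$, and $\tfrac12\abs{\Delta_k\hat G_z(l)}\le4\abs{U_{p(z)}(k,l)}$ for all $k,l$. The plan is to parlay these a priori bounds, together with \refprop{betaBound} and the stated consequence $\norm{(1-\hat D)^{-1}}_2^2\le1+3\beta$, into the estimates
\[ \sum_{x}\sum_{N\ge1}\Pi_z^{(N)}(x)=O(\beta), \qquad \sum_{x}\sum_{N\ge1}(1-\cos k\cdot x)\Pi_z^{(N)}(x)=O(\beta)\bigl(1-\hat D(k)\bigr), \]
and then to read off the three improved bounds. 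Granting these, $f_1$ is immediate from \refeq{pzFormula}: $z\abs{\Omega}=p(z)\abs{\Omega}-\hat\Pi_z(0)\le1+\abs{\hat\Pi_z(0)}\le1+O(\beta)$. For $f_2$, combine \refeq{FhatFormula} with $\hat C_{p(z)}(k)^{-1}=\chi(z)^{-1}+p(z)\abs{\Omega}(1-\hat D(k))$ (which uses \refeq{pzFormula}) to obtain $\hat F_z(k)-\hat C_{p(z)}(k)^{-1}=\hat\Pi_z(0)\hat D(k)-\hat\Pi_z(k)$, of modulus at most $\abs{\hat\Pi_z(0)}(1-\hat D(k))+\abs{\hat\Pi_z(0)-\hat\Pi_z(k)}=O(\beta)(1-\hat D(k))$; since $\hat C_{p(z)}(k)^{-1}$ dominates a fixed multiple of $(1-\hat D(k))$ plus $\chi(z)^{-1}\ge0$, this gives $\hat F_z(k)\ge(1-O(\beta))\hat C_{p(z)}(k)^{-1}>0$, whence $0<\hat G_z(k)=\hat F_z(k)^{-1}\le(1+O(\beta))\hat C_{p(z)}(k)$, i.e.\ $f_2(z)\le1+O(\beta)$. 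The bound $f_3(z)\le1+O(\beta)$ comes from the analogous manipulation applied to $\tfrac12\Delta_k\hat G_z(l)$, using $f_2(z)\le4$, the cosine-weighted estimates \refeq{Pi1cosBound}--\refeq{Pi2cosBound}, and the fact that $U_{p(z)}(k,l)$ was designed precisely so as to dominate the resulting simple-random-walk expression. Taking the maximum then yields \refeq{f4Impl}.

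The crux, and the step I expect to be the main obstacle, is deriving the two displayed $\Pi_z$-estimates from the bare hypothesis $f(z)\le4$. By \refthm{PiBounds} these reduce to $O(\beta)$-control of $\norm{H_z}_\infty$, $\norm{G_z*H_z}_\infty$, and $\norm{(1-\cos k\cdot x)H_z}_\infty$ (relative to the corresponding simple-random-walk quantity); but $\norm{H_z}_\infty$ is itself controlled only through $\hat\Pi_z$, via $H_z=z\abs{\Omega}\,D*G_z+\Pi_z*G_z$ from \refeq{GzxRelation}, so there is an apparent circularity. I would break it in two moves. First, the leading lace term $\Pi_z^{(1)}$ is supported at the origin, so $\hat\Pi_z^{(1)}$ is the constant $z\abs{\Omega}(D*G_z)(0)$: it drops out of the $k$-difference entirely (this is \refeq{Pi1cosBound}), and its size is estimated by expanding $G_z$ once more through \refeq{GzxRelation}, reducing $(D*G_z)(0)$ to integrals of $\hat D^2\hat C_{p(z)}$ that are $O(\beta)$ by \refeq{betaBound} together with $\norm{D}_2^2=\abs{\Omega}^{-1}$ --- the vanishing $\int\hat D=0$ being what removes the dangerous linear contribution. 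Second, every term with $N\ge2$ carries at least one factor $\norm{G_z*H_z}_\infty\le\norm{H_z}_\infty+\norm{\hat H_z}_2^2$, and $\norm{\hat H_z}_2^2=\norm{\hat G_z-1}_2^2$ is controlled using $\abs{\hat G_z}\le4\hat C_{p(z)}$ and the identity $\hat G_z-1=\hat G_z(z\abs{\Omega}\hat D+\hat\Pi_z)$ (immediate from \refeq{GhatFormula}). Feeding this back into \refthm{PiBounds} yields a closed system of inequalities for $(\norm{H_z}_\infty,\norm{G_z*H_z}_\infty,\norm{\hat\Pi_z}_\infty)$ whose cross-coefficients are all bounded by a power of $\beta$, so for $\beta$ small the system pins each quantity down to size $O(\beta^{1/2})$; a second pass through the same estimates, now exploiting this smallness, sharpens everything --- including $\abs{\hat\Pi_z(0)}$ and the $k$-difference --- to the required $O(\beta)$, and simultaneously justifies the convergence of $\Pi_z$ on $(0,z_c)$ that is used throughout. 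The only genuinely laborious part is this bookkeeping: tracking which power of $\beta$ enters at each stage and checking that every constant is independent of $d$ and of $L$.
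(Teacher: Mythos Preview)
Your handling of continuity, of $f(0)=1$, and of the algebra for $f_2$ is fine, and your overall architecture matches the paper's. The gap is exactly where you flag the ``apparent circularity'': your proposed two-pass resolution does not close. From \refthm{PiBounds} the leading bound is
\[
\sum_{N\ge 1}\sum_x \Pi_z^{(N)}(x)\;\le\; z\abs{\Omega}\,\norm{H_z}_\infty \;+\; \norm{H_z}_\infty\sum_{N\ge 2}\norm{G_z*H_z}_\infty^{N-1},
\]
so the coefficient linking $\norm{\hat\Pi_z}_\infty$ to $\norm{H_z}_\infty$ is $z\abs{\Omega}\le 4$, an $O(1)$ number, not a power of $\beta$. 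Your route back, $\norm{H_z}_\infty$ via $H_z=z\abs{\Omega}D*G_z+\Pi_z*G_z$, picks up another $O(1)$ coefficient in front of $\norm{\Pi_z}_1$. The composed map therefore does not contract, and the a priori information coming from $f(z)\le 4$ alone (namely $\norm{\hat G_z}_2=O(1)$, hence $\norm{H_z}_2,\norm{H_z}_\infty=O(1)$) is not small enough even to guarantee that the geometric series $\sum_{N\ge 2}\norm{G_z*H_z}_\infty^{N-1}$ converges. So the claim that ``cross-coefficients are all bounded by a power of $\beta$'' is not correct as stated.

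The missing idea is a one-line combinatorial inequality that bypasses the lace expansion entirely: any self-avoiding walk of length $\ge 1$ has a first step, and dropping the avoidance constraint between that first step and the remainder gives
\[
H_z(x)\;\le\; z\abs{\Omega}\,(D*G_z)(x).
\]
This is not the identity \refeq{GzxRelation} with the $\Pi_z$ term discarded (that term has no sign), but a direct upper bound. With $f_1(z)\le 4$ and $f_2(z)\le 4$ it yields immediately
\[
\norm{H_z}_2^2 \le 16\,\bignorm{\hat D\hat G_z}_2^2 \le 4^4\,\bignorm{\hat D(1-\hat D)^{-1}}_2^2 \le 4^4\beta,
\]
and then $\norm{H_z}_\infty\le 4\norm{D*G_z}_\infty\le 4\norm{D}_\infty+4\norm{D}_2\norm{H_z}_2=O(\beta)$, since $\norm{D}_\infty=\abs{\Omega}^{-1}$ and $\norm{D}_2=\abs{\Omega}^{-1/2}$ are each $O(\beta^{1/2})$. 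Now $\norm{G_z*H_z}_\infty\le\norm{H_z}_\infty+\norm{H_z}_2^2=O(\beta)$, the geometric series sums, and $\abs{\hat\Pi_z(k)}=O(\beta)$ follows in one stroke --- no iteration needed. From here your treatment of $f_1$ and $f_2$ goes through; for the denominator bound on $\hat F_z(k)$ the paper splits into the cases $z\le\tfrac12\abs{\Omega}^{-1}$ (where $\chi(z)^{-1}\ge\tfrac12$) and $z\ge\tfrac12\abs{\Omega}^{-1}$ (where $z\abs{\Omega}\ge\tfrac12$), which is the clean way to get the ``fixed multiple'' you invoke. The $f_3$ bound then follows along the same lines using \refeq{Pi2cosBound}.
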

\begin{proof}
It is relatively easy to verify the continuity of $f$, and we omit the details.
To see that $f(0)=1\leq a$, we observe that $f_1(0)=0$,
$p(0) =0$ and hence $f_2(0)=1/1=1$, and $f_3(0)=0$.
The difficult step is to prove the implication \refeq{f4Impl}, and the
remainder of the proof concerns this step.
We assume throughout that $f(z)\leq 4$.


We consider first $f_1(z)=z\abs{\Omega}$.
Our goal is to prove that $f_1(z) \le 1+O(\beta)$, and for this we
will only use the assumptions $f_1(z) \le 4$ and $f_2(z) \le 4$;
we do not yet need $f_3$.
Since $0<\chi(z)<\infty$, we have
$\chi(z)^{-1}=1-z\abs{\Omega}-\hat{\Pi}_z(0)>0$, i.e.,
\begin{equation}
f_1(z)=z\abs{\Omega} < 1-\hat{\Pi}_z(0) \leq 1+|\hat{\Pi}_z(0)|.
\end{equation}
The required bound for $f_1(z)$ will follow once we
show that for all $z\in(0,z_c)$ and for all $k\in[-\pi,\pi]^d$,
\begin{equation}
\lbeq{PiObeta}
|\hat{\Pi}_z(k)| \leq O(\beta).
\end{equation}
To prove \refeq{PiObeta} we use
\refthm{PiBounds} (more precisely, \refeq{Pi1Bound} and \refeq{Pi2Bound}),
to obtain
\begin{align}
|\hat{\Pi}_z(k)|
&\leq
\sum_{N=1}^\infty \sum_{x\in\Z^d} \Pi_z^{(N)}(x)
\notag\\
\lbeq{PihatUnifBound}
&\leq
\norm{H_z}_\infty \left( f_1(z) + \sum_{N=2}^\infty \norm{G_z * H_z}_\infty^{N-1} \right).
\end{align}
For the first term, we use
$f_1(z)\leq 4$.  For the second term, we need a bound on $\norm{G_z * H_z}_\infty$
in order to bound the sum.   By definition,
\begin{equation}
\lbeq{GstarHBound}
\norm{G_z * H_z}_\infty \leq \norm{H_z}_\infty + \norm{H_z * H_z}_\infty \leq \norm{H_z}_\infty +\norm{H_z}_2^2.
\end{equation}
Now $H_z$ is the generating function for SAWs which take at least one step.
By omitting the avoidance constraint between the first step and subsequent
steps, we obtain
\begin{equation}
H_z(x)\leq z\abs{\Omega} (D * G_z)(x) \leq 4 (D*G_z)(x).
\end{equation}
Thus we can bound the second term in \refeq{GstarHBound}, using $f_2(z)\le 4$
and \refprop{betaBound},
as
\begin{align}
\norm{H_z}_2^2
&\leq
4^2 \norm{D*G_z}_2^2 = 4^2 \bignorm{\hat{D}\hat{G}_z}_2^2
\notag\\
&\leq
4^4 \bignorm{\hat{D}\hat{C}_{p(z)}}_2^2 = 4^4 \norm{D*C_{p(z)}}_2^2
\notag\\
&\leq
4^4 \norm{D*C_{z_0}}_2^2 = 4^4\bignorm{\hat{D}\bigl(1-\hat{D}\bigr)^{-1}}_2^2
\notag\\
&\leq 4^4 \beta.
\end{align}
Similar estimates show $\norm{H_z}_\infty \leq O(\beta)$.  If we
substitute these estimates into \refeq{PihatUnifBound}, we obtain
\begin{equation}
|\hat{\Pi}_z(k)|
\leq
C\beta \left( 4 + \sum_{N=2}^\infty (C\beta)^{N-1} \right)
\end{equation}
for some constant $C$,
so that
\refeq{PiObeta} will hold for $\beta$ sufficiently small.
This completes the proof for $f_1(z)$.

We next sketch the proof that $f_2(z)\leq 1+O(\beta)$.
Recalling the notation $\hat{F}_z(k)=\hat{G}_z(k)^{-1}$ introduced in
\refeq{FhatDefn},  and using the formulas
\refeq{pzDefn} and \refeq{pzFormula}
for $p(z)$, we obtain
\begin{align}
\frac{\hat{G}_z(k)}{\hat{C}_{p(z)}(k)} -1
&=
\frac{1-p(z)\abs{\Omega}\hat{D}(k)}{\hat{F}_z(k)} -1
\notag\\
&=
\frac{1-\bigl(z\abs{\Omega}+\hat{\Pi}_z(0)\bigr)\hat{D}(k)-\hat{F}_z(k)}{\hat{F}_z(k)}
\notag\\
&=
\frac{-\hat{\Pi}_z(0)\hat{D}(k)+\hat{\Pi}_z(k)}{\hat{F}_z(k)}
\notag\\
\lbeq{GhatOverChatBound}
&=
\frac{\hat{\Pi}_z(0)\bigl(1-\hat{D}(k)\bigr)-\bigl(\hat{\Pi}_z(0)-\hat{\Pi}_z(k)\bigr)}{\hat{F}_z(k)}.
\end{align}
The bound $f_3(z)\leq 4$ and \refeq{Pi2cosBound} can be used to show that
$|\hat{\Pi}_z(0)-\hat{\Pi}_z(k)|\leq \linebreak O(\beta)\bigl(1-\hat{D}(k)\bigr)$
(see \cite{Slad06} for details); it is precisely at this point
that the need to include $f_3$ in the definition of $f$ arises.
Together with \refeq{PiObeta}, this shows that the numerator of
\refeq{GhatOverChatBound} is $O(\beta)\bigl(1-\hat{D}(k)\bigr)$.

For the denominator, we recall the formula \refeq{FhatFormula}:
\begin{equation}
\hat{F}_z(k)=\chi(z)^{-1} + z\abs{\Omega}\bigl(1-\hat{D}(k)\bigr) + \bigl(\hat{\Pi}_z(0)-\hat{\Pi}_z(k)\bigr).
\end{equation}
To bound $\hat{F}_z(k)$ from below, we consider two parameter ranges for $z$.
If $z\leq\tfrac{1}{2}\abs{\Omega}^{-1}$, we can make the trivial
estimate $\chi(z)^{-1} \geq \hat{C}_z(0)^{-1}
=1-z\abs{\Omega}\geq \tfrac{1}{2}$, so that
$\hat{F}_z(k)\geq\tfrac{1}{2}+0-O(\beta)\geq \tfrac{1}{4}$ for small $\beta$.
Since the numerator of \refeq{GhatOverChatBound} is itself $O(\beta)$,
this proves that $f_2(z)\leq 1+O(\beta)$ for this range of $z$.

It remains to consider $\tfrac{1}{2}\abs{\Omega}^{-1} \leq z\leq z_c$.  Now we estimate
\begin{equation}
\hat{F}_z(k)\geq 0+\tfrac{1}{2}\bigl(1-\hat{D}(k)\bigr)-O(\beta)\bigl(1-\hat{D}(k)\bigr)\geq \tfrac{1}{4}\bigl(1-\hat{D}(k)\bigr).
\end{equation}
The factors $1-\hat{D}(k)$ in the numerator and denominator of \refeq{GhatOverChatBound} cancel, leaving $O(\beta)$ as desired.

Finally the proof for $f_3(z)$ is similar to the proof for $f_2(z)$,
and we refer to \cite{Slad06} for the details.
\end{proof}

\subsection{Tutorial}
\lbsect{Tut3}

For simplicity, we restrict our attention now to the nearest-neighbour model
of SAWs in dimensions sufficiently high that the preceding arguments and
conclusions apply.
In \reflemma{fHypos}, we found that $f_2(z) \le a=1+O(d^{-1})$,
since  $\beta \leq O((d-4)^{-1})= O(d^{-1})$.  This
estimate, which states that
\begin{equation} \label{eq:infrared}
  \hat G_z(k) \leq a \hat C_{p(z)}(k) \qquad k \in [\-\pi,\pi]^d, \;\;\;
  z \in (0,z_c),
\end{equation}
is most important for $k \approx 0$, the \emph{small frequencies}, and it
is referred to as the \emph{infrared bound}.  Other bounds obtained in
\reflemma{fHypos} can be framed as follows:
there is a constant $c$, independent of $z\le z_c$, such that
\begin{equation}
  \|H_z\|_2^2 \leq c d^{-1},
  \quad
  \|H_z\|_\infty \leq c d^{-1},
  \quad
  \|\Pi_z\|_1 \leq  c d^{-1},
\end{equation}
and
\begin{equation} \label{eq:Pi-lace-bound}
  \|\Pi_z^{(N)}\|_1 \leq (cd^{-1})^N, \quad
  \sum_{N= M}^\infty \|\Pi_z^{(N)}\|_1 \leq cd^{-M}.
\end{equation}
We also recall that the Fourier transform of the two-point function can be
written as
\begin{equation}
  \hat G_z(k) = \frac{1}{1-z|\Omega|\hat D(k) - \hat \Pi_z(k)}.
\end{equation}
Since $\hat G_z(0) \to \infty$ as $z \to z_c$, we obtain the equation
\begin{equation} \label{eq:G-z_c}
  1-z_c|\Omega|-\hat\Pi_{z_c}(0) = 0.
\end{equation}
This equation provides a starting point to study the connective constant
$\mu = z_c^{-1}$.

\begin{problem} \label{problem:1/d}
  In this problem, we show that the connective constant obeys
  \begin{equation}
    \mu = 2d -1 - (2d)^{-1} + O((2d)^{-2}) \qquad \text{as $d\to \infty$.}
  \end{equation}
  This special case of the results discussed in \refSSect{dexpansion}
  was first proved by Kesten \cite{Kest64}, by very different means.
  \medskip

  (a) Let $m \geq 1$ be an integer.
  Show that $\|(1-\hat D)^{-m}\|_1$ is non-increasing in $d > 2m$.
  In particular, it follows that $\|\hat C_{z_0}\|_2$ is
  bounded uniformly in $d>4$.

  \emph{Hint:} $A^{-m} = \Gamma(m)^{-1} \int_0^\infty u^{m-1} e^{-uA} \; du$.
  \medskip

  (b) Let $H^{(j)}_z(x) = \sum_{m = j}^\infty c_m(x) z^m$ be the generating
  function for
  SAWs that take at least $j$ steps. By relaxing the condition of mutual
  self-avoidance for the first $j$ steps, show that
  \begin{equation}
    \|H^{(j)}_{z_c}\|_\infty \leq O((2d)^{-j/2}), \quad j > 1.
  \end{equation}

  \emph{Hint:} Use the infrared bound for the two-point function \eqref{eq:infrared},
  and that the probability that
  a $2j$-step simple random walk which starts at $0$ also ends at $0$ is
  \begin{equation}
    \|\hat D^{2j}\|_1 \leq O((2d)^{-j}).
  \end{equation}
  \medskip

  (c)
  Recall that $\pi^{(1)}_n(x) = 0$ if $x \neq 0$, so that
  $\hat \Pi^{(1)}_z(0) = \sum_{x\in\Z^d} \Pi^{(1)}_z(x) = \Pi^{(1)}_z(0)$
  is the generating function for all self-avoiding returns. Prove that
  \begin{equation}
    \hat\Pi^{(1)}_{z_c}(0) = (2d)^{-1} + 3 (2d)^{-2} + O((2d)^{-3})
  \end{equation}
  \medskip

  (d) Note that $\hat \Pi^{(2)}_z(0)$ is the generating function for all
  \emph{$\theta$-walks}:
  paths that visit their eventual endpoint, return to the origin, then return to
  their endpoint, and are otherwise self-avoiding.
  Prove that
  \begin{equation}
    \hat \Pi^{(2)}_z(0) = (2d)^{-2} + O((2d)^{-3}).
  \end{equation}
  \medskip

  (e) Conclude from (c) and (d) that
  \begin{equation}
    \hat \Pi_z(0) = - (2d)^{-1} - 2(2d)^{-2} + O((2d)^{-3}),
  \end{equation}
  and use this to show
  \begin{equation}
    \mu = 2d -1 - (2d)^{-1} + O((2d)^{-2}).
  \end{equation}
\end{problem}

We have seen in \refSSect{DiffIneq} that $\chi(z) \asymp (1-z/z_c)^{-1}$ in high dimensions, assuming the bubble condition.
The next problem shows that this bound can be improved to an asymptotic formula.

\begin{problem} \label{problem:chi-asymptotic}
  (a) Show that
  \begin{equation}
    \frac{d[z\chi(z)]}{dz} = V(z) \chi(z)^2,
    \quad \text{where }
    V(z) = 1- \hat \Pi_z(0) + z\frac{d\hat\Pi_z(0)}{dz}.
  \end{equation}
  \emph{Hint:} Let $\hat F_z(0) = \chi(z)^{-1} = 1-z|\Omega| - \hat \Pi_z(0)$
  and express the left-hand side in terms of $\hat F_z(0)$.
  \medskip

  (b) Show that $\hat \Pi_{z_c}(0)$, $\frac{d}{dz}\hat\Pi_{z_c}(0)$ and thus $V(z_c)$ are finite. It follows that
  \begin{equation}
    \frac{d[z\chi(z)]}{dz} = V(z) \chi(z)^2 \sim V(z_c) \chi(z)^{2} \quad \text{ as $z\nearrow z_c$,}
  \end{equation}
  where $f(z) \sim g(z)$ means $\lim_{z \nearrow z_c} f(z)/g(z) = 1$.
  \medskip

  (c) Prove that $\chi(z) \sim A (1-z/z_c)^{-1}$ as $z \nearrow z_c$,
    where the constant $A$ is given by
    $A = z_c^{-1} [2d+ \frac{d}{dz}|_{z=z_c} \hat\Pi_z(0)]^{-1}$.
\end{problem}

\section{Integral representation for walk models}
\lbsect{intrep}

It has long been understood by physicists that it is sometimes possible
to represent random fields by random walks.
Ideas in this direction due to Symanzik \cite{Syma69} were influential
among mathematicians, and inspired,  e.g., the analysis of
\cite{BFS83II,BFS82}
who showed how to use random walks to represent and analyse ferromagnetic
lattice spin systems.
In this section, we develop representations of two random walk models
in terms of random fields, via functional integrals.
Our ultimate goal is rather the opposite to
that of \cite{BFS83II,BFS82}, namely
we wish to study models of random walks via studying their
integral representations.
This will be the topic of \refSect{ctwsaw}.

We begin in Section~\ref{s:Gaussian-integrals}
with some background material about Gaussian integrals.
In Section~\ref{sec:G-saw-loops}, we use these Gaussian integrals
to represent a model of SAWs in a background of
self-avoiding loops, a model closely related to the $O(n)$ loop
model discussed in Section~\ref{sec:loopholo}.
The random field in these Gaussian integrals is called a \emph{boson}
field in physics.  It was realised in the physics literature
\cite{McKa80,PS80}
that the loops in the loop model could be eliminated by the
use of anti-commuting variables, referred to as a
\emph{fermion} field, thereby providing a representation for models of
SAWs.  The anti-commuting variables can
be understood in terms of differential forms with their
anti-commuting wedge product, and in
Sections~\ref{s:Gaussian-forms}--\ref{s:functions-forms}
we provide the relevant background on differential forms and
their integration.  Finally, in \refSect{repsaw}, we obtain
an integral representation for SAWs.
The ideas in this section are developed in
further detail in \cite{BIS09}.

\subsection{Gaussian integrals}
\label{s:Gaussian-integrals}


Fix a positive integer $M$.
Later, we identify the
set $\{1, \dots, M\}$ with a finite set $\Lambda$ on which the walks
related to the fields take place, e.g., $\Lambda \subset \Z^d$.
Consider a two-compo\-nent real field
\begin{equation}
(u,v) = (u_x, v_x)_{x \in \{1, \dots, M\}} \in \R^M \times \R^M.
\end{equation}
%
From this, we obtain
the associated complex field
$(\varphi, \bar\varphi) = (\varphi_x, \bar\varphi_x)_{x\in\{1,\dots, M\}}$,
where
\begin{equation}
\varphi_x = u_x + iv_x, \quad \bar\varphi_x = u_x -iv_x;
\end{equation}
this is the so-called \emph{boson field}.
We wish to integrate with respect to the
variables $(\varphi_x, \bar\varphi_x)$, and for this we will
use the differentials $d\varphi_x = du_x+i\, dv_x$ and
$d\bar\varphi_x = du_x-i\, dv_x$.
As we will discuss in more detail in Section~\ref{s:Gaussian-forms},
differentials are multiplied
using an anti-commuting product, so in particular
$du_x \, dv_x = - dv_x \, du_x$, $du_x \, du_x=dv_x \, dv_x=0$, and
$d\bar\varphi_x \, d\varphi_x = 2i \, du_x \, dv_x$.
%

Let $C = (C_{xy})_{x,y\in \{1,\dots,M\}}$ be an $M \times M$ complex matrix with positive Hermitian part,
meaning that
\begin{equation}
  \sum_{x,y=1}^M \varphi_x (C_{xy} + \bar C_{yx}) \bar\varphi_y > 0
  \quad \text{for all $\varphi \neq 0$ in $\C^M$.}
\end{equation}
It is not difficult to see that this implies that $A = C^{-1}$ exists.
The (complex) Gaussian measure with covariance $C$ is defined by
\begin{equation}
  d\mu_C(\varphi, \bar\varphi) = \frac{1}{Z_C}  e^{-\varphi A \bar\varphi} \; d\bar\varphi \, d\varphi,
\end{equation}
where $\varphi A \bar\varphi = \sum_{x,y=1}^M \varphi_x A_{xy} \bar\varphi_y$,
and
\begin{equation}
    d\bar\varphi \, d\varphi = d\bar\varphi_1 \, d\varphi_1 \cdots
    d\bar\varphi_M \, d\varphi_M
    = (2i)^M du_1\, dv_1 \cdots du_M\, dv_M
\end{equation}
is a multiple of the
Lebesgue measure on $\R^{2M}$.
The normalisation constant
\begin{equation}
  Z_C = \int_{\R^{2M}} e^{-\varphi A \bar\varphi} \; d\bar\varphi \, d\varphi
\end{equation}
can be computed explicitly.

\begin{lemma}
\label{lem:ZC}
For $C$ with positive Hermitian part,
the normalisation of the Gaussian integral is given by
  \begin{equation}
    Z_C = \frac{(2\pi i)^M}{\det A}.
  \end{equation}
\end{lemma}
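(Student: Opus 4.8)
The plan is to reduce the computation of $Z_C$ to the scalar case by diagonalisation, and then to compute the one-dimensional Gaussian integral directly. First I would observe that it suffices to prove the formula when $C$ (equivalently $A=C^{-1}$) is diagonalisable over $\C$ with eigenvalues having positive real part; the general case of positive Hermitian part then follows by continuity, since both sides are continuous functions of the matrix entries of $A$ on the open set where the Hermitian part is positive, and diagonalisable matrices are dense. Actually, a cleaner route avoids the density argument: for $C$ with positive Hermitian part one can write $A = P D P^{-1}$ after a suitable complex linear change of variables, or better, use the fact that a matrix with positive Hermitian part is conjugate by a complex matrix to a diagonal matrix; I would note that the quadratic form $\varphi A \bar\varphi$ transforms correctly under linear changes of the $\varphi$ variables (with the Jacobian of $d\bar\varphi\,d\varphi$ being $|\det|^2$ of the transformation, which cancels appropriately), so the value $Z_C$ depends only on $\det A$.

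The key computational step is the rank-one case $M=1$: with $\varphi = u+iv$, $\bar\varphi = u-iv$, and $A = a \in \C$ with $\mathrm{Re}\, a > 0$, we have $d\bar\varphi\,d\varphi = 2i\,du\,dv$ and $\varphi A \bar\varphi = a(u^2+v^2)$, so
\begin{equation}
  Z_C = \int_{\R^2} e^{-a(u^2+v^2)}\, 2i\, du\, dv = 2i \left( \int_{\R} e^{-a t^2}\, dt \right)^2 = 2i \cdot \frac{\pi}{a} = \frac{2\pi i}{a},
\end{equation}
using the standard Gaussian integral $\int_\R e^{-at^2}\,dt = \sqrt{\pi/a}$ for $\mathrm{Re}\,a>0$ (with the principal branch of the square root, justified by analytic continuation from $a>0$ real). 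This matches $(2\pi i)^M/\det A$ with $M=1$ and $\det A = a$.

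For general $M$, after reducing to diagonal $A = \mathrm{diag}(a_1,\dots,a_M)$, the integral factorises: $e^{-\varphi A\bar\varphi} = \prod_{x=1}^M e^{-a_x \varphi_x \bar\varphi_x}$ and $d\bar\varphi\,d\varphi = \prod_{x=1}^M d\bar\varphi_x\, d\varphi_x = \prod_{x=1}^M 2i\, du_x\, dv_x$ (here the anti-commuting differentials reorganise with no sign, since each pair $d\bar\varphi_x\,d\varphi_x$ is an even-degree form and hence central). Thus $Z_C = \prod_{x=1}^M (2\pi i / a_x) = (2\pi i)^M / \prod_x a_x = (2\pi i)^M/\det A$, as claimed.

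The main obstacle is the reduction step: one must be careful that changing variables in the $\varphi$ integral is legitimate and that the formula $Z_C = (2\pi i)^M/\det A$ is genuinely invariant under such changes. The cleanest way I would handle this is to note that for $B \in GL_M(\C)$, setting $\psi = B\varphi$, the form $d\bar\varphi\, d\varphi$ picks up a factor $\det B \cdot \overline{\det B} = |\det B|^2$ while $\varphi A \bar\varphi$ becomes $\psi (B^{-1})^* {}^t(\text{...})$; tracking this shows $Z_{C} $ transforms so that $Z_C \det A$ is invariant. Alternatively, and perhaps most safely for the exposition, I would first establish the identity for Hermitian positive-definite $A$ (where the spectral theorem gives a \emph{unitary} diagonalisation, Jacobian $1$, and everything is transparent), and then extend to all $A$ with positive Hermitian part by analyticity: both $Z_C$ (as an absolutely convergent integral depending holomorphically on the entries of $A$) and $(2\pi i)^M/\det A$ are holomorphic on the connected open domain in question and agree on the totally real subset of Hermitian positive matrices, hence agree everywhere by the identity theorem.
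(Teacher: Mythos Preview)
Your proposal is correct and follows essentially the same route as the paper: diagonalise $A$, reduce to a product of one-dimensional Gaussian integrals, and read off $(2\pi i)^M/\det A$. The paper's proof only treats the Hermitian case explicitly (where the spectral theorem gives a unitary change of variables with Jacobian~$1$) and defers the general positive-Hermitian-part case to \cite{BIS09}; your final paragraph, establishing the Hermitian case first and then extending by holomorphy in the matrix entries, supplies exactly this missing step and is sound, since the Hermitian positive-definite matrices form a maximal totally real submanifold of the connected domain $\{A : \tfrac12(A+A^*)>0\}\subset \C^{M\times M}$.
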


\begin{proof}[Proof]
In this proof, we make the simplifying assumption that
$C$ and thus also $A$ are Hermitian, though the result holds more generally;
see \cite{BIS09}.
  By the spectral theorem for Hermitian matrices, there is a positive
  diagonal matrix $D = \mathrm{diag}(d_x)$
  and a unitary matrix $U$ such that $A=U^{-1}D U$.
  Then, $\varphi A \bar\varphi = \rho D \bar\rho$ where $\rho = \bar U \varphi$
  ($\bar U$ is the complex conjugate of $U$).
  By a change of variables in the integral and explicit computation of
  the resulting 1-dimensional integral,
  \begin{equation}
    Z_C = \prod_{x=1}^M
    \int_{\R^2} e^{-d_x(u_x^2+v_x^2)} \; 2i \, du_x \, dv_x
    = \frac{(2\pi i)^M}{\prod_{x=1}^M d_x} = \frac{(2\pi i)^M}{\det A}.
    \qedhere
  \end{equation}
\end{proof}

We define the differential operators
\begin{equation}
  \ddp{}{\varphi_x} = \half \left( \ddp{}{u_x}-i\ddp{}{v_x} \right)
  ,\quad
  \ddp{}{\bar\varphi_x} = \half \left( \ddp{}{u_x}+i\ddp{}{v_x} \right).
\end{equation}
It is easy to check that
\begin{equation}
  \ddp{\varphi_y}{\varphi_x} = \ddp{\bar\varphi_y}{\bar\varphi_x} = \delta_{xy}
  ,\quad
  \ddp{\bar\varphi_y}{\varphi_x} = \ddp{\varphi_y}{\bar\varphi_x} = 0
  .
\end{equation}
The following integration by parts formula will be useful.

\begin{lemma} \label{lem:IBP}
  For $C$ with positive Hermitian part, and for nice functions $F$,
  \begin{equation}
  \lbeq{IBP}
    \int \bar\varphi_a F \; d\mu_C(\varphi, \bar\varphi) = \sum_{x=1}^M C_{ax} \int \ddp{F}{\varphi_x} \; d\mu_C(\varphi, \bar\varphi).
  \end{equation}
\end{lemma}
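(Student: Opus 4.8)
The plan is to reduce the integration-by-parts formula to the classical divergence-theorem identity for real Gaussian integrals on $\R^{2M}$, by writing everything in terms of the real variables $u_x, v_x$ and the ordinary Lebesgue measure. First I would recall that $d\mu_C = Z_C^{-1} e^{-\varphi A \bar\varphi}\, d\bar\varphi\, d\varphi$, where $d\bar\varphi\, d\varphi = (2i)^M du_1\, dv_1 \cdots du_M\, dv_M$ is a constant multiple of Lebesgue measure, so the constants $(2i)^M$ and $Z_C$ cancel between the two sides of \refeq{IBP} and I may work directly with $\int_{\R^{2M}} (\cdot)\, e^{-\varphi A \bar\varphi}\, du\, dv$. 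The key elementary fact is that for a Schwartz-class (or suitably decaying) function $g$ on $\R^{2M}$ and any partial derivative $\partial$, one has $\int \partial g = 0$; applying this to $g = F\, e^{-\varphi A \bar\varphi}$ with $\partial = \partial/\partial \varphi_x$ (which, by the definition $\partial/\partial\varphi_x = \tfrac12(\partial/\partial u_x - i\,\partial/\partial v_x)$, is a genuine constant-coefficient combination of real partials) gives
\begin{equation}
  \int \left( \ddp{F}{\varphi_x} + F\, \ddp{}{\varphi_x}\bigl(-\varphi A \bar\varphi\bigr) \right) e^{-\varphi A \bar\varphi}\, du\, dv = 0 .
\end{equation}

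Next I would compute $\partial(-\varphi A \bar\varphi)/\partial\varphi_x$. Using $\partial\varphi_y/\partial\varphi_x = \delta_{xy}$ and $\partial\bar\varphi_y/\partial\varphi_x = 0$, we get $\partial(\varphi A \bar\varphi)/\partial\varphi_x = \sum_y A_{xy}\bar\varphi_y$, so the displayed identity becomes
\begin{equation}
  \int \ddp{F}{\varphi_x}\, d\mu_C = \sum_{y=1}^M A_{xy} \int \bar\varphi_y\, F\, d\mu_C .
\end{equation}
Finally I would multiply both sides by $C_{ax}$, sum over $x$, and use $C = A^{-1}$, i.e. $\sum_x C_{ax} A_{xy} = \delta_{ay}$, to collapse the right-hand side to $\int \bar\varphi_a F\, d\mu_C$, which is exactly \refeq{IBP}.

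The only genuine obstacle is justifying the vanishing of the boundary term $\int \partial(F e^{-\varphi A \bar\varphi}) = 0$, which is where the phrase ``nice functions $F$'' does its work: one needs $F e^{-\varphi A\bar\varphi}$ and its first derivatives to be integrable and to decay at infinity, which follows from the positive-Hermitian-part hypothesis on $C$ (so that $\mathrm{Re}(\varphi A\bar\varphi)$ is a positive definite quadratic form, giving Gaussian decay) together with at-most-polynomial growth of $F$ and its derivatives. I would state this as the standing regularity assumption, note that it holds in all the applications (where $F$ is a polynomial in the fields, possibly times a bounded factor), and otherwise treat the interchange of differentiation and integration and the integration by parts as routine, referring to \cite{BIS09} for the general case where $C$ need not be Hermitian.
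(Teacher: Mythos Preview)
Your argument is correct and is essentially the same as the paper's: both integrate by parts in $\partial/\partial\varphi_x$ against the Gaussian weight to obtain $\int \partial F/\partial\varphi_x \, d\mu_C = \sum_y A_{xy}\int \bar\varphi_y F\, d\mu_C$, then multiply by $C_{ax}$ and sum using $CA=I$. You are simply more explicit than the paper about translating $\partial/\partial\varphi_x$ to real partials and about the decay needed to kill the boundary term, which the paper absorbs into the phrase ``integrating by parts.''
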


\begin{proof}
  Integrating by parts, we obtain
  \begin{align}
    \int \ddp{F}{\varphi_x} e^{-\varphi A \bar\varphi} \; d\bar\varphi \, d\varphi
    &= - \int F \ddp{}{\varphi_x} e^{-\varphi A \bar\varphi} \; d\bar\varphi \, d\varphi
    \notag\\
    &=  \int F \sum_{y} A_{xy} \bar\varphi_y e^{-\varphi A \bar\varphi}
    \; d\bar\varphi \,d\varphi .
  \end{align}
  It follows from the fact that $C=A^{-1}$ that
  \begin{equation}
    \sum_{x=1}^M C_{ax} \int \ddp{F}{\varphi_x} \; d\mu_C
    = \int \sum_{x,y} C_{ax}A_{xy} \bar\varphi_y F \; d\mu_C
    = \int \bar\varphi_a F \; d\mu_C .
   \qedhere
  \end{equation}
\end{proof}

The following application of Lemma~\ref{lem:IBP} is a special case of
Wick's Theorem.
The quantity appearing on the right-hand side
of \refeq{Wick} is the \emph{permanent} of the submatrix of $C$
indexed by $(x_i, y_j)_{i,j=1}^k$.

\begin{lemma}
\label{lem:Wick}
  Let $\{x_1, \dots, x_k\}$ and $\{y_1, \dots, y_k\}$ each be sets with $k$ distinct elements from $\{1, \dots, M\}$.
  Then
  \begin{equation}
  \lbeq{Wick}
    \int \prod_{l=1}^k \bar\varphi_{x_l}\varphi_{y_l} d\mu_C
    = \sum_{\sigma \in S_k} \prod_{l=1}^k C_{x_l, y_{\sigma(l)}},
  \end{equation}
  where the sum is over the set $S_k$ of permutations of $\{1,\ldots,k\}$.
\end{lemma}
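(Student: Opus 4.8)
The plan is to prove \refeq{Wick} by induction on $k$, the engine being the integration by parts formula of Lemma~\ref{lem:IBP} together with the elementary identities $\ddp{\varphi_y}{\varphi_x}=\delta_{xy}$ and $\ddp{\bar\varphi_y}{\varphi_x}=0$ recorded just above it. The base case $k=1$ is immediate: taking $F=\varphi_{y_1}$ in \refeq{IBP} gives $\int\bar\varphi_{x_1}\varphi_{y_1}\,d\mu_C=\sum_{x}C_{x_1x}\int\ddp{\varphi_{y_1}}{\varphi_x}\,d\mu_C=\sum_x C_{x_1x}\delta_{xy_1}=C_{x_1y_1}$, which is the claimed formula since $S_1$ consists of a single permutation. (One may equally start at $k=0$, where both sides are $1$.)

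For the inductive step, fix $k\ge 2$ and, using that the boson field is commuting, write the integrand as $\bar\varphi_{x_1}F$ with $F=\bigl(\prod_{l=2}^k\bar\varphi_{x_l}\bigr)\bigl(\prod_{l=1}^k\varphi_{y_l}\bigr)$. Applying Lemma~\ref{lem:IBP} and then the product rule for $\ddp{}{\varphi_x}$, and noting that $\ddp{}{\varphi_x}$ annihilates every factor $\bar\varphi_{x_l}$ while the distinctness of $y_1,\dots,y_k$ ensures each factor $\varphi_{y_j}$ occurs to the first power, we obtain
\[
\ddp{F}{\varphi_x}=\Bigl(\prod_{l=2}^k\bar\varphi_{x_l}\Bigr)\sum_{j=1}^k\delta_{xy_j}\prod_{\substack{l=1\\ l\neq j}}^k\varphi_{y_l}.
\]
Carrying out the sum over $x$ collapses $\sum_x C_{x_1x}\delta_{xy_j}$ to $C_{x_1y_j}$, so that
\[
\int\prod_{l=1}^k\bar\varphi_{x_l}\varphi_{y_l}\,d\mu_C=\sum_{j=1}^k C_{x_1y_j}\int\prod_{l=2}^k\bar\varphi_{x_l}\prod_{\substack{l=1\\ l\neq j}}^k\varphi_{y_l}\,d\mu_C.
\]
Each remaining integral involves the $k-1$ distinct indices $x_2,\dots,x_k$ and the $k-1$ distinct indices obtained from $y_1,\dots,y_k$ by deleting $y_j$, so the induction hypothesis applies and expresses it as a sum over bijections from $\{2,\dots,k\}$ to $\{1,\dots,k\}\setminus\{j\}$.

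It then remains to reassemble the sums: a term on the right is labelled by a choice of $j\in\{1,\dots,k\}$ together with a bijection $\{2,\dots,k\}\to\{1,\dots,k\}\setminus\{j\}$, which is exactly the data of a permutation $\sigma\in S_k$ with $\sigma(1)=j$ --- reading $\sigma(l)$ as the index of the $\varphi$ paired with $\bar\varphi_{x_l}$ --- and the accumulated product of covariances is $\prod_{l=1}^k C_{x_l,y_{\sigma(l)}}$. Summing over all such $j$ and bijections therefore yields $\sum_{\sigma\in S_k}\prod_{l=1}^k C_{x_l,y_{\sigma(l)}}$, i.e.\ the permanent of $(C_{x_i,y_j})_{i,j=1}^k$, completing the induction. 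The steps are routine; the only places needing attention --- the nearest thing to an obstacle --- are the combinatorial bookkeeping that the distinctness of \emph{both} index sets forces every field variable to appear to the first power (so that no spurious multiplicities or diagonal contractions intrude), and the minor technical point that a polynomial $F$ is a ``nice function'' to which Lemma~\ref{lem:IBP} legitimately applies, which is just the standard finiteness of Gaussian moments.
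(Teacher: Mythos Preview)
Your proof is correct and follows essentially the same approach as the paper: the paper simply says the result ``follows by repeated application of the integration by parts formula'' of Lemma~\ref{lem:IBP}, noting that each application removes one $\bar\varphi$ and one $\varphi$, and your argument is precisely this spelled out as an induction on $k$ with careful bookkeeping of the resulting bijections.
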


\begin{proof}
  This follows by repeated application of the integration by parts
  formula in Lemma~\ref{lem:IBP}.  Each time the formula is applied,
  one factor of $\bar\varphi$ disappears on the right-hand side of
  \refeq{IBP}, and the partial differentiation
  eliminates one factor $\varphi$ as well.
\end{proof}

\subsection{Integral representation for a loop model}
\label{sec:G-saw-loops}

Let $\Lambda$ be a finite set of cardinality $M$.
Fix $a,b \in \Lambda$ and a subset $X \subset \Lambda \setminus \{a,b\}$.
An example we have in mind
is $\Lambda \subset \Z^d$ and $X=\Lambda \setminus \{a,b\}$.
We define the integral
\begin{equation}
\lbeq{loopab}
  G_{ab,X} = \int \bar\varphi_a \varphi_b \prod_{x \in X} (1+\varphi_x\bar\varphi_x) \; d\mu_C.
\end{equation}
As we now explain, this can be interpreted as a loop model whose configurations
consist of a self-avoiding walk from $a$ to $b$ whose intermediate steps
lie in $X$, together with a background of closed loops in $X$.
We denote by $\SAWs_{ab}(X)$ the set of sequences $(a,x_1,\ldots,x_{n-1},b)$
with $n\ge 1$ arbitrary and the $x_i \in X$ distinct---these are SAWs with
rather general steps.

Repeated integration by parts gives
\begin{equation}
  G_{ab,X} = \sum_{\omega \in \SAWs_{ab}(X)} C^\omega \int \prod_{x \in X \setminus \omega} (1+\varphi_x\bar\varphi_x) \; d\mu_C,
\end{equation}
where $C^\omega = \prod_{i=1}^{\ell(\omega)} C_{w(i-1),w(i)}$. Also,
by expanding the product and applying Lemma~\ref{lem:Wick}, we obtain
\begin{align}
  \int \prod_{x \in X \setminus \omega} (1+\varphi_x\bar\varphi_x) \; d\mu_C
  &= \sum_{Z \subset X \setminus \omega} \int \prod_{x \in Z} \varphi_x\bar\varphi_x \; d\mu_C \nonumber \\
  &= \sum_{Z \subset X \setminus \omega} \sum_{\sigma \in S(Z)} \prod_{z \in Z} C_{z,\sigma(z)},
\end{align}
with $S(Z)$ is the set of permutations of the set $Z$.
Altogether, this gives
\begin{align}
  G_{ab,X} &= \sum_{\omega \in \SAWs_{ab}(X)} C^\omega
  \sum_{Z \subset X \setminus \omega} \sum_{\sigma \in S(Z)} \prod_{z \in Z} C_{z,\sigma(z)}.
\end{align}


Thus, by decomposing the permutation $\sigma$ into cycles, we can interpret \refeq{loopab} as the generating function for
self-avoiding walks from $a$ to $b$ in a background of loops
with weight $C_{xy}$ for every step between $x$ and $y$ (with each loop corresponding to a cycle of $\sigma$).
See Figure~\ref{fig:SAW-loops}.

\begin{figure}[htb]
  \centering
  \input{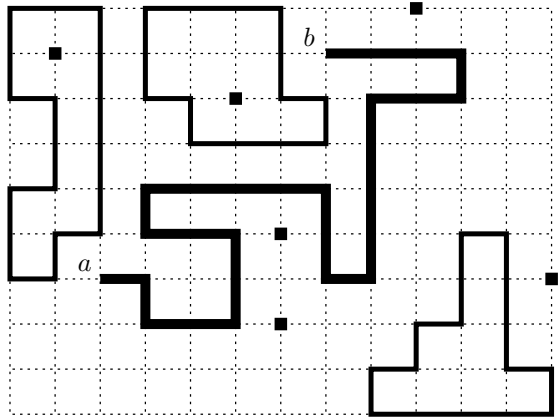}
  \caption{Self-avoiding walk from $a$ to $b$ with loop background.
    Loops can have length zero.
    The loops will be eliminated by the use of differential forms.}
  \label{fig:SAW-loops}
\end{figure}

\subsection{Differential forms}
\label{s:Gaussian-forms}

Our next goal is to modify the example
of Section~\ref{sec:G-saw-loops} with the help of differential
forms, which are versions of what physicists
call \emph{fermions}, to
obtain an integral representation for
the generating function for self-avoiding walks \emph{without} the
loop background.  A gentle introduction to differential forms can be
found in \cite{Rudi76}.

The \emph{Grassmann algebra} ${\mathcal N}$ of differential forms is generated by the
one-forms $du_1, dv_1, \dots, du_M, dv_M$,
with anticommutative product $\wedge$. A $p$-form (a differential
form of degree $p$) is a function of the variables $(u,v)$ times a product of
$p$ differentials or sum of these.
Because of anticommutativity, $du_x \wedge du_x = dv_x \wedge dv_x = 0$, and any $p$-form with $p>2M$ must be zero. A form
of maximal degree can thus be written uniquely as
\begin{equation} \label{eq:K-maximal}
  K = f(u,v) \; du_1 \wedge dv_1 \wedge \cdots \wedge du_M \wedge dv_M,
\end{equation}
where $du_1 \wedge dv_1 \wedge \cdots \wedge du_M \wedge dv_M$ is
the standard volume form on $\R^{2M}$.
A general differential form is a linear combination of $p$-forms,
where different terms in the sum can have different values of $p$.
Together, the differential forms constitute the algebra ${\mathcal N}$.

We will omit the wedge $\wedge$ from the notation from now on, and write
simply $du_1 \, dv_1$ for $du_1 \wedge dv_1$, but it should be borne in
mind that
order is significant in such an expression: $du_x \, dv_y = -dv_y \, du_x$.
On the other hand, two forms of \emph{even} degree commute.

We again use complex variables, and write
\begin{align} \label{eq:boson-def}
\begin{split}
  \varphi_x = u_x  + i v_x
  , \qquad&
  \bar\varphi_x = u_x -i v_x
  ,
  \\
  d\varphi_x = du_x  + i \, dv_x
  ,\qquad&
  d\bar\varphi_x = du_x -i \, dv_x
  .
\end{split}
\end{align}
Then
\begin{equation}
  d\bar\varphi_x \, d\varphi_x = 2i \, du_x \, dv_x
  .
\end{equation}
Given any fixed choice of the complex square root, we introduce the notation
\begin{equation} \label{eq:fermion-def}
  \psi_x = \frac{1}{\sqrt{2\pi i}} d\varphi_x
  ,\quad
  \bar\psi_x = \frac{1}{\sqrt{2\pi i}} d\bar\varphi_x
  .
\end{equation}
The collection of differential forms
\begin{equation}
(\psi, \bar\psi) = (\psi_x, \bar\psi_x)_{x\in\{1,\dots,M\}}
\end{equation}
is called the \emph{fermion} field.
It follows that
\begin{equation}
  \bar\psi_x \psi_x = \frac{1}{\pi} du_x \, dv_x.
\end{equation}

Let $\Lambda = \{1,\ldots,M\}$.
Given an $M \times M$ matrix $A$, we define the differential form
\begin{equation}
\lbeq{SAdef}
  S_A = \varphi A \bar\varphi + \psi A \bar\psi
  = \sum_{x,y \in\Lambda} \varphi_x A_{xy} \bar\varphi_y
  + \sum_{x,y\in\Lambda} \psi_x A_{xy} \bar\psi_y
  .
\end{equation}
An example of special interest is the case where
$A_{uv} = \delta_{ux}\delta_{vx}$ for some fixed
$x \in \Lambda$.  In this case, we write $\tau_x$ in place of $S_A$,
i.e.,
\begin{equation}
\lbeq{tauxdef}
  \tau_x = \varphi_x \bar\varphi_x + \psi_x \bar\psi_x
  .
\end{equation}

\subsection{Functions of forms and integrals of forms}
\label{s:functions-forms}


The following definition tells us how to integrate a differential form.

\begin{definition} \label{def:int-K}
  Let $F$ be a differential form whose term
  $K$ of maximal degree is as in \eqref{eq:K-maximal}.
  The \emph{integral of} $F$ is then defined to be
  \begin{equation}
    \int F = \int K =
    \int_{\R^{2M}} f(u,v) \; du_1 \, dv_1 \cdots du_M \, dv_M.
  \end{equation}
  In particular, if $F$ contains no term of degree $2M$ then its integral is zero.
\end{definition}

We also need to define functions of even differential forms.

\begin{definition}
  Let $K = (K_j)_{j\in J}$ be a finite collection
  of differential forms, with each $K_j$ even (a sum of forms of even degrees).
  Let $K_j^{(0)}$ be the degree zero part of $K_j$.
  Given a $C^\infty$ function $F : \R^J \to \C$, we define $F(K)$ to
  be the form given by the Taylor polynomial (a polynomial in $\psi$
  and $\bar\psi$)
  \begin{equation} \label{eq:def-F(K)}
    F(K) = \sum_{\alpha} \frac{1}{\alpha!} F^{(\alpha)}(K^{(0)})(K-K^{(0)})^\alpha
  \end{equation}
  where $\alpha = (\alpha_1, \dots, \alpha_j)$ is a multi-index and
  \begin{equation} \label{eq:def-F(K)-2}
    \alpha! = \prod_{j \in J} \alpha_j!
    ,\quad
    (K-K^{(0)})^\alpha = \prod_{j \in J} (K_j - K^{(0)}_j)^{\alpha_j}
    .
  \end{equation}
  The sum in \eqref{eq:def-F(K)} is finite due to anticommutativity,
  and the product in \eqref{eq:def-F(K)-2} is well-defined because
  all factors are even and thus commute.
\end{definition}

\begin{example}
  A simple but important example is $J=1$ and $F(t) = e^{-t}$, for which
  we obtain, e.g.,
  \begin{align}
    e^{-\tau_x} &= e^{-\varphi_x\bar\varphi_x-\psi_x\bar\psi_x}
    = e^{-\varphi_x\bar\varphi_x}(1-\psi_x\bar\psi_x)
    ,
  \\
  \label{eq:exp-S_A}
    e^{-S_A} &= e^{-\varphi A \bar\varphi - \psi A \bar\psi} = e^{-\varphi A \bar\varphi} \sum_{n=0}^M \frac{(-1)^n}{n!} (\psi A \bar\psi)^n
    .
  \end{align}
\end{example}

The following lemma displays a remarkable self-normalisation property
of these integrals.

\begin{lemma}
  \label{lem:self-normalisation}
  If $A$ is a complex $M\times M$ matrix with positive Hermitian part, then
  \begin{equation}
    \int e^{-S_A} = 1.
  \end{equation}
\end{lemma}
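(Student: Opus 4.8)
The plan is to split the form $e^{-S_A} = e^{-\varphi A\bar\varphi}\,e^{-\psi A\bar\psi}$ (the two factors commute since $\psi A \bar\psi$ is an even form) and expand the fermionic exponential as the finite sum $e^{-\psi A\bar\psi} = \sum_{n=0}^M \frac{(-1)^n}{n!}(\psi A\bar\psi)^n$ from \eqref{eq:exp-S_A}. Since the integral picks out only the term of top degree $2M$, and the bosonic factor $e^{-\varphi A\bar\varphi}$ contributes only degree-zero content in the differentials, the only surviving term is the one where $(\psi A\bar\psi)^n$ supplies the full volume form, which forces $n = M$. So I would first reduce $\int e^{-S_A}$ to $\frac{(-1)^M}{M!}\int e^{-\varphi A\bar\varphi}\,(\psi A\bar\psi)^M$.

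Next I would compute $(\psi A\bar\psi)^M$. Writing $\psi A\bar\psi = \sum_{x,y}A_{xy}\psi_x\bar\psi_y$ and expanding the $M$-th power, each nonzero contribution must use each $\psi_x$ exactly once and each $\bar\psi_y$ exactly once (any repetition kills the term by anticommutativity), so the surviving terms are indexed by a permutation $\sigma\in S_M$, giving a coefficient $\prod_x A_{x,\sigma(x)}$ times a product of the $\psi_x\bar\psi_{\sigma(x)}$ in some order. Reordering each such product into the standard form $\prod_x \psi_x\bar\psi_x$ (equivalently $\prod_x \frac{1}{\pi}du_x\,dv_x$, up to the $(2\pi i)^{-1}$ factors in the definition of $\psi,\bar\psi$) produces the sign $\operatorname{sign}(\sigma)$. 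Combined with the $M!$ ways of ordering the factors in the power, this yields $(\psi A\bar\psi)^M = M!\,(\det A)\,\prod_{x}\psi_x\bar\psi_x$. The bookkeeping of signs here—checking that the reordering sign is exactly $\operatorname{sign}(\sigma)$ and that the $M!$ overcounting cancels the $1/M!$—is the one place to be careful, though it is a standard Grassmann computation.

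Finally I would assemble the pieces: $\int e^{-S_A} = (-1)^M (\det A)\int e^{-\varphi A\bar\varphi}\prod_x\psi_x\bar\psi_x$. Using $\psi_x\bar\psi_x = \frac{1}{2\pi i}d\varphi_x\,\frac{1}{2\pi i}\cdot(\text{sign})\cdots$ — more precisely $\prod_x \psi_x\bar\psi_x = \frac{1}{(2\pi i)^M}\prod_x d\bar\varphi_x\,d\varphi_x$ up to a sign, or directly $\prod_x\frac{1}{\pi}du_x\,dv_x$ — the integral $\int e^{-\varphi A\bar\varphi}\prod_x\psi_x\bar\psi_x$ becomes $\frac{1}{(2\pi i)^M}\int_{\R^{2M}}e^{-\varphi A\bar\varphi}\,d\bar\varphi\,d\varphi = \frac{1}{(2\pi i)^M}Z_C$, and Lemma~\ref{lem:ZC} gives $Z_C = (2\pi i)^M/\det A$. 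Hence $\int e^{-S_A} = (-1)^M(\det A)\cdot\frac{1}{(2\pi i)^M}\cdot\frac{(2\pi i)^M}{\det A} = (-1)^M$... so I must track the signs in the identification of $\prod_x\psi_x\bar\psi_x$ with the Lebesgue volume form carefully, since the correct normalization (with $\psi_x\bar\psi_x = \frac1\pi du_x\,dv_x > 0$ as a measure) makes everything come out to exactly $1$. The main obstacle is precisely this sign/normalization accounting: matching the $(-1)^M$, the $\det A$, the $M!$, and the powers of $2\pi i$ so they cancel cleanly. A cleaner route avoiding the ambiguity is to observe $\prod_x \psi_x\bar\psi_x = \pi^{-M}\prod_x du_x\,dv_x$ directly from \eqref{eq:fermion-def}, so $\int e^{-\varphi A\bar\varphi}\prod_x\psi_x\bar\psi_x = \pi^{-M}\int e^{-\varphi A\bar\varphi}\,du_1dv_1\cdots = \pi^{-M}\cdot\frac{1}{(2i)^M}Z_C$; I would use whichever bookkeeping is least error-prone and verify the final cancellation explicitly on the $M=1$ case as a sanity check.
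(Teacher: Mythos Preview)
Your approach is exactly the paper's: expand $e^{-\psi A\bar\psi}$ as a finite sum, keep only the top-degree term $n=M$, identify $(\psi A\bar\psi)^M = M!\,(\det A)\prod_x\psi_x\bar\psi_x$ via the permutation-sign argument, and cancel $\det A$ against the bosonic normalisation $Z_C$ from Lemma~\ref{lem:ZC}. The sign that is tripping you up at the end is simply that $\psi_x\bar\psi_x = -\bar\psi_x\psi_x = -\tfrac{1}{\pi}\,du_x\,dv_x$ (note the minus; the paper records $\bar\psi_x\psi_x = \tfrac{1}{\pi}\,du_x\,dv_x$), so $\prod_x\psi_x\bar\psi_x = \tfrac{(-1)^M}{(2\pi i)^M}\,d\bar\varphi\,d\varphi$, and this extra $(-1)^M$ cancels the one you found to give exactly $1$.
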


\begin{proof}
  Using \eqref{eq:exp-S_A} and Definition~\ref{def:int-K},
  \begin{align}
    \int e^{-S_A}
    &= \int_{\R^{2M}} e^{-\varphi A\bar\varphi} \frac{1}{M!} (-1)^M
    (\psi A \bar\psi)^M\nonumber\\
    &= \frac{1}{M!}
    \left(\frac{-1}{2\pi i}\right)^M \int_{\R^{2M}} e^{-\varphi A \bar\varphi}
    (d\varphi A d\bar\varphi)^M.
  \end{align}
  By definition,
  \begin{align}
    (d\varphi A d\bar\varphi)^M
    &= \sum_{x_1,y_1} \cdots \sum_{x_M,y_M} A_{x_1 y_1} \cdots A_{x_M y_M}
    \; d\varphi_{x_1}\,d\bar\varphi_{y_1} \cdots d\varphi_{x_M}\, d\bar\varphi_{y_M}.
  \end{align}
  Due to the antisymmetry, non-zero contributions to the above sum require
  that $x_1,\ldots,x_M$ and $y_1,\ldots, y_M$ each be a permutation of
  $\{1,\ldots,M\}$.  Thus, by interchanging the (commuting) pairs
  $d\varphi_{x_i}d\bar\varphi_{y_i}$
  so as to place the $x_i$ in the order $1,\ldots,M$, and
  then relabelling the $y_i$, we obtain
  \begin{align}
    (d\varphi A d\bar\varphi)^M
    &=   M! \sum_{y_1,\dots, y_M} A_{1 y_1} \cdots A_{M y_M} \; d\varphi_{1}\,d\bar\varphi_{y_1} \cdots d\varphi_{M} \, d\bar\varphi_{y_M} \nonumber\\
    &= M! \sum_{y_1,\dots, y_M} \epsilon_{y_1, \dots, y_M}  A_{1 y_1} \cdots A_{M y_M} \; d\varphi_{1}\, d\bar\varphi_{1} \cdots d\varphi_{M} \, d\bar\varphi_{M} \nonumber\\
    &= M! \, (-1)^M (\det A) \, d\bar\varphi \, d\varphi,
  \end{align}
  where $\epsilon_{y_1, \dots, y_M}$ is the sign of the
  permutation $(y_1, \dots, y_M)$ of $\{1, \dots, M\}$.
  With Lemma~\ref{lem:ZC}, it follows that
  \begin{equation}
    \int e^{-S_A} = \frac{\det A}{(2\pi i)^M} \int_{\R^{2M}} e^{-\varphi A \bar\varphi} \; d\bar\varphi\, d\varphi = 1. \qedhere
  \end{equation}
\end{proof}

\begin{remark}
\label{rem:gi}
  More generally, the calculation in the previous proof
  also shows that for a function $f=f(\varphi,\bar\varphi)$,
  a form of degree zero,
  \begin{equation}
    \int e^{-S_A} f = \int f \; d\mu_C  \qquad (C=A^{-1}),
  \end{equation}
  provided $f$ is such that the integral on the right-hand side converges.
  In our present setup, we have defined
  $\int e^{-S_A} F$  for more general forms $F$, so this provides
  an extension of the Gaussian integral of
  Section~\ref{s:Gaussian-integrals}.
\end{remark}

The self-normalisation property of Lemma~\ref{lem:self-normalisation}
has the following beautiful extension.  The precise hypotheses needed
on $F$ can be found in \cite[Proposition~4.4]{BIS09}.

\begin{lemma}
  \label{lem:extension-normalisation}
  If $A$ is a complex $M\times M$ matrix with positive Hermitian part,
  and $F:\R^M \to \C$ is a nice function (exponential growth at infinity is
  permitted), then
  \begin{equation}
    \int e^{-S_A} F(\tau) = F(0)
    ,
  \end{equation}
  where we regard $\tau$ as the vector $(\tau_1,\ldots,\tau_M)$.
\end{lemma}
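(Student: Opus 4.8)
The plan is to strip off the value $F(0)$ using the self-normalisation already in hand (Lemma~\ref{lem:self-normalisation}) and to annihilate the remainder by a supersymmetric cancellation. Since $F$ is smooth, integrating $\frac{d}{ds}F(st)$ over $s\in[0,1]$ gives, for all $t\in\R^M$,
\[
  F(t) - F(0) = \sum_{x=1}^M t_x\,H_x(t),\qquad H_x(t) = \int_0^1 \ddp{F}{t_x}(st)\,ds,
\]
with each $H_x$ again a nice smooth function. Because the assignment $\Phi\mapsto\Phi(\tau)$ (Taylor expansion in the nilpotent even forms $\psi_x\bar\psi_x$) is an algebra homomorphism into the even differential forms, substituting $t=\tau$ turns this into the identity of forms $F(\tau) - F(0) = \sum_x \tau_x\,H_x(\tau)$. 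By linearity of the integral and Lemma~\ref{lem:self-normalisation} it then suffices to prove that
\[
  \int e^{-S_A}\,\tau_x\,H(\tau) = 0 \qquad\text{for every $x$ and every admissible smooth $H$.}
\]

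To do this I would invoke the supersymmetry generator $Q$ of \cite{BIS09}: the odd antiderivation on the Grassmann algebra $\mathcal{N}$ (coefficients now allowed to be smooth functions of $(u,v)$) that sends $\varphi_x,\bar\varphi_x$ to $\psi_x,\bar\psi_x$ and, up to signs and factors of $i$, sends $\psi_x,\bar\psi_x$ back to $\varphi_x,\bar\varphi_x$; its degree-raising part is a twist of the exterior derivative and its degree-lowering part is contraction against the vector field generating the rotation $\varphi_x\mapsto e^{i\theta}\varphi_x$, $\psi_x\mapsto e^{i\theta}\psi_x$ (and conjugates), so that $Q^2$ is the Lie derivative along that rotation, which annihilates $\tau_x$ and $S_A$. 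The facts I need are: (a) $\int Q\beta = 0$ for every form $\beta$ with sufficient decay at infinity, the degree-lowering part never contributing to a top-degree form and the exterior-derivative part integrating to zero by Stokes' theorem on $\R^{2M}$; (b) $QS_A = 0$ and $Q\tau_x = 0$ for all $x$ --- this is exactly why the boson and fermion bilinears in \refeq{SAdef} are built with the same matrix $A$ --- whence, by the chain rule for $Q$, also $Q\bigl(e^{-S_A}H(\tau)\bigr) = 0$; and (c) each $\tau_x$ is $Q$-exact, $\tau_x = Q\zeta_x$ for an odd form $\zeta_x$ built from $\psi_x$ and $\bar\varphi_x$ (a one-line check). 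Granting these, oddness of $\zeta_x$ and $Q$-closedness of $e^{-S_A}H(\tau)$ give $\tau_x\,e^{-S_A}H(\tau) = (Q\zeta_x)\,e^{-S_A}H(\tau) = Q\bigl(\zeta_x\,e^{-S_A}H(\tau)\bigr)$; integrating and applying (a) yields the displayed vanishing. Hence $\int e^{-S_A}F(\tau) = F(0)\int e^{-S_A} + \sum_x\int e^{-S_A}\tau_x H_x(\tau) = F(0)$.

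The main obstacle is making (a)--(c) precise, and in particular pinning down the decay needed for the Stokes identity (a): the algebra is only a matter of sign/normalisation bookkeeping (one must fix conventions so that $QS_A = 0$ and $Q\tau_x = 0$ hold at once), but the analytic step --- that $\zeta_x\,e^{-S_A}H(\tau)$ decays fast enough for (a) to apply --- is exactly where the growth hypothesis on $F$ (``exponential growth permitted''; see \cite[Proposition~4.4]{BIS09}) enters and must be stated with care. An alternative that avoids introducing $Q$ is to prove $\int e^{-S_A}\tau_x H(\tau) = 0$ directly, combining the bosonic integration-by-parts formula of Lemma~\ref{lem:IBP} (extended from functions to differential forms) with its Grassmann counterpart; the cancellation between the bosonic and fermionic terms is precisely the supersymmetry that $Q$ encodes, and Lemma~\ref{lem:self-normalisation} is itself the special case $F\equiv 1$ of the present statement.
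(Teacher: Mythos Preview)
Your argument is correct and its ingredients are exactly the supersymmetric machinery developed in \cite{BIS09}: the operator $Q$, the $Q$-closedness of $S_A$ and each $\tau_x$, the $Q$-exactness of $\tau_x$, and the Stokes-type vanishing of $\int Q\beta$. With these in hand your Taylor decomposition $F(\tau)-F(0)=\sum_x\tau_x H_x(\tau)$ reduces the problem to $\int e^{-S_A}\tau_x H(\tau)=0$, which you dispatch by writing the integrand as $Q(\zeta_x e^{-S_A}H(\tau))$.

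The paper takes a much shorter and more direct route that avoids introducing $Q$ altogether. Rather than peeling off $F(0)$ first, it represents $F$ by its Fourier transform, $F(t)=(2\pi)^{-M}\int\hat F(k)e^{-ik\cdot t}\,dk$, substitutes $t=\tau$, and interchanges the order of integration. The inner integral is then $\int e^{-S_A-ik\cdot\tau}=\int e^{-S_{A+iK}}$ with $K=\mathrm{diag}(k_x)$, and Lemma~\ref{lem:self-normalisation} gives this the value $1$ for every $k$. One is left with $(2\pi)^{-M}\int\hat F(k)\,dk=F(0)$. The whole proof is two lines once you notice the algebraic identity $S_A+ik\cdot\tau=S_{A+iK}$.

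What each buys: the paper's Fourier argument is slicker and uses only the self-normalisation already established, but as written it needs $F$ to have a Fourier representation (Schwartz in the sketch; the extension to exponentially growing $F$ requires a further approximation step). Your supersymmetric argument is longer and imports more machinery, but it is conceptually transparent about \emph{why} the result holds---it is a localisation statement, with $\tau_x$ exact and $e^{-S_A}$ closed---and it works directly on $F$ without passing through Fourier space. Your honest acknowledgement that the decay hypothesis for the Stokes step is where the ``nice'' assumption enters is exactly right.
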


\begin{proof}[Proof (sketch)]
  If $F$ is Schwartz class, e.g., then it can be expressed in
  terms of its Fourier transform as
  \begin{equation}
    F(t) = \frac{1}{(2\pi)^M} \int_{\R^M} \hat F(k) e^{-i k \cdot t} \; dk_1 \cdots dk_M.
  \end{equation}
  It then follows that
  \begin{equation}
    \int e^{-S_A} F(\tau) = \frac{1}{(2\pi)^M} \int \hat F(k) \left( \int e^{-S_A-ik\cdot \tau}\right) dk = F(0)
  \end{equation}
  because $S_A + ik\cdot \tau = S_{A+iK}$ with $K = \mathrm{diag}(k_x)_{x=1}^M$,
  and thus $\int e^{-S_{A+iK}} = 1$ by Lemma~\ref{lem:self-normalisation}.
\end{proof}

It is not difficult to extend
the integration by parts formula for Gaussian measures,
Lemma~\ref{lem:IBP}, to the present more general setting;
see \cite{BIS09} for details.
The result is the following.

\begin{lemma}
\label{lem:IBPfermions}
  For $a\in \Lambda$,
  for $C=A^{-1}$ with positive Hermitian part, and for forms $F$ for which the
  integrals exist,
  \begin{equation}
    \int e^{-S_A} \bar\varphi_a F =
    \sum_{x \in \Lambda} C_{ax} \int e^{-S_A} \ddp{F}{\varphi_x}.
  \end{equation}
\end{lemma}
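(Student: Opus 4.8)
The plan is to obtain this by exactly the same mechanism as the Gaussian integration by parts of Lemma~\ref{lem:IBP}: rewrite $\bar\varphi_a\, e^{-S_A}$ as (minus) a $\varphi$-derivative of $e^{-S_A}$, transfer the derivative onto $F$ by the product rule, and discard a total derivative under the integral sign. The only new feature compared with Lemma~\ref{lem:IBP} is the presence of the fermionic part $\psi A\bar\psi$ of $S_A$, and the key observation is that this part is inert under $\ddp{}{\varphi_x}$.

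First I would record the product rule. Since $S_A = \varphi A\bar\varphi + \psi A\bar\psi$ by \refeq{SAdef}, and since the fermion field consists (up to the constant $\tfrac{1}{\sqrt{2\pi i}}$) of the one-forms $d\varphi_x, d\bar\varphi_x$, which $\ddp{}{\varphi_x}$ treats as constants, the operator $\ddp{}{\varphi_x}$ annihilates $\psi A\bar\psi$ and acts only on the function-coefficients of a form. Writing $(A\bar\varphi)_x = \sum_{y\in\Lambda} A_{xy}\bar\varphi_y$, one then has
\[
  \ddp{}{\varphi_x}\bigl(e^{-S_A}F\bigr)
  = e^{-S_A}\Bigl(-(A\bar\varphi)_x\, F + \ddp{F}{\varphi_x}\Bigr).
\]
Multiplying by $C_{ax}$, summing over $x\in\Lambda$, and using $C=A^{-1}$, i.e.\ $\sum_{x\in\Lambda} C_{ax}A_{xz} = \delta_{az}$, the first term telescopes to $\bar\varphi_a$, giving
\[
  e^{-S_A}\,\bar\varphi_a\, F
  = -\sum_{x\in\Lambda} C_{ax}\,\ddp{}{\varphi_x}\bigl(e^{-S_A}F\bigr)
  + \sum_{x\in\Lambda} C_{ax}\, e^{-S_A}\,\ddp{F}{\varphi_x}.
\]
Integrating both sides in the sense of Definition~\ref{def:int-K}, the lemma reduces to the claim $\int \ddp{}{\varphi_x}\bigl(e^{-S_A}F\bigr) = 0$ for each $x$. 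Writing $\ddp{}{\varphi_x} = \tfrac12(\partial_{u_x} - i\partial_{v_x})$ and noting that differentiation commutes with extracting the coefficient $g$ of the volume form $du_1\,dv_1\cdots du_M\,dv_M$ (since it does not touch the differential monomials), this integral equals $\int_{\R^{2M}} \tfrac12(\partial_{u_x}-i\partial_{v_x})g\; du\,dv$, which vanishes by the fundamental theorem of calculus provided $g$ and its first derivatives are integrable and have vanishing contribution at infinity.

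The main obstacle is precisely this last point: justifying differentiation under the integral and the vanishing of the boundary term. Here the hypothesis that $A$ has positive Hermitian part is essential, since it makes $|e^{-\varphi A\bar\varphi}| = e^{-\frac12\varphi(A+A^\ast)\bar\varphi}$ decay, and together with the (mild, exponential-growth-permitting) hypothesis on $F$ under which $\int e^{-S_A}F$ exists, this forces the required integrability of $g$; I would cite \cite{BIS09} for the precise class of admissible $F$, paralleling the remark after Lemma~\ref{lem:extension-normalisation}. An alternative, more pedestrian route that avoids the form-valued product rule is to expand $e^{-\psi A\bar\psi}$ and $F$ into their finite sums of fermion monomials, so that by Definition~\ref{def:int-K} and Remark~\ref{rem:gi} the integral $\int e^{-S_A}(\cdots)$ collapses into ordinary Gaussian integrals of the coefficient functions against $d\mu_C$; applying Lemma~\ref{lem:IBP} coefficient by coefficient, the signs arising from anticommuting the $d\varphi_{x_i}$ past one another appear identically on both sides and cancel, recovering the stated identity.
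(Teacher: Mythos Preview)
Your proof is correct and is precisely the natural extension of the paper's argument for Lemma~\ref{lem:IBP} to the differential-form setting; the paper itself omits the proof entirely, stating only that ``it is not difficult to extend the integration by parts formula for Gaussian measures, Lemma~\ref{lem:IBP}, to the present more general setting; see \cite{BIS09} for details.'' Your identification of the key point---that the fermionic part $\psi A\bar\psi$ is annihilated by $\ddp{}{\varphi_x}$, so the computation reduces to the bosonic integration by parts acting on coefficient functions---together with your handling of the total-derivative term via Definition~\ref{def:int-K} and the decay from the positive Hermitian part of $A$, constitutes exactly the argument one would expect \cite{BIS09} to contain.
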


\subsection{Integral representation for self-avoiding walk}
\lbsect{repsaw}

Let $\Lambda$ be a finite set and
let $a,b \in \Lambda$.
In Section~\ref{sec:G-saw-loops},
we showed that the integral
\begin{equation}
  \int \bar\varphi_a \varphi_b \prod_{x \neq a,b}
  (1+\varphi_x\bar\varphi_x) \; d\mu_C
\end{equation}
is the generating function for
SAWs in a background
of self-avoiding loops.
The following theorem shows that the loops are eliminated
if we replace the factors $(1+\varphi_x\bar\varphi_x)$ by
$(1+\tau_x) = (1+ \varphi_x\bar\varphi_x + \psi_x\bar\psi_x)$
and replace the Gaussian measure $d\mu_C$ by $e^{-S_A}$ with $A=C^{-1}$.

\begin{theorem}
\label{thm:repsaw}
  For $C=A^{-1}$ with positive Hermitian part, and for $a,b \in \Lambda$,
  \begin{equation}
  \lbeq{sawrep}
    \sum_{\omega \in \SAWs_{a,b}(\Lambda)} C^\omega
    =
    \int e^{-S_A} \bar\varphi_a \varphi_b \prod_{x \neq a,b} (1+\tau_x) .
  \end{equation}
\end{theorem}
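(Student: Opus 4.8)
The plan is to mimic the computation of Section~\ref{sec:G-saw-loops}, but to track the bosonic and fermionic contributions separately and to show that the fermions exactly cancel the bosonic loop background. First I would expand the product $\prod_{x\neq a,b}(1+\tau_x)$ using $\tau_x=\varphi_x\bar\varphi_x+\psi_x\bar\psi_x$, obtaining a sum over subsets $X\subset\Lambda\setminus\{a,b\}$ of $\prod_{x\in X}\tau_x$. Then I would apply the integration by parts formula of Lemma~\ref{lem:IBPfermions} repeatedly, starting from $\bar\varphi_a$, to ``contract'' the walk: each application of the formula peels off a factor $\bar\varphi$, attaches a propagator $C_{ax}$, and differentiates; iterating this until one reaches $\varphi_b$ produces a sum over self-avoiding sequences $\omega=(a,x_1,\dots,x_{n-1},b)$ with the $x_i$ distinct elements of $X$, each weighted by $C^\omega=\prod_i C_{\omega(i-1),\omega(i)}$, times the integral $\int e^{-S_A}\prod_{x\in X\setminus\omega}\tau_x$ over the remaining vertices. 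The self-avoidance of $\omega$ is forced exactly as in Section~\ref{sec:G-saw-loops}: because $\tau_x$ contains only a single factor $\varphi_x$ and a single $\bar\varphi_x$ (and likewise one $\psi_x,\bar\psi_x$), once a vertex has been used by the walk it can no longer be revisited.

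The heart of the argument is then to evaluate $\int e^{-S_A}\prod_{x\in Y}\tau_x$ for an arbitrary subset $Y\subset\Lambda$ and show it equals the indicator $\mathbf 1_{Y=\emptyset}$. For $Y=\emptyset$ this is exactly Lemma~\ref{lem:self-normalisation}. For $Y\neq\emptyset$ one can invoke Lemma~\ref{lem:extension-normalisation} with $F(t)=\prod_{x\in Y}t_x$ (extended to a nice function of all of $\tau=(\tau_1,\dots,\tau_M)$, constant in the coordinates outside $Y$): since this $F$ vanishes at the origin, $\int e^{-S_A}F(\tau)=F(0)=0$. (Alternatively, one can do the cancellation by hand: using $e^{-\tau_x}=e^{-\varphi_x\bar\varphi_x}(1-\psi_x\bar\psi_x)$ one computes $\int e^{-S_A}\tau_x\,(\cdots)$ and finds the fermionic term $\psi_x\bar\psi_x$ contributes with opposite sign to the bosonic term $\varphi_x\bar\varphi_x$ after the Gaussian integral, so the net loop weight at any vertex is zero — this is the mechanism by which ``fermions kill the loops''.) Either way, only the term $\omega$ with $X\setminus\omega=\emptyset$, i.e.\ $X=\omega$ as sets, survives, and summing over all such $\omega$ gives precisely $\sum_{\omega\in\SAWs_{a,b}(\Lambda)}C^\omega$, which is the right-hand side of \refeq{sawrep} read the other way.

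The main obstacle I anticipate is bookkeeping in the repeated integration by parts: one must be careful that $\ddp{}{\varphi_x}$ acting on $\prod(1+\tau_x)$ produces exactly one new $\bar\varphi$ (from $\ddp{}{\varphi_x}\tau_x=\bar\varphi_x$) and nothing spurious, and that no factor $\varphi_x$ or $\psi_x$ already used gets hit again — this is what encodes self-avoidance, and it needs the observation that after differentiating at $x$, the variable $\varphi_x$ no longer appears in the remaining form. A minor technical point is checking that the functions obtained at each stage satisfy the hypotheses (integrability, the growth conditions referenced in \cite{BIS09}) under which Lemmas~\ref{lem:IBPfermions} and~\ref{lem:extension-normalisation} apply; I would simply cite \cite[Proposition~4.4]{BIS09} for this rather than re-prove it. The combinatorial identity at the end — matching surviving terms with elements of $\SAWs_{a,b}(\Lambda)$ — is then immediate.
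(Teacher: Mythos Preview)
Your proposal is correct and follows essentially the same approach as the paper: repeated integration by parts via Lemma~\ref{lem:IBPfermions} to extract the self-avoiding walk, followed by Lemma~\ref{lem:extension-normalisation} to eliminate the leftover loop contribution. The one difference is cosmetic: you first expand $\prod_{x\neq a,b}(1+\tau_x)$ over subsets $X$ and then show $\int e^{-S_A}\prod_{x\in Y}\tau_x=\mathbf 1_{Y=\emptyset}$, whereas the paper applies integration by parts directly to the unexpanded product, obtaining $\int e^{-S_A}\prod_{x\in\Lambda\setminus\omega}(1+\tau_x)$ and then invoking Lemma~\ref{lem:extension-normalisation} with $F(t)=\prod_{x\in\Lambda\setminus\omega}(1+t_x)$, for which $F(0)=1$ immediately. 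This avoids your subset expansion and the final matching of $X$ with $\omega$, but the two arguments are equivalent and yours is perfectly fine.
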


\begin{proof}
  Exactly as in Section~\ref{sec:G-saw-loops},
   but now using the integration by parts formula
  of Lemma~\ref{lem:IBPfermions}, we obtain
  \begin{equation}
    \int e^{-S_A} \bar\varphi_a \varphi_b \prod_{x \neq a,b} (1+\tau_x)
    = \sum_{\omega \in \SAWs_{a,b}(\Lambda)} C^\omega
    \int e^{-S_A} \prod_{x \in \Lambda\setminus \omega} (1+\tau_x) .
  \end{equation}
  However, the integral on the right-hand side, which formerly generated
  loops, is now equal to $1$ by Lemma~\ref{lem:extension-normalisation}.
\end{proof}

\section{Renormalisation group analysis in dimension \texorpdfstring{$4$}{4}}
\lbsect{ctwsaw}

The integral representation of Theorem~\ref{thm:repsaw} opens up
the following possibility for studying SAWs on $\Z^d$:  approximate
$\Z^d$ by a large finite set $\Lambda$, rewrite the SAW two-point
function as an integral as in \refeq{sawrep}, and apply methods
of analysis to compute the asymptotic behaviour of the integral
uniformly in the limit $\Lambda \nearrow \Z^d$.
In this section, we sketch how such a program can be carried out
for a particular model of
continuous-time weakly SAW on the 4-dimensional lattice $\Z^4$,
using a variant of Theorem~\ref{thm:repsaw}. In this approach,
once the integral representation has been invoked, the original SAWs
no longer appear and play no further role in the analysis.
The method of proof is a rigorous renormalisation group
method \cite{BS11,BS11a}.
There is work in progress, not discussed further here, to
attempt to extend
this program to a particular spread-out
version of the discrete-time strictly SAW model on $\Z^4$
using Theorem~\ref{thm:repsaw}.

We begin in Section~\ref{sec:ctwsaw} with the definition of the
continuous-time weakly SAW and a statement of the main result
for its two-point function, followed by some commentary on related
results.  The approximation of the two-point function on $\Z^d$ by
a two-point function on a $d$-dimensional finite torus $\Lambda$ is
discussed in Section~\ref{sec:fv}, and the integral representation
of the two-point function on $\Lambda$ is explained in Section~\ref{sec:ir}.
The discussion of integration of differential forms from
Section~\ref{s:functions-forms} is developed further in
Section~\ref{sec:s-e}.  At this point, the stage is set for
the application of the renormalisation group method, and this
is described briefly in
Sections~\ref{sec:decomp}--\ref{sec:remsteps}.
A more extensive account of all this can be found in \cite{BS11,BS11a}.

\subsection{Continuous-time weakly self-avoiding walk}
\label{sec:ctwsaw}

The definition of the discrete-time weakly
self-avoiding walk was given in \refSSect{SAW}.
With an unimportant change in our conventions, and
writing $z=e^{-\nu}$ and using the
parameter $g>0$ of  \refeq{cnlambdaxExp}
rather than $\lambda$,
the two-point function \refeq{TwoPointDefn} can be rewritten as
\begin{equation}
\lbeq{GDT}
    G_\nu^{(g),{\rm DT}}(x)
    =
    \sum_{n=0}^\infty \sum_{\omega \in \Walks_n(0,x)}
    \exp\Bigl( -g \sum_{i,j=0}^{n} \indicator{\omega(i)=\omega(j)} \Bigr)
    e^{-\nu n}
    ,
\end{equation}
where ``${\rm DT}$'' emphasises the fact that the walks are in discrete time.
The \emph{local time} at $v\in \Z^d$ is defined as the number of visits to $v$ up
to time $n$, i.e.,
\begin{equation}
    L_{v,n} = L_{v,n}(\omega) = \sum_{i=0}^n \indicator{\omega(i)=v}.
\end{equation}
Note that $\sum_{v\in\Z^d} L_{v,n} = n$ is independent of the walk $\omega$, and that
\begin{align}
  \sum_{v\in\Z^d} L_{v,n}^2
  &= \sum_{v\in\Z^d} \sum_{i,j=0}^n   \indicator{\omega(i)=v} \indicator{\omega(j)=v}
  =\sum_{i,j=0}^n    \indicator{\omega(i)=\omega(j)}.
\end{align}
Thus,  writing $z=e^{-\nu}$,
the two-point function can be rewritten as
\begin{equation}
\lbeq{Ggkappa}
    G_{\nu}^{(g),{\rm DT}}(x)
    =
    \sum_{n=0}^\infty \sum_{\omega \in \Walks_n(0,x)}
    e^{ -g \sum_{v\in \Z^d} L_{v,n}^2 }
    e^{-\nu n}.
\end{equation}

The two-point function of the
continuous-time weakly SAW is a modification of
\refeq{Ggkappa} in which the
underlying random walk model has continuous, rather than discrete, time.
To define the modification, we consider the
\emph{continuous-time} random walk $X$ which takes nearest-neighbour steps
like the usual SRW, but whose jumps occur after independent
${\rm Exp}(2d)$ holding times at each vertex.  In other words, the
steps occur at the events of a rate-$2d$ Poisson process, rather than
at integer times.  We write $\E_0$ for the expectation associated to the
process $X$ started at $X(0)=0\in\Z^d$.
The local time of $X$ at $v$ up to time $T$ is now defined by
  \begin{equation}
    L_{v,T} =\int_0^T \indicator{X(s)=v} \; ds.
  \end{equation}

The probabilistic structure of
\refeq{cnlambdaxDefn}--\refeq{cnlambdaExpectation} extends
naturally to the continuous-time setting.
With this in mind, we define the \emph{two-point function}
of continuous-time weakly SAW
by
\begin{equation}
\lbeq{Ggnux}
  G^{(g)}_{\nu}(x) = \int_0^\infty \E_0(e^{-g\sum_v L_{v,T}^2} \indicator{X(T)=x})
  e^{-\nu T} \; dT ;
\end{equation}
this is a natural modification of \refeq{GDT}.
The continuous-time SAW is predicted to lie in
the same universality class as the discrete-time SAW.

Using a subadditivity argument as in \refSect{Subadd}, it is
not difficult to see that the limit
\begin{equation}
  \lim_{T\to\infty} \left( \E_0(e^{-g\sum_v L_{v,T}^2}) \right)^{1/T}
  = e^{\nu_c(g)}
\end{equation}
exists, for some $\nu_c(g) \le 0$.
We leave it as an exercise to show that $\nu_c(g) > -\infty$.
In particular,
$G^{(g)}_{\nu}(x)$
is well-defined
for $\nu > \nu_c(g)$.  The following theorem of Brydges and
Slade \cite{BS11,BS11a} shows that the critical exponent
$\eta$ is equal to $0$ for this model, in dimensions $d \ge 4$.

\begin{theorem}
  \label{thm:BS-wsaw}
  Let $d \geq 4$. For $g\geq 0$ sufficiently small, there exists $c_g > 0$ such that
  \begin{equation}
    G^{(g)}_{\nu_c(g)}(x) = \frac{c_g}{|x|^{d-2}}(1+o(1))
    \quad \text{as $|x|\to\infty$.}
  \end{equation}
\end{theorem}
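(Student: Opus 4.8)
The plan is to carry out the programme sketched at the start of this section: approximate $\Z^d$ by a finite torus, rewrite the two-point function as a functional integral via a continuous-time analogue of Theorem~\ref{thm:repsaw}, and then analyse that integral by a rigorous renormalisation group method, sending the volume to infinity at the end. First I would replace $\Z^d$ by the discrete torus $\Lambda_N=\Z^d/L^N\Z^d$, define a torus two-point function $G^{(g)}_{\nu,N}(x)$ by confining the continuous-time walk in \refeq{Ggnux} to $\Lambda_N$, and show (the content of Section~\ref{sec:fv}) that $G^{(g)}_{\nu,N}(x)\to G^{(g)}_\nu(x)$ as $N\to\infty$, with enough uniformity in $\nu$ near $\nu_c(g)$ to permit an exchange of limits later on. On $\Lambda_N$ one then applies the BFS--Dynkin isomorphism, the continuous-time counterpart of the loop-model construction of Section~\ref{sec:G-saw-loops}: the local-time weight $e^{-g\sum_v L_{v,T}^2}$ becomes a $\varphi^4$-type interaction, so that
\[
  G^{(g)}_{\nu,N}(x)=\int e^{-S_{\Lambda_N}}\,\bar\varphi_0\varphi_x ,
\]
where $S_{\Lambda_N}$ is built from the lattice Laplacian acting on both the boson field $\varphi$ and the fermion field $\psi$, together with the local term $\sum_v(g\,\tau_v^2+\nu\,\tau_v)$, with $\tau_v$ the supersymmetric combination of \refeq{tauxdef}. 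The closed loops present in Section~\ref{sec:G-saw-loops} are eliminated here by the self-normalisation identities, Lemma~\ref{lem:self-normalisation} and Lemma~\ref{lem:extension-normalisation}, exactly as in the proof of Theorem~\ref{thm:repsaw}. After this step the walks have disappeared, and what remains is a purely analytic problem about a perturbed Gaussian super-integral in the critical dimension $d=4$.

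Next I would take the Gaussian part to have covariance $C_{m^2}=(-\Delta_{\Lambda_N}+m^2)^{-1}$ for a small mass $m^2>0$, use a finite-range decomposition $C_{m^2}=\sum_{j=1}^{N}C_j$ with $C_j(x,y)=0$ for $|x-y|>\tfrac12 L^j$ (Section~\ref{sec:decomp}), and integrate out the fields one scale at a time. After $j$ scales the effective interaction is a sum over blocks of side $L^j$ of a local polynomial $V_j$ parametrised by a marginal $\varphi^4$ coupling $g_j$, a relevant coefficient $\nu_j$ and a wave-function renormalisation $z_j$, plus a non-perturbative remainder $K_j$ living in a Banach space of differential forms equipped with a suitable norm. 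One derives the recursion $(g_j,\nu_j,z_j,K_j)\mapsto(g_{j+1},\nu_{j+1},z_{j+1},K_{j+1})$; in $d=4$ its leading part is $g_{j+1}=g_j-\mathrm{c}\,g_j^2+O(g_j^3)+(\text{$K$-dependence})$ with $\mathrm{c}>0$, so $g_j\downarrow0$ like $1/j$, which is the mechanism behind the logarithmic corrections to scaling. The heart of the argument is to show that, for $g$ small and for $\nu$ taken equal to $\nu_c(g)$ so as to sit on the stable manifold of the trivial fixed point, this recursion can be run for all $j$ with $|g_j|,|\nu_j|,|z_j|$ small and $K_j$ staying in a fixed small ball --- an implicit-function / stable-manifold argument for the coupled pair $(\nu_j,K_j)$ driven by the marginal flow of $g_j$.

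Once the flow is controlled, the large-$|x|$ behaviour of $G^{(g)}_{\nu_c(g)}(x)$ is governed by the renormalised Gaussian, so that $G^{(g)}_{\nu_c(g)}(x)=c_g\,C_0(0,x)\,(1+o(1))\sim c_g'\,|x|^{-(d-2)}$ in $d=4$, with $c_g=\lim_{j\to\infty}z_j^{-1}$ close to $1$ for small $g$; the contributions of $K_j$ and of the finite-volume error are shown to be $o(1)$, and one finally lets $m^2\downarrow0$ and $N\to\infty$. I expect the renormalisation group step to be the main obstacle: constructing a norm on form-valued polymer activities under which the map contracts in the irrelevant directions, computing the second-order coefficient $-\mathrm{c}\,g_j^2$ precisely enough and checking $\mathrm{c}>0$ (exactly where $d=4$ and the logarithm enter), and carrying through the stable-manifold construction that selects $\nu_c(g)$. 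All of the genuinely hard analysis lives here, which is why the statement is a theorem only for $g$ sufficiently small; by comparison the finite-volume reduction and the integral representation are routine given the material of Section~\ref{sec:G-saw-loops}.
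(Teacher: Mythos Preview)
Your proposal follows essentially the same route as the paper: finite-volume approximation on a torus, supersymmetric integral representation, finite-range decomposition of $(-\Delta+m^2)^{-1}$, and a renormalisation group flow $(g_j,\nu_j,z_j,K_j)$ driven to the Gaussian fixed point. The one discrepancy worth flagging is the tuning: in the paper's parametrisation $\nu_c$ is fixed a priori by the walk model and it is the initial wave-function coupling $z_0$ that is selected by the fixed-point theorem, with the constant $c_g$ emerging as $(1+z_0^*)\lambda_\infty^2 c_0$ through an observable flow $(\lambda_j,q_j)$ attached to the external field, not as $\lim_j z_j^{-1}$ (indeed $z_j\to 0$ in the paper's conventions).
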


Theorem~\ref{thm:BS-wsaw} should be compared with the result of
Theorem~\ref{thm:SAWLaceGreenConclusion} for $d\geq 5$.
The main point in Theorem~\ref{thm:BS-wsaw} is the inclusion of the
upper critical
dimension $d=4$.
In particular, there is no logarithmic correction to the leading
asymptotic behaviour of the critical two-point function when $d=4$.
The case $g=0$ is the classical result that the
SRW Green
function obeys $G^{(0)}_0(x) \sim c_0 |x|^{-(d-2)}$, which in fact
holds in all dimensions $d>2$.

The proof of Theorem~\ref{thm:BS-wsaw} is based on an
integral representation combined with a rigorous renormalisation
group method, and is inspired by the methods used in
\cite{BEI92,BI03c,BI03d} for the continuous-time weakly self-avoiding
walk on the 4-dimensional
\emph{hierarchical lattice}.  The hierarchical lattice is
a modification of the lattice $\Z^d$ that is particularly amenable
to a renormalisation group approach.  It is predicted that the models
on the hierarchical lattice and $\Z^d$ lie in the same universality class.
Strong evidence for  this is the result of
Brydges and Imbrie \cite{BI03c} that on the
4-dimensional hierarchical lattice the typical
end-to-end distance after time $T$ is given,
for small $g>0$ and as $T \to\infty$, by
\begin{equation}
  \frac{\E_0(|\omega(T)|\, e^{-g\sum_v L_{v,T}^2})}
  {\E_0(e^{-g\sum_v L_{v,T}^2})}= c\, T^{1/2} (\log T)^{1/8}
  \left[ 1+ \frac{\log\log T}{32 \log T}  + O\left(\frac{1}{\log T}\right)\right].
\end{equation}
This matches the prediction \refeq{nuPrediction4} for $\Z^4$.
There are related results by Hara and Ohno  \cite{HO10}, proved
with a completely different renormalisation group approach,
for the critical two-point function, susceptibility and correlation
length of the \emph{discrete}-time weakly self-avoiding walk on the $d$-dimensional
hierarchical lattice for $d \ge 4$.

Recently, Mitter and Scoppola \cite{MS08} used the integral
representation and renormalisation group analysis to study a
continuous-time weakly self-avoiding walk with long-range steps.
In the model of \cite{MS08},
each step of length $r$ has a weight decaying like $r^{-d-\alpha}$, with
$\alpha = \frac{1}{2}(3+\epsilon)$ for small $\epsilon >0$,
in dimension $d=3$.  This is
\emph{below} the upper critical dimension $2\alpha = 3+\epsilon$
(recall the discussion below Theorem~\ref{thm:SAWLaceGreenConclusion}).
The main result is a control of the renormalisation
group trajectory, a first step towards the computation of the
asymptotic
behaviour of the critical two-point function below the upper critical
dimension.
This is a rigorous version, for the weakly self-avoiding walk, of the
expansion in $\epsilon = 4-d$ discussed in \cite{WK74}.

\subsection{Finite-volume approximation}
\label{sec:fv}

Integral representations of the type discussed in \refSect{repsaw}
are for walks on a finite set.  In preparation for the
integral representation, we first discuss the approximation
of the two-point function $G_{\nu_c}^{(g)}(x)$
on $\Z^d$ by a two-point function
on the finite torus $\Lambda = \Z^d / R\Z^d$
with side length $R \in \Z_+$.  For later convenience, we will always
take $R=L^N$ with $L$ a large dyadic integer.
The parameter $g$ is
regarded as a fixed positive number and will sometimes be omitted
in what follows, to simplify
the notation.
We denote by $G^\Lambda$ the natural
modification of \refeq{Ggnux} in which the random walk on $\Z^d$
is replaced by the random walk on $\Lambda$.

\begin{theorem}
\label{thm:SL}
  Let $d\geq 1$, $g >0$, and $x\in \Z^d$. Then for all $\nu \ge \nu_c$,
  \begin{equation}
    G_{\nu}(x)
    =
    \lim_{\nu' \searrow \nu} \lim_{N \to \infty} G^\Lambda_{\nu'}(x),
  \end{equation}
  where, on the right-hand side, $x$ is the canonical representative of $x$
  in $\Lambda$ for $L^N$ large compared to $x$.
\end{theorem}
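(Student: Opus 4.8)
The plan is to establish Theorem~\ref{thm:SL} in two stages, corresponding to the two limits on the right-hand side. First I would fix $\nu' > \nu_c(g)$ and control the finite-volume approximation as $N\to\infty$, obtaining $G^\Lambda_{\nu'}(x)\to G_{\nu'}(x)$; then I would let $\nu'\searrow\nu$ and use monotonicity together with an exponential decay bound to pass from $G_{\nu'}(x)$ to $G_\nu(x)$ for all $\nu\ge\nu_c$. The natural tool throughout is to expand both two-point functions as in \refeq{Ggnux} over the continuous-time walk and compare term by term, using the fact that for $L^N$ large compared to $|x|$ and to the relevant time scales the torus walk and the $\Z^d$ walk are coupled.

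For the first stage, I would write $G^\Lambda_{\nu'}(x) = \int_0^\infty \E_0^\Lambda(e^{-g\sum_{v\in\Lambda} L_{v,T}^2}\indicator{X(T)=x})e^{-\nu' T}\,dT$, where $X$ is now the continuous-time walk on $\Lambda$. The walk on $\Lambda$ can be obtained from the walk on $\Z^d$ by projection modulo $L^N\Z^d$; under this projection a path on $\Z^d$ that stays within distance $L^N/2$ of the origin is mapped bijectively onto a path on $\Lambda$ with the same local-time profile, so the integrands agree on the event that the $\Z^d$-walk has not wrapped around the torus. The contribution to $G^\Lambda_{\nu'}(x)$ from wrapped paths, and the corresponding tail of $G_{\nu'}(x)$, must be shown to vanish as $N\to\infty$. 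Here the key input is that $\nu' > \nu_c(g)$ strictly, so that by the very definition of $\nu_c(g)$ there is $\epsilon>0$ with $\E_0(e^{-g\sum_v L_{v,T}^2})\le Ce^{(\nu_c(g)+\epsilon-\nu')T}$ for $\nu_c(g)+\epsilon<\nu'$; together with a crude large-deviation bound on the event that a continuous-time walk reaches distance $L^N/2$ by time $T$ (which costs a factor like $e^{-cL^N}$ for $T$ not too large, and is absorbed by the exponential weight in $e^{-\nu' T}$ for large $T$), this gives a bound uniform in $N$ that is summable, and dominated convergence finishes the first limit. On the torus side one needs the matching statement that $G^\Lambda_{\nu'}(x)$ is finite and its wrapped contribution is negligible; finiteness for $\nu'>\nu_c(g)$ follows because the local-time weight on $\Lambda$ dominates the weight obtained by only counting self-intersections, so the same exponential rate bound applies with $\nu_c^\Lambda(g)\le \nu_c(g)$ and in fact $\nu_c^\Lambda(g)\to\nu_c(g)$, which is itself part of what must be checked.

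For the second stage, $\nu\mapsto G_\nu(x)$ is manifestly non-increasing in $\nu$ (the weight $e^{-\nu T}$ decreases pointwise), so $\lim_{\nu'\searrow\nu}G_{\nu'}(x)$ exists and is at most $G_\nu(x)$; the reverse inequality is monotone convergence applied to $\int_0^\infty \E_0(e^{-g\sum_v L_{v,T}^2}\indicator{X(T)=x})e^{-\nu' T}\,dT$ as $\nu'\downarrow\nu$, since the integrands increase to the $\nu$ integrand. This handles the boundary case $\nu=\nu_c$, where $G_{\nu_c}^{(g)}(x)$ is defined precisely as this limit (and is finite for $d\ge 4$ by Theorem~\ref{thm:BS-wsaw}, though finiteness is not needed for the identity as stated). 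Combining the two stages gives $G_\nu(x) = \lim_{\nu'\searrow\nu}\lim_{N\to\infty}G^\Lambda_{\nu'}(x)$ for all $\nu\ge\nu_c$, as claimed.

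The main obstacle I anticipate is making the first-stage estimate genuinely uniform in $N$: one must bound the discrepancy between the torus and full-lattice two-point functions by something that decays in $N$ while retaining integrability in $T$. The delicate regime is $T$ comparable to $L^{dN}$ or larger, where the walk has had time to explore the whole torus; there one cannot simply say wrapping is exponentially unlikely, and instead must lean on the strict inequality $\nu'>\nu_c(g)$ to get exponential decay in $T$ of the local-time weight that beats the polynomial volume factors. Ensuring that the constants in $\E_0^\Lambda(e^{-g\sum_v L_{v,T}^2})\le C e^{(\nu_c(g)+\epsilon-\nu')T}$ can be taken independent of $N$ (equivalently, that $\nu_c^\Lambda(g)$ is bounded above uniformly in $N$, which follows from comparison with the $\Z^d$ rate after noting local times on $\Lambda$ dominate the pullback of local times on $\Z^d$) is the technical heart of the argument; everything else is a routine application of dominated and monotone convergence.
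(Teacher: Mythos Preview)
Your two-stage structure is right, and the second stage (monotone convergence as $\nu'\searrow\nu$) matches the paper exactly. The first stage, however, takes a different route. The paper proceeds via a Simon--Lieb inequality: for a domain $D$ properly inside the torus,
\[
G^\Lambda_{\nu'}(0,x) - G^\Lambda_{\nu',D}(0,x) \le \sum_{z\in\partial D} G^\Lambda_{\nu',\bar D}(0,z)\, G^\Lambda_{\nu'}(z,x),
\]
and since walks contributing to $G^\Lambda_{\nu',\bar D}$ do not wrap, the first factor equals its $\Z^d$ counterpart and is bounded by $G_{\nu'}(0,z)$, which decays exponentially in $|z|$ for $\nu'>\nu_c$ (itself obtained by iterating Simon--Lieb on $\Z^d$). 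The sum of the second factor is bounded by the torus susceptibility $\chi^\Lambda(\nu')\le\chi(\nu')<\infty$, which is precisely your local-time domination observation. This handles the finite-volume limit without any time-splitting or random-walk large-deviation input.

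Your direct coupling argument---splitting the $T$-integral at a scale proportional to $L^N$, using a Poisson tail bound on the displacement for small $T$ and the subcritical rate bound $\E_0(e^{-g\sum_v L_{v,T}^2})\le Ce^{(\nu_c+\epsilon)T}$ for large $T$---is a valid alternative. It avoids the Simon--Lieb machinery, but it requires checking that the split threshold can be chosen so the small-$T$ large-deviation rate beats the growing factor $e^{-\nu'T}$ when $\nu'<0$; this holds because the Poisson rate function diverges as the distance-to-time ratio does, but it is a detail you have left implicit. The Simon--Lieb route buys cleaner bookkeeping and yields exponential decay of the subcritical $G_{\nu'}(0,x)$ as a byproduct. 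Incidentally, you do not need $\nu_c^\Lambda\to\nu_c$; only $\nu_c^\Lambda\le\nu_c$ is used, and that is exactly your local-time domination.
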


\begin{proof}
  This follows from a version of the Simon--Lieb inequality
  \cite{Simo80,Lieb80}
  for the continuous-time weakly self-avoiding walk.
  In the problems of
  \refSect{Tut4} below, we develop the corresponding argument in the discrete-time
  setting.  With a little more work, the same approach can be adapted to continuous
  time.
\end{proof}

We are most interested in the case $\nu=\nu_c$ in
Theorem~\ref{thm:SL}.  The theorem allows for
the study of the critical two-point function on $\Z^d$ via the
subcritical two-point function in finite volume,
provided sufficient control is maintained to take the limits.
Since SRW is recurrent in finite volume, its Green function
is infinite, and the flexibility of taking $\nu$ slightly larger
than $\nu_c$ helps bypass this concern.

\subsection{Integral representation}
\label{sec:ir}

We recall the introduction of the boson field
$(\varphi_x,\bar\varphi_x)$ in \eqref{eq:boson-def} and
the fermion field $(\psi_x,\bar\psi_x)$ in \eqref{eq:fermion-def}, and now
index these fields with $x$ in the torus
$\Lambda = \Z^d / L^N\Z^d$.
We also recall from \refeq{tauxdef} the definition, for $x \in \Lambda$,
of the differential form
\begin{equation}
  \tau_x = \varphi_x\bar\varphi_x + \psi_x\bar\psi_x.
\end{equation}
The Laplacian $\Delta$
applies to the boson and fermion fields according to
\begin{equation}
  (\Delta\varphi)_x = \sum_{y:y\sim x} (\varphi_y -\varphi_x),\quad
  (\Delta\psi)_x = \sum_{y:y \sim x} (\psi_y -\psi_x),
\end{equation}
where the sum is over the neighbours $y$ of $x$ in the torus $\Lambda$.
We also define the differential forms
\begin{equation}
  \tau_{\Delta,x} = \frac{1}{2}(
  \varphi_x(-\Delta\bar\varphi)_x + (-\Delta\varphi)_x\bar\varphi_x
  + \psi_x(-\Delta\bar\psi)_x + (-\Delta\psi)_x\bar\psi_x
  ).
\end{equation}
The following theorem is proved in \cite{BI03c}; see also
\cite[Theorem~5.1]{BIS09} for a self-contained proof.  Its requirement that
$G_\nu^\Lambda(x)<\infty$ for large $\Lambda$
is a consequence of Theorem~\ref{thm:SL}.

\begin{theorem}
  For $\nu > \nu_c$ and $0,x\in\Lambda$,
  and for $\Lambda$ large enough that $G_\nu^\Lambda(x)<\infty$,
  the finite-volume two-point function has the
  integral representation
  \begin{equation}
  \lbeq{Gint}
    G^{\Lambda}_{\nu}(x)
    = \int
    e^{-\sum_{v\in\Lambda}(\tau_{\Delta,v}+g\tau_v^2+\nu\tau_v)}
    \bar\varphi_0\varphi_x
    .
  \end{equation}
\end{theorem}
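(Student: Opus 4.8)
The plan is to combine the loop-model integral representation of Theorem~\ref{thm:repsaw} with the observation that the continuous-time weakly self-avoiding walk two-point function \refeq{Ggnux} is precisely a generating function for walks weighted by $e^{-g\sum_v L_{v,T}^2}$, which—after integrating out the local times—can be cast in the form $\sum_\omega C^\omega$ for a suitable covariance $C$ depending on $g$ and $\nu$. The key point is that the quadratic self-interaction $e^{-g L_v^2}$, unlike a hard self-avoidance constraint, is exactly the kind of on-site weight that the factors $(1+\tau_x)$ in \refeq{sawrep} are designed to produce once one applies the self-normalisation Lemma~\ref{lem:self-normalisation} and its extension Lemma~\ref{lem:extension-normalisation}. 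So the first step is to rewrite the left-hand side of \refeq{Gint} using a Feynman--Kac / random walk expansion, exposing local times as the natural bookkeeping variable.

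First I would expand $G^\Lambda_\nu(x)$ as a sum over continuous-time walk trajectories on $\Lambda$. Conditioning on the sequence of vertices visited and integrating over the exponential holding times, the $T$-integral in \refeq{Ggnux} against $e^{-\nu T}$ together with the rate-$2d$ holding times produces, for an $n$-step walk path $\omega$ with local time profile $(L_v)$, a weight of the form $\prod_{\text{steps}} \frac{1}{2d} \cdot (\text{a function of the }L_v\text{ and }\nu)$. Concretely, the resolvent of the generator $\Delta - g(\cdot)^2 - \nu$ evaluated at $(0,x)$ is what appears. I would then use the identity (from Gaussian integration, cf.\ Remark~\ref{rem:gi}) that for the torus Laplacian, $(-\Delta + m^2)^{-1}_{0x} = \int \bar\varphi_0\varphi_x\, d\mu_C$ with $C = (-\Delta+m^2)^{-1}$, and promote this to the full interacting resolvent by introducing the fields. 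The substitution $\nu \mapsto \nu\tau_v$, $g L_v^2 \mapsto g\tau_v^2$, and the kinetic term $-\Delta \mapsto \tau_{\Delta,v}$ is legitimate because, at the level of the Gaussian measure, $\tau_v = \varphi_v\bar\varphi_v + \psi_v\bar\psi_v$ contributes exactly a factor generating one "local-time unit" per visit when one applies integration by parts (Lemma~\ref{lem:IBPfermions}) to pull down the $\bar\varphi_0\varphi_x$ insertion into a walk expansion. The fermionic part $\psi_v\bar\psi_v$ is what cancels the vacuum loops, exactly as in the proof of Theorem~\ref{thm:repsaw}.

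The cleanest route, and the one I would actually write, is to reduce directly to Theorem~\ref{thm:repsaw}: apply it with the matrix $A = -\Delta + \nu + (\text{diagonal local-time generator})$, or rather observe that the right-hand side of \refeq{Gint} equals $\int e^{-S_A}\,\bar\varphi_0\varphi_x \prod_v e^{-g\tau_v^2}$ with $A = -\Delta + \nu$, then expand $e^{-g\tau_v^2}$ and use Lemma~\ref{lem:extension-normalisation} (with $F$ the relevant function of $\tau$) to collapse the integral over the complement of the walk's range to $1$, leaving exactly $\sum_\omega C^\omega \cdot (\text{local-time weight})$. Matching this against the Feynman--Kac expansion of $G^\Lambda_\nu(x)$ completes the proof. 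The requirement $G^\Lambda_\nu(x) < \infty$ (guaranteed by Theorem~\ref{thm:SL} for large $\Lambda$) is needed to justify interchanging the sum over walks with the integral, and to ensure $A = -\Delta + \nu$ has positive Hermitian part so the Gaussian measure is well-defined—note $\nu > \nu_c \ge \nu_c(0)$ but one only needs $\nu > 0$ here, or more precisely $-\Delta + \nu > 0$ on the torus, which holds for $\nu > 0$; the subtlety that $\nu_c(g)$ may be negative is exactly why one works with the $g\tau_v^2$ factor separately rather than absorbing it.

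The main obstacle will be the bookkeeping that identifies the combinatorial weight from the Feynman--Kac expansion—products of holding-time Laplace transforms and the $e^{-gL_v^2}$ factor—with the output $C^\omega \prod(\ldots)$ of the integration-by-parts expansion, being careful that each visit to a vertex $v$ of multiplicity $k$ contributes the correct $k$-th moment-type factor; this is where the precise form $g\tau_v^2$ (as opposed to, say, a different power) gets pinned down, via $\int e^{-S_A} \tau_v^k \cdots = $ a derivative of $e^{-g t^2}$ at $t=0$ matching $\E[L_{v,T}^k]$-type contributions. Everything else—Gaussian normalisation, loop cancellation, taking $\Lambda$ large—follows the template already laid out in Sections~\ref{s:functions-forms}--\ref{repsaw}.
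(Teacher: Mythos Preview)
The paper does not give its own proof of this statement; it simply records that the theorem is proved in \cite{BI03c}, with a self-contained version in \cite[Theorem~5.1]{BIS09}. So there is no in-paper argument to match against, and your proposal has to be judged on its own.

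Your proposal has the right cast of characters (integration by parts, supersymmetric loop cancellation, Feynman--Kac), but there is a genuine gap. The line ``reduce directly to Theorem~\ref{thm:repsaw}'' does not work: that theorem concerns \emph{discrete-time strictly self-avoiding} walks arising from the factors $(1+\tau_x)$, whereas here the factor is $e^{-g\tau_v^2}$ and the underlying process is the \emph{continuous-time} random walk, which revisits vertices. Iterating Lemma~\ref{lem:IBPfermions} with $F(\tau)=\prod_v e^{-g\tau_v^2}$ does not terminate after one pass through each site; each application differentiates $F$ in $\tau_y$, producing a new $\bar\varphi_y$ and continuing the walk indefinitely. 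The output is an infinite sum over all nearest-neighbour paths with a derivative $\partial_{\tau_{y_1}}\cdots\partial_{\tau_{y_n}}F(0)$ attached, and identifying this with the local-time expectation $\E_0[e^{-g\sum_v L_{v,T}^2}1_{X(T)=x}]$ is exactly the content of the theorem --- it is not a bookkeeping exercise, and your ``matching step'' hides it entirely. The phrase ``expand $e^{-g\tau_v^2}$'' is likewise too vague to carry weight.

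The argument actually used in \cite{BIS09} is closer to the proof sketch of Lemma~\ref{lem:extension-normalisation}: write the interaction via its Fourier transform as a superposition of linear exponentials $e^{-is\cdot\tau}$, absorb $is\cdot\tau$ into $S_A$ to obtain $S_{A+iS}$ with $S=\mathrm{diag}(s_v)$, and evaluate
\[
  \int e^{-S_{A+iS}}\,\bar\varphi_0\varphi_x \;=\; \bigl((A+iS)^{-1}\bigr)_{0x}
\]
by a single Gaussian integration (Lemma~\ref{lem:Wick} with one pair). This resolvent equals, by the ordinary Feynman--Kac formula for the continuous-time simple random walk with generator $\Delta$,
\[
  \bigl((-\Delta+\nu+iS)^{-1}\bigr)_{0x}
  \;=\; \int_0^\infty \E_0\!\bigl[e^{-is\cdot L_T}\,1_{X(T)=x}\bigr]\,e^{-\nu T}\,dT,
\]
and Fourier inversion then recovers $G^\Lambda_\nu(x)$ with the desired weight $e^{-g\sum_v L_{v,T}^2}$. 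The hypothesis $\nu>\nu_c$ (giving $G^\Lambda_\nu(x)<\infty$) is what makes the $T$-integral and the Fourier inversion legitimate. If you want to salvage your integration-by-parts route, you would need to carry out the infinite iteration and resum it into this same resolvent identity --- possible, but considerably more work than the Fourier shortcut.
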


It is the goal of the method to show that the infinite-volume
critical two-point function is asymptotically equal to a multiple
of the inverse Laplacian on $\Z^d$, for $d \ge 4$.  To exhibit
an explicit factor to account for this multiple, we introduce
a parameter $z_0>-1$
by making the
change of variables $\varphi_x \mapsto (1+z_0)^{1/2} \varphi_x$.
With this change of variables, the integral representation \refeq{Gint}
becomes
\begin{equation}
  G^{\Lambda}_{\nu}(x) = (1+z_0) \int e^{-S(\Lambda)}
  e^{-\widetilde V_0(\Lambda)} \bar\varphi_0\varphi_x ,
\end{equation}
where
\begin{align}
  S(\Lambda) &= \sum_{v\in\Lambda} (\tau_{\Delta,v}+m^2\tau_v),\\
  \widetilde V_0(\Lambda)
  &= \sum_{v\in\Lambda} (g_0\tau_v^2+\nu_0\tau_v+z_0\tau_{\Delta,v}),
\end{align}
with
\begin{equation}
  g_0 = (1+z_0)^2g, \quad \nu_0 = (1+z_0)\nu_c, \quad m^2 = (1+z_0)(\nu-\nu_c).
\end{equation}
In particular,
the limit $\nu \searrow \nu_c$ corresponds to $m^2 \searrow 0$.

It is often convenient in statistical mechanics to obtain a correlation
function by differentiation of a partition function with respect to
an external field, and we will follow this approach here.
Introducing an \emph{external field} $\sigma \in \C$, we define
\begin{equation}
  V_0(\Lambda) =
  \widetilde V_0(\Lambda) + \sigma\bar\varphi_0 + \bar\sigma \varphi_x.
\end{equation}
Then the two-point function is given by
\begin{equation}
\lbeq{Ggoal}
  G^{\Lambda}_{\nu}(x) = (1+z_0) \frac{\partial^2}{\partial\sigma\partial\bar\sigma}\Big|_{\sigma=\bar\sigma=0} \int_{\C^\Lambda} e^{-S(\Lambda)-V_0(\Lambda)} .
\end{equation}
Our goal now is the evaluation of the large-$x$ asymptotic behaviour
of
\begin{equation}
\lbeq{Ggoal2}
    G_{\nu_c}(x)=
  \lim_{m^2 \searrow 0}
  \lim_{N \to \infty}
  (1+z_0) \frac{\partial^2}{\partial\sigma\partial\bar\sigma}\Big|_{\sigma=\bar\sigma=0} \int_{\C^\Lambda} e^{-S(\Lambda)-V_0(\Lambda)} .
\end{equation}

For the case $\widetilde V_0=0$ (so in particular $z_0=0$),
in view of Remark~\ref{rem:gi} the right-hand side becomes
\begin{equation}
\lbeq{G0:1}
  \lim_{m^2 \searrow 0}
  \lim_{N \to \infty}
  \int_{\C^\Lambda} e^{-S(\Lambda)}\bar\varphi_0\varphi_x
  =
  \lim_{m^2 \searrow 0}
  \lim_{\Lambda \nearrow \Z^d}
  \int  \bar\varphi_0\varphi_x d\mu_{(-\Delta_\Lambda+m^2)^{-1}},
\end{equation}
and by Lemma~\ref{lem:Wick} this is equal to
\begin{equation}
\lbeq{G0:2}
  \lim_{m^2 \searrow 0}
  \lim_{\Lambda \nearrow \Z^d}
  (-\Delta_\Lambda+m^2)^{-1}_{0x} = (-\Delta_{\Z^d})^{-1}_{0x}
  \sim c_0|x|^{-(d-2)}
\end{equation}
(we have added subscripts to the Laplacians to emphasise
where they act).  The goal of the forthcoming analysis is
to show that for small $g>0$, and with the correct choice of $z_0$,
the effect of $\widetilde V_0$ is a small perturbation in the sense
that its presence does not change the power in this $|x|^{-(d-2)}$ decay.

\subsection{Superexpectation}
\label{sec:s-e}

We will need some further development of the theory of
integration of differential forms discussed in
Section~\ref{s:functions-forms}.
As before, we denote the algebra of differential forms, now with index
set $\Lambda$, by $\mathcal{N}$.
Let $C$ be a $\Lambda
\times \Lambda$ matrix, with positive-definite Hermitian
part, and with inverse $A=C^{-1}$.
The \emph{Gaussian superexpectation} with covariance matrix $C$ is defined by
\begin{equation}
  \E_CF = \int e^{-S_A} F \quad \text{for $F \in \mathcal{N}$.}
\end{equation}
The name ``superexpectation'' comes from the fact that the integral
representation for the two-point function is actually a
\emph{supersymmetric} field theory; super-symmetry is discussed
in \cite{BIS09}.

Note that, by Lemma~\ref{lem:self-normalisation} and Remark~\ref{rem:gi},
$\E_C 1 = 1$, and more generally $\E_C f = \int fd\mu_C$ if $f$ is a
zero-form.  The latter property shows that the
Gaussian superexpectation
extends the ordinary Gaussian expectation, and we wish to take this further.
Recall the elementary fact that if $X_1 \sim N(0, \sigma_1^2)$ and
$X_2 \sim N(0,\sigma_2^2)$ are independent normal random variables,
then $X_1+X_2 \sim N(0, \sigma_1^2+\sigma_2^2)$.  In particular,
if $X \sim N(0,\sigma_1^2+\sigma_2^2)$ then we can evaluate
$\E(f(X))$ in stages as
\begin{equation}
\lbeq{EX1X2}
    \E(f(X)) = \E( \E( f(X_1+X_2) \, | \, X_2) ).
\end{equation}
It will be a crucial
ingredient of the following analysis that this
has an extension to the superexpectation, as we describe next.


By definition,
any form $F \in \mathcal{N}$ is a linear combination of
products of factors $\psi_{x_i}$ and $\bar\psi_{\bar{x}_i}$, with
$x_i,\bar{x}_i\in \Lambda$ and with coefficients given by functions
of $\varphi$ and $\bar\varphi$.  The coefficients may also depend on
the external field $(\sigma,\bar\sigma)$,
but we leave the dependence on $\sigma,\bar\sigma$ implicit
in the notation.
We also define an algebra $\mathcal{N}^\times$
with twice as many fields as $\mathcal{N}$, namely
with boson fields
$(\phi,\xi)$ and fermion fields $(\psi,\eta)$,
where $\phi = (\varphi,\bar\varphi)$,
$\xi = (\zeta,\bar\zeta)$,
$\psi = \frac{1}{\sqrt{2\pi i}} (d\varphi, d\bar\varphi)$,
$\eta = \frac{1}{\sqrt{2\pi i}} (d\zeta, d\bar\zeta)$.
For a form $F=f(\varphi, \bar\varphi) \psi^x \bar\psi^y$
(where $\psi^x$ denotes a product
$\psi_{x_1}\cdots\psi_{x_j}$), we define
\begin{equation}
\lbeq{thetadef}
    \theta F = f(\varphi+\xi,\bar\varphi+\bar\xi) (\psi+\eta)^x (\bar\psi+\bar\eta)^y,
\end{equation}
and we extend this to a map $\theta : \mathcal{N} \to \mathcal{N}^\times$
by linearity.
Then we understand the map $\E_C \circ \theta : \mathcal{N}
\to \mathcal{N}$ as the integration with respect to the
\emph{fluctuation fields} $\xi$ and $\eta$, with the fields $\phi$ and $\psi$
left fixed.  This is like a conditional expectation.
However, this
is not standard probability theory, since $\E_C$ does not arise
from a probability measure and takes
values in the (non-commutative) algebra of forms.

The superexpectation has the following important
convolution property,
analogous to \refeq{EX1X2} (see \cite{BI03c,BS11a}).

\begin{prop}\lbprop{Gaussian-conv}
Let $F \in \mathcal{N}$, and suppose that $C_1$ and $C'$ have positive-definite
Hermitian parts.  Then
  \begin{equation} \label{eq:Gaussian-conv}
    \E_{C'+C_1}F = \E_{C'}(\E_{C_1}\theta F).
  \end{equation}
\end{prop}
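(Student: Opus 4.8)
The plan is to prove the convolution identity by showing that the two sides obey the same characterising relations. Recall from Lemma~\ref{lem:IBP} and Lemma~\ref{lem:IBPfermions}, together with their conjugate counterparts (obtained by interchanging $\varphi\leftrightarrow\bar\varphi$ and $\psi\leftrightarrow\bar\psi$), that the Gaussian superexpectation $\E_C$ satisfies $\E_C 1=1$ and the integration-by-parts relations
\begin{equation*}
  \E_C(\bar\varphi_a F)=\sum_{x\in\Lambda}C_{ax}\,\E_C\big(\partial_{\varphi_x}F\big),
  \qquad
  \E_C(\bar\psi_a F)=\sum_{x\in\Lambda}C_{ax}\,\E_C\big(\partial_{\psi_x}F\big),
\end{equation*}
together with the two conjugate relations in which $C_{ax}$ is replaced by $C_{xa}$. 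A short induction on the total degree of a form with polynomial coefficients --- each relation removes one field and lowers the degree, and a form of degree zero in every field is a constant multiple of $1$ --- shows that linearity, the normalisation, and these four relations determine $\E_C$ uniquely on such forms; the general case of $F\in\mathcal{N}$ relevant to the applications then follows by an approximation argument, using positivity of the Hermitian parts of $C_1$ and $C'$ to dominate the integrals. It therefore suffices to verify that $\Phi(F):=\E_{C'}\big(\E_{C_1}\theta F\big)$ obeys $\Phi(1)=1$ and the four relations above with $C$ replaced by $C'+C_1$.

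First I would record the elementary behaviour of the doubling map $\theta$ of \refeq{thetadef}: by definition $\theta(\bar\varphi_a F)=(\bar\varphi_a+\bar\xi_a)\,\theta F$ (and similarly for the other three fields), while the chain rule gives $\partial_{\xi_x}\theta F=\theta\big(\partial_{\varphi_x}F\big)$ and $\partial_{\varphi_x}\theta F=\theta\big(\partial_{\varphi_x}F\big)$, with the analogous statements for $\eta,\psi$. Since $\E_{C_1}$ integrates only over the fluctuation fields $\xi,\eta$, it commutes with multiplication by $\varphi,\bar\varphi,\psi,\bar\psi$ and with $\partial_{\varphi_x},\partial_{\psi_x}$. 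Using these facts and integration by parts in the $\xi$-sector,
\begin{align*}
  \E_{C_1}\theta(\bar\varphi_a F)
  &=\E_{C_1}\big((\bar\varphi_a+\bar\xi_a)\theta F\big)
  =\bar\varphi_a\,\E_{C_1}\theta F+\sum_{x}(C_1)_{ax}\,\E_{C_1}\big(\partial_{\xi_x}\theta F\big)\\
  &=\bar\varphi_a\,\E_{C_1}\theta F+\sum_{x}(C_1)_{ax}\,\E_{C_1}\theta\big(\partial_{\varphi_x}F\big).
\end{align*}
Applying $\E_{C'}$, using integration by parts in the $\varphi$-sector on the first term together with $\partial_{\varphi_x}\E_{C_1}\theta F=\E_{C_1}\theta\big(\partial_{\varphi_x}F\big)$, gives
\begin{align*}
  \Phi(\bar\varphi_a F)
  &=\sum_{x}(C')_{ax}\,\Phi\big(\partial_{\varphi_x}F\big)+\sum_{x}(C_1)_{ax}\,\Phi\big(\partial_{\varphi_x}F\big)\\
  &=\sum_{x}(C'+C_1)_{ax}\,\Phi\big(\partial_{\varphi_x}F\big),
\end{align*}
which is the required identity. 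The three remaining relations, and $\Phi(1)=\E_{C'}\E_{C_1}\theta 1=1$, are checked in exactly the same manner; uniqueness then gives $\Phi=\E_{C'+C_1}$.

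An alternative, somewhat slicker route is to first establish the operator representation $\E_C F=\big(e^{\mathcal{L}_C}F\big)\big|_{\varphi=\bar\varphi=\psi=\bar\psi=0}$ (the degree-zero part, evaluated at the origin), where $\mathcal{L}_C=\sum_{x,y}C_{xy}\big(\partial_{\varphi_x}\partial_{\bar\varphi_y}+\partial_{\psi_x}\partial_{\bar\psi_y}\big)$; this is a consequence of Wick's theorem (Lemma~\ref{lem:Wick}) and its Grassmann analogue. Since $\mathcal{L}$ is linear in $C$ and the operators $\mathcal{L}_{C'},\mathcal{L}_{C_1}$ commute, $e^{\mathcal{L}_{C'+C_1}}=e^{\mathcal{L}_{C'}}e^{\mathcal{L}_{C_1}}$; and the chain rule shows that $\E_{C_1}\theta F=e^{\mathcal{L}_{C_1}}F$ as a form in the original fields, whence $\E_{C'}\big(\E_{C_1}\theta F\big)=\big(e^{\mathcal{L}_{C'}}e^{\mathcal{L}_{C_1}}F\big)\big|_{0}=\E_{C'+C_1}F$. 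In either approach the main obstacle is not conceptual but a matter of bookkeeping: one must fix, once and for all, the conventions for the anticommuting (left) derivatives $\partial_{\psi_x},\partial_{\bar\psi_x}$ and for the ordering of fermionic factors so that the integration-by-parts identities of Lemma~\ref{lem:IBPfermions} hold in the clean form used above, and one must justify the extension from forms with polynomial coefficients to the general $F\in\mathcal{N}$ occurring in the renormalisation group analysis.
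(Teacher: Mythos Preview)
The paper does not actually give a proof of this proposition; it simply states the result and cites \cite{BI03c,BS11a} for the argument. So there is no ``paper's own proof'' to compare against directly.

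That said, your proposal is sound. Both routes you sketch are standard in this setting. The first argument --- characterising $\E_C$ by normalisation plus the four integration-by-parts relations, and then verifying that the composite $\E_{C'}\circ\E_{C_1}\theta$ satisfies the same relations with covariance $C'+C_1$ --- is correct as written; the key chain-rule identities $\partial_{\xi_x}\theta F=\theta(\partial_{\varphi_x}F)$ and $\partial_{\varphi_x}\theta F=\theta(\partial_{\varphi_x}F)$ are exactly what makes the two covariances add. The second route via $\E_C F=\big(e^{\mathcal{L}_C}F\big)\big|_0$ and the semigroup property $e^{\mathcal{L}_{C'+C_1}}=e^{\mathcal{L}_{C'}}e^{\mathcal{L}_{C_1}}$ is essentially the approach taken in the reference \cite{BIS09} that the paper leans on throughout this section, so in spirit your alternative is closer to what the cited literature does. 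You are right that the only genuine work lies in fixing sign conventions for the anticommuting derivatives and in extending from polynomial coefficients to the full class of integrands; neither is a real obstacle.
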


Suppose $C$ and $C_{j},\,j=1,\dots ,N,$ are $\Lambda
\times \Lambda$ matrices with positive-definite  Hermitian parts,
such that
\begin{equation}
\label{e:CCN}
    C = \sum_{j=1}^N C_j
    .
\end{equation}
Then, by the above proposition,
\begin{equation}
    \label{e:progressive}
    \E_{C}F
    =
    \big( \E_{C_N}\circ \E_{C_{N-1}}\theta \circ \cdots
    \circ \E_{C_{1}}\theta\big) F
    .
\end{equation}
In the next section, we describe a particular choice of
the decomposition \eqref{e:CCN},
which will allow us to control the progressive
integration in \eqref{e:progressive}.

\subsection{Decomposition of the covariance}
\label{sec:decomp}

Our goal is to compute the large-$x$ asymptotic
behaviour of the two-point function using \refeq{Ggoal},
which we can now rewrite as
\begin{equation}
\lbeq{GgoalC}
  G^{\Lambda}_{\nu}(x)
  = (1+z_0)
  \frac{\partial^2}{\partial\sigma\partial\bar\sigma}\Big|_{\sigma=\bar\sigma=0}
  \E_C e^{-V_0(\Lambda)} ,
\end{equation}
with $C=(-\Delta +m^2)^{-1}$.  The Laplacian is on the torus $\Lambda$,
and we must take the limits as $\Lambda$ approaches $\Z^d$ and $m^2$
approaches zero, so $C$ is an approximation to $(-\Delta_{\Z^d})^{-1}$.
The operator $(-\Delta_{\Z^d})^{-1}$ decays as $|x|^{-2}$ in dimension
$d=4$, and such long-range correlations make the analysis difficult.
The renormalisation group approach takes the long-range correlations
into account progressively, by making a good decomposition of the
covariance $C$ into a sum of terms with \emph{finite} range,
together with progressive integration as in \eqref{e:progressive}.
The particular decomposition used is given in the
following theorem, which extends a result of
Brydges, Guadagni and Mitter \cite{BGM04}; see also \cite{Bryd09,BS11a}.
In its statement,
$\nabla_x^\alpha=\nabla_{x_1}^{\alpha_1} \dotsb \nabla_{x_d}^{\alpha_d}$ for a
multi-index $\alpha=(\alpha_1,\dotsc,\alpha_d)$, where
$\nabla_{x_k}$ denotes the
finite-difference operator $\nabla_{x_k}f(x,y)=f(x+e_k,y)-f(x,y)$.


\begin{theorem}
\label{thm:Cdecomp}
  Let $d>2$ and $N\in \Z_+$,
  and let $\Lambda$ be the torus $\Z^d/L^N\Z^d$, with $L$ a
  sufficiently large dyadic integer.
  Let $m^2>0$ and let
  $C=(-\Delta+m^2)^{-1}$ on $\Lambda$.
  There exist positive-definite $\Lambda \times \Lambda$ matrices
  $C_1, \dots, C_N$ such that:
  \begin{enumerate}
  \item $C= \sum_{j=1}^N C_j$,
  \item $C_j(x,y) = 0$ if $|x-y| \geq \frac{1}{2} L^j$,
  \item  for multi-indices $\alpha,\beta$ with
  $\ell^1$ norms $|\alpha|_1,|\beta|_1$ at most
  some fixed value $p$, and for $j <N$,
      \begin{equation}
        |\nabla_x^\alpha \nabla_y^\beta C_{j}(x,y)|
        \leq cL^{-(j-1)(2[\phi]-(|\alpha|_1+|\beta|_1))} ,
      \end{equation}
      where $[\phi]=\half(d-2)$,
      and $c$ is independent of $j$ and $N$.
  \end{enumerate}
\end{theorem}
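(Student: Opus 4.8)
The plan is to construct the decomposition $C = \sum_{j=1}^N C_j$ explicitly via an integral representation of $(-\Delta+m^2)^{-1}$ in terms of a suitable heat-kernel-like family, chosen so that the individual pieces have the finite-range property (ii). The natural starting point is to write, for the resolvent on the torus,
\begin{equation}
  C = (-\Delta+m^2)^{-1} = \int_0^\infty e^{-t(-\Delta+m^2)} \, dt,
\end{equation}
and to dissect the $t$-integral into dyadic blocks $t \in [L^{2(j-1)}, L^{2j}]$, but the bare heat kernel $e^{t\Delta}$ does not have compact support. The fix, following Brydges--Guadagni--Mitter \cite{BGM04}, is to replace $e^{t\Delta}$ by a \emph{finite-range} modification: one uses a function of $-\Delta$ built from a fixed smooth, compactly supported ``bump'' profile, so that the resulting kernel $C_j(x,y)$ vanishes once $|x-y| \geq \tfrac12 L^j$. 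Concretely, one chooses a fixed even function whose Fourier transform is smooth and supported in a ball, forms the associated operator via functional calculus for $-\Delta$ on $\Lambda$, and verifies that the spatial kernel inherits the finite-range support from the support of the profile together with finite propagation speed for the discrete wave-type operator. The case $j=N$ is handled separately since it carries the remaining ``tail'' of the $t$-integral together with the $m^2$ regularisation, and here one only needs positive-definiteness and the sum rule, not the sharp decay bound in (iii).

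The key steps, in order, would be: (1) Set up the integral/functional-calculus representation of $C$ and define the candidate pieces $C_1,\dots,C_N$ by grouping scales; check that $\sum_j C_j = C$ exactly on the torus by construction. (2) Prove finite range: show $C_j(x,y)=0$ for $|x-y|\geq \tfrac12 L^j$, using the compact support of the chosen profile and a propagation-speed estimate for the discrete Laplacian; this is where the dyadic integer $L$ must be taken large, so that the overlap of supports across scales is controlled and the geometry on the torus of side $L^N$ causes no wrap-around issues for $j<N$. (3) Prove positive-definiteness of each $C_j$: arrange the profile so that each $C_j$ is a positive operator (e.g.\ write $C_j = B_j B_j^*$ with $B_j$ itself a finite-range convolution, or ensure the Fourier multiplier defining $C_j$ is nonnegative). (4) Prove the derivative bounds (iii): pass to Fourier space on $\Lambda$, where $\nabla_x^\alpha \nabla_y^\beta C_j$ becomes multiplication by a symbol of size $\sim |k|^{|\alpha|_1+|\beta|_1}$ against the scale-$j$ multiplier, which is effectively supported on $|k| \sim L^{-j}$ and smooth there; a standard stationary-phase / repeated-integration-by-parts argument then yields the claimed $L^{-(j-1)(2[\phi]-(|\alpha|_1+|\beta|_1))}$ decay, uniformly in $N$ and in $m^2 \in (0,1]$, using $d>2$ to ensure the $t$-integral near $t=\infty$ (for $j<N$) converges without needing $m^2$.

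The main obstacle I expect is step (2) together with the uniformity in step (4): getting a \emph{sharp} finite-range property on the discrete torus while simultaneously retaining the scaling bounds with constants independent of $N$ and of $m^2$ as $m^2 \searrow 0$. The finite-range requirement forces one away from the literal heat kernel and toward a more delicate construction, and one must check that the modification does not destroy positive-definiteness or the exact sum rule, and that the torus (rather than $\Z^d$) introduces no boundary effect for $j<N$ since the supports have diameter $< L^j \le L^{N-1} < L^N$. The bound for $j<N$ must avoid any dependence on $m^2$, which is why the $m^2$-dependence is quarantined into $C_N$; verifying that the tail of the scale decomposition can absorb $m^2$ while $C_N$ remains positive-definite is the technical heart. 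Everything else — the Fourier-side symbol estimates, the combinatorics of multi-indices, the bookkeeping of the dyadic grouping — is routine once the finite-range building block is in hand, so I would present that block first and treat the rest as corollaries of its properties.
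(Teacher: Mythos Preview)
The paper does not actually give a proof of this theorem: it is stated with the remark that it ``extends a result of Brydges, Guadagni and Mitter \cite{BGM04}; see also \cite{Bryd09,BS11a}'', and the proof is deferred entirely to those references. Your proposal follows precisely the BGM strategy that the paper cites --- construct each $C_j$ by functional calculus from a fixed compactly supported profile so that finite range comes from finite propagation speed, group scales dyadically, absorb the $m^2$-dependence into the top scale $C_N$, and read off the derivative bounds from the Fourier symbol supported near $|k|\sim L^{-j}$ --- so in that sense your approach and the paper's (cited) approach coincide.

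One caution on your sketch: the literal heat-semigroup dissection you start with is only heuristic, and in the actual BGM construction the finite-range positive-definite building block is not obtained by truncating $e^{t\Delta}$ but by writing $(|k|^2+m^2)^{-1}$ as an integral of squares of a fixed compactly supported function (or, in the lattice/torus setting of \cite{Bryd09,BS11a}, via an analogous construction adapted to the discrete Laplacian). Your steps (2)--(4) correctly identify where the work lies, and your diagnosis that the uniformity in $m^2$ and $N$, together with the exact finite range on the discrete torus, is the technical heart is accurate. Since the paper itself offers no proof to compare against beyond the citations, your outline is as close to ``the paper's approach'' as one can get.
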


The decomposition in Theorem~\ref{thm:Cdecomp}(a)
is called a \emph{finite-range}
decomposition because of item (b):
the covariance $C_j$ has range $\frac 12 L^j$, and fields at points
separated beyond that range are uncorrelated under $\E_{C_j}$.

To compute the important expectation $\E_C e^{-V_0(\Lambda)}$
in \refeq{GgoalC},
we use Theorem~\ref{thm:Cdecomp} and \refprop{Gaussian-conv} to evaluate
it progressively.
Namely, if we define
\begin{equation}
  Z_0 = e^{-V_0(\Lambda)}, \quad Z_{j+1} = \E_{C_{j+1}}\theta Z_j \;\;\;
  (j+1<N), \quad Z_N = \E_{C_N} Z_{N-1},
\end{equation}
then the desired expectation is equal to
$Z_N = \E_C e^{-V_0(\Lambda)}$.
Thus we are led to study the recursion $Z_j \mapsto Z_{j+1}$.

In the expectation $Z_{j+1}=\E_{C_{j+1}}\theta Z_j$,
on the right-hand side we write $\varphi_j = \varphi_{j+1}+\zeta_{j+1}$, as in \refeq{thetadef},
and similarly for
$\bar\varphi_{j}$, $d\varphi_{j}$,
$d\bar\varphi_{j}$.  The expectation $\E_{C_{j+1}}\theta$ integrates
out $\zeta_{j+1}$, $\bar\zeta_{j+1}$, $d\zeta_{j+1}$, $d\bar\zeta_{j+1}$ leaving
dependence of $Z_{j+1}$ on $\varphi_{j+1}$, $\bar\varphi_{j+1}$, $d\varphi_{j+1}$,
$d\bar\varphi_{j+1}$.  This process is repeated.
The $\zeta_j$ fields that are integrated out are the fluctuation fields.

It follows from Remark~\ref{rem:gi} and
Lemma~\ref{lem:Wick} that
$\E_{C_{j+1}}|\zeta_{j,x}|^2 = C_{j+1}(x,x)$.  With
Theorem~\ref{thm:Cdecomp}(c), this indicates that the typical
size of the fluctuation field $\zeta_j$ is of order $L^{-j[\phi]}$;
the number $[\phi]=\frac 12 (d-2)$ is referred to as the
\emph{scaling dimension} or \emph{engineering dimension}
of the field.
Moreover, Theorem~\ref{thm:Cdecomp}(c)
also indicates that the derivative of $\zeta_{j,x}$
is typically smaller than the field itself by a factor
$L^{-j}$, so that the fluctuation field remains approximately constant
over a distance $L^{j}$.

To make systematic use of this behaviour of the fields, we introduce
nested pavings of $\Lambda$ by sets of \emph{blocks}
$\mathcal{B}_j$ on scales $j=0,\ldots,N$.
The blocks in $\mathcal{B}_0$ are simply the points in $\Lambda$.
The blocks in $\mathcal{B}_1$ form a disjoint paving of $\Lambda$ by boxes of
side $L$.  More generally, each block in $\mathcal{B}_j$ has side $L^j$
and consists of $L^d$ disjoint blocks in $\mathcal{B}_{j-1}$.
A \emph{polymer} on scale $j$ is any union of blocks in $\mathcal{B}_j$,
and we denote the set of scale-$j$ polymers by $\mathcal{P}_j$.
(This terminology is standard but these polymers have nothing to
do with physical polymers or random walks, they merely provide
a means of organising subsets in the pavings of the torus.)

\begin{figure}[h]
  \input{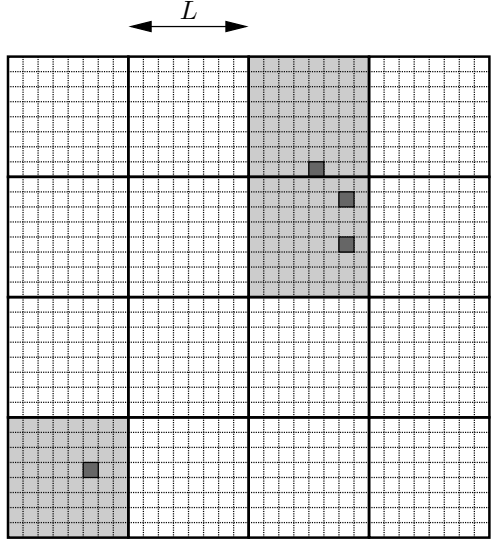}
  \caption{The four small shaded squares represent a polymer in $\mathcal{P}_0$,
  and the three larger shaded squares represent its closure in $\mathcal{P}_1$.}
  \label{fig:polymers}
\end{figure}

For a block $B \in \mathcal{B}_j$,
the above considerations concerning the typical size of the fluctuation
field suggest that, at each of the $L^{dj}$ points $x\in B$,
$\zeta_{j,x}$ has typical size $L^{-j[\phi]}$, and hence
\begin{equation}
  \sum_{x\in B} \zeta_{j,x}^p \approx L^{dj} L^{-pj[\phi]}
  = L^{(d-p[\phi])j}.
\end{equation}
The above sum is \emph{relevant} (growing exponentially in $j$)
for $p[\phi]<d$, \emph{irrelevant} (decaying exponentially in $j$)
for $p[\phi]>d$,
and \emph{marginal} (neither growing or decaying) for $p[\phi]=d$.
Since $\tau_x = \varphi_x\bar\varphi_x +\psi_x\bar\psi_x$ is
quadratic in the fields, it corresponds to $p=2$.
Thus $p[\phi]=2[\phi] = d-2 < d$ and
$\tau_x$
is relevant in all dimensions. Similarly, $\tau_x^2$
corresponds to $p=4$ with
$p[\phi]=4[\phi]=2d-4$, so that $\tau_x^2$ is irrelevant for $d>4$, marginal
for $d=4$, and relevant for $d<4$.
The monomial $\tau_{\Delta,x}$ is marginal in all dimensions.
In fact, the three monomials $\tau_x^2$, $\tau_x$ and $\tau_{\Delta,x}$,
which constitute the initial potential
$\widetilde V_0$, are precisely the marginal and relevant local monomials
that are Euclidean invariant and obey an additional symmetry between
bosons and fermions
called \emph{supersymmetry} (see \cite{BIS09}).

\subsection{The map \texorpdfstring{$Z_0 \mapsto Z_1$}{Z_0 -> Z_1}}
\label{sec:Z0Z1}

For an idea of how the recursion $Z_j \mapsto Z_{j+1}$ might be
studied, let us 
take $j=0$ and consider the map
$Z_0 \mapsto Z_1=\E_{C_1}\theta Z_0$.

For simplicity, we set $\sigma=\bar\sigma=0$, so that $V_0=
g_0\tau^2+\nu_0\tau+z_0\tau_\Delta$ is
translation invariant.  As usual, the monomials in $V_0$ depend on the
fields $\varphi, \bar\varphi, \psi, \bar\psi$.
As discussed above, we decompose the field $\varphi$ as
$\varphi = \varphi_1 + \zeta_1 $, and similarly for $\bar\varphi, \psi,\bar\psi$.
The operation $\E_{C_1}\theta$ integrates out the fields $\zeta_1,\bar\zeta_1,
d\zeta_1,d\bar\zeta_1$.
Recall that, by definition, ${\mathcal P}_0$ is the set of subsets of $\Lambda$.
We write $I_0(x) = e^{-V_0(x)}$, and, for $X \in {\mathcal P}_0$, write
$I_0^X = \prod_{x\in X} I_0(x)=e^{-V_0(X)}$ where $V_0(X)
= \sum_{x \in X} V_0(x)$.
In this notation, the dependence on the fields is left implicit.
Let
  \begin{equation}
    V_1  =
    g_1\tau^2+\nu_1 \tau + z_1 \tau_{\Delta}
  \end{equation}
denote a modification of $V_0$ in which the coupling constants in $V_0$
have been
adjusted, or \emph{renormalised}, to some new values
$g_1,\nu_1 , z_1$. This is the origin
of the term ``renormalisation'' in the renormalisation group.
We set $I_1^X = e^{-V_1(X)}$, but with the
fields in $V_1$ given by $\varphi_1,\bar\varphi_1,d\varphi_1,d\bar\varphi_1$.
Let $\delta I_1^X =\prod_{x\in X}(I_1(x)-\theta I_0(x))$; this
is an element of $\mathcal{N}^\times$ since $I_1$ depends on the
fields $\varphi_1$ and so on, while
$\theta I_0$ depends on $\varphi_1+\zeta_1$
and so on.

%
%
Then we obtain
\begin{align}
  Z_1(\Lambda) &= \E_{C_1} \theta I_0(\Lambda)
  = E_{C_1} \prod_{x\in \Lambda} (I_1(x)+\delta I_1(x))
  \nonumber\\
  &= \E_{C_1} \sum_{X\in {\mathcal P}_0} I_1^{\Lambda\setminus X}
  \delta I_1^{X}
  = \sum_{X\in {\mathcal P}_0} I_1^{\Lambda\setminus X} \E_{C_1}
  \delta I_1^{X}
  .
%
\lbeq{Z1}
\end{align}
%
Here we have expressed
$Z_1$ as a sum over a polymer on scale 0; we wish to express it
as a sum over a polymer on scale 1.  To this end, for a polymer $X$
on scale 0, we define the \emph{closure} $\overline{X}$ to be the
smallest polymer on scale 1 containing $X$: see Figure~\ref{fig:polymers}.
We can now write
\begin{equation}\lbeq{Z1Final}
Z_1(\Lambda) = \sum_{U \in \mathcal{P}_1} I_1^{\Lambda\setminus U} K_1(U),
\end{equation}
where
\begin{equation}\lbeq{K1Definition}
K_1(U) = \sum_{X\in {\mathcal P}_0 : \overline{X}=U}
I_1^{U \setminus X} \E_{C_1}  \delta I_1^X.
\end{equation}

\begin{definition}
  For $j=0,1,2, \dots, N$, and for
  $F,G:\mathcal{P}_j \to \mathcal{N}_{\rm even}$,
  where $\mathcal{N}_{\rm even}$ denotes the
  forms of even degree, the \emph{circle product} of $F,G$ is
  \begin{equation}
    (F \circ G)(\Lambda) =
    \sum_{U \in \mathcal{P}_j(\Lambda)} F(\Lambda \setminus U) G(U).
  \end{equation}
  Note that the circle product depends on the scale $j$.
\end{definition}

The circle product is associative and commutative (the latter due to
the restriction to forms of even degree).
With the circle product, we can
encode the formula \refeq{Z1Final} compactly
as $Z_1(\Lambda) = (I_1 \circ K_1)(\Lambda)$, with the convention that $I_1(U)=I_1^U$.
The identity element for the circle product is $\indicator{U = \varnothing}$.
Thus, if we define $K_0(X) = \indicator{X=\varnothing}$, then
$Z_0(\Lambda)=I_0(\Lambda) = (I_0\circ K_0)(\Lambda)$.

All later stages of the recursion proceed inductively
from $Z_j = (I_j \circ K_j) (\Lambda)$.
The interaction $I_j$ continues to be defined by a potential $V_j$,
but the form of the dependence will not, in general, be as simple
as $I=e^{-V}$.  The interaction does, however, obey
$I_j(X) = \prod_{B\in\mathcal{B}_j(X)} I_j(B)$, for
all $X \in {\mathcal P}_j$ and for all $j$.
The following factorisation property of
$K_1$, which can be verified from \refeq{K1Definition},
allows the induction to
proceed.  If $U\in\mathcal{P}_1$ has connected components
$U_1, \dots, U_k$, then $K_1(U) = \prod_{i=1}^k K_1(U_i)$;
the notion of connectivity here includes blocks touching at a corner.
The induction will preserve this key
property for $K_j$ and ${\mathcal P}_j$,
for all $j$.






\subsection{Remaining steps in the proof}
\label{sec:remsteps}

Our goal is to prove Theorem~\ref{thm:BS-wsaw}.  According to
\refeq{Ggoal2}, we need to show that there is a choice of $z_0$
such that, for $g$ small and positive,
\begin{equation}
\lbeq{ZNx}
  G_{\nu_c}(x)=\lim_{m^2 \searrow 0}
  \lim_{N \to \infty}
  (1+z_0)
  \frac{\partial^2}{\partial\sigma\partial\bar\sigma}
  \Big|_{\sigma=\bar\sigma=0}
  Z_N(\Lambda)
  \sim c_g |x|^{-(d-2)}.
\end{equation}
In particular, we see from this that
the correct choice of $z_0$ will appear in the value of the constant $c_g$.
The remaining steps in the proof of \refeq{ZNx} are summarised,
imprecisely, as follows.
Much is left unsaid here, and details can be found in \cite{BS11a}.

\begin{theorem}
Let $d \ge 4$, and let $g> 0$ be sufficiently small.
  There is a choice of
  $V_1,\ldots,V_N$ given, for $X \subset \Lambda$, by
  \begin{equation}
    V_j(X) = \sum_{v\in X}
    (g_j\tau_v^2+\nu_j \tau_v + z_j \tau_{\Delta,v})
    + \lambda_j(\sigma\bar\varphi_0 + \bar\sigma \varphi_x)
    + q_j^2  \sigma\bar\sigma,
  \end{equation}
  with $V_j$ determining $I_j$,
  and a choice
  of $K_1,\ldots,K_N$ with $K_j : \mathcal{P}_j \to \mathcal{N}$
  obeying the key factorisation property mentioned above, such that
  \begin{equation}
    Z_j(\Lambda) = (I_j \circ K_j)(\Lambda)
   \end{equation}
   obeys the recursion $Z_{j+1}=\E_{C_{j+1}}\theta Z_j$.
   Moreover, $(V_j,K_j)_{0 \le j \le N}$ obeys the flow equations
   \begin{align}
     g_{j+1} &= g_j - c g_j^2 + r_{g,j} \\
     \nu_{j+1} &= \nu_j + 2g_j C_{j+1}(0,0) + r_{\nu,j} \\
     z_{j+1} &= z_j + r_{z,j} \\
     K_{j+1} &= r_{K,j}
   \end{align}
   where the $r$ terms represent error
   terms.  Further equations define the evolution of $\lambda_j$ and
   $q_j$.
\end{theorem}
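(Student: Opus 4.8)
The plan is to realise the single step $Z_j \mapsto Z_{j+1} = \E_{C_{j+1}}\theta Z_j$ as an explicit map on pairs $(V_j,K_j)$ --- the \emph{renormalisation group map} --- and then to iterate it from $j=0$ to $j=N$ while maintaining quantitative control of both arguments. Starting from the representation $Z_j(\Lambda)=(I_j\circ K_j)(\Lambda)$ at scale $j$, with $I_j$ built from the potential $V_j$, one applies $\theta$; since $\theta$ is an algebra homomorphism compatible with the circle product, $\theta Z_j=(\theta I_j)\circ(\theta K_j)$, still a sum over scale-$j$ polymers. The first task is \emph{coarse-graining}: introduce a candidate next interaction $I_{j+1}=e^{-V_{j+1}}$ with fields at scale $j+1$, write $\theta I_j = I_{j+1}+\delta I_{j+1}$, expand the product $\prod_B(I_{j+1}(B)+\delta I_{j+1}(B))$, and take the closure of each scale-$j$ polymer up to scale $j+1$, exactly as in \refeq{Z1}--\refeq{K1Definition}. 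This produces $Z_{j+1}(\Lambda)=(I_{j+1}\circ K_{j+1})(\Lambda)$ with $K_{j+1}$ given by an explicit formula in $\E_{C_{j+1}}$, $V_j$, $V_{j+1}$ and $K_j$, from which the connected-component factorisation of $K_{j+1}$ is read off as in the $j=0$ case.

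The second task is to \emph{choose} $V_{j+1}$ so that $K_{j+1}$ is forced to be small. The guide is the power counting of Section~\ref{sec:decomp}: in $d=4$ only $\tau^2$, $\tau$ and $\tau_{\Delta}$, together with the external-field monomials $\sigma\bar\varphi_0+\bar\sigma\varphi_x$ and $\sigma\bar\sigma$, are marginal or relevant, Euclidean invariant, and supersymmetric; every other local monomial generated by the expectation is irrelevant and may be absorbed into $K_{j+1}$. One therefore introduces a localisation operator $\mathrm{Loc}$ extracting the relevant and marginal local part of a polymer activity, and defines $V_{j+1}$, schematically, as $\mathrm{Loc}$ applied to $V_j$ minus the second-order contribution of $\E_{C_{j+1}}\theta e^{-V_j}$. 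Expanding this expectation and applying Wick's theorem (Lemma~\ref{lem:Wick}) produces the flow equations: the quadratic term $-cg_j^2$ in the $g$-equation is the connected second-order contribution of two $\tau^2$ vertices (the bubble); the term $2g_jC_{j+1}(0,0)$ in the $\nu$-equation is the contraction of a single field pair inside one $\tau^2$ vertex (the tadpole); $z_j$ receives only an error term because the wave-function renormalisation is finite for $d\ge 4$. Whatever remains after subtracting $\mathrm{Loc}$ is, by construction, collected into the errors $r_{g,j},r_{\nu,j},r_{z,j}$ and the new activity $K_{j+1}=r_{K,j}$; the evolution of $\lambda_j$ and $q_j$ is obtained by the same mechanism, using that the observable enters only to first order.

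The heart of the argument is the \emph{contraction estimate}. One equips interactions with norms on the coupling constants and polymer activities with weighted norms $\|\cdot\|_j$ whose large-field regulators are tuned to the field dimension $[\phi]=\tfrac{1}{2}(d-2)$ and to the decay bounds of $C_{j+1}$ from Theorem~\ref{thm:Cdecomp}(c). Using the finite-range property Theorem~\ref{thm:Cdecomp}(b) to decouple blocks, and Taylor expansion in the fluctuation field, whose typical size is $L^{-j[\phi]}$, one proves bounds of the form
\begin{align*}
  \|K_{j+1}\|_{j+1} &\le L^{-\delta}\bigl(\|K_j\|_j + g_j^2\bigr), \\
  |r_{g,j}|,\,|r_{\nu,j}|,\,|r_{z,j}| &\le O\bigl(g_j^3 + g_j\|K_j\|_j\bigr),
\end{align*}
with $\delta>0$, so that $K_j$ stays small as long as the coupling constants do. The main obstacle is that $d=4$ is marginal: the flow $g_{j+1}=g_j-cg_j^2+\cdots$ drives $g_j\to 0$ only like $1/(cj)$, so one must verify that \emph{all} error terms are summable against this slow decay, and simultaneously choose the initial datum $z_0$ --- equivalently $\nu_0=(1+z_0)\nu_c$ --- on the stable manifold so that $\nu_j$ does not escape. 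Existence of the correct $z_0$ follows from a one-parameter shooting (fixed-point) argument on the sequence $(\nu_j,z_j,K_j)_{j\le N}$, exploiting that $\nu$ is the unique expanding direction; this argument simultaneously produces the constant hidden in $c_g$.

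Once $(V_j,K_j)_{j\le N}$ is controlled uniformly in $N$ and $m^2$, the final step evaluates $Z_N(\Lambda)$, which by then is supported on a single remaining block, differentiates in $\sigma,\bar\sigma$, and passes to the limits $N\to\infty$ and $m^2\searrow 0$ as in \refeq{ZNx}. The leading contribution is the rescaled free covariance $(1+z_0)(-\Delta_{\Z^d})^{-1}_{0x}\sim c_0|x|^{-(d-2)}$; the couplings $\lambda_N,q_N$ and the controlled corrections only modify the amplitude, yielding $c_g$ and hence Theorem~\ref{thm:BS-wsaw}.
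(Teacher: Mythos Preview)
The paper does not actually prove this theorem: it is stated in Section~\ref{sec:remsteps} as part of a summary of ``remaining steps,'' with the caveat that ``Much is left unsaid here, and details can be found in \cite{BS11a}.'' So there is no proof in the paper to compare against; your sketch is in fact considerably more detailed than anything the paper provides for this statement.

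That said, your outline conflates the content of \emph{this} theorem with that of the next one (Theorem~\ref{thm:fp}) and with the final derivation of \refeq{ZNx}. The present theorem is purely structural: it asserts the existence of a decomposition $Z_j=(I_j\circ K_j)(\Lambda)$ compatible with the recursion, together with the \emph{form} of the flow equations. For that, only your first two tasks are relevant --- the coarse-graining step extending the $j=0$ construction of \refeq{Z1}--\refeq{K1Definition} to general $j$, and the choice of $V_{j+1}$ via a localisation operator that extracts the marginal and relevant monomials and thereby produces the displayed recursions for $g_j,\nu_j,z_j,K_j$. The contraction estimates in your norms $\|\cdot\|_j$, the shooting/fixed-point argument selecting $z_0$, and the evaluation of the limits $N\to\infty$, $m^2\searrow 0$ are the substance of Theorem~\ref{thm:fp} and of the concluding paragraph, not of the theorem you were asked to prove. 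Your sketch of those later steps is reasonable and in the spirit of the paper's outline, but it should be separated out.
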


The previous theorem represents the recursion $Z_j \mapsto Z_{j+1}$ as
a dynamical system.  A fixed-point theorem is used to make the
correct choice of the initial value $z_0$ so that the $r$ terms remain
small on all scales, and so that $(g_j,\nu_j,z_j,K_j)$ flows to
$(0,0,0,0)$.  The latter is referred to as \emph{infrared asymptotic
freedom}, and is the effect anticipated below \refeq{G0:2}.
This final ingredient is summarised in the following
theorem.

\begin{theorem}
\label{thm:fp}
  If $g> 0$ is sufficiently small (independent of $N$ and $m^2$),
  there exists $z_0$
  such that
  \begin{equation}
    \lim_{m^2 \searrow 0}\lim_{N\to\infty}
    V_N =\lambda_\infty(\sigma\bar\varphi_0 + \bar\sigma\varphi_x)
    + q_\infty \sigma\bar\sigma,
    \end{equation}
  with $\lambda_\infty >0$ and, as $x\to\infty$,
  $q_\infty \sim \lambda_\infty^2(-\Delta_{\Z^d})^{-1}_{0x}$.
  Moreover, in an appropriately defined Banach space,
\begin{equation}
    \lim_{m^2 \searrow 0}\lim_{N\to\infty}  K_N(\Lambda) = 0.
\end{equation}
\end{theorem}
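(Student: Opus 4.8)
The plan is to prove Theorem~\ref{thm:fp} as a \emph{stable manifold} statement for the renormalisation group dynamical system $(V_j,K_j)\mapsto(V_{j+1},K_{j+1})$ whose flow equations were recorded in the preceding theorem. First I would fix, for each scale $j$, a pair of norms: a finite-dimensional norm on the coupling constants $(g_j,\nu_j,z_j,\lambda_j,q_j)$ encoded in $V_j$, and a $\phi$-dependent seminorm (a weighted $T_\phi$-type norm on polymer activities, summed over $\mathcal{P}_j$ against a large-field regulator) on $K_j$. Together these make Banach spaces $\mathcal{X}_j$, and the content of the preceding theorem is that the single-scale map $\mathcal{X}_j\to\mathcal{X}_{j+1}$ is well-defined, smooth, and of the stated form, with the $r$-terms quadratically small in $(g_j,K_j)$ and with the $K$-coordinate strictly contractive at each step. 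The diagrammatic estimates of \refthm{PiBounds}-type (here their differential-form analogues) together with the finite-range property and the decay bounds of Theorem~\ref{thm:Cdecomp} are what make the error bounds on $r_{K,j}$, $r_{g,j}$, $r_{\nu,j}$, $r_{z,j}$ summable over scales.

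The key mechanism I would exploit is that for $d=4$ the coupling $g$ is \emph{marginally irrelevant}: discarding errors, $g_{j+1}=g_j-cg_j^2$ with $c>0$ forces $g_j\sim(cj)^{-1}\to 0$ (for $d>4$, $g$ is irrelevant and decays geometrically, which only makes the estimates easier). Thus $g_j$ itself supplies the small parameter that dominates the nonlinearity on every scale, even though no single scale is perturbative in an absolute sense. I would establish \emph{a priori} that along any trajectory staying in a fixed small ball, $g_j$ decays like $1/j$ and $K_j$ contracts geometrically with a rate improved by powers of $g_j$, so that $\|K_N(\Lambda)\|\to 0$.

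The main obstacle is the \emph{relevant} direction: $\nu_j$ (together with the combination of the external-field couplings that feeds $q_j$) grows under the linearised flow, so a generic initial condition produces a trajectory that escapes the small ball. The remedy is to treat $z_0$ — the free parameter left once $\nu_c(g)$ has been used — as a tuning parameter and to reformulate the dynamics as a fixed-point problem on the space of entire \emph{sequences} $(V_j,K_j)_{j\le N}$: the marginal/irrelevant coordinates $(g_j,z_j,\lambda_j,K_j)$ are propagated \emph{forward} from scale $0$, while the relevant coordinate $\nu_j$ is reconstructed \emph{backward} from its vanishing terminal value via $\nu_j=-\sum_{k>j}(\text{error terms})_k$; a contraction-mapping argument in this trajectory space yields a unique fixed point and a value $z_0$, which one shows can be chosen uniformly in $N$ and in $m^2\ge 0$ (and hence survives the limits $N\to\infty$, $m^2\searrow 0$). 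Verifying that this backward–forward map is a contraction \emph{uniformly} in $N$ and $m^2$ — essential for the infinite-volume and massless limits — is the technical heart of the proof and is exactly where the careful choice of norms and the improvement factors $g_j$ are indispensable.

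Finally I would read off the asymptotics. The $\lambda_j$-recursion is a perturbation of $\lambda_{j+1}=\lambda_j(1+O(g_j))$, which (since $\sum_j g_j$ diverges only logarithmically but the product still converges after the errors are controlled) converges to some $\lambda_\infty>0$; the $q_j$-recursion accumulates contributions of the form $\lambda_j^2 C_{j+1}(0,x)$, and summing these over $j$ and using the telescoping $C=\sum_j C_j$ of Theorem~\ref{thm:Cdecomp} rebuilds $\lambda_\infty^2(-\Delta_{\Z^d})^{-1}_{0x}$ in the limit, up to terms of lower order in $|x|$. Combined with $K_N(\Lambda)\to 0$ and the extraction formula \refeq{ZNx}, this gives $G_{\nu_c}(x)\sim c_g|x|^{-(d-2)}$ with $c_g=(1+z_0)\lambda_\infty^2$, completing the proof of Theorem~\ref{thm:BS-wsaw}.
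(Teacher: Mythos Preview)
The paper does not actually prove Theorem~\ref{thm:fp}: it is stated in Section~\ref{sec:remsteps} as an ``imprecise'' summary of work whose details are deferred to \cite{BS11,BS11a}, with the explicit caveat that ``Much is left unsaid here.'' So there is no in-paper proof to compare your proposal against. That said, your outline is broadly faithful to the Brydges--Slade strategy hinted at in the surrounding text: viewing the RG map as a dynamical system, recognising $g$ as marginally irrelevant in $d=4$ so that $g_j\sim(cj)^{-1}$, tuning the initial datum to land on a stable manifold via a forward--backward (irrelevant/relevant) fixed-point argument on trajectory space, and extracting $\lambda_\infty$ and $q_\infty\sim\lambda_\infty^2(-\Delta_{\Z^d})^{-1}_{0x}$ from the observable flow and the telescoping decomposition $C=\sum_j C_j$.

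Two points of caution. First, your reference to ``\refthm{PiBounds}-type'' estimates is misplaced: \refthm{PiBounds} belongs to the lace-expansion half of the paper and has no role in the RG argument of Section~\ref{s:ctwsaw}; the error bounds on the $r$-terms come instead from the norm apparatus (large-field regulators, $T_\phi$ seminorms) built for polymer activities, together with the finite-range covariance bounds of Theorem~\ref{thm:Cdecomp}. Second, your claim that $\lambda_{j+1}=\lambda_j(1+O(g_j))$ with $g_j\sim 1/j$ yields a convergent product is delicate: a genuine $O(g_j)$ correction would make $\log\lambda_N$ diverge logarithmically. In the actual analysis the leading correction to the observable coupling is of higher order (or is controlled by a more refined mechanism), and this is precisely the kind of detail that the paper declines to supply and that \cite{BS11a} is meant to contain.
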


At scale $N$ there are only two polymers, namely the single block
$\Lambda$ and the empty set $\varnothing$.
By definition,
$I_N(\varnothing)=K_N(\varnothing)=1$.
Also, the field has been entirely
integrated out at scale $N$, and from Theorem~\ref{thm:fp} and the definition
of the circle product,  we obtain
\begin{equation}
  Z_N(\Lambda) = I_N(\Lambda) + K_N(\Lambda) \approx I_N(\Lambda)
  \approx e^{-q_N \sigma \bar\sigma}.
\end{equation}
Let $z_0^*=\lim_{m^2 \searrow 0}z_0$.
With \refeq{ZNx} and $q_N \to q_\infty$, this gives
\begin{equation}
 G_{\nu_c}(x) =  (1+z_0^*) q_\infty \sim
 (1+z_0^*)\lambda_\infty^2(-\Delta_{\Z^d})^{-1}_{0x}
 \sim
 (1+z_0^*)\lambda_\infty^2 c_0  |x|^{-(d-2)}.
\end{equation}
This is the desired conclusion of Theorem~\ref{thm:BS-wsaw}.

\subsection{Tutorial}
\lbsect{Tut4}

These problems develop a proof of the discrete-time version
of Theorem~\ref{thm:SL}.
The proof makes use of a Simon--Lieb inequality---this
is now a generic term for inequalities of
  the sort introduced in \cite{Simo80,Lieb80} for the Ising model.
The approach developed here can be adapted to prove Theorem~\ref{thm:SL}.

Let $\Gamma$ represent either $\Gamma = \Z^d$
or the discrete torus $\Gamma = \Z^d/R\Z^d$.
Let $\E_x$ denote the expectation for the usual discrete-time
SRW  on $\Gamma$, which we denote now by $(X_n)_{n\geq 0}$, starting at $x$.
Let $I_{m,n}$ denote the number of self-intersections of
$X$ between times $m$ and $n$:
\begin{equation}
  I_{m,n} = \sum_{m\leq i < j \leq n} \indicator{X_i = X_j}, \quad I_n = I_{0,n}.
\end{equation}
We define
the two-point function of the weakly SAW in the domain $D \subset \Gamma$
by
\begin{equation}
  G_{\nu,D}(x,y)
  =
  \sum_{n\geq 0} \E_x(e^{-gI_n} \indicator{X_n=y,n < T_D}) e^{-\nu n},
  \quad x,y \in \Gamma, \; \nu \in \R,\\
\end{equation}
where $T_D = \inf\{n \geq 0: X_n \notin D \}$ is the exit time of $D$.
We define the boundary $\partial D =
\{x \not\in D: \text{$\exists y \in D$ s.t. } x \sim y\}$, and the closure
$\bar D = D \cup \partial D$.
The two-point function on the entire graph is written as $G_\nu$
rather than $G_{\nu,\Gamma}$.
Let $c_n(x,y) = E_x(e^{-gI_n} \indicator{X_n=y})$, let
$c_n = \sum_{y \in \Gamma} c_n(0,y)$, and define the susceptibility by
\begin{equation}
  \chi(\nu) = \sum_{y \in \Gamma} G_{\nu}(0,y) = \sum_{n\geq 0} c_n e^{-\nu n}.
\end{equation}

\begin{problem} \label{problem:wsaw-submultiplicativity}
  Verify that $(c_n)_{n\geq 0}$ is a submultiplicative sequence,
  i.e.\ $c_{n+m} \leq c_{n}c_{m}$,
  and conclude that $\frac{1}{n} \log(c_n)$ converges to its infimum,
  which is $\nu_c$ 
  by definition.
  In particular, notice that for $\nu < \nu_c$, $\chi(\nu) = \infty$
  and for $\nu > \nu_c$, $\chi(\nu) < \infty$.
\end{problem}

\begin{problem} \label{problem:kappa-torus}
  Let $\chi^R(\nu)$ be the susceptibility for $\Z^d/R'\Z^d$
  where $R'=2R+1$, and let $\chi(\nu)$ be the susceptibility for $\Z^d$.
  Prove that $\chi^R(\nu) \leq \chi(\nu)$ for $R'\geq 3$, and, in particular,
  that $\nu_c(\Z^d) \geq \nu_c(\Z^d/R\Z^d)$.
  Here, $\nu_c(\Gamma)$ denotes the critical point of the
  weakly SAW on $\Gamma$.
\end{problem}

\begin{problem} \label{problem:simon-lieb}
  Prove the following version of the Simon-Lieb inequality
  for the discrete-time weakly SAW on $\Gamma$.
  Given $D \subset \Gamma$, show that
  \begin{equation} \label{eq:sl}
    G_{\nu}(x,y) - G_{\nu,D}(x,y) \leq \sum_{z \in \partial D} G_{\nu,\bar D}(x,z) G_{\nu}(z,y).
  \end{equation}
  Note that if $x \in D$ and $y \in D^c$, then $G_{\nu,D}(x,y) = 0$.

\end{problem}

The following problem provides an approach to proving exponential
decay of a subcritical two-point function which, unlike
\refprop{TwoPointSubcritDecay}, adapts well to the
continuous-time setting.

\begin{problem} \label{problem:exp-decay}
  Let $\Lambda_{R} = \{-R+1, \dots, R\}^d \subset \Z^d$.
  For $\nu > \nu_c$, $\sum_{y\in\Z^d} G_\nu(0,y)$ is finite, and thus
  $\theta = \sum_{y \in \partial\Lambda_R} G_\nu(0,y) < 1$ for $R$
  sufficiently large.
  Conclude from Problem~\ref{problem:simon-lieb} with $D=\Lambda_R$
  that for $y \not\in\Lambda_R$,
  \begin{equation}
    G_\nu(0,y) \leq \theta^{\lfloor |y|_\infty/(R+1)\rfloor} \sup_{x\in\Z^d} G_\nu(0,x).
  \end{equation}
\end{problem}

\begin{problem} \label{problem:infinite-volume-limit}
  Let $(T_R)_{R\in \N}$ be a sequence of discrete tori
  with the vertex sets $V_R$ embedded in $\Z^d$
  by $V_R = \Lambda_R$ where $\Lambda_R$ is as in
  Problem~\ref{problem:exp-decay}; in particular, $V_R \subset V_{R+1}$.
  Let $G^R_\nu$ be the two-point function on $T_R$,
  and $G_\nu$ be the two-point function on $\Z^d$.
  Use Problem~\ref{problem:kappa-torus} and
  Problem~\ref{problem:exp-decay} to prove that for all
  $\nu > \nu_c = \nu_c(\Z^d)$, $x,y \in \Z^d$,
  \begin{equation}
    \label{eq:TorusToInfty}
    G^R_\nu(x,y) \to G_\nu(x,y) \quad\text{as $R\to\infty$.}
  \end{equation}
  Conclude that
  \begin{equation}
    \label{eq:kappaToCrit}
    G_{\nu_c}(x,y) = \lim_{\nu \searrow \nu_c} \lim_{R\to\infty} G^R_\nu(x,y).
  \end{equation}
\end{problem}

\appendix

\section{Solutions to the problems}

\subsection{Solutions for Tutorial~\ref{s:Tut1}}

\begin{solution}{Problem \ref{problem:subadditivity}}
  Let $M$ be an integer, and for every $n \in \N$, write $n = Mk+r$ with $0\leq r <M$. Then,
  \begin{equation}
    \frac{1}{n}a_n \leq \frac{k}{n} a_M + \frac{1}{n}a_r,
    \quad\text{and, thus,}\quad
    \limsup_{n\to\infty} \frac{1}{n} a_n \leq \frac{1}{M} a_M.
  \end{equation}
  In particular,
  \begin{equation}
    \limsup_{n\to\infty} \frac{1}{n} a_n \leq \inf_{M\in \N} \frac{1}{M} a_M \leq \liminf_{M\to \infty} \frac{1}{n} a_n,
  \end{equation}
  which implies both statements of the claim.
\end{solution}

\begin{solution}{Problem \ref{problem:connectivec}}
  The number of $n$-step walks with steps only in positive coordinate directions is $d^{n}$.
  The number of walks which do not reverse direction is $2d(2d-1)^{n-1}$. Thus,
  \begin{equation}
    d^n \leq c_n \leq 2d(2d-1)^{n-1}
    \quad \text{and therefore}\quad
    d \leq \mu \leq 2d-1.
  \end{equation}
  The upper bound can easily be improved by excluding more patterns that lead to self-intersecting walks
  than merely reversals of steps. For example, by considering walks which do not contain anti-clockwise
  ``unit squares'' (see \reffig{WalkWithHalfLoop}), we obtain
  \begin{equation}
    c_{3n+1} \leq 2d((2d-1)^3-1)^n = 4(26^{1/3})^{3n},
  \end{equation}
  giving $\mu \leq 26^{1/3} < 3$. Similarly, the lower bound can be improved by considering
  walks that take steps either in positive coordinate directions, i.e., north or east,
  or in an east-north-west-north pattern: see \reffig{WalkWithHalfLoop}. It follows that
  \begin{equation}
    c_{4n} \geq (d^4+1)^n = (17^{1/4})^{4n},
  \end{equation}
  where $17^{1/4} > 2$. In particular, $2< 17^{1/4} \leq \mu \leq 26^{1/3} < 3$.
\end{solution}
\begin{figure}[h]
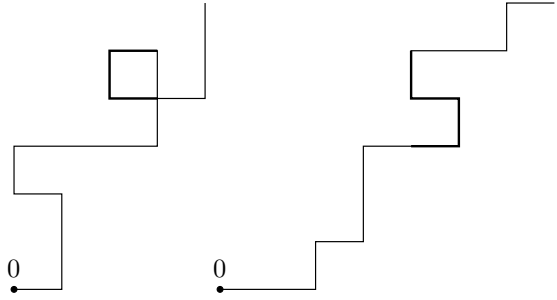

  \input{walk-square.pspdftex}
  \input{walk-with-half-loop.pspdftex}
  \caption{Left: The walk does contain a unit square. Right: The walk only takes steps east, north, or in east-north-west-north patterns  (thick line).}
  \lbfig{WalkWithHalfLoop}
\end{figure}

\begin{solution}{Problem \ref{problem:consistent}}
  SAWs can get trapped: see \reffig{TrappedWalk}. A trapped walk $\omega$ of length $n$
  does not arise as the restriction of a walk $\rho$ of length $m > n$ to the first
  $n$ steps. Thus, under $\Q_n^{(1)}$, $\omega$ has positive probability, while
  $\sum_{\rho > \omega} \Q_{m}^{(1)}(\rho) = 0$.
\end{solution}
\begin{figure}[h]
  \input{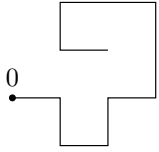}
  \caption{Trapped walk.}
  \lbfig{TrappedWalk}
\end{figure}

\begin{solution}{Problem \ref{problem:2point-1d}}
  $c_n(x) = \indicator{|x|=n}$, so $G_z(x) = \sum_{n\geq 0} c_n(x) z^n = z^{|x|}$, and
  \begin{align}
    \hat G_z(k)
    &=
    \sum_{x\in\Z} z^{|x|} e^{ikx}
    = -1 + \sum_{n\geq 0} z^n (e^{ikn}+e^{-ikn})
    \notag\\
    &=
    -1 + (1-ze^{ik})^{-1} + (1-ze^{-ik})^{-1}
    = \frac{1- z^2}{1-2z\cos k+z^2},
  \end{align}
  as claimed.
\end{solution}

\begin{solution}{Problem \ref{problem:tauberian}}
  The assumption implies
  \begin{equation}
    |f((1-1/n)e^{i\varphi})| \leq c|1-(1-1/n)e^{i\varphi}|^{-b}.
  \end{equation}
  Note that for $\varphi \in [0,\pi/2]$,
  \begin{equation}
    |{\rm Re}(1-(1-1/n)e^{i\varphi})|
    = 1-(1-1/n)\cos\varphi
    \geq 1/n,
  \end{equation}
  \begin{equation}
    |{\rm Im}(1-(1-1/n)e^{i\varphi})| = |(1-1/n)\sin\varphi|\\
    \geq (1-1/n) 
      \frac{2\varphi}{\pi} . 
  \end{equation}
  Suppose $b>1$.
  The integral is estimated using $|z|^{n} \geq C e^{-1}$ for $|z| = 1-1/n$,
  \begin{multline}
    \frac{1}{2\pi} \int_0^{\pi/2} |f((1-1/n)e^{i\varphi})| \; d\varphi
    \leq c \int_0^{\pi/2} \left(\frac{1}{n} + (1-1/n)\frac{2\varphi}{\pi}\right)^{-b} \; d\varphi\\
    = (1-1/n)^{-1} c \int_{1/n}^{1} t^{-b} \; dt
    = (1-1/n)^{-1} cb  (n^{b-1} - 1) \leq c n^{b-1},
  \end{multline}
  and, since $|f(z)|$ is bounded for $z$ bounded away from $1$,
  \begin{equation}
    \frac{1}{2\pi} \int_{\pi/2}^\pi |f((1-1/n)e^{i\varphi})| \; d\varphi
    \leq c .
  \end{equation}
  Likewise, the contributions for the interval $[\pi,2\pi]$ are estimated and we obtain
  \begin{equation}
    |a_n| \leq cn^{b-1}.
  \end{equation}
  The above assumed $b >1$ but the extension to $b=1$ is easy.
\end{solution}

\begin{solution}{Problem \ref{problem:transience}}
  (a) Let $T_0 = 0$ and $T_k = \inf \{ n > T_{k-1}: X_n = 0 \}$. Then $u = P(T_1 < \infty)$, and by induction
  and the strong Markov property,
  $P(T_k < \infty) = u^k$.
  It follows that
  \begin{equation}
    m = \E(N) = \sum_{k \geq 0} \P(T_k< \infty) = (1-u)^{-1} .
  \end{equation}
  \medskip
  (b) The solution relies on the formula 
  \begin{equation}
    \P(X_n = 0) = \int_{[-\pi,\pi]^d} \hat D(k)^n \frac{d^dk}{(2\pi)^d}.
  \end{equation}
  Some care is required when performing the sum over $n$ since the best uniform bound on $\hat D^n$ is $1$
  which is not summable. A solution is to make use of monotone convergence first, and then apply the dominated
  convergence theorem, as follows,
  \begin{equation}
    m = \lim_{t\nearrow 1} \sum_{n \geq 0} \P(X_n=0)t^n = \lim_{t\nearrow 1} \int_{[-\pi,\pi]^d} \frac{1}{1- t\hat D(k)}\; \frac{d^dk}{(2\pi)^d}.
  \end{equation}
  Note that $\hat D$ is a real-valued function and that
  \begin{equation}
    \frac{1}{1- t\hat D(k)}
    \leq \frac{2}{1- \hat D(k)} \quad \text{for $t\in[1/2,1]$},
  \end{equation}
  so that if $(1-\hat D)^{-1} \in L^1$, 
  then the claim follows by dominated convergence.
  In the case that
  $(1-\hat D)^{-1} \not\in L^1$, 
  the claim follows from Fatou's lemma.

  \medskip

  (c) $\hat D(k) = \sum_{j=1}^d (e^{ik_j} + e^{-ik_j}) = 2 \sum_{j=1}^d \cos(k_j)$ and thus
  $1- \hat D(k) = O(1) |k|^2$ as $k \to 0$. Note further that
  \begin{equation}
    \int_{\R^d} f(|k|) \; dk = V_{d-1} \int_0^\infty f(r) \; r^{d-1} \; dr,
  \end{equation}
  where $V_{d-1}$ is the volume of the $(d-1)$-dimensional sphere, and in particular,
  \begin{equation}
    \int_{[-\epsilon,\epsilon]^d} |k|^{-p} \; dk \text{ is integrable} \quad  \text{if and only if}\quad d > p. \qedhere
  \end{equation}
\end{solution}

\begin{solution}{Problem \ref{problem:intersection}}
  Note that
  \begin{equation}
    I = \sum_{x \in \Z^d} \Big(\sum_{i\geq 0}
    \indicator{X^1_i = x}\Big)\Big( \sum_{j\geq 0} \indicator{X^2_j = x}\Big),
  \end{equation}
  and thus, by Parseval's theorem, if $f \in L^2(\Z^d)$,
  \begin{equation}
    \E(I) = \sum_{x \in \Z^d} f(x)^2 = \int_{[-\pi,\pi]^d} |\hat f(k)|^2 \frac{d^dk}{(2\pi)^d},
  \end{equation}
  where
  \begin{equation}
    f(x) =  \sum_{j\geq 0} \P\{X^1_j = x\} = \sum_{j \geq 0} D^{*j}(x),
    \quad
    \hat f(k)
    = \sum_{j \geq 0}  \hat D(k)^j
    = \frac{1}{1-\hat D(k)}.
  \end{equation}
  If $f \not\in L^2(\Z^d)$, then both sides must be infinite.
\end{solution}

\subsection{Solutions for Tutorial~\ref{s:Tut2}}

\begin{solution}{Problem \ref{problem:graph-connected-not-path-connected}}
The graph $\set{0n}$ is connected on $[0,n]$ in the above sense but not
path-connected.  Also, $\set{01,12,\dotsc,(n-1)n}$ is path-connected but
not connected in the above sense since the open intervals $(i-1,i)$ do
not overlap.
\end{solution}

\begin{solution}{Problem \ref{problem:Ksumprod}}
This is an application of the identity
\begin{equation}
\label{e:SubsetProduct}
\prod_{i\in I} (1+u_i)=\sum_{S\subset I}\prod_{i\in S} u_i
\end{equation}
with $I$ being the set of edges on $[a,b]$.
\end{solution}

\begin{solution}{Problem \ref{problem:KJidentity}}
The identity corresponds to a decomposition of $\mathcal{B}[a,b]$ by
connected components.  The term $K[a+1,b]$ corresponds to graphs $\Gamma$
for which $a\notin\Gamma$.

So assume $a\in\Gamma$.  We shall show that $\Gamma$ can be written
uniquely as $\Gamma=\Gamma'\union\Gamma''$ where $\Gamma'\in\mathcal{G}[a,j]$
and $\Gamma''\in\mathcal{B}[j,b]$ for some $j\in(a,b]$.  Informally, $\Gamma'$ is
the connected component of $\Gamma$ containing $a$, though we must verify
that this notion is well-defined. Conversely it is
clear that if $\Gamma'\in \mathcal{G}[a,j]$, $\Gamma''\in \mathcal{B}[j,b]$
for some $j\in(a,b]$, then $\Gamma=\Gamma'\union\Gamma''\in\mathcal{B}[a,b]$
with $a\in\Gamma$.  Then the result will follow since
\begin{equation}
\prod_{st\in\Gamma'\union\Gamma''} U_{st}=\prod_{st\in\Gamma'} U_{st}\prod_{st\in\Gamma''} U_{st}.
\end{equation}

Let
\begin{equation}
\label{e:jFormula}
j=\min\set{i\in(a,b]: i\notin(s,t)\text{ for some $st\in\Gamma$}}.
\end{equation}
The minimum is well defined since there can be no $st \in \Gamma$ for
which $b\in(s,t)$.
By construction, every edge $st\in\Gamma$ satisfies $t\leq j$ or $s\geq j$, so that we can write $\Gamma=\Gamma'\union\Gamma''$ where $\Gamma'\in\mathcal{B}[a,j]$, $\Gamma''\in\mathcal{B}[j,b]$.  We must show that $\Gamma'\in\mathcal{G}[a,j]$, i.e., that $\Gamma'$ is connected.  But $\union_{st\in\Gamma'}(s,t)=(a,j)\intersect\union_{st\in\Gamma}(s,t)=(a,j)$ by the minimality of $j$.

Finally we check that the decomposition $\Gamma=\Gamma'\union\Gamma''$ is unique: this follows because if $\Gamma'\in\mathcal{G}[a,j']$ and $\Gamma''\in\mathcal{B}[j',b]$ then the formula \eqref{e:jFormula} recovers $j=j'$.
\end{solution}

\begin{solution}{Problem \ref{problem:c_n-convolution}}
The convolutions correspond to summing over the values of
$\omega(1)$ and $\omega(m)$.  Namely, noting that a walk $\omega$
on $[0,n]$ is equivalent to a pair of walks $\omega_0$ on $[0,m]$
and $\omega_1$ on $[m,n]$ with $\omega_0(m)=\omega_1(m)$, we have
\begin{align}
c_n(x)
&=
\sum_{\omega\in\mathcal{W}_n(0,x)} K[1,n](\omega)
+\sum_{m=1}^n \sum_{\omega\in\mathcal{W}_n(0,x)} J[0,m](\omega) K[m,n](\omega)
\notag\\
&=
\sum_{y\in\Z^d}\sum_{\omega_0\in\mathcal{W}_1(0,y)} \sum_{\substack{\omega_1 : [1,n]\to\Z^d,\\ \omega_1(1)=y, \, \omega_1(n)=x}} K[1,n](\omega_1)
\notag\\
&\qquad
+\sum_{m=1}^n \sum_{y\in\Z^d} \sum_{\omega_0\in\mathcal{W}_m(0,y)} \sum_{\substack{\omega_1:[m,n]\to\Z^d,\\ \omega_1(m)=y, \, \omega_1(n)=x}} J[0,m](\omega_0) K[m,n](\omega_1)
\notag\\
&=
\sum_{y\in\Z^d} \indicator{y\in \Omega} c_{n-1}(x-y)
+\sum_{m=1}^n \sum_{y\in\Z^d} \pi_m(y) c_{n-m}(x-y)
\end{align}
where we use the translation invariance (in time and space) of $K$.
Since $c_1(y)=\indicator{y\in \Omega}$, this is the desired equation.
\end{solution}

\begin{figure}
\begin{center}
\setlength{\unitlength}{0.0115in}%
\begin{picture}(215,270)(20,580)
\thicklines
\put( 20,800){\makebox(0,0)[lb]{\raisebox{0pt}[0pt][0pt]{\circle*{2}}}}
\put( 30,800){\makebox(0,0)[lb]{\raisebox{0pt}[0pt][0pt]{\circle*{2}}}}
\put( 40,800){\makebox(0,0)[lb]{\raisebox{0pt}[0pt][0pt]{\circle*{2}}}}
\put( 50,800){\makebox(0,0)[lb]{\raisebox{0pt}[0pt][0pt]{\circle*{2}}}}
\put( 60,800){\makebox(0,0)[lb]{\raisebox{0pt}[0pt][0pt]{\circle*{2}}}}
\put( 70,800){\makebox(0,0)[lb]{\raisebox{0pt}[0pt][0pt]{\circle*{2}}}}
\put( 80,800){\makebox(0,0)[lb]{\raisebox{0pt}[0pt][0pt]{\circle*{2}}}}
\put( 90,800){\makebox(0,0)[lb]{\raisebox{0pt}[0pt][0pt]{\circle*{2}}}}
\put(100,800){\makebox(0,0)[lb]{\raisebox{0pt}[0pt][0pt]{\circle*{2}}}}
\put(110,800){\makebox(0,0)[lb]{\raisebox{0pt}[0pt][0pt]{\circle*{2}}}}
\put(120,800){\makebox(0,0)[lb]{\raisebox{0pt}[0pt][0pt]{\circle*{2}}}}
\put(130,800){\makebox(0,0)[lb]{\raisebox{0pt}[0pt][0pt]{\circle*{2}}}}
\put(140,800){\makebox(0,0)[lb]{\raisebox{0pt}[0pt][0pt]{\circle*{2}}}}
\put(150,800){\makebox(0,0)[lb]{\raisebox{0pt}[0pt][0pt]{\circle*{2}}}}
\put(160,800){\makebox(0,0)[lb]{\raisebox{0pt}[0pt][0pt]{\circle*{2}}}}
\put(170,800){\makebox(0,0)[lb]{\raisebox{0pt}[0pt][0pt]{\circle*{2}}}}
\put(180,800){\makebox(0,0)[lb]{\raisebox{0pt}[0pt][0pt]{\circle*{2}}}}
\put(190,800){\makebox(0,0)[lb]{\raisebox{0pt}[0pt][0pt]{\circle*{2}}}}
\put(200,800){\makebox(0,0)[lb]{\raisebox{0pt}[0pt][0pt]{\circle*{2}}}}
\put(210,800){\makebox(0,0)[lb]{\raisebox{0pt}[0pt][0pt]{\circle*{2}}}}
\qbezier(20,800)(115,830)(210,800)
\put( 20,780){\makebox(0,0)[lb]{\raisebox{0pt}[0pt][0pt]{$s_1$}}}
\put(210,780){\makebox(0,0)[lb]{\raisebox{0pt}[0pt][0pt]{$t_1$}}}
\put( 20,740){\makebox(0,0)[lb]{\raisebox{0pt}[0pt][0pt]{\circle*{2}}}}
\put( 30,740){\makebox(0,0)[lb]{\raisebox{0pt}[0pt][0pt]{\circle*{2}}}}
\put( 40,740){\makebox(0,0)[lb]{\raisebox{0pt}[0pt][0pt]{\circle*{2}}}}
\put( 50,740){\makebox(0,0)[lb]{\raisebox{0pt}[0pt][0pt]{\circle*{2}}}}
\put( 60,740){\makebox(0,0)[lb]{\raisebox{0pt}[0pt][0pt]{\circle*{2}}}}
\put( 70,740){\makebox(0,0)[lb]{\raisebox{0pt}[0pt][0pt]{\circle*{2}}}}
\put( 80,740){\makebox(0,0)[lb]{\raisebox{0pt}[0pt][0pt]{\circle*{2}}}}
\put( 90,740){\makebox(0,0)[lb]{\raisebox{0pt}[0pt][0pt]{\circle*{2}}}}
\put(100,740){\makebox(0,0)[lb]{\raisebox{0pt}[0pt][0pt]{\circle*{2}}}}
\put(110,740){\makebox(0,0)[lb]{\raisebox{0pt}[0pt][0pt]{\circle*{2}}}}
\put(120,740){\makebox(0,0)[lb]{\raisebox{0pt}[0pt][0pt]{\circle*{2}}}}
\put(130,740){\makebox(0,0)[lb]{\raisebox{0pt}[0pt][0pt]{\circle*{2}}}}
\put(140,740){\makebox(0,0)[lb]{\raisebox{0pt}[0pt][0pt]{\circle*{2}}}}
\put(150,740){\makebox(0,0)[lb]{\raisebox{0pt}[0pt][0pt]{\circle*{2}}}}
\put(160,740){\makebox(0,0)[lb]{\raisebox{0pt}[0pt][0pt]{\circle*{2}}}}
\put(170,740){\makebox(0,0)[lb]{\raisebox{0pt}[0pt][0pt]{\circle*{2}}}}
\put(180,740){\makebox(0,0)[lb]{\raisebox{0pt}[0pt][0pt]{\circle*{2}}}}
\put(190,740){\makebox(0,0)[lb]{\raisebox{0pt}[0pt][0pt]{\circle*{2}}}}
\put(200,740){\makebox(0,0)[lb]{\raisebox{0pt}[0pt][0pt]{\circle*{2}}}}
\put(210,740){\makebox(0,0)[lb]{\raisebox{0pt}[0pt][0pt]{\circle*{2}}}}
\qbezier(20,740)(80,770)(140,740)
\qbezier(80,740)(145,770)(210,740)
\put( 20,720){\makebox(0,0)[lb]{\raisebox{0pt}[0pt][0pt]{$s_1$}}}
\put( 80,720){\makebox(0,0)[lb]{\raisebox{0pt}[0pt][0pt]{$s_2$}}}
\put(140,720){\makebox(0,0)[lb]{\raisebox{0pt}[0pt][0pt]{$t_1$}}}
\put(210,720){\makebox(0,0)[lb]{\raisebox{0pt}[0pt][0pt]{$t_2$}}}
\put( 20,680){\makebox(0,0)[lb]{\raisebox{0pt}[0pt][0pt]{\circle*{2}}}}
\put( 30,680){\makebox(0,0)[lb]{\raisebox{0pt}[0pt][0pt]{\circle*{2}}}}
\put( 40,680){\makebox(0,0)[lb]{\raisebox{0pt}[0pt][0pt]{\circle*{2}}}}
\put( 50,680){\makebox(0,0)[lb]{\raisebox{0pt}[0pt][0pt]{\circle*{2}}}}
\put( 60,680){\makebox(0,0)[lb]{\raisebox{0pt}[0pt][0pt]{\circle*{2}}}}
\put( 70,680){\makebox(0,0)[lb]{\raisebox{0pt}[0pt][0pt]{\circle*{2}}}}
\put( 80,680){\makebox(0,0)[lb]{\raisebox{0pt}[0pt][0pt]{\circle*{2}}}}
\put( 90,680){\makebox(0,0)[lb]{\raisebox{0pt}[0pt][0pt]{\circle*{2}}}}
\put(100,680){\makebox(0,0)[lb]{\raisebox{0pt}[0pt][0pt]{\circle*{2}}}}
\put(110,680){\makebox(0,0)[lb]{\raisebox{0pt}[0pt][0pt]{\circle*{2}}}}
\put(120,680){\makebox(0,0)[lb]{\raisebox{0pt}[0pt][0pt]{\circle*{2}}}}
\put(130,680){\makebox(0,0)[lb]{\raisebox{0pt}[0pt][0pt]{\circle*{2}}}}
\put(140,680){\makebox(0,0)[lb]{\raisebox{0pt}[0pt][0pt]{\circle*{2}}}}
\put(150,680){\makebox(0,0)[lb]{\raisebox{0pt}[0pt][0pt]{\circle*{2}}}}
\put(160,680){\makebox(0,0)[lb]{\raisebox{0pt}[0pt][0pt]{\circle*{2}}}}
\put(170,680){\makebox(0,0)[lb]{\raisebox{0pt}[0pt][0pt]{\circle*{2}}}}
\put(180,680){\makebox(0,0)[lb]{\raisebox{0pt}[0pt][0pt]{\circle*{2}}}}
\put(190,680){\makebox(0,0)[lb]{\raisebox{0pt}[0pt][0pt]{\circle*{2}}}}
\put(200,680){\makebox(0,0)[lb]{\raisebox{0pt}[0pt][0pt]{\circle*{2}}}}
\put(210,680){\makebox(0,0)[lb]{\raisebox{0pt}[0pt][0pt]{\circle*{2}}}}
\put( 20,660){\makebox(0,0)[lb]{\raisebox{0pt}[0pt][0pt]{$s_1$}}}
\put( 50,660){\makebox(0,0)[lb]{\raisebox{0pt}[0pt][0pt]{$s_2$}}}
\put( 80,660){\makebox(0,0)[lb]{\raisebox{0pt}[0pt][0pt]{$t_1$}}}
\put(120,660){\makebox(0,0)[lb]{\raisebox{0pt}[0pt][0pt]{$s_3$}}}
\put(160,660){\makebox(0,0)[lb]{\raisebox{0pt}[0pt][0pt]{$t_2$}}}
\put(210,660){\makebox(0,0)[lb]{\raisebox{0pt}[0pt][0pt]{$t_3$}}}
\qbezier(20,680)(50,710)(80,680)
\qbezier(50,680)(105,710)(160,680)
\qbezier(120,680)(165,710)(210,680)
\put( 20,580){\makebox(0,0)[lb]{\raisebox{0pt}[0pt][0pt]{$s_1$}}}
\put( 50,580){\makebox(0,0)[lb]{\raisebox{0pt}[0pt][0pt]{$s_2$}}}
\put( 80,580){\makebox(0,0)[lb]{\raisebox{0pt}[0pt][0pt]{$t_1$}}}
\put(110,580){\makebox(0,0)[lb]{\raisebox{0pt}[0pt][0pt]{$s_3$}}}
\put(140,580){\makebox(0,0)[lb]{\raisebox{0pt}[0pt][0pt]{$t_2$}}}
\put(170,580){\makebox(0,0)[lb]{\raisebox{0pt}[0pt][0pt]{$s_4$}}}
\put(190,580){\makebox(0,0)[lb]{\raisebox{0pt}[0pt][0pt]{$t_3$}}}
\put(210,580){\makebox(0,0)[lb]{\raisebox{0pt}[0pt][0pt]{$t_4$}}}
\qbezier(20,600)(50,630)(80,600)
\qbezier(50,600)(95,630)(140,600)
\qbezier(110,600)(150,630)(190,600)
\qbezier(170,600)(190,630)(210,600)
\put( 20,600){\makebox(0,0)[lb]{\raisebox{0pt}[0pt][0pt]{\circle*{2}}}}
\put( 30,600){\makebox(0,0)[lb]{\raisebox{0pt}[0pt][0pt]{\circle*{2}}}}
\put( 40,600){\makebox(0,0)[lb]{\raisebox{0pt}[0pt][0pt]{\circle*{2}}}}
\put( 50,600){\makebox(0,0)[lb]{\raisebox{0pt}[0pt][0pt]{\circle*{2}}}}
\put( 60,600){\makebox(0,0)[lb]{\raisebox{0pt}[0pt][0pt]{\circle*{2}}}}
\put( 70,600){\makebox(0,0)[lb]{\raisebox{0pt}[0pt][0pt]{\circle*{2}}}}
\put( 80,600){\makebox(0,0)[lb]{\raisebox{0pt}[0pt][0pt]{\circle*{2}}}}
\put( 90,600){\makebox(0,0)[lb]{\raisebox{0pt}[0pt][0pt]{\circle*{2}}}}
\put(100,600){\makebox(0,0)[lb]{\raisebox{0pt}[0pt][0pt]{\circle*{2}}}}
\put(110,600){\makebox(0,0)[lb]{\raisebox{0pt}[0pt][0pt]{\circle*{2}}}}
\put(120,600){\makebox(0,0)[lb]{\raisebox{0pt}[0pt][0pt]{\circle*{2}}}}
\put(130,600){\makebox(0,0)[lb]{\raisebox{0pt}[0pt][0pt]{\circle*{2}}}}
\put(140,600){\makebox(0,0)[lb]{\raisebox{0pt}[0pt][0pt]{\circle*{2}}}}
\put(150,600){\makebox(0,0)[lb]{\raisebox{0pt}[0pt][0pt]{\circle*{2}}}}
\put(160,600){\makebox(0,0)[lb]{\raisebox{0pt}[0pt][0pt]{\circle*{2}}}}
\put(170,600){\makebox(0,0)[lb]{\raisebox{0pt}[0pt][0pt]{\circle*{2}}}}
\put(180,600){\makebox(0,0)[lb]{\raisebox{0pt}[0pt][0pt]{\circle*{2}}}}
\put(190,600){\makebox(0,0)[lb]{\raisebox{0pt}[0pt][0pt]{\circle*{2}}}}
\put(200,600){\makebox(0,0)[lb]{\raisebox{0pt}[0pt][0pt]{\circle*{2}}}}
\put(210,600){\makebox(0,0)[lb]{\raisebox{0pt}[0pt][0pt]{\circle*{2}}}}
\end{picture}
\end{center}
\caption{\label{LN} Laces in $\mathcal{L}^{(N)}[a,b]$
for $N = 1,2,3,4$, with $s_1=a$ and $t_N=b$.}
\end{figure}
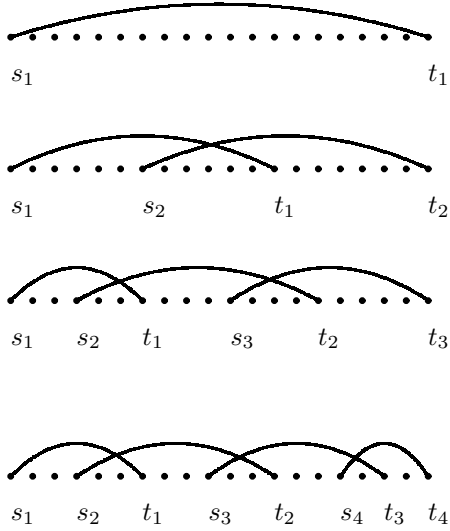

\begin{solution}{Problem \ref{problem:LaceIntervals}}
Figure~\ref{LN} is helpful.
Note first that if $L$ is a lace, then $s_l<s_{l+1}$ for each $l$.
Indeed, if $s_l=s_{l+1}$, we may assume that $t_l<t_{l+1}$.
But then $(s_l,t_l)\subset(s_{l+1},t_{l+1})$ so that $L\setminus\set{s_l t_l}$
is still connected.  A similar argument gives $t_l<t_{l+1}$.
The requirement that $L$ is connected implies that $a=s_1$ and $b=t_N$.

Suppose to the contrary that (1) $s_{l+1}\geq t_l$ ($1\leq l\leq N-1$)
or (2) $s_{l+2}< t_l$ ($1\leq l\leq N-2$).  In case (1), $L$ is not connected,
since $s_i\geq t_l$ for $i\geq l+1$ while $t_i\leq t_l$ for $i\leq l$.
In case (2), the edge $s_{l+1}t_{l+1}$ is redundant since
$(s_{l+1},t_{l+1})\subset(s_l,t_l)\union (s_{l+2},t_{l+2})=(s_l,t_{l+2})$.

For the converse, the hypotheses imply that $\union_{st\in L}(s,t)=(a,b)$,
so $L$ is connected.  Neither $s_1 t_1$ nor $s_N t_N$ can be removed from $L$
since they are the only edges containing the endpoints.  If $s_l t_l$ is
removed, $2\leq l\leq N-1$, then $t_{l-1}\leq s_{l+1}$ implies that
$\union_{st\in L} (s,t) = (a,t_{l-1})\union(s_{l+1},b)\neq(a,b)$.
So $L\setminus\set{st}$ is not connected.  Since connectedness is a
monotone property, no strict subset of $L$ can be connected, so $L$
is minimally connected, i.e., a lace.

Finally the intervals are as follows: the first and last intervals are
$[s_1,s_2]$ and $[t_{N-1},t_N]$; the $2i^\text{th}$ interval is
$[s_{i+1},t_i]$ ($1\leq i\leq N-1$); and the $(2i+1)^\text{st}$ interval
is $[t_i,s_{i+2}]$, $1\leq i\leq N-2$.  The inequalities above show that
the points $\set{s_i,t_i}$ do indeed form the intervals claimed, and that
the intervals $[t_i,s_{i+2}]$ can be empty while the other intervals must
be non-empty.
\end{solution}

\begin{figure}
\begin{center}
\setlength{\unitlength}{0.01in}%
\begin{picture}(225,212)(-25,600)
\thicklines
\put( 60,800){\makebox(0,0)[lb]{\raisebox{0pt}[0pt][0pt]{\circle*{2}}}}
\put( 80,800){\makebox(0,0)[lb]{\raisebox{0pt}[0pt][0pt]{\circle*{2}}}}
\put(100,800){\makebox(0,0)[lb]{\raisebox{0pt}[0pt][0pt]{\circle*{2}}}}
\put(120,800){\makebox(0,0)[lb]{\raisebox{0pt}[0pt][0pt]{\circle*{2}}}}
\put(140,800){\makebox(0,0)[lb]{\raisebox{0pt}[0pt][0pt]{\circle*{2}}}}
\put(160,800){\makebox(0,0)[lb]{\raisebox{0pt}[0pt][0pt]{\circle*{2}}}}
\put(180,800){\makebox(0,0)[lb]{\raisebox{0pt}[0pt][0pt]{\circle*{2}}}}
\put(200,800){\makebox(0,0)[lb]{\raisebox{0pt}[0pt][0pt]{\circle*{2}}}}
\put(220,800){\makebox(0,0)[lb]{\raisebox{0pt}[0pt][0pt]{\circle*{2}}}}
\put(240,800){\makebox(0,0)[lb]{\raisebox{0pt}[0pt][0pt]{\circle*{2}}}}
\qbezier(60,800)(100,830)(140,800)
\qbezier(80,800)(90,810)(100,800)
\qbezier(100,800)(140,830)(180,800)
\qbezier(160,800)(200,830)(240,800)
\qbezier(180,800)(200,820)(220,800)
\put( 60,740){\makebox(0,0)[lb]{\raisebox{0pt}[0pt][0pt]{\circle*{2}}}}
\put( 80,740){\makebox(0,0)[lb]{\raisebox{0pt}[0pt][0pt]{\circle*{2}}}}
\put(100,740){\makebox(0,0)[lb]{\raisebox{0pt}[0pt][0pt]{\circle*{2}}}}
\put(120,740){\makebox(0,0)[lb]{\raisebox{0pt}[0pt][0pt]{\circle*{2}}}}
\put(140,740){\makebox(0,0)[lb]{\raisebox{0pt}[0pt][0pt]{\circle*{2}}}}
\put(160,740){\makebox(0,0)[lb]{\raisebox{0pt}[0pt][0pt]{\circle*{2}}}}
\put(180,740){\makebox(0,0)[lb]{\raisebox{0pt}[0pt][0pt]{\circle*{2}}}}
\put(200,740){\makebox(0,0)[lb]{\raisebox{0pt}[0pt][0pt]{\circle*{2}}}}
\put(220,740){\makebox(0,0)[lb]{\raisebox{0pt}[0pt][0pt]{\circle*{2}}}}
\put(240,740){\makebox(0,0)[lb]{\raisebox{0pt}[0pt][0pt]{\circle*{2}}}}
\qbezier(60,740)(100,770)(140,740)
\qbezier(100,740)(140,770)(180,740)
\qbezier(160,740)(200,770)(240,740)
\put( 60,680){\makebox(0,0)[lb]{\raisebox{0pt}[0pt][0pt]{\circle*{2}}}}
\put( 80,680){\makebox(0,0)[lb]{\raisebox{0pt}[0pt][0pt]{\circle*{2}}}}
\put(100,680){\makebox(0,0)[lb]{\raisebox{0pt}[0pt][0pt]{\circle*{2}}}}
\put(120,680){\makebox(0,0)[lb]{\raisebox{0pt}[0pt][0pt]{\circle*{2}}}}
\put(140,680){\makebox(0,0)[lb]{\raisebox{0pt}[0pt][0pt]{\circle*{2}}}}
\put(160,680){\makebox(0,0)[lb]{\raisebox{0pt}[0pt][0pt]{\circle*{2}}}}
\put(180,680){\makebox(0,0)[lb]{\raisebox{0pt}[0pt][0pt]{\circle*{2}}}}
\put(200,680){\makebox(0,0)[lb]{\raisebox{0pt}[0pt][0pt]{\circle*{2}}}}
\put(220,680){\makebox(0,0)[lb]{\raisebox{0pt}[0pt][0pt]{\circle*{2}}}}
\put(240,680){\makebox(0,0)[lb]{\raisebox{0pt}[0pt][0pt]{\circle*{2}}}}
\qbezier(60,680)(100,710)(140,680)
\qbezier(100,680)(140,710)(180,680)
\qbezier(160,680)(200,710)(240,680)
\thicklines
\qbezier[30](80,680)(120,650)(160,680)
\qbezier[30](140,680)(180,650)(220,680)
\put( 60,620){\makebox(0,0)[lb]{\raisebox{0pt}[0pt][0pt]{\circle*{2}}}}
\put( 80,620){\makebox(0,0)[lb]{\raisebox{0pt}[0pt][0pt]{\circle*{2}}}}
\put(100,620){\makebox(0,0)[lb]{\raisebox{0pt}[0pt][0pt]{\circle*{2}}}}
\put(120,620){\makebox(0,0)[lb]{\raisebox{0pt}[0pt][0pt]{\circle*{2}}}}
\put(140,620){\makebox(0,0)[lb]{\raisebox{0pt}[0pt][0pt]{\circle*{2}}}}
\put(160,620){\makebox(0,0)[lb]{\raisebox{0pt}[0pt][0pt]{\circle*{2}}}}
\put(180,620){\makebox(0,0)[lb]{\raisebox{0pt}[0pt][0pt]{\circle*{2}}}}
\put(200,620){\makebox(0,0)[lb]{\raisebox{0pt}[0pt][0pt]{\circle*{2}}}}
\put(220,620){\makebox(0,0)[lb]{\raisebox{0pt}[0pt][0pt]{\circle*{2}}}}
\put(240,620){\makebox(0,0)[lb]{\raisebox{0pt}[0pt][0pt]{\circle*{2}}}}
\qbezier(60,620)(100,650)(140,620)
\qbezier(100,620)(140,650)(180,620)
\qbezier(160,620)(200,650)(240,620)
\thicklines
\qbezier[30](100,620)(150,590)(200,620)
\put( 60,780){\makebox(0,0)[lb]{\raisebox{0pt}[0pt][0pt]{$a$}}}
\put(240,780){\makebox(0,0)[lb]{\raisebox{0pt}[0pt][0pt]{$b$}}}
\put( 60,720){\makebox(0,0)[lb]{\raisebox{0pt}[0pt][0pt]{$a$}}}
\put(240,720){\makebox(0,0)[lb]{\raisebox{0pt}[0pt][0pt]{$b$}}}
\put( 60,660){\makebox(0,0)[lb]{\raisebox{0pt}[0pt][0pt]{$a$}}}
\put(240,660){\makebox(0,0)[lb]{\raisebox{0pt}[0pt][0pt]{$b$}}}
\put( 60,600){\makebox(0,0)[lb]{\raisebox{0pt}[0pt][0pt]{$a$}}}
\put(240,600){\makebox(0,0)[lb]{\raisebox{0pt}[0pt][0pt]{$b$}}}
\put( -90,800){\makebox(0,0)[lb]{\raisebox{0pt}[0pt][0pt]{(a)}}}
\put( -90,680){\makebox(0,0)[lb]{\raisebox{0pt}[0pt][0pt]{(b)}}}
\put( -90,620){\makebox(0,0)[lb]{\raisebox{0pt}[0pt][0pt]{(c)}}}
\put( 15,800){\makebox(0,0)[lb]{\raisebox{0pt}[0pt][0pt]{$\Gamma$}}}
\put( 15,740){\makebox(0,0)[lb]{\raisebox{0pt}[0pt][0pt]{${\sf L}_\Gamma$}}}
\put( 15,680){\makebox(0,0)[lb]{\raisebox{0pt}[0pt][0pt]{$L$}}}
\put( 15,620){\makebox(0,0)[lb]{\raisebox{0pt}[0pt][0pt]{$L$}}}
\end{picture}
\end{center}
\caption{(a) A connected graph $\Gamma$ and its associated
lace $L={\sf L}_ \Gamma$.  (b)  The dotted edges are compatible with the lace $L$.
(c)  The dotted edge is not compatible with the lace $L$.}
\label{lacepresc}
\end{figure}
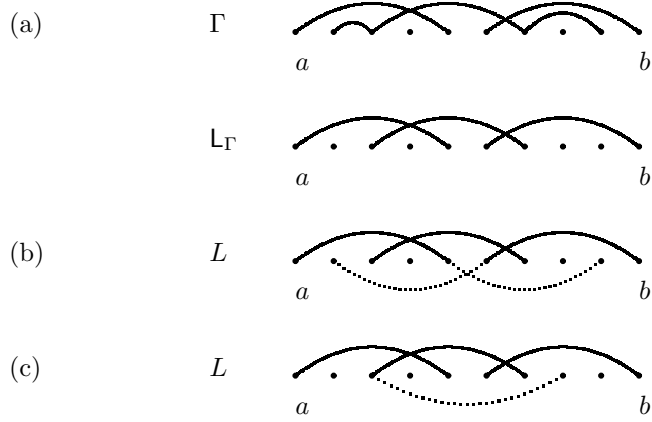

\begin{solution}{Problem \ref{problem:Compatible}}
Figure~\ref{lacepresc} is helpful.
First, since necessarily ${\sf L}_\Gamma\subset\Gamma$, we may assume
that $L\subset\Gamma$, and we write $\Gamma=L\union A$ with
$A\intersect L=\emptyset$.

Next, we reformulate the inductive procedure for selecting the edges
of ${\sf L}_\Gamma$.  At each step, the edge $s_{i+1}t_{i+1}$ is,
among all edges $st\in\Gamma$ satisfying $s<t_i$, the one that is
\emph{maximal} with respect to the following order relation:
$st\succ s't'$ if and only if $t>t'$ or $t=t'$ and $st$ is longer than $s't'$
(i.e., $t-s>t'-s'$).

The result follows at once from this observation.
Indeed, ${\sf L}_\Gamma=L$ means that at each inductive step,
$s_{i+1}t_{i+1}\in L$ is the maximal edge $st$ satisfying $s<t_i$,
among all edges of $L\union A$.  This is equivalent to saying that
for each $s't'\in A$, at each inductive step, $s_{i+1}t_{i+1}$ is
the maximal edge among all edges of $L\union \set{s't'}$.
But this is precisely the condition that $A\subset \mathcal{C}(L)$.
%
%
\end{solution}

\begin{solution}{Problem \ref{problem:Jsumprod}}
The first equation is simply a decomposition of
$\Gamma\in\mathcal{G}[a,b]$ according to the value of ${\sf L}_\Gamma$.
The second equation follows using \eqref{e:SubsetProduct} because
Problem~\ref{problem:Compatible} shows that $\Gamma$ for which
${\sf L}_\Gamma=L$ can be identified as $L$ together with
an arbitrary subset of edges from $\mathcal{C}(L)$.
The last equation is immediate from the preceding ones.
\end{solution}

\begin{solution}{Problem \ref{problem:J-laces}}
(a)
By definition,
\begin{align}
\sum_{N=1}^\infty (-1)^N \pi_m^{(N)}(x)
&=
\sum_{\omega\in\mathcal{W}_m(0,x)} \sum_{N=1}^\infty J^{(N)}[0,m](\omega)
\notag\\
&=
\sum_{\omega\in\mathcal{W}_m(0,x)} J[0,m](\omega) =\pi_m(x).
\end{align}
Each of the $N$ factors $U_{st}$, $st\in L$, contributes $-1$, so
$\pi_m^{(N)}(x)\geq 0$.

\smallskip (b)
$N=1$:
The only lace with 1 edge is $L=\set{0m}$, and every edge except $0m$ is
compatible with $L$.  So $J^{(1)}[0,m]$ contains the single factor $U_{0m}$,
and the factor $1+U_{s't'}$ for each $s't'\neq 0m$.  So a contributing
$\omega$ must have $\omega(s')\neq\omega(t')$ whenever $s't'\neq 0m$,
as well as $0=\omega(0)=\omega(m)=x$.  Hence $\pi_m^{(1)}(x)=0$ for $x\neq 0$,
and $\pi_m^{(1)}(0)$ is the number of $m$-step self-avoiding returns.

$N=2$:
For $L=\set{0t_1,s_1 m}$, the factors $U_{st}$, $st\in L$, require that
$\omega$ should start at $0$, visit $x$ (at step $s_1$), return to $0$
(at step $t_1$), then return to $x$.  The compatible edges consist of
every edge except the edges of $L$ and the edges $0t$, $t>t_1$ and $sm$, $s<s_1$.  This implies that each of the three intervals in $\omega$ must be
self-avoiding and mutually avoiding, except for the intersections required above.  (In particular, $x\neq 0$.)  (Intersections of the form $\omega(0)=\omega(t)$, $t>t_1$, might not appear to be forbidden, but actually they are impossible since we require $\omega(t)\neq \omega(t_1)=\omega(0)$.)

$N=3,4,\dotsc$:
As for $N=2$, $\omega$ must have self-intersections corresponding to
the edges of the lace and self-avoidance corresponding to each compatible
edge.  It is convenient to recall the $2N-1$ intervals from
Problem~\ref{problem:LaceIntervals}.  Because of compatible edges,
$\omega$ is required to be self-avoiding on each of these intervals.
In addition, certain of these intervals are required to be mutually avoiding,
but not all of them need be, corresponding to the fact that an edge spanning
too many intervals cannot be compatible.  The pattern of mutual avoidance is
described as follows: for $N=3$,
\begin{equation}
[1234][345]
\end{equation}
and for $N=4$,
\begin{equation}
[1234][3456][567]
\end{equation}
where, for instance $[3456]$ indicates that the third to sixth interval
must be mutually self-avoiding, except for the required intersections.
These intersections require that at the endpoints of the intervals,
$\omega$ must visit the following points (for the case $N=4$):
\begin{equation}
0,x_1,0,x_2,x_1,x_3,x_2,x_3
\end{equation}
where $x_3=x$, corresponding to the intervals $[s_1,s_2]$, $[s_2,t_1]$,
$[t_1,s_3]$, $[s_3,t_2]$, $[t_2,s_4]$, $[s_4,t_3]$, $[t_3,t_4]$.

To prove the avoidance patterns amounts to analysing
exactly which edges are compatible.  For instance, it is easy to verify
that if $s_{i+1}\leq s <t_i$ for $1\leq i\leq N-1$, then $st\in\mathcal{C}(L)$
if and only if $t\leq t_{i+1}$ (assuming $st\notin L$).

\smallskip (c)
The possibly empty intervals indicate that the
$3^\text{rd},5^\text{th},\dotsc,(2N-3)^\text{rd}$ segments of the
diagrams above can be empty, whereas all other segments must have non-zero length.
The picture for $N=11$ is
    \begin{center}
      \includegraphics[width=0.5\hsize]{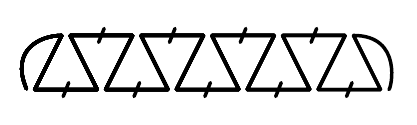}
     \end{center}
where the lines that are slashed are exactly the lines that are permitted
to have length zero.
\end{solution}

\subsection{Solutions for Tutorial~\ref{s:Tut3}}

\begin{solution}{Problem \ref{problem:1/d}}
  (a) Using the hint and Fubini's theorem (which is applicable because $d > 2m$),
  \begin{align}
    \int_{[-\pi,\pi]^d} \frac{1}{[1-\hat D(k)]^m} \frac{d^dk}{(2\pi)^d}
    &= \Gamma(m)^{-1} \int_0^\infty \left(\int_{[-\pi,\pi]^d} e^{-u[1-\hat D(k)]} \frac{d^dk}{(2\pi)^d} \right) u^{m-1} \; du
    \notag\\
    &= \Gamma(m)^{-1} \int_0^\infty \left(\int_{-\pi}^\pi e^{-u(1-\cos k)/d} \; \frac{dk}{2\pi} \right)^d u^{m-1} \; du.
  \end{align}
  The inner integral is decreasing as a function of $d$ by H\"older's inequality.
  \medskip

  (b) Relaxing the self-avoidance for the first $j$ steps
  gives the inequality
  \begin{equation}
    H^{(j)}_z(x) \leq (z|\Omega| D)^{*j} * G_z(x),
  \end{equation}
  and thus, by Cauchy-Schwarz,
  \begin{equation}
    \|H^{(j)}_z\|_\infty
    \leq \|\hat H^{(j)}_z\|_1
    \leq (z|\Omega|)^j \|\hat D^{j}\|_2 \|\hat G_z\|_2.
  \end{equation}
  The claim now follows from $z_c|\Omega|\leq a$,
  \begin{equation}
    \|\hat D^{j}\|_2 = \|\hat D^{2j}\|_1^{1/2} \leq O((2d)^{-j/2}),
  \end{equation}
  and
  \begin{equation}
    \|\hat G_z\|_2 \leq a \|\hat C_{p(z)}\|_2 \leq a\|\hat C_{1/|\Omega|}\|_2 \leq O(1)
  \end{equation}
  by the infrared bound \eqref{eq:infrared} and (a).
  \medskip

  (c) Calculating the first two terms explicitly, we obtain
  \begin{align}
    \hat\Pi^{(1)}_{z}(0)
    &= (2d) z^2 + (2d)(2d-2) z^4 +  \sum_{m \geq 6} \hat \pi^{(1)}_m(0) z^m
    \notag\\
    &= (2d) z^2 + (2d)(2d-2) z^4 + O((2d)^{-3}),
  \end{align}
  where the remainder was estimated as in (b), and using the symmetry of $D$:
  \begin{align}
    \sum_{m \geq j} \hat \pi^{(1)}_m(0) z^m
    &=
    (H^{(j-1)}_z * z|\Omega|D) (0)
    \notag\\
    &\leq
    (z|\Omega|)^j \, \|D^{*j} * G_z\|_\infty
    \leq O((2d)^{-j/2}).
  \end{align}
  Using \eqref{eq:Pi-lace-bound}, we obtain
  \begin{equation}
    \hat \Pi_z(0) = - \hat \Pi^{(1)}_z(0) + O((2d)^{-2}).
  \end{equation}
 Equation \eqref{eq:G-z_c} gives
 $z_c=(2d)^{-1}-(2d)^{-1} \hat\Pi_{z_c}(0) = (2d)^{-1}+O((2d)^{-2})$,
 from which we obtain
  \begin{align}
    z_c &= (2d)^{-1} + (2d)^{-1} \hat\Pi^{(1)}_{z_c}(0) + O((2d)^{-3})
    \notag\\
    &= (2d)^{-1} + (2d)^{-2} +O((2d)^{-3}).
  \end{align}
  Finally, we obtain
  \begin{equation}
    \hat\Pi^{(1)}_{z_c}(0) = [(2d)^{-1} + 2(2d)^{-2}] + (2d)^{-2} + O((2d)^{-3}).
  \end{equation}
  \medskip

  (d) The generating function for $\theta$-walks from $0$ to $x$ can be written as
  \begin{equation}
    \hat\Pi^{(2)}_z(0) = (2d)z^3 + 3(2d)(2d-2) z^5 + \sum_{m\geq 7} \hat\pi^{(2)}_m(0)z^m.
  \end{equation}
  Using $z_c = (2d)^{-1} + (2d)^{-2} +O((2d)^{-3})$ from part (c), we obtain
  \begin{equation}
    (2d)z_c^3 + 3(2d)(2d-2) z_c^5   = (2d)^{-2} + O((2d)^{-3}).
  \end{equation}
  The remainder is estimated using the fact that in a
  $\theta$-walks from $0$ to $x$ of length $m\geq 7$, either two of the
  subwalks take just one step and the other takes at least 5 steps,
  or at least two of the subwalks take at least 3 steps.
  Thus there is a combinatorial constant $K$ such that
  \begin{align}
    \begin{split}
    \sum_{m\geq 7} \hat\pi^{(2)}_m(0) z^m
    &\leq
    K \sum_{e} H^{(5)}_z(e)z|\Omega|D(e)z|\Omega|D(e)
    \\
    &\qquad
    + K \sum_x H^{(1)}_z(x) H^{(3)}_z(x)H^{(3)}_z(x).
    \end{split}
  \end{align}
  The first term can be estimated by an $L^\infty$ bound
  (use $z|\Omega| \leq a$ and $|\Omega|=2d$):
  \begin{align}
    \sum_{e} H^{(5)}_z(e)z|\Omega|D(-e)z|\Omega|D(e)
    &\leq \|H^{(5)}_z\|_\infty a^2(2d)^{-1}
    \notag\\
    &\leq O((2d)^{-7/2}) \leq O((2d)^{-3}).
  \end{align}
  The second term is estimated in the spirit of (b):
  \begin{align}
    \sum_x H^{(1)}_z(x) H^{(3)}_z(-x)H^{(3)}_z(x)
    &\leq O((2d)^{-1/2}) (H^{(3)}_z * H^{(3)}_z)(0)
    \notag\\
    &\leq O((2d)^{-1/2}) ((z|\Omega|D)^{*6} * G_z^{*2})(0)
    \notag\\
    &\leq O((2d)^{-1/2}) \|\hat D^6\hat G_z^2\|_1
    \notag\\
    & \leq O((2d)^{-1/2}) \|\hat D^6\|_2 \|\hat G_z^2\|_2
    \leq O((2d)^{-7/2}).
  \end{align}
  \medskip

  (e) Using \eqref{eq:Pi-lace-bound}, we obtain
  \begin{equation}
    \hat \Pi_{z_c}(0) = - (2d)^{-1} - 2(2d)^{-2} + O((2d)^{-3}).
  \end{equation}
  From \eqref{eq:G-z_c}, it then follows that
  \begin{equation}
    z_c=(2d)^{-1}+(2d)^{-2} + 2(2d)^{-3} + O((2d)^{-4}).
  \end{equation}
  Inverting this finally yields
  \begin{equation}
    \mu = 2d -1 - (2d)^{-1} + O((2d)^{-2}). \qedhere
  \end{equation}
\end{solution}







\begin{solution}{Problem \ref{problem:chi-asymptotic}}
  (a) This is a straightforward calculation.
  \medskip

  (b) This requires an extension of the diagrammatic estimates.  The
  argument is sketched in \cite[Section~5.4]{Slad06}.
  \medskip

  (c) Note that
  \begin{equation}
    \frac{d[z\chi(z)]^{-1}}{dz}
    = -[z\chi(z)]^{-2} \frac{d[z\chi(z)]}{dz}
    \sim -\frac{V(z_c)}{z^2}
    \sim -\frac{V(z_c)}{z_c^2} .
  \end{equation}
  Integrating this asymptotic relation, we obtain
  \begin{equation}
    \lim_{z\to z_c} [z\chi(z)]^{-1} - [z\chi(z)]^{-1}
    \sim -\frac{V(z_c)}{z_c^2} (z_c-z).
  \end{equation}
  The limit vanishes and thus
  \begin{equation}
    \chi(z)^{-1}
    \sim  V(z_c)^{-1}  (1-z/z_c)^{-1}.
  \end{equation}
  The claim then follows from the definition of $V(z_c)$ and \eqref{eq:G-z_c}.
\end{solution}

\subsection{Solutions for Tutorial~\ref{s:Tut4}}

\begin{solution}{Problem \ref{problem:wsaw-submultiplicativity}}
For $m,n\geq 0$,
\begin{align}
  I_{0,n+m}
  &= \sum_{0\leq i < j \leq n+m} \indicator{X_{i} = X_{j}}
  \notag\\
  &\geq \sum_{0\leq i < j \leq m}\indicator{X_{i} = X_{j}}+\sum_{m \leq i < j \leq n+m} \indicator{X_{i} = X_{j}}
  = I_{0,m} + I_{m,n+m}.
\end{align}
By translation invariance and the Markov property, $I_{m,n+m}$ is independent of $I_{0,m}$ and has the same law as $I_{0,n}$.  Therefore
\begin{equation}
\label{cSubmultiplicative}
  c_{n+m} \leq \E_0(e^{-gI_{0,m}} e^{-gI_{m,m+n}})
  = \E_0(e^{-gI_{0,m}}) \E_0(e^{-gI_{0,n}})=c_m c_n
\end{equation}
as claimed.  The remaining statements follow since \eqref{cSubmultiplicative}
implies that $(\log c_n)_{n\geq 0}$ is a subadditive sequence, and
\reflemma{Subadd} can be applied.
\end{solution}

\begin{solution}{Problem \ref{problem:kappa-torus}}
  Note that there is a one-to-one correspondence between nearest-neighbour walks on $\Z^d$
  and such walks on the torus $\Z^d / R\Z^d$, $R \geq 3$,
  by folding a walk on $\Z^d$
  (the image under the canonical projection $\Z^d \twoheadrightarrow \Z^d/R\Z^d$), and
  corresponding unfolding of walks on $\Z^d/R\Z^d$ (unique for the
  nearest-neighbour step distribution
  provided $R \geq 3$).
  Given a walk $X=(X_n)_{n\geq 0}$ on $\Z^d$ starting at $0$, we denote the folded
  (or projected) walk by $X'$. Write $\Lambda_R= \{-R+1, \dots, R\}^d$
  and $R'=2R+1$; then
  \begin{align}
    I_{n}(X)
    &=
    \sum_{0\leq i < j \leq n} \sum_{x\in\Z^d}  \indicator{X_i = X_j = x}
    = \sum_{0\leq i < j \leq n} \sum_{x\in \Lambda_R} \sum_{y \in \Z^d}
    \indicator{X_i = X_j = x+yR'}
    \notag\\
    &\leq
    \sum_{0\leq i < j \leq n} \sum_{x\in \Lambda_R}
    \sum_{y_1,y_2 \in \Z^d} \indicator{X_i = x+y_1R'} \indicator{X_j = x+y_2 R'}
    \notag\\
    &=
    I_{n}(X'),
  \end{align}
  and thus
  \begin{equation}
    \E(e^{-gI_n}) \geq \E^R(e^{-gI_n}).
  \end{equation}
  The desired inequalities both follow from this one.
\end{solution}

\begin{solution}{Problem \ref{problem:simon-lieb}}
Note that
\begin{equation}
  G_{\nu}(x,y)-G_{\nu,D}(x,y)
  = \sum_{n=0}^\infty \E_x(e^{-gI_n} \indicator{X_n=y,\, n\geq T_D}) e^{-\nu n},
\end{equation}
and, by partitioning in $T_D$ and $X_{T_D}$, we obtain
\begin{equation}
  \E_x(e^{-gI_n}\indicator{X_n=y, n\geq T_D})
  = \sum_{z\in \partial D}\sum_{m=0}^{n}  \E_x(e^{-gI_n} \indicator{X_n=y}\indicator{X_{T_D}=z}\indicator{T_D=m}).
\end{equation}
Using $I_{n} \geq I_{m}+I_{m,n}$ and the Markov property, it follows that
\begin{align}
  &\E_x(e^{-gI_n} \indicator{X_n=y} \indicator{X_{T_D} = z}\indicator{T_D=m})
  \nonumber \\ & \quad
  \leq \E_x(e^{-gI_{m}} \indicator{X_m=z} \indicator{T_D=m} e^{-gI_{m,n}} \indicator{X_n=y}) \nonumber\\
  &\quad
  = \E_x(e^{-gI_{m}} \indicator{X_m=z} \indicator{T_D=m}) \E_z(e^{-gI_{n-m}} \indicator{X_{n-m}=y}).
\end{align}
Thus, because $\{ T_D = m, X_m =z \} = \{m \leq T_D, X_m =z\}$ for $z \in \partial D$,
\begin{align}
  G_{\nu}(x,y)-G_{\nu, D}(x,y)
  &\leq
  \sum_{z\in \partial D} \sum_{n=0}^\infty
  \sum_{m= 0}^{n} \E_x(e^{-gI_{m}} \indicator{X_m=z}
  \indicator{T_D=m})
  \notag\\
  &\qquad\qquad\cdot
  \E_z(e^{-gI_{n-m}} \indicator{X_{n-m}=y}) e^{-\nu n}
  \notag\\
  &\leq
  \sum_{z \in \partial D} G_{\nu,\bar D}(x,z) G_{\nu}(z,y),
\end{align}
as claimed.
\end{solution}

\begin{solution}{Problem \ref{problem:exp-decay}}
  Let $m= \lfloor |y|_\infty/(R+1)\rfloor$.
  By the Simon-Lieb inequality \eqref{eq:sl}, translation invariance,
  and the bound $G_{\nu,\bar D}(x,z)\leq G_{\nu}(x,z)$, we have
  \begin{align}
    G_\nu(x,y) &\leq \sum_{z_1 \in x+\partial \Lambda_R} G_\nu(x, z_1) G_{\nu}(z_1, y) \nonumber\\
    &\leq \cdots \leq \sum_{z_1 \in z_0+\partial \Lambda_R}
    \dots \sum_{z_m \in z_{m-1} + \partial\Lambda_R}
    G_\nu(x, z_1) G_{\nu}(z_1, z_2) \cdots G_\nu(z_m, y)
    \nonumber \\
    &\leq \theta^m \sup_{x \in \Z^d} G_\nu(0,x).
  \end{align}
  Note that we applied \eqref{eq:sl}
  in such a manner that the term $G_{\nu,D}(x,y)$ vanishes.
\end{solution}

\begin{solution}{Problem \ref{problem:infinite-volume-limit}}
Fix $\nu>\nu_c$, and let $D_R 
= \{-R+2,\ldots,R-1\}^d$
be the interior of $\Lambda_R$.
By monotone convergence,
\begin{equation}
  G_{\nu}(x,y) = \lim_{R \to \infty} G_{\nu,D_R}(x,y).
\end{equation}
Hence, to prove \eqref{eq:TorusToInfty}, it suffices to show that
$\lim_{R \to \infty}G^R_{\nu}(x,y)- G_{\nu,D_R}(x,y)=0$.  Now,
\begin{equation}
  G^R_{\nu}(x,y)- G_{\nu,D_R}(x,y)
  = G^R_{\nu}(x,y) - G^R_{\nu,D_R}(x,y),
\end{equation}
and thus, from the Simon-Lieb inequality (Problem~\ref{problem:simon-lieb}), it follows that
\begin{align}
  G^R_{\nu}(x,y)- G_{\nu,D_R}(x,y)
  &\leq \sum_{z\in \partial D_R} G^R_{\nu,\bar D_R}(x,z) G^R_{\nu}(z,y)
  \notag\\
  &\leq \Big(\sup_{z\in \partial D_R}G^R_{\nu,\bar D_R}(x,z)\Big)
  \Big(\sum_{z\in \partial D_R} G^R_{\nu}(z,y)\Big).
\end{align}
By Problem~\ref{problem:kappa-torus},
\begin{equation}
  \sum_{z\in \partial D_R} G^R_{\nu}(z,y)
  \leq \sum_{z \in \Lambda_R} G^R_{\nu}(z,y)
  = \chi^R(\nu) \leq  \chi(\nu) < \infty,
\end{equation}
and, by Problem~\ref{problem:exp-decay} and the fact that $G_\nu(0,x)$ is
uniformly bounded since the susceptibility is finite,
\begin{align}
  \sup_{z\in \partial D_R} G^R_{\nu,\bar D_R}(x,z)
  &= \sup_{z\in \partial D_R} G_{\nu,\bar D_R}(x,z)
  \leq \sup_{z\in \partial D_R} G_\nu(x,z)
  \notag\\
  &\leq  \sup_{z\in \partial D_R} Ce^{- \gamma  |z-x|}
  \leq Ce^{-\gamma (R-\abs{x})}\to 0
\end{align}
as $R\to\infty$.  Therefore
$\lim_{R \to \infty}G^R_{\nu}(x,y)- G_{\nu,\bar D_R}(x,y)=0$,
proving \eqref{eq:TorusToInfty}.  Finally \eqref{eq:kappaToCrit}
follows since $G_{\nu_c}(x,y)=\lim_{\nu\searrow \nu_c}G_{\nu}(x,y)$
by monotone convergence.
\end{solution}

\section*{Acknowledgements}

The authors are grateful to the organisers of both the Clay Mathematics Institute
Summer School and the XIV Brazilian Probability School for having delivered
such a stimulating and energetic event.  We are particularly grateful to
Vladas Sidoravicius for his tireless efforts on behalf of the School and its
participants.

Hugo Duminil-Copin thanks Stanislav Smirnov for very stimulating discussions and
encouragement related to their joint work presented in \refSect{cchex}.
Gordon Slade thanks David Brydges for many discussions
related to their joint work presented in \refSect{ctwsaw}.

The work of Hugo Duminil-Copin was supported by the ANR grant
BLAN06-3-134462, the EU Marie-Curie RTN CODY, the ERC AG CONFRA, and the
Swiss FNS.
The work of Gordon Slade was supported in part by NSERC of Canada.


\providecommand{\bysame}{\leavevmode\hbox to3em{\hrulefill}\thinspace}
\providecommand{\MR}{\relax\ifhmode\unskip\space\fi MR }
\providecommand{\MRhref}[2]{%
  \href{http://www.ams.org/mathscinet-getitem?mr=#1}{#2}
}
\providecommand{\href}[2]{#2}

\end{document}